\newtheorem{theorem}{Theorem}[section]
\newtheorem{definition}[theorem]{Definition}
\newtheorem{example}[theorem]{Example}
\theoremstyle{definition}
\newtheorem{thm}{Theorem}[section]
\newtheorem{lem}[thm]{Lemma}
\newtheorem{cor}[thm]{Corolary}
\newtheorem{prop}[thm]{Proposition}
\newtheorem{defn}[thm]{Definition}
\newtheorem{obs}[thm]{Observation}
\newtheorem{ejem}[thm]{Example}
\numberwithin{equation}{section}
\title[The Dynamics of Solvable Subgroups of $\text{PSL}\left(3,\mathbb{C}\right)$]{\sc{The Dynamics of Solvable Subgroups of $\text{PSL}\left(3,\mathbb{C}\right)$} } 
\author{Mauricio Toledo-Acosta}
\address{CIMAT, Jalisco s.n, Valenciana, C. P. 36023, Guanajuato, Gto. México.}
\email{gerardo.toledo@cimat.mx}
\thanks{Partially supported by grants of projects PAPPIT UNAM IN101816, PAPPIT UNAM IN110219, CONACYT 282937, FORDECYT 265667} 
\subjclass{Primary 37F99, Secondary 30F40, 20H10, 57M60}
\begin{document}

\maketitle

\begin{abstract}
In this work we study and provide a full description, up to a finite index subgroup, of the dynamics of solvable complex Kleinian subgroups of $\text{PSL}\left(3,\mathbb{C}\right)$. These groups have \emph{simple} dynamics, contrary to strongly irreducible groups. Because of this, we propose to define elementary subgroups of $\text{PSL}\left(3,\mathbb{C}\right)$ as solvable groups. We show that triangular groups can be decomposed in four layers, via the semi-direct product of four types of elements, with parabolic elements in the inner most layers and loxodromic elements in the outer layers. It is also shown that solvable groups, up to a finite index subgroup, act properly and discontinuously on the complement of either a line, two lines, a line and a point outside of the line, or a pencil of lines passing through a point. These results are another step towards the completion of the study of elementary subgroups of $\text{PSL}\left(3,\mathbb{C}\right)$.
\end{abstract}

\maketitle

\section*{Introduction}

Kleinian groups are discrete subgroups of $\text{PSL}\left(2,\mathbb{C}\right)$, the group of biholomorphic automorphisms of the complex projective line $\mathbb{CP}^1$, acting properly and disconti-nuously on a non-empty region of $\mathbb{CP}^1$. Kleinian groups have been thoroughly studied since the end of the 19th century by Lazarus Fuchs, Felix Klein, Henri Poincar\'e (who named them after Klein in \cite{poincare1883memoire}) and many others. Kleinian groups have played a major role in several fields of mathematics, such as Riemann surfaces and Teichm\"uller theory, automorphic forms, holomorphic dynamics, conformal and hyperbolic geometry, etc. For a detailed study of Kleinian groups, see \cite{maskit}, \cite{taniguchi}. In \cite{seade2001actions}, Jos\'e Seade and Alberto Verjovsky introduced the notion of complex Kleinian groups, which are discrete subgroups of $\text{PSL}\left(n+1,\mathbb{C}\right)$ acting properly and discontinuously on an open invariant subset of $\mathbb{CP}^n$. In the last decade, there has been a great effort to complete the study of the dynamics of complex Kleinian groups in dimension 2 (see for example \cite{bcn11cuatrolineas,bcn14unalinea,bcn16,bgn18,ckg_libro,cs2014}). In this regard, one of the pieces left to complete is the full description of the discrete subgroups of $\text{PSL}\left(3,\mathbb{C}\right)$ which have \emph{simple} dynamics. These subgroups of $\text{PSL}\left(3,\mathbb{C}\right)$ are the analogous of elementary subgroups of $\text{PSL}\left(2,\mathbb{C}\right)$. However, there is no \emph{standard} definition of elementary subgroups of $\text{PSL}\left(3,\mathbb{C}\right)$. \\

In the context of Kleinian groups, \emph{elementary groups} are discrete subgroups of $\text{PSL}\left(2,\mathbb{C}\right)$ such that the limit set is a finite set, in which case the limit set has 0, 1, or 2 points. These groups have \emph{simple} dynamics. In complex dimension 2, the Kulkarni limit set is either a finite union of complex lines (1, 2 or 3) or it contains an infinite number of complex lines. On the other hand, the limit set either contains a finite number of lines in general position (1, 2, 3 or 4) or it contains infinitely many lines in general position \cite{bcn16}. Therefore, one could define elementary groups in complex dimension 2 as discrete subgroups of $\text{PSL}\left(3,\mathbb{C}\right)$ such that the Kulkarni limit set contains a finite number of lines, or discrete subgroups of $\text{PSL}\left(3,\mathbb{C}\right)$ such that the Kulkarni limit set contains a finite number of lines in general position. However, this definition would not be useful in complex dimension greater than 2 since there is no similar classification of the Kulkarni limit set in this case. \\


In this work, we propose to define elementary complex Kleinian subgroups of $\text{PSL}\left(3,\mathbb{C}\right)$ as solvable groups, which also generalizes the notion of elementary groups of $\text{PSL}\left(2,\mathbb{C}\right)$. We will show that solvable groups exhibit \emph{simple} dynamics contrary to the \emph{rich} dynamics of strongly irreducible discrete subgroups of $\text{PSL}\left(3,\mathbb{C}\right)$, which have been extensively studied \cite{bcn2011,ckg_libro}. The study of solvable groups will help to complete the classification and understanding of the dynamics of all complex Kleinian groups of $\text{PSL}\left(3,\mathbb{C}\right)$. This work is a step towards the implementation of a Sullivan's dictionary between the theory of iteration of functions in several complex variables and the dynamics of complex Kleinian groups.\\

The main purpose of this paper is to provide a precise description of discrete solvable subgroups of $\text{PSL}\left(3,\mathbb{C}\right)$ and their dynamics, up to a finite index subgroup. This paper generalizes and builds upon the work done in \cite{ppar}, in which the authors study complex Kleinian groups whose elements are parabolic. To make these generalizations, additional and new techniques were developed in this work.\\

Before stating the main theorem of this paper, it is necessary to introduce some notation. We denote by $\text{PSL}\left(3,\mathbb{C}\right)$ the group of biholomorphic automorphisms of the complex projective plane $\mathbb{CP}^2$. We denote the upper triangular subgroup of $\text{PSL}\left(3,\mathbb{C}\right)$ by
	$$U_{+}=\left\{\left[\begin{array}{ccc}
	a_{11} & a_{12} & a_{13} \\
	0 & a_{22} & a_{23} \\
	0 & 0 & a_{33}
	\end{array}\right]\,\middle|\,a_{11}a_{22}a_{33}=1,\text{ }a_{ij}\in\mathbb{C}\right\}.$$
Let $\lambda_{12},\lambda_{23},\lambda_{13}:(U_{+},\cdot)\rightarrow(\mathbb{C}^{\ast},\cdot)$ be the group morphisms given by
	$$\begin{array}{ccc}
	\lambda_{12}\left(\left[\alpha_{ij}\right]\right)=\alpha_{11}\alpha^{-1}_{22}, &
	\lambda_{23}\left(\left[\alpha_{ij}\right]\right)=\alpha_{22}\alpha^{-1}_{33}, &
	\lambda_{13}\left(\left[\alpha_{ij}\right]\right)=\alpha_{11}\alpha^{-1}_{33}.	
	\end{array}$$

If $\Gamma\subset U_{+}$ is a group, we simplify the notation by writing $\text{Ker}\left(\lambda_{ij}\right)$ instead of $\text{Ker}\left(\lambda_{ij}\right)\cap \Gamma$. In this paper we show the following theorem

\begin{thm}\label{thm_main_solvable}
Let $\Gamma\subset\text{PSL}\left(3,\mathbb{C}\right)$ be a triangularizable complex Kleinian group such that its Kulkarni limit set does not consist of exactly four lines in general position. Then, there exists a non-empty open region $\Omega_\Gamma\subset \mathbb{CP}^2$ such that
	\begin{enumerate}[(i)]
	\item $\Omega_\Gamma$ is the maximal open set where the action is proper and discontinuous.
	\item $\Omega_\Gamma$ is homeomorphic to one of the following regions: $\mathbb{C}^2$, $\mathbb{C}^2\setminus\{0\}$, $\mathbb{C}\times\left(\mathbb{H}^+\cup\mathbb{H}^-\right)$ or $\mathbb{C}\times\mathbb{C}^\ast$.
	\item $\Gamma$ is finitely generated and $\text{rank}(\Gamma)\leq 4$.
	\item The group $\Gamma$ can be written as
		$$\Gamma = \Gamma_p\rtimes\langle \eta_1 \rangle\rtimes...\rtimes\langle \eta_m \rangle\rtimes\langle\gamma_1\rangle\rtimes...\rtimes\langle\gamma_n\rangle$$
	where $\Gamma_p$ is the subgroup of $\Gamma$ consisting of all the parabolic elements of $\Gamma$. The elements $\eta_1,...,\eta_m$ are loxo-parabolic elements such that $\lambda_{12}(\eta_1),...,\lambda_{12}(\eta_m)$ generate the group $\lambda_{12}\left(\text{Ker}(\lambda_{23})\right)$. The elements $\gamma_1,...,\gamma_n$ are strongly loxodromic and complex homotheties such that $\lambda_{23}(\eta_1),...,\lambda_{23}(\eta_m)$ generate the group $\lambda_{23}\left(\Gamma\right)$.

	\item The group $\Gamma$ leaves a full flag invariant. 
	\end{enumerate}
\end{thm}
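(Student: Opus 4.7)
The plan is to begin by conjugating $\Gamma$ into $U_+$, which is possible by triangularizability and immediately establishes (v) since upper triangular matrices preserve the standard full flag $[e_1]\in\linproy{e_1e_2}\subset\CP^2$. With $\Gamma\subset U_+$ in hand, the rest of the argument will exploit the filtration cut out by the characters $\lambda_{12},\lambda_{23}$: writing $\Gamma_p$ for the subgroup of parabolic elements, one has $\Gamma_p = \kernel(\lambda_{12})\cap\kernel(\lambda_{23})$, because every element of $U_+$ with $\lambda_{12}=\lambda_{23}=1$ is unipotent.

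To prove the decomposition (iv), I would first apply the results of \cite{ppar} to $\Gamma_p$, which is a normal subgroup and therefore admits the structural and rank estimates developed there. Next I would examine $\Gamma/\Gamma_p$: it embeds into $\C^{\ast}\times\C^{\ast}$ via $(\lambda_{12},\lambda_{23})$, hence is abelian and finitely generated, and discreteness of $\Gamma$ forces its rank to be at most $2$. After passing to a finite-index subgroup I would split this quotient along the exact sequence $1\to\kernel(\lambda_{23})/\Gamma_p\to\Gamma/\Gamma_p\to\lambda_{23}(\Gamma)\to 1$, lifting generators of $\lambda_{23}(\Gamma)$ to elements $\gamma_1,\dots,\gamma_n$ (which are necessarily strongly loxodromic or complex homotheties according to whether $\lambda_{12}(\gamma_j)\neq 1$ or $=1$), and lifting generators of $\lambda_{12}(\kernel(\lambda_{23}))$ to elements $\eta_1,\dots,\eta_m\in\kernel(\lambda_{23})$ (which are then loxo-parabolic, since their $[2,3]$-block is unipotent while the $[1,1]$-entry differs from the repeated eigenvalue). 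A direct commutator computation inside $U_+$ verifies that conjugation by each $\gamma_j$ preserves the subgroup generated by $\Gamma_p$ and the $\eta_i$, and that conjugation by each $\eta_i$ preserves $\Gamma_p$, giving the nested semi-direct product. Adding the generator counts across the three strata then yields the rank bound in (iii).

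For (i) and (ii) I would perform a case analysis based on the configuration of the Kulkarni limit set $\KulL(\Gamma)$. By the existing classification of $\KulL$ for triangular complex Kleinian groups, and under the standing hypothesis that $\KulL(\Gamma)$ is not four lines in general position, $\KulL(\Gamma)$ must be one of the following four configurations: a single line, a line together with an isolated point off it, two distinct lines, or a pencil of lines through a common point. In each case the complement is an open set homeomorphic to the corresponding model region $\C^2$, $\C^2\setminus\{0\}$, $\C\times\C^{\ast}$, or $\C\times(\Hh^+\cup\Hh^-)$, which gives (ii); proper discontinuity on this complement then follows from the general properties of the Kulkarni discontinuity domain, and one sets $\Omega_\Gamma$ equal to this complement.

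The main obstacle will be the maximality asserted in (i): one must rule out the existence of any further point that could be adjoined to $\Omega_\Gamma$ while preserving proper discontinuity. I would handle this by using the explicit generators produced in (iv) to exhibit, for every $p\in\KulL(\Gamma)$, a sequence of distinct elements of $\Gamma$ whose accumulation at $p$ witnesses the failure of proper discontinuity there. The loxodromic $\gamma_j$ provide accumulation onto their invariant lines, while the loxo-parabolic $\eta_i$ and the parabolic core $\Gamma_p$ cover the remaining configurations; ruling out hidden wandering points arising from the parabolic layer is the technical heart of the argument, and is precisely where the additional techniques beyond those of \cite{ppar}, alluded to in the introduction, must be brought to bear.
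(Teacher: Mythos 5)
Your overall skeleton (conjugate into $U_+$, peel off the parabolic part, then quotient by the characters $\lambda_{12},\lambda_{23}$) matches the paper's strategy, but there are two genuine gaps. The more serious one is the rank bound (iii). You propose to bound $\rank(\Gamma_p)$ using \cite{ppar} and $\rank(\Gamma/\Gamma_p)$ separately and then add; since \cite{ppar} only gives $\rank(\Gamma_p)\leq 4$, this yields at best $\rank(\Gamma)\leq 6$, not $4$. Moreover, your claim that discreteness of $\Gamma$ forces $\rank(\Gamma/\Gamma_p)\leq 2$ does not follow from the embedding into $\C^{\ast}\times\C^{\ast}$: the image of $(\lambda_{12},\lambda_{23})$ need not be discrete there (the paper devotes substantial effort to the cases where the control group $\Pi(\Gamma)$, and hence $\lambda_{23}(\Gamma)$, is non-discrete). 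The paper obtains $k+r+m+n\leq 4$ as a \emph{joint} bound by exhibiting, in each of many cases, a simply connected domain on which all of $\Gamma$ acts properly and discontinuously and then invoking the obstructor-dimension theorem of Kapovich (Theorems \ref{thm_obdim_1} and \ref{thm_obdim_2}); that case-by-case construction is the bulk of Theorem \ref{thm_descomposicion_caso_noconmutativo2} and cannot be replaced by adding stratum-wise bounds.

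The second gap concerns the identification $\Gamma_p=\kernel(\lambda_{12})\cap\kernel(\lambda_{23})$ and the claim that the lifts $\eta_i,\gamma_j$ are of the stated loxodromic types. Your justification (unipotence of elements with $\lambda_{12}=\lambda_{23}=1$) gives only the inclusion $\kernel(\lambda_{12})\cap\kernel(\lambda_{23})\subset\Gamma_p$; the reverse inclusion requires showing that $\Gamma$ contains no ellipto-parabolic elements, i.e.\ no parabolic elements with non-trivial unitary eigenvalue ratios. This is exactly what Proposition \ref{prop_EPI_conmutativo} and Corollary \ref{cor_lox_nivel34} establish (irrational ellipto-parabolics force commutativity, rational ones force torsion), and the same propositions, together with Corollary \ref{cor_HC_no_hay_en_no_conmutativos} on type I complex homotheties, are what rule out elliptic and screw elements in the outer layers. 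Finally, for (i)--(ii) you assume an a priori classification of $\KulL(\Gamma)$ into four configurations; the paper does not have such a classification available in advance but derives the possible limit sets (which also include two or three isolated points in the diagonal case, see Theorem \ref{thm_kulkarni_diagonales}) from the tables of quasi-projective limits, and it is from those same limits, via Propositions \ref{prop_descripcion_Eq} and \ref{prop_eq_in_kuld}, that the maximality in (i) is extracted --- the step you correctly flag as the technical heart but leave open.
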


Our ultimate goal is to prove the previous theorem for solvable subgroups of $\text{PSL}\left(3,\mathbb{C}\right)$ satisfying the hypothesis of Theorem \ref{thm_main_solvable}. Every solvable subgroup of $\text{PSL}\left(3,\mathbb{C}\right)$ contains a finite index subgroup conjugate to some subgroup of $U_+$. As a consequence of this, a fundamental step towards this goal is to prove Theorem \ref{thm_main_solvable}. This is the first main result proved in this paper. The second result we prove in this paper is Theorem \ref{thm_main_gen}, which is a partial generalization of Theorem \ref{thm_main_solvable} for solvable subgroups of $\text{PSL}\left(3,\mathbb{C}\right)$. 

\begin{thm}\label{thm_main_gen}
Let $\Gamma\subset\text{PSL}\left(3,\mathbb{C}\right)$ be a solvable complex Kleinian group such that its Kulkarni limit set does not consist of exactly four lines in general position. Let $\Gamma_0\subset\Gamma$ be a virtually triangularizable finite index subgroup. If $\Gamma_0$ is commutative then there exists a non-empty open region $\Omega_\Gamma\subset \mathbb{CP}^2$ such that

\begin{enumerate}[(i)]
\item $\Omega_\Gamma$ is the maximal open set where the action is proper and discontinuous.
\item $\Omega_\Gamma$ is homeomorphic to one of the following regions: $\mathbb{C}^2$, $\mathbb{C}^2\setminus\{0\}$, $\mathbb{C}\times\left(\mathbb{H}^+\cup\mathbb{H}^-\right)$ or $\mathbb{C}\times\mathbb{C}^\ast$.
\item Up to a finite index subgroup, the group $\Gamma$ leaves a full flag invariant. 
\end{enumerate}
\end{thm}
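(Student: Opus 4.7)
The plan is to reduce the statement to the triangularizable case, Theorem \ref{thm_main_solvable}, by descending to a carefully chosen normal finite-index subgroup of $\Gamma$ and then transferring the conclusions back via normality and maximality arguments.

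First, I would replace $\Gamma_0$ by its normal core in $\Gamma$, namely $\tilde{\Gamma}_0 = \bigcap_{\gamma\in\Gamma}\gamma\Gamma_0\gamma^{-1}$, which is the kernel of the action of $\Gamma$ on $\Gamma/\Gamma_0$. This is normal in $\Gamma$, of finite index, and still commutative, being a subgroup of the commutative $\Gamma_0$. Since $\tilde{\Gamma}_0$ is virtually triangularizable, a further finite-index passage (again closed up under conjugation in $\Gamma$) produces a normal finite-index subgroup $\Gamma_1\lhd\Gamma$ that is simultaneously triangularizable and commutative. One has to verify that $\Lambda_{\text{Kul}}(\Gamma_1)$ still fails to consist of exactly four lines in general position; this should follow from the explicit description of Kulkarni limit sets for triangular groups developed earlier in the paper, since finite-index descent can only remove controlled pieces of the limit set.

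Next, Theorem \ref{thm_main_solvable} applies to $\Gamma_1$ and furnishes a maximal region of proper discontinuity $\Omega_{\Gamma_1}\subset\CP^2$ of one of the four listed homeomorphism types, together with a $\Gamma_1$-invariant full flag. For any $\gamma\in\Gamma$, normality gives $\gamma\Gamma_1\gamma^{-1}=\Gamma_1$, so $\gamma\cdot\Omega_{\Gamma_1}$ is itself a maximal region of proper discontinuity for $\Gamma_1$ and hence coincides with $\Omega_{\Gamma_1}$ by the uniqueness clause (i). Therefore $\Omega_{\Gamma_1}$ is $\Gamma$-invariant. A routine coset-decomposition argument, using $[\Gamma:\Gamma_1]<\infty$, promotes proper discontinuity from $\Gamma_1$ to $\Gamma$; and any strictly larger region of proper discontinuity for $\Gamma$ would be one for $\Gamma_1$ too, contradicting the maximality of $\Omega_{\Gamma_1}$. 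Setting $\Omega_\Gamma=\Omega_{\Gamma_1}$ then yields (i) and (ii) at once.

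Finally, for (iii), I would exploit commutativity of $\Gamma_0$ crucially: a commutative subgroup of $U_+$ admits only finitely many invariant full flags, determined by its joint generalised-eigenstructure. Conjugation by $\Gamma$ permutes this finite set of $\Gamma_1$-invariant flags, and the kernel of the resulting permutation representation is a finite-index subgroup of $\Gamma$ leaving a full flag invariant. The main obstacle I anticipate is precisely this rigidity argument: the commutativity hypothesis is essential because for a non-commutative triangular $\Gamma_0$ the collection of invariant full flags need not be finite nor stable under the ambient conjugation, which is exactly why Theorem \ref{thm_main_gen} is stated only in the commutative case. A secondary delicate point is the limit-set compatibility under finite-index descent, which will again rely on the structural results for triangular groups established earlier in the paper.
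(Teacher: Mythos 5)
Your argument for conclusions (i) and (ii) is correct, but it takes a considerably longer route than the paper. The paper's proof is essentially three lines: it invokes Theorem \ref{teo_solvable_triangularizables} to produce the finite index subgroup, applies Theorem \ref{thm_main_solvable} to (a triangularizable finite-index subgroup of) the commutative group $\Gamma_0$, and then quotes Proposition 3.6 of \cite{bcn16}, which asserts that $\KulD(\Gamma)=\KulD(\Gamma_0)$ for \emph{any} finite index subgroup, with no normality hypothesis. Your normal-core construction, the conjugation-plus-maximality argument showing $\Gamma$-invariance of $\Omega_{\Gamma_1}$, and the coset-decomposition promotion of proper discontinuity together amount to a self-contained re-proof of that finite-index invariance; this is a legitimate alternative and makes the transfer mechanism explicit, but it is machinery the paper avoids by citation. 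Your worry about the limit-set hypothesis surviving the descent is dissolved by the same Proposition 3.6 (the Kulkarni limit set is itself unchanged under passage to finite index subgroups), and in any case the classification in Section \ref{sec_commutative_triangular} shows that commutative triangular groups never have four lines in general position as limit set.

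For conclusion (iii) your argument both overshoots and rests on a false intermediate claim. A commutative subgroup of $U_+$ can have \emph{infinitely} many invariant full flags: the purely parabolic group of elements $g_{x,y}$ fixes $e_1$ and leaves every line through $e_1$ invariant, and a group of complex homotheties $\text{Diag}(\alpha^{-2},\alpha,\alpha)$ leaves invariant every flag $\parentesis{p,\linproy{p,e_1}}$ with $p\in\linproy{e_2,e_3}$. So the finiteness of the set of invariant flags, and with it the permutation representation you propose to build on it, is not available. Fortunately none of this is needed: conclusion (iii) only asks for \emph{some} finite index subgroup preserving \emph{some} full flag, and the triangularizable finite index subgroup supplied by Theorem \ref{teo_solvable_triangularizables} visibly preserves the standard flag $e_1\in\linproy{e_1,e_2}$. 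This one-line observation is all the paper uses.
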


The paper is organized as follows: In Section \ref{sec_background} we quickly review the necessary facts and definitions. In Section \ref{sec_non_commutative_triangular}, we give a dynamic description of non-commutative discrete triangular subgroups of $\text{PSL}\left(3,\mathbb{C}\right)$ and prove a decomposition theorem for these groups, via semi-direct products. In Section \ref{sec_commutative_triangular}, we characterize and describe the dynamics of commutative discrete triangular subgroups of $\text{PSL}\left(3,\mathbb{C}\right)$. Finally, we prove Theorems \ref{thm_main_solvable} and \ref{thm_main_gen} in Section \ref{sec_proof_main_thm}.

\section{Notation and Background} \label{sec_background}

\subsection{Complex Kleinian groups}

The complex projective plane $\mathbb{CP}^2$ is defined as
	$$\mathbb{CP}^2=(\mathbb{C}^{3}\setminus\{0\})/\mathbb{C}^\ast,$$
where $\mathbb{C}^\ast:=\mathbb{C}\setminus\{0\}$ acts by the usual scalar multiplication. Let $\left[\;\right]:\mathbb{C}^3\setminus\{0\}\rightarrow\mathbb{CP}^2$ be the quotient map. We denote the projectivization of the point $x=(x_1,x_2,x_3)\in\mathbb{C}^3$ by $[x]=\left[x_1:x_2:x_{n+1}\right]$. We denote by $e_1,e_2,e_3$ the projectivization of the canonical base of $\mathbb{C}^3$.\\

Let $\mathcal{M}_{3}\left(\mathbb{C}\right)$ be the group of all square matrices of size $3\times 3$ with complex coefficients, and let $\text{SL}\left(3,\mathbb{C}\right)\subset\mathcal{M}_{3}\left(\mathbb{C}\right)$ (resp. $\text{GL}\left(3,\mathbb{C}\right)$) be the subgroup of matrices with determinant equal to $1$ (resp. not equal to $0$). The group of biholomorphic automorphisms of $\mathbb{CP}^2$ is given by 
	$$\text{PSL}\left(3,\mathbb{C}\right)\cong\text{PGL}\left(3,\mathbb{C}\right):=\text{GL}\left(3,\mathbb{C}\right)/\{\text{scalar matrices}\}.$$
If $g\in\text{PSL}\left(3,\mathbb{C}\right)$ (resp. $z\in\mathbb{CP}^2$), we denote by $\mathbf{g}\in\text{SL}\left(3,\mathbb{C}\right)$ any of its lifts (resp. $\mathbf{z}\in\mathbb{C}^3$). We denote again by $\left[\;\right]:\mathbb{C}^3\setminus\{0\}\rightarrow\mathbb{CP}^2$ the quotient map. We denote by $\text{Fix}(g)\subset\mathbb{CP}^2$ the set of fixed points of an automorphism $g\in\text{PSL}\left(3,\mathbb{C}\right)$.\\

Now we describe a helpful construction used to reduce the action of a group $\Gamma\subset\text{PSL}\left(3,\mathbb{C}\right)$ on $\mathbb{CP}^2$ to the action of a subgroup of $\text{PSL}\left(2,\mathbb{C}\right)$ on a complex line in $\mathbb{CP}^2$, thus, simplifying the study of the dynamics of $\Gamma$ (see Chapter 5 of \cite{ckg_libro} for details). Consider a subgroup $\Gamma\subset\text{PSL}\left(3,\mathbb{C}\right)$ acting on $\mathbb{CP}^2$ with a global fixed point $p\in\mathbb{CP}^2$. Let $\ell\subset\mathbb{CP}^2\setminus\{p\}$ be a projective complex line. We define the projection $\pi=\pi_{p,\ell}:\mathbb{CP}^2\rightarrow\ell$ given by $\pi(x)=\ell\cap\overleftrightarrow{p,x}$. This function is holomorphic, and it allows us to define the group homomorphism 
	$$\Pi=\Pi_{p,\ell}:\Gamma\rightarrow\text{Bihol}(\ell)\cong\text{PSL}\left(2,\mathbb{C}\right)$$
given by $\Pi(g)(x)=\pi(g(x))$ for $g\in\Gamma$ (Lemma 6.11 of \cite{cs2014}). To simplify the notation we will write $\text{Ker}(\Gamma)$ instead of $\text{Ker}(\Pi)\cap \Gamma$.\\

Consider $M\in\mathcal{M}_{3}\left(\mathbb{C}\right)$. Let $\text{Ker}(M)$ be its kernel, and consider its projectivization $\left[\text{Ker}(M)\setminus\{0\}\right]$. Then $M$ induces a well defined map $\left[ M\right]:\mathbb{CP}^2\setminus \left[\text{Ker}(M)\setminus\{0\}\right]\rightarrow \mathbb{CP}^2$ given by $\left[M \right](z)=\left[M\mathbf{z}\right]$. We say that $M$ is a quasi-projective map. We define the space of quasi-projective maps $\text{QP}\left(3,\mathbb{C}\right)$ as
	$$\text{QP}\left(3,\mathbb{C}\right)=\left(\mathcal{M}_{3}\left(\mathbb{C}\right)\setminus\{0\}\right)\bigg/ \mathbb{C}^\ast.$$
This space is the closure of $\text{PSL}\left(3,\mathbb{C}\right)$ in the space $\mathcal{M}_{3}\left(\mathbb{C}\right)$. Therefore every sequence of elements in $\text{PSL}\left(3,\mathbb{C}\right)$ converge to an element of $\text{QP}\left(3,\mathbb{C}\right)$ (see Section 3 of \cite{cano2010equicontinuity} or Section 7.4 of \cite{ckg_libro}). We denote by $\text{Diag}(a_1,a_2,a_3)$ the diagonal element of $\text{QP}\left(3,\mathbb{C}\right)$ with diagonal entries $a_1,a_2,a_3$.\\

As in the case of automorphisms of $\mathbb{CP}^1$, we classify the elements of $\text{PSL}\left(3,\mathbb{C}\right)$ in three classes: elliptic, parabolic and loxodromic. However, unlike the classical case, there are several subclasses in each case (see Section 4.2 of \cite{ckg_libro}). We now give a quick summary of the subclasses of elements we will be using.

\begin{defn}
An element $g\in\text{PSL}\left(3,\mathbb{C}\right)$ is said to be:
	\begin{itemize}
	\item \emph{Elliptic} if it has a diagonalizable lift in $\text{SL}\left(3,\mathbb{C}\right)$ such that every eigenvalue has norm 1.
	\item \emph{Parabolic} if it has a non-diagonalizable lift in $\text{SL}\left(3,\mathbb{C}\right)$ such that every eigenvalue has norm 1.
	\item \emph{Loxodromic} if it has a lift in $\text{SL}\left(3,\mathbb{C}\right)$ with an eigenvalue of norm distinct of 1. Furthermore, we say that $g$ is:
		\begin{itemize}
		\item \emph{Loxo-parabolic} if it is conjugate to an element $h\in\text{PSL}\left(3,\mathbb{C}\right)$ such that
			$$\mathbf{h}=\left(
			\begin{array}{ccc}
 			\lambda & 1 & 0 \\
 			0 & \lambda & 0 \\
 			0 & 0 & \lambda^{-2}
			\end{array}\right),\;|\lambda|\neq 1.$$
		\item A \emph{complex homothety} if it is conjugate to an element $h\in\text{PSL}\left(3,\mathbb{C}\right)$ such that $\mathbf{h}=\text{Diag}\left(\lambda,\lambda,\lambda^{-2}\right)$, with $|\lambda|\neq 1$.
		\item A \emph{rational (resp. irrational) screw} if it is conjugate to an element $h\in\text{PSL}\left(3,\mathbb{C}\right)$ such that $\mathbf{h}=\text{Diag}\left(\lambda_1,\lambda_2,\lambda_3\right)$, with $\left| \lambda_1 \right|=\left|\lambda_2 \right| \neq \left|\lambda_3\right|$ and $\lambda_1\lambda_2^{-1}=e^{2\pi i \theta}$ with $\theta\in \mathbb{Q}$ (resp. $\theta\in\mathbb{R}\setminus\mathbb{Q}$).
		\item \emph{Strongly loxodromic} if it is conjugate to an element $h\in\text{PSL}\left(3,\mathbb{C}\right)$ such that $\mathbf{h}=\text{Diag}\left(\lambda_1,\lambda_2,\lambda_3\right)$, where the elements $\{|\lambda_1|,|\lambda_2|,|\lambda_3|\}$ are pairwise different.
		\end{itemize}		 
	\end{itemize}
\end{defn}

We say that $z\in\mathbb{C}\setminus\{1\}$ is a rational rotation (resp. irrational rotation) if $z=e^{2\pi i \theta}$ for some $\theta\in\mathbb{Q}$ (resp. $\theta\in\mathbb{R}\setminus\mathbb{Q}$). An element $\gamma\in\Gamma$ is called a \emph{torsion element} if it has finite order. The group $\Gamma$ is a \emph{torsion-free group} if the only torsion element in $\Gamma$ is the identity. 

In the classical case of Kleinian groups, there are several characterizations for the concept of limit set, and all of them coincide \cite{taniguchi}. In the complex setting there is no unique definition of limit set (for a detailed discussion and examples see Section 3.1 of \cite{ckg_libro}). In \cite{kulkarni}, Kulkarni introduced a notion of limit set which works in a very general setting (Section 3.3. of \cite{ckg_libro}). 

\begin{defn}\label{defn_kulkarni}
Let $\Gamma$ be a discrete subgroup of $\text{PSL}\left(3,\mathbb{C}\right)$ acting on $\mathbb{CP}^2$. Let us define the following sets:
	\begin{itemize}
	\item Let $L_0(\Gamma)$ be the closure of the set of points in $\mathbb{CP}^2$ with infinite group of isotropy.
	\item Let $L_1(\Gamma)$ be the closure of the set of accumulation points of orbits of points in $\mathbb{CP}^2\setminus L_0(\Gamma)$.
	\item Let $L_2(\Gamma)$ be the closure of the set of accumulation points of orbits of compact subsets of $\mathbb{CP}^2\setminus\left(L_0(\Gamma)\cup L_1(\Gamma)\right)$. 
	\end{itemize}
We define the \emph{Kulkarni limit set} of $\Gamma$ as $ \Lambda_{\text{Kul}}(\Gamma):=L_0(\Gamma)\cup L_1(\Gamma)\cup L_2(\Gamma)$.
The \emph{Kulkarni region of discontinuity} of $\Gamma$ is defined as $\Omega_{\text{Kul}}(\Gamma):=\mathbb{CP}^n\setminus\Lambda_{\text{Kul}}(\Gamma)$.
\end{defn}

The action of $\Gamma$ on $\Omega_{\text{Kul}}(\Gamma)$ is proper and discontinuous (Section 3.2 of \cite{ckg_libro}).

\begin{defn}
The \emph{equicontinuity region} for a familiy $\Gamma$ of automorphisms of $\mathbb{CP}^2$, denoted $\text{Eq}(\Gamma)$, is defined to be the set of points $z\in\mathbb{CP}^2$ for which there is an open neighborhood $U$ of $z$ such that $\Gamma$ restricted to $U$ is a normal family. 
\end{defn}

The following propositions will be very useful to compute the Kulkarni limit set of a group in terms of the quasi-projective limits of sequences in the group.

\begin{prop}[Proposition 7.4.1 of \cite{ckg_libro}]\label{prop_convergencia_qp}
Let $\{\gamma_k\}\subset\text{PSL}\left(3,\mathbb{C}\right)$ be a sequence of distinct elements, then there is a subsequence of $\{\gamma_k\}$, still denoted by $\{\gamma_k\}$, and a quasi-projective map $\gamma\in\text{QP}\left(3,\mathbb{C}\right)$ such that $\gamma_k\overset{k\rightarrow\infty}{\longrightarrow}\gamma$ uniformly on compact sets of $\mathbb{CP}^2\setminus\left[\text{Ker}(\gamma)\right]$.
\end{prop}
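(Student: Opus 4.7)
The plan is a standard normalization-and-compactness argument on matrix lifts. For each $k$ choose a lift $\mathbf{g}_k\in\SL$ of $\gamma_k$, fix any norm $\norma{\cdot}$ on $\MC$ (say the operator norm), and rescale to $\tilde{\mathbf{g}}_k:=\mathbf{g}_k/\norma{\mathbf{g}_k}$; since $\PSL$ is the projectivization of $\GL$, the matrices $\tilde{\mathbf{g}}_k$ and $\mathbf{g}_k$ represent the same element of $\PSL$. Every $\tilde{\mathbf{g}}_k$ then lies on the unit sphere of $\MC$, which is compact, so I would extract a convergent subsequence $\tilde{\mathbf{g}}_k\to \mathbf{g}\in\MC$ with $\norma{\mathbf{g}}=1$. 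In particular $\mathbf{g}\neq 0$, so $\gamma:=\corchetes{\mathbf{g}}\in\QP$ is a well-defined quasi-projective map and $\corchetes{\kernel(\mathbf{g})}$ is a proper projective subspace of $\CP^2$.

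It remains to verify uniform convergence on compact subsets of $\CP^2\setminus\corchetes{\kernel(\mathbf{g})}$. Fix such a compact set $K$ and choose a continuous lift to a compact $\tilde K\subset\C^3\setminus\set{0}$ disjoint from $\kernel(\mathbf{g})$. The map $\mathbf{z}\mapsto\mathbf{g}\mathbf{z}$ is continuous and nowhere zero on $\tilde K$, so $\norma{\mathbf{g}\mathbf{z}}\geq\epsilon$ for some $\epsilon>0$ and all $\mathbf{z}\in\tilde K$. For $k$ large, $\norma{(\tilde{\mathbf{g}}_k-\mathbf{g})\mathbf{z}}<\epsilon/2$ uniformly on $\tilde K$, so $\tilde{\mathbf{g}}_k\mathbf{z}$ is also bounded away from zero. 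Since the projectivization map $\corchetes{\cdot}$ is uniformly continuous on the complement of any neighbourhood of the origin in $\C^3$, composing yields $\gamma_k(z)=\corchetes{\tilde{\mathbf{g}}_k\mathbf{z}}\to\corchetes{\mathbf{g}\mathbf{z}}=\gamma(z)$ uniformly for $z\in K$.

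The main obstacle is more conceptual than technical: one must recognise that although the $\gamma_k$ live in $\PSL$, the natural ambient space for their accumulation points is not $\PSL$ but its projective closure $\QP$, which permits singular matrix representatives. The rescaling decouples magnitude from projective class and thus forces compactness; the price paid is that the limit matrix $\mathbf{g}$ may be singular, which is precisely why the convergence must be restricted to the complement of $\corchetes{\kernel(\mathbf{g})}$. The hypothesis that the $\gamma_k$ are distinct enters only to guarantee that the subsequence extraction is not tautological.
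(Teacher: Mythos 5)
Your proof is correct: the paper does not prove this statement itself but imports it verbatim as Proposition 7.4.1 of \cite{ckg_libro}, and your normalization-and-compactness argument (rescale lifts to the unit sphere of $\MC$, extract a convergent subsequence, then bound $\norma{\mathbf{g}\mathbf{z}}$ below on a compact lift of $K$ to get uniform convergence after projectivizing) is precisely the standard proof given in that reference. The only cosmetic quibble is that $\corchetes{\cdot}$ is not literally uniformly continuous on the full complement of a neighbourhood of the origin; what you actually use, and what holds, is uniform continuity on the compact annulus $\set{\epsilon/2\leq\norma{w}\leq M}$ containing all the vectors $\tilde{\mathbf{g}}_k\mathbf{z}$ and $\mathbf{g}\mathbf{z}$ for $\mathbf{z}$ in the lifted compact set.
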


\begin{prop}[Proposition 2.5 of \cite{clu2017}]\label{prop_descripcion_Eq}
Let $\Gamma\subset\text{PSL}\left(3,\mathbb{C}\right)$ be a group, we say $\gamma\in\text{QP}\left(3,\mathbb{C}\right)$ is a limit of $\Gamma$, in symbols $\gamma\in\text{Lim}(\Gamma)$, if there is a sequence $\{\gamma_m\}\subset\Gamma$ of distinct elements satisfying $\gamma_m\rightarrow\gamma$. Then
	$$\text{Eq}(\Gamma)=\mathbb{CP}^2\setminus\overline{\underset{\gamma\in\text{Lim}(\Gamma)}{\bigcup}\text{Ker}(\gamma)}.$$
\end{prop}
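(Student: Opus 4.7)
The plan is to prove the two set inclusions separately, using Proposition \ref{prop_convergencia_qp} as the main engine. A preliminary observation is that since $\Gamma\subset\PSL$ is discrete, any sequence of distinct elements of $\Gamma$ cannot converge in $\PSL$, so every $\gamma\in\text{Lim}(\Gamma)$ has rank strictly less than three and therefore non-trivial kernel.

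For the inclusion $\CP^2\setminus\overline{\bigcup_{\gamma\in\text{Lim}(\Gamma)}\kernel(\gamma)}\subseteq\Eq(\Gamma)$, I would fix a point $z$ in the left-hand side and choose an open neighborhood $U$ of $z$ disjoint from this closed union. Given any sequence $\set{g_k}\subset\Gamma$ of distinct elements, Proposition \ref{prop_convergencia_qp} furnishes a subsequence converging to some $\gamma\in\text{Lim}(\Gamma)$ uniformly on compact subsets of $\CP^2\setminus\corchetes{\kernel(\gamma)}$. Since $U$ is by construction disjoint from $\corchetes{\kernel(\gamma)}$, every compact subset of $U$ is such a compact, so $\Gamma|_U$ is a normal family and $z\in\Eq(\Gamma)$.

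For the reverse inclusion I would argue by contradiction. Assume $z\in\overline{\bigcup_{\gamma\in\text{Lim}(\Gamma)}\kernel(\gamma)}\cap\Eq(\Gamma)$ and choose an open $U\ni z$ on which $\Gamma|_U$ is normal. Because $z$ is in the closure, $U$ meets some $\corchetes{\kernel(\gamma_0)}$ at a point $z_0$, with $\gamma_0=\lim g_k$ for a distinct-element sequence $\set{g_k}\subset\Gamma$. By normality, I extract a subsequence converging locally uniformly on $U$ to a continuous map $f:U\rightarrow\CP^2$; uniqueness of limits forces $f=\gamma_0$ on $U\setminus\corchetes{\kernel(\gamma_0)}$, and the goal becomes to contradict continuity of $f$ at a point of $\corchetes{\kernel(\gamma_0)}\cap U$. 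When $\rank(\gamma_0)=2$ the kernel is a single point and the image of $\gamma_0$ is a projective line; distinct lines through the kernel point are collapsed to distinct points on the image line, so approaching $z_0$ along two such lines yields two different candidate values for $f(z_0)$, giving the desired contradiction.

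The main obstacle is the rank-one case, in which $\gamma_0$ is constant off a projective line $L\subset\CP^2$ with value $p$ and hence extends continuously to all of $\CP^2$ as the constant $p$, so the direct argument above fails. To handle this I would apply a second extraction to the family $\set{g_k|_L}$ of holomorphic maps $L\cong\CP^1\rightarrow\CP^2$, via a version of Proposition \ref{prop_convergencia_qp}, producing a quasi-projective limit $\delta_0:L\rightarrow\CP^2$. A global surjectivity argument---were $\delta_0$ identically $p$ then $\set{g_k}$ would converge to the constant $p$ throughout $\CP^2$, contradicting surjectivity of each biholomorphism $g_k$---forces $\delta_0$ not to be identically $p$, whence by the identity principle $\delta_0(w_0)\neq p$ for some $w_0\in L\cap U$. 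At this $w_0$ we have $g_k(w_0)\rightarrow\delta_0(w_0)\neq p$, whereas $g_k(w)\rightarrow p$ for nearby $w\in U\setminus L$; this violates continuity of $f$ at $w_0$ and closes the argument.
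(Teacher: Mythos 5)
The paper offers no proof of this statement to compare against --- it is quoted directly from Proposition 2.5 of \cite{clu2017} --- so I can only judge your argument on its own terms. Your first inclusion is correct (with the harmless caveat that a sequence taking only finitely many values is handled trivially), and your rank-two argument is also correct: the kernel is a single point, distinct lines through it are collapsed by $\gamma_0$ to distinct points of the image line, and this is incompatible with continuity of the normal limit $f$. The problem is the rank-one case, and the gap there is genuine.

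Your key claim is that the renormalized restriction limit $\delta_0$ of $\set{g_k|_L}$ cannot be identically $p$, because otherwise $\set{g_k}$ would converge to the constant $p$ on all of $\CP^2$, ``contradicting surjectivity.'' Both halves of this fail. First, $\delta_0$ can be identically $p$: take $\Gamma=\prodint{g}$ with $\mathbf{g}$ a single $3\times 3$ Jordan block with eigenvalue $1$ (a regular unipotent element) and $g_k=g^k$. Then $g^k\rightarrow\gamma_0$ with $\corchetes{\kernel(\gamma_0)}=L=\linproy{e_1,e_2}$ and image $p=e_1\in L$; restricting to the plane over $L$ one finds $g^k(\mathbf{e}_1)=\mathbf{e}_1$ and $g^k(\mathbf{e}_2)=k\mathbf{e}_1+\mathbf{e}_2$, so after renormalizing, $\delta_0$ kills $\mathbf{e}_1$ and sends everything else to $\mathbf{e}_1$; that is, $\corchetes{\delta_0}$ is again the constant $p$ on $L$ minus its own kernel point. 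Second, even granting the premise, pointwise convergence of $\set{g_k}$ to a constant on all of $\CP^2$ --- which is exactly what happens in this example, since $g^k(e_1)=e_1=p$ as well --- does not contradict surjectivity of the individual biholomorphisms $g^k$; only uniform convergence on the whole compact $\CP^2$ would, and your two-level extraction does not give that. So your argument says nothing for regular unipotent elements, which is precisely the case where the statement is still nontrivial: one must show that no point of $\linproy{e_1,e_2}$ lies in $\Eq(\prodint{g})$, and that requires exhibiting, for each $z_0\in L$, source sequences tending to $z_0$ whose images under $g^k$ tend to something other than $p$ (they exist, but only along curves osculating $L$ to second order, which is exactly what your line-by-line and restriction-to-$L$ analysis misses). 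A correct proof needs either this explicit higher-order analysis or the finer structure theory of quasi-projective limits developed in \cite{clu2017}; a single restriction step plus an appeal to surjectivity is not enough.
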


\begin{prop}[Corollary 2.6 of \cite{clu2017}]\label{prop_eq_in_kuld}
Let $\Gamma\subset\text{PSL}\left(3,\mathbb{C}\right)$ be a discrete group, then $\Gamma$ acts properly discontinuously on $\text{Eq}(\Gamma)$. Moreover, $\text{Eq}(\Gamma)\subset\Omega_{\text{Kul}}(\Gamma)$.
\end{prop}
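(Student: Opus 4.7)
The plan is to bootstrap Theorem \ref{thm_main_solvable} applied to a suitable commutative triangularizable finite-index \emph{normal} subgroup of $\Gamma$, and then transfer the conclusions to $\Gamma$ via a standard finite-index covering argument. As a first step, since $\Gamma_0$ is virtually triangularizable, pick a triangularizable finite-index subgroup $\Gamma_1 \subseteq \Gamma_0$; as a subgroup of the commutative $\Gamma_0$, it is automatically commutative. Then I would replace $\Gamma_1$ by its core in $\Gamma$, namely $\tilde\Gamma := \bigcap_{g \in \Gamma} g\Gamma_1 g^{-1}$, which is a finite-index normal subgroup of $\Gamma$. Being a subgroup of $\Gamma_1$, the group $\tilde\Gamma$ is still triangularizable (any triangularizing basis for $\Gamma_1$ works) and remains commutative. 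Normality of $\tilde\Gamma$ in $\Gamma$, together with a pigeonhole argument on coset representatives, shows that $\KulL(\tilde\Gamma) = \KulL(\Gamma)$, so the hypothesis on the Kulkarni limit set transfers to $\tilde\Gamma$ and Theorem \ref{thm_main_solvable} applies.

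Having established this reduction, apply Theorem \ref{thm_main_solvable} to $\tilde\Gamma$ to obtain an open region $\Omega := \Omega_{\tilde\Gamma} \subset \CP^2$ satisfying (i)--(v) of that theorem. Set $\Omega_\Gamma := \Omega$ and verify (i)--(iii) of the present statement in turn. For $\Gamma$-invariance of $\Omega$: normality of $\tilde\Gamma$ in $\Gamma$ gives $\gamma \tilde\Gamma \gamma^{-1} = \tilde\Gamma$ for every $\gamma \in \Gamma$, so $\gamma(\Omega)$ is again the maximal region of proper discontinuity for $\tilde\Gamma$, forcing $\gamma(\Omega) = \Omega$. For proper discontinuity of $\Gamma$ on $\Omega$: fix coset representatives $\gamma_1,\ldots,\gamma_N$ of $\tilde\Gamma$ in $\Gamma$; for any compact $K \subset \Omega$, each $K_i := K \cup \gamma_i^{-1}(K)$ is compact in $\Omega$ by $\Gamma$-invariance, and
$$\set{\gamma \in \Gamma : \gamma(K) \cap K \neq \emptyset} \subseteq \bigcup_{i=1}^N \gamma_i \cdot \set{h \in \tilde\Gamma : h(K_i) \cap K_i \neq \emptyset}$$
is a finite union of finite sets, hence finite. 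For maximality, any open $\Omega' \supsetneq \Omega$ on which $\Gamma$ acts properly discontinuously would also carry such an action of $\tilde\Gamma$, contradicting maximality of $\Omega$ for $\tilde\Gamma$ (Theorem \ref{thm_main_solvable}(i)). Property (ii) is immediate since the underlying region $\Omega$ is unchanged.

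For (iii), Theorem \ref{thm_main_solvable}(v) supplies a full flag $F$ invariant under $\tilde\Gamma$. The $\Gamma$-stabilizer $H := \set{\gamma \in \Gamma : \gamma(F) = F}$ contains $\tilde\Gamma$, so $[\Gamma : H] \leq [\Gamma : \tilde\Gamma] < \infty$, giving the required finite-index subgroup of $\Gamma$ that leaves $F$ invariant.

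I expect the main obstacle to be the very first step — specifically, verifying that $\KulL(\tilde\Gamma)$ does not consist of exactly four lines in general position, so that Theorem \ref{thm_main_solvable} is applicable to $\tilde\Gamma$. The sketched argument (using normality of $\tilde\Gamma$ and pigeonhole on cosets to conclude $\KulL(\tilde\Gamma) = \KulL(\Gamma)$) requires carefully unpacking the three layers in Definition \ref{defn_kulkarni}, possibly via the quasi-projective limit description of Propositions \ref{prop_convergencia_qp} and \ref{prop_descripcion_Eq} to match the accumulation sets of $\tilde\Gamma$-orbits with those of $\Gamma$-orbits. Once this reduction is in place, the transfer arguments (invariance, proper discontinuity, maximality, flag) are standard consequences of $[\Gamma : \tilde\Gamma] < \infty$.
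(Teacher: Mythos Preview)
Your proposal does not address the stated proposition at all. Proposition~\ref{prop_eq_in_kuld} asserts that for any discrete $\Gamma\subset\PSL$ the action on the equicontinuity region $\Eq(\Gamma)$ is properly discontinuous and that $\Eq(\Gamma)\subset\KulD(\Gamma)$; the paper does not prove this but cites it as Corollary~2.6 of \cite{clu2017}. Your write-up, by contrast, is an argument for Theorem~\ref{thm_main_gen}: you speak of a virtually triangularizable finite-index subgroup $\Gamma_0$, invoke Theorem~\ref{thm_main_solvable}, and verify conclusions about a maximal region $\Omega_\Gamma$ and an invariant full flag. None of these objects appear in the statement you were asked to prove.

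If your intended target was indeed Theorem~\ref{thm_main_gen}, then your approach is broadly in the same spirit as the paper's but more elaborate. The paper's proof is short: it applies Theorem~\ref{thm_main_solvable} to $\Gamma_0$, takes $\Omega_\Gamma=\KulD(\Gamma_0)$, and then invokes Proposition~3.6 of \cite{bcn16} to obtain $\KulD(\Gamma)=\KulD(\Gamma_0)$ directly, bypassing any explicit normal-core or coset argument. Your route---passing to the normal core $\tilde\Gamma$, transferring invariance and proper discontinuity by hand via coset representatives, and deducing the flag statement from a stabilizer-index bound---is a correct and self-contained alternative that avoids citing the external result from \cite{bcn16}. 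The point you flag as the main obstacle (showing $\KulL(\tilde\Gamma)=\KulL(\Gamma)$ so that the four-lines hypothesis transfers) is exactly what Proposition~3.6 of \cite{bcn16} encapsulates, so the paper sidesteps the difficulty you anticipate by citation rather than by argument.
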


\subsection{Solvable groups}

\begin{definition}
Let $G$ be a group. The derived series $\{G^{(i)}\}$ of $G$ is defined inductively as
	$$\begin{array}{cc}
	G^{(0)}=G, & G^{(i+1)}=\left[G^{(i)},G^{(i)}\right].
	\end{array}$$
One says that $G$ is \emph{solvable} if for some $n\geq 0$, we have $G^{(n)}=\{\mathit{id}\}$. 
\end{definition}

The following are examples of solvable subgroups of $\text{PSL}\left(3,\mathbb{C}\right)$. These groups will be used later as they appear as subgroups of control groups.

\begin{example}\mbox{}
	\begin{itemize}
	\item Denote by $\text{Rot}_\infty\subset\text{PSL}\left(2,\mathbb{C}\right)$ the group of all rotations around the origin, then the \emph{infinite dihedral group} $\text{Dih}_\infty=\langle \text{Rot}_\infty,\;z\mapsto -z\rangle$ is solvable.
	\item The special orthogonal group,
	$$\text{SO}(3)=\left\{\left[\begin{array}{cc}
	a & -c\\
	c & \overline{a}	
	\end{array}\right]\,\middle|\,|a|^2+|c|^2=1\right\}\subset \text{PSL}\left(2,\mathbb{C}\right)$$
	is not solvable.
	\end{itemize}
\end{example}

The next theorem is known as the \emph{topological Tits alternative} (Theorem 1 of \cite{tits} or Theorem 1.3 of \cite{tits2}).

\begin{thm}\label{teo_tits2}
Let $K$ be a local field and let $\Gamma\subset\text{GL}(n,K)$ be a subgroup. Then either $\Gamma$ contains an open solvable group, or $\Gamma$ contains a dense free subgroup.  
\end{thm}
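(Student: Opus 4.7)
The plan is to combine Tits' classical proof strategy --- Zariski closures and the ping-pong lemma --- with topological considerations specific to the local field $K$. The dichotomy in the conclusion is a topological refinement of the more familiar ``virtually solvable versus contains a non-abelian free subgroup,'' so the proof should proceed by analyzing how each case interacts with the topology on $\text{GL}(n,K)$.

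First, I would consider the Zariski closure $G=\overline{\Gamma}^{Z}$ in $\text{GL}(n,K)$ and apply the classical Tits alternative to $G$. If $G$ is virtually solvable, then a finite index solvable subgroup $G_0\subset G$ produces a solvable subgroup $\Gamma\cap G_0$ of $\Gamma$, and the task becomes to show that this subgroup is \emph{open} for the topology induced from $\text{GL}(n,K)$. This should follow from the fact that closed subgroups of $\text{GL}(n,K)$, for $K$ a local field, carry enough analytic structure for Zariski-finite-index subgroups to yield topologically open subgroups of $\Gamma$.

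If $G$ is not virtually solvable, the goal is to produce a free subgroup of $\Gamma$ that is moreover dense. The core tool is the ping-pong lemma applied to \emph{very proximal} elements, i.e.\ elements acting on an appropriate projective space (built from a suitable irreducible representation of the semisimple quotient of $G$) with a single strongly attracting fixed point and a repelling hyperplane. The key observation is that proximality and the disjointness of the attractor/repeller pairs for different generators are \emph{open conditions} in $\text{GL}(n,K)$. One should be able to construct the generators inductively, placing them in any prescribed open neighborhoods of elements of $\Gamma$, and then a Baire-style argument over countably many open subsets of $\Gamma$ yields density.

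The hardest step will be this second case: guaranteeing that the ping-pong generators can be simultaneously perturbed to lie in arbitrary open subsets of $\Gamma$ while preserving the contracting dynamics. This requires a quantitative version of Tits' construction of proximal elements, giving explicit control on how small the perturbations can be while still producing the required attractor/repeller configuration. The necessary ingredients are the Jordan decomposition, the representation theory of the semisimple quotient of $G$, and a careful bookkeeping of contracting constants along the ping-pong construction.
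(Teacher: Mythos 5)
The paper does not prove this statement: it is quoted verbatim as an external result, the \emph{topological Tits alternative} of Breuillard and Gelander (the paper points to Theorem 1 of \cite{tits} and Theorem 1.3 of \cite{tits2}), and is used later only as a black box. So there is no internal proof to compare your argument against; your proposal has to be judged as an attempted proof of a deep theorem from the literature.

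As such, your outline correctly identifies the skeleton of the actual Breuillard--Gelander argument (Zariski closure, reduction to a semisimple quotient, ping-pong with proximal elements, and openness of the proximality conditions), but it is a research program rather than a proof, and the two places you flag as ``to be checked'' are precisely where essentially all of the content lies. First, in the virtually solvable case, openness of $\Gamma\cap G_0$ in the topology induced from $\text{GL}(n,K)$ does \emph{not} follow from $G_0$ having finite index in the Zariski closure: a finite-index subgroup of a topological group is open only if it is closed, and there is no a priori reason for $\Gamma\cap G_0$ to be closed in $\Gamma$. The correct statement requires analyzing the closure of $\Gamma$ in $\text{GL}(n,K)$ as a locally compact group and its identity component, which is a separate structural argument you have not supplied. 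Second, and more seriously, producing a free subgroup that is \emph{dense} in $\Gamma$ requires placing each ping-pong generator in a prescribed open subset of $\Gamma$ while maintaining uniform contraction estimates; this is the quantitative heart of the cited paper (including the delicate positive-characteristic and non-archimedean cases) and your proposal acknowledges it as ``the hardest step'' without carrying it out. In short: the strategy is the right one, but as written the proposal does not establish the theorem, whereas the paper legitimately sidesteps the issue by citing it.
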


\begin{thm}[Theorem 3.6 of \cite{wehrfritz2012infinite}]\label{teo_solvable_triangularizables}
Let $G\subset\text{GL}\left(3,\mathbb{C}\right)$ be a discrete solvable subgroup, then $G$ is virtually triangularizable. 
\end{thm}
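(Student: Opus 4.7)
The plan is to pass through the Zariski closure and then apply the Lie--Kolchin theorem. First I would consider the Zariski closure $H$ of $G$ inside $\GL$. A standard fact about linear algebraic groups is that the Zariski closure of the commutator subgroup coincides with the commutator subgroup of the Zariski closure; iterating this along the derived series shows that $H$ inherits solvability from $G$. Thus $H$ is a solvable linear algebraic group over $\C$.

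Being an algebraic group, $H$ has only finitely many irreducible components in the Zariski topology. Let $H^0$ denote the identity component; then $H^0$ is a connected closed solvable subgroup of $\GL$ and the quotient $H/H^0$ is finite. At this point I would invoke the Lie--Kolchin theorem, which asserts that any connected solvable subgroup of $\GL$ can be simultaneously conjugated into the group of upper triangular matrices: there exists $h\in\GL$ with $hH^0h^{-1}\subset T$, where $T$ is the full upper triangular subgroup of $\GL$.

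Setting $G_0 = G\cap H^0$, the index $[G:G_0]$ divides the finite index $[H:H^0]$, so $G_0$ has finite index in $G$. Since $G_0\subset H^0$, we get $hG_0h^{-1}\subset T$, exhibiting $G$ as virtually triangularizable.

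The main obstacle is really packaged into the two black boxes above. Checking that solvability is preserved under Zariski closure requires some care with the commutator map and connectedness arguments. The deeper ingredient is Lie--Kolchin itself, whose proof (typically by induction on $\dim V$, finding a common eigenvector for the connected solvable group acting on $\C^3$ via a Borel fixed-point argument) is what crucially uses that $\C$ is algebraically closed. Interestingly, discreteness of $G$ plays no role in this sketch; the statement as cited holds for arbitrary solvable subgroups of $\GL$, with discreteness only ensuring that the finite-index triangularizable subgroup produced is itself a well-behaved discrete object to analyze in the sequel.
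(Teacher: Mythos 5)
Your sketch is correct, but there is nothing in the paper to compare it against: the result is imported verbatim as Theorem 3.6 of Wehrfritz's \emph{Infinite Linear Groups} and no proof is given in the text. What you have written is the standard proof of the Lie--Kolchin--Mal'cev theorem in the algebraically closed case, and every step checks out: the Zariski closure $H=\overline{G}$ satisfies $H^{(n)}\subseteq\overline{G^{(n)}}$ (the map $h\mapsto[g,h]$ is a morphism sending $G$ into $\overline{[G,G]}$, and one closes up in each variable in turn), so $H$ is solvable; $H/H^0$ is finite; Lie--Kolchin triangularizes $H^0$ over $\C$; and $G_0=G\cap H^0$ is a normal finite-index subgroup of $G$ with $[G:G_0]$ dividing $[H:H^0]$ because $G/G_0$ embeds in $H/H^0$. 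Your closing remark is also accurate and worth keeping in mind: discreteness plays no role here, and the cited source in fact proves the stronger statement that an arbitrary solvable linear group over any field is triangularizable over the algebraic closure after passing to a finite-index subgroup (with index bounded in terms of $n$ only). The only caveat is that your argument, as a replacement for the citation, silently assumes the two black boxes (closedness of $[H,H]$ for an algebraic group $H$, and Lie--Kolchin itself), but both are standard and correctly invoked.
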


\section{Non-commutative triangular groups}\label{sec_non_commutative_triangular}

In this section, we provide a dynamic description of non-commutative upper triangular discrete subgroups of $\text{PSL}\left(3,\mathbb{C}\right)$. We also give a decomposition theorem of non-commutative subgroups of $U_+$ (Theorems \ref{thm_descomposicion_caso_noconmutativo} and \ref{thm_descomposicion_caso_noconmutativo2}). Subsections \ref{subsec_restrictions} and \ref{subsec_core} consist of results needed for the proofs of the main theorems of this section.

\subsection{Restrictions on the elements of a non-commutative group}
\label{subsec_restrictions}
In this subsection we study the restrictions that irrational screws, irrational ellipto-parabolic elements and complex homotheties impose on the groups they belong to. We show that if a discrete subgroup $\Gamma\subset U_+$ contains an irrational screw, an irrational ellipto-parabolic element or a complex homothety, then $\Gamma$ has to be commutative.\\

These technical results, along with Subsection \ref{subsec_core} will be important to prove the main theorems in Subsections \ref{subsec_decomposition} and \ref{subsec_consequences}. Also, these propositions generalize the results presented in Chapter 5 of \cite{tesisadriana}.\\

The following class of elements will be used often. So, for the sake of simplicity, let us denote

$$h_{x,y}=\left[\begin{array}{ccc}
	1 & 0 & x\\
	0 & 1 & y\\
	0 & 0 & 1\\
	\end{array}\right],\; x,y\in\mathbb{C}.$$

We will use the following argument several times throughout the paper. We state it as Lemma \ref{lem_bloques_conmutativos}, whose proof is a straightforward calculation. 

\begin{lem}\label{lem_bloques_conmutativos}
Let $\alpha=\left[\alpha_{ij}\right]$, $\beta\left[\beta_{ij}\right]$ $\in U_+\setminus\{\mathit{id}\}$ be two distinct elements. We denote by $\alpha_1$ and $\alpha_2$ (resp. $\beta_1$ and $\beta_2$) the upper left and lower right $2\times 2$ blocks of $\alpha$ (resp. $\beta$),

$$
\alpha_1 = \left[
\begin{array}{cc}
\alpha_{11} & \alpha_{12} \\
0 & \alpha_{22}
\end{array}
\right],\;\;\;
\alpha_2 = \Pi(\alpha)=\left[
\begin{array}{cc}
\alpha_{22} & \alpha_{23} \\
0 & \alpha_{33}
\end{array}
\right].
$$
If we regard $\alpha_i$ and $\beta_i$ as elements of $\text{PSL}(2,\mathbb{C})$, then $\text{Fix}(\alpha_1)=\{e_1,\left[\alpha_{12}:\alpha_{22}-\alpha_{11}\right] \}$ and $\text{Fix}(\alpha_2)=\{e_1,\left[\alpha_{23}:\alpha_{33}-\alpha_{22}\right] \}$, analogously for $\text{Fix}(\beta_1)$ and $\text{Fix}(\beta_2)$. Also, 
$$\text{Fix}(\alpha_i)=\text{Fix}(\beta_i)\;\Leftrightarrow \left[\alpha_i,\beta_i \right]=\mathit{id}.$$ 
\end{lem}
	
We first consider the case of irrational screws.

\begin{prop}\label{prop_IS_discreto_invariante}
Let $\gamma\in U_+$ be an irrational screw given by 
	$$\gamma=\text{Diag}(\beta^{-2}e^{-6\pi i \theta},\beta e^{4\pi i \theta},\beta e^{2\pi i \theta}),$$ 
for some $|\beta|\neq 1$ and $\theta\in\mathbb{R}\setminus\mathbb{Q}$. Let $\alpha\in U_+\setminus\langle\gamma\rangle$. If $\langle\alpha,\gamma\rangle$ is discrete, then $\alpha$ is diagonal.
\end{prop}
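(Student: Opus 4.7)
The approach is by contrapositive: I would suppose that $\alpha$ is not diagonal and construct a sequence of pairwise distinct elements of $\prodint{\alpha,\gamma}$ that converges in $\PSL$, thereby contradicting discreteness. Write $\alpha=\corchetes{a_{ij}}\in U_+$ in its canonical $\SL$ lift ($a_{11}a_{22}a_{33}=1$). Since $\gamma=\text{Diag}(\lambda_1,\lambda_2,\lambda_3)$ is diagonal, conjugation by $\gamma^n$ preserves the diagonal of $\alpha$ and multiplies each off-diagonal entry $a_{ij}$ by $(\lambda_i/\lambda_j)^n$. For the given eigenvalues one computes
\[
\frac{\lambda_1}{\lambda_2}=\beta^{-3}e^{-10\pi i\theta},\qquad\frac{\lambda_1}{\lambda_3}=\beta^{-3}e^{-8\pi i\theta},\qquad\frac{\lambda_2}{\lambda_3}=e^{2\pi i\theta}.
\]
After possibly replacing $\gamma$ by $\gamma^{-1}$ (which is again an irrational screw of the given form, with $\beta$ swapped for $\beta^{-1}$ and $\theta$ for $-\theta$), I may assume $\valorabs{\beta}>1$, so that $\valorabs{\beta}^{-3n}\to 0$ as $n\to+\infty$ while the $(2,3)$-factor stays of modulus one.

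If $a_{23}\ne 0$, the $(2,3)$-entries $a_{23}e^{2\pi in\theta}$ lie on the compact circle of radius $\valorabs{a_{23}}$, and by irrationality of $\theta$ they are pairwise distinct for distinct $n\in\Z$. The $(1,2)$- and $(1,3)$-entries decay to $0$ and the diagonal is frozen, so by compactness of the circle some subsequence $\gamma^{n_k}\alpha\gamma^{-n_k}$ converges entrywise to an element of $U_+\subset\PSL$. Since all conjugates share the same diagonal of $\alpha$, their $\SL$ lifts cannot differ by a nontrivial cube root of unity, so they project to pairwise distinct elements of $\PSL$, contradicting discreteness.

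If instead $a_{23}=0$ but at least one of $a_{12},a_{13}$ is nonzero, then $\gamma^n\alpha\gamma^{-n}$ already converges entrywise to $\text{Diag}(a_{11},a_{22},a_{33})$ as $n\to+\infty$, because both remaining factors decay to $0$. The nonzero off-diagonal entry has strictly monotonic modulus $\valorabs{a_{ij}}\valorabs{\beta}^{-3n}$, so the conjugates are pairwise distinct, again violating discreteness.

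The main obstacle is extracting the correct sequence in each case and confirming that the produced elements are genuinely distinct in $\PSL$ and not merely in $\SL$; both are handled by the hypotheses, the irrationality of $\theta$ producing distinct phases on a circle in the first case, and $\valorabs{\beta}\ne 1$ giving strict monotonicity of the modulus in the second. The assumption $\alpha\nin\prodint{\gamma}$ is not used inside the argument itself; it only makes the conclusion non-vacuous, since every power of $\gamma$ is trivially diagonal.
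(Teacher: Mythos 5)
Your proof is correct and follows essentially the same route as the paper's: conjugate $\alpha$ by powers of the diagonal irrational screw so that the $(1,2)$- and $(1,3)$-entries decay while the $(2,3)$-entry rotates on a circle, extract a convergent subsequence of pairwise distinct conjugates, and contradict discreteness. If anything, your version is slightly more careful than the paper's (which fixes the subsequence with $e^{2k\pi i\theta}\to 1$ at the outset and leaves the distinctness-in-$\PSL$ check implicit), but the underlying idea is identical.
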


\begin{proof}
Let $\gamma,\alpha\in U_+$ be as in the statement of the proposition, and assume, without loss of generality, that $|\beta|>1$. Let us denote $\alpha=\left[\alpha_{ij}\right]$, and suppose that $\alpha$ is not diagonal. Since $\theta\in\mathbb{R}\setminus\mathbb{Q}$, there is a sequence such that $e^{2 k \pi i \theta}\rightarrow 1$. Consider the sequence $\{\mu_k:=\gamma^{-k}\alpha\gamma^{k}\}\subset\langle \alpha,\gamma \rangle$. Since $\alpha$ not diagonal, then $\alpha_{12}\neq 0$, $\alpha_{13}\neq 0$ or $\alpha_{23}\neq 0$. Therefore, $\{\mu_k\}$ is a sequence of distinct elements such that 
	$$\mu_k\rightarrow
	\left[\begin{array}{ccc}
	\alpha_{11} & 0 & 0 \\
	0 & \alpha_{22} & \alpha_{23}\\
	0 & 0 & \alpha_{33}\\
	\end{array}	
	\right]\in \text{PSL}\left(3,\mathbb{C}\right).$$ 
Then $\langle \alpha,\gamma \rangle$ is not discrete since it contains a sequence of distinct elements $\{\mu_k\}$ converging to an element of $\text{PSL}\left(3,\mathbb{C}\right)$. 
\end{proof}

We have the following immediate consequence of Proposition \ref{prop_IS_discreto_invariante}.

\begin{cor}\label{cor_IS_conmutativo}
Let $\Gamma\subset U_+$ be a discrete subgroup. Let $\gamma\in\Gamma$ be an irrational screw as in Proposition \ref{prop_IS_discreto_invariante}, then $\Gamma$ is commutative.   
\end{cor}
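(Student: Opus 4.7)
The plan is to leverage Proposition \ref{prop_IS_discreto_invariante} as a black box: it already constrains every element outside $\langle \gamma \rangle$ that pairs with $\gamma$ in a discrete subgroup to be diagonal, which is essentially the content we need. The only task left is to assemble this pointwise information into a global statement about $\Gamma$.

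First, I would pick an arbitrary element $\alpha \in \Gamma$ and split into two cases. If $\alpha \in \langle \gamma \rangle$, then $\alpha$ is a power of the diagonal matrix $\gamma$, hence itself diagonal. If $\alpha \notin \langle \gamma \rangle$, then the subgroup $\langle \alpha, \gamma \rangle \subset \Gamma$ inherits discreteness from $\Gamma$, so Proposition \ref{prop_IS_discreto_invariante} applies and forces $\alpha$ to be diagonal. Either way, every $\alpha \in \Gamma$ is diagonal.

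Finally, since the diagonal matrices in $U_+$ form a commutative subgroup (they correspond to the maximal torus of $\PSL$ sitting inside $U_+$, and any two diagonal matrices commute under multiplication), the group $\Gamma$ is itself commutative. There is no real obstacle here; the corollary is simply the group-theoretic packaging of the pointwise statement in Proposition \ref{prop_IS_discreto_invariante}, and the proof is essentially two lines of case analysis followed by the observation that diagonal matrices commute.
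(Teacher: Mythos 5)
Your proof is correct and matches the paper's intent exactly: the paper states this corollary as an immediate consequence of Proposition \ref{prop_IS_discreto_invariante} with no written proof, and the argument it has in mind is precisely your two-case analysis (powers of $\gamma$ are diagonal; any other $\alpha$ generates with $\gamma$ a discrete subgroup of $\Gamma$, so the proposition forces $\alpha$ diagonal) followed by the observation that diagonal matrices commute.
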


\begin{cor}\label{cor_IS_RotInf_prohibidas}
Let $\Gamma\subset U_+$ be a discrete subgroup such that $\Sigma=\Pi(\Gamma)$ is non-discrete and $\overline{\Sigma}=\text{Rot}_\infty$. Then $\Gamma$ is commutative.
\end{cor}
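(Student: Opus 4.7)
The plan is to find an irrational screw inside $\Gamma$ and then invoke Corollary \ref{cor_IS_conmutativo}.

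First I would translate the hypothesis $\overline{\Sigma}=\rot$ into matrix data. Since $\Sigma$ is a subgroup of its own closure, $\Sigma\subset\rot$, so every $\gamma=[\alpha_{ij}]\in\Gamma$ has $\Pi(\gamma)$ a rotation about the origin of the affine chart used to build $\Pi$. Looking at the formula for $\Pi$ this forces $\alpha_{23}=0$ and $|\alpha_{22}|=|\alpha_{33}|$; combined with $\alpha_{11}\alpha_{22}\alpha_{33}=1$ one gets $|\alpha_{11}|=|\alpha_{22}|^{-2}$. So the eigenvalue-modulus pattern of every $\gamma\in\Gamma$ is pinned down by $|\alpha_{22}(\gamma)|$: either all three moduli equal $1$, or two equal moduli and a third of different modulus.

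The main step is to produce $\gamma\in\Gamma$ with $\lambda_{23}(\gamma)=e^{2\pi i\theta}$ for some $\theta\in\R\setminus\Q$ and with $|\alpha_{22}(\gamma)|\neq 1$. Granting this, the three eigenvalues of $\gamma$ are pairwise distinct (their moduli already are), so an upper-triangular change of basis diagonalizes $\gamma$ inside $U_+$, bringing it to the normal form of Proposition \ref{prop_IS_discreto_invariante}; it is therefore an irrational screw. Corollary \ref{cor_IS_conmutativo} then gives that $\Gamma$ is commutative.

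The hard part is to guarantee the existence of such a $\gamma$ purely from density and non-discreteness of $\Sigma$. I would argue by ruling out the two obstructions. First, an element $\gamma\in\Gamma$ with $\lambda_{23}(\gamma)$ an irrational rotation but with $|\alpha_{22}(\gamma)|=1$ has all three eigenvalues on the unit circle; if it is diagonalizable, the cyclic subgroup $\langle\gamma\rangle$ lies in a compact torus and is dense in a positive-dimensional subtorus (because $\alpha_{22}/\alpha_{33}$ has irrational argument), contradicting the discreteness of $\Gamma$; if it is non-diagonalizable the conjugation trick from the proof of Proposition \ref{prop_IS_discreto_invariante}, applied to powers of $\gamma$ against any non-diagonal element of $\Gamma$, again yields a non-discrete subset of $\Gamma$. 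Second, the possibility that $\Sigma$ consists entirely of rational rotations is excluded as follows: density would then force elements of arbitrarily large rotation order, and choosing $\gamma_n\in\Gamma$ with $\lambda_{23}(\gamma_n)\to 1$ distinctly, passing to a quasi-projective limit through Proposition \ref{prop_convergencia_qp}, and conjugating any non-diagonal element of $\Gamma$ against the sequence $\gamma_n$ produces (by the same mechanism as Proposition \ref{prop_IS_discreto_invariante}) an accumulating family in $\Gamma$, contradicting discreteness unless $\Gamma$ already is diagonal — but a dense, torsion, diagonal group with $|\alpha_{22}|\equiv 1$ also fails to be discrete. Either way an irrational screw sits inside $\Gamma$, and the corollary follows.
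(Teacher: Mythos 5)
Your overall strategy --- exhibit an irrational screw in $\Gamma$ and invoke Corollary \ref{cor_IS_conmutativo} after an upper-triangular diagonalization --- is exactly the paper's, and your translation of the hypothesis into $\alpha_{23}=0$, $\valorabs{\alpha_{22}}=\valorabs{\alpha_{33}}$, $\valorabs{\alpha_{11}}=\valorabs{\alpha_{22}}^{-2}$ is correct, as is your disposal of the diagonalizable unit-modulus case (an infinite-order elliptic element violates discreteness). The genuine gap is the non-diagonalizable sub-case of your first obstruction. If $\gamma$ has all eigenvalues on the unit circle, $\alpha_{23}=0$, and is not diagonalizable (so $\alpha_{11}=\alpha_{22}$ with $\alpha_{12}\neq 0$, or $\alpha_{11}=\alpha_{33}$ with $\alpha_{13}\neq 0$), the conjugation trick of Proposition \ref{prop_IS_discreto_invariante} gives you nothing: every eigenvalue ratio of $\gamma$ has modulus $1$, so conjugating another element of $\Gamma$ by $\gamma^{k}$ contracts no entry (the unipotent part only adds polynomial growth in $k$), and no convergent sequence of distinct elements is produced. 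Indeed $\prodint{\gamma}$ itself is discrete, since $\gamma^{k}$ diverges in $\PSL$ to a quasi-projective map of rank one. There is no contradiction with discreteness to be had here; such a $\gamma$ is an irrational ellipto-parabolic element, and the correct resolution is not a contradiction but a direct appeal to Proposition \ref{prop_EPI_conmutativo} (after a permutation conjugation interchanging $e_2$ and $e_3$ in the sub-case $\alpha_{11}=\alpha_{33}$, $\alpha_{13}\neq 0$, which preserves the form $\alpha_{23}=0$), which already delivers the desired commutativity.

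Your exclusion of the all-rational-rotations scenario is also much harder than it needs to be, and incomplete as written: you control $\lambda_{23}(\gamma_n)$ but not $\lambda_{12}(\gamma_n)$, whose modulus $\valorabs{\alpha_{22}(\gamma_n)}^{-3}$ may escape to $0$ or $\infty$, so the convergence of $\gamma_n\alpha\gamma_n^{-1}$ is not justified; and the final claim that a dense torsion diagonal group must have $\valorabs{\alpha_{22}}\equiv 1$ is unsupported. The paper dispatches this in one line using its standing convention (via Lemma 5.8 of \cite{ppar}, restated at the start of Section \ref{sec_commutative_triangular}) that $\lambda_{23}(\Gamma)$ is torsion free: a nontrivial rational rotation $\Pi(\gamma)$ makes $\lambda_{23}(\gamma)=\alpha_{22}\alpha_{33}^{-1}$ a nontrivial root of unity, i.e.\ a torsion element. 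With that, every nontrivial element of $\Sigma$ is an irrational rotation, and the paper's proof reduces to your first and last paragraphs.
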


\begin{proof}
The elements of $\Sigma$ have the form $\Pi(\gamma)=\text{Diag}\left( e^{2\pi i \theta},e^{-2\pi i \theta}\right)$. Then every element of $\Gamma\setminus\{\mathit{id}\}$ is diagonalizable and therefore, it is an irrational screw. Then $\Gamma$ contains only screws. If there were a rational screw $\gamma\in\Gamma$ then $\lambda_{23}(\Gamma)$ would not be a torsion-free group. Then $\Gamma$ contains only irrational screws, it follows from Corollary \ref{cor_IS_conmutativo} that $\Gamma$ is commutative.
\end{proof}

The following is an immediate consequence of Lemma 5.10 of \cite{ppar}.

\begin{prop}\label{prop_EPI_conmutativo}
Let $\Gamma\subset U_+$ be a discrete group containing an irrational ellipto-parabolic element $\gamma$ with one of the following two forms:
$$\begin{array}{cc}
\gamma = \left[\begin{array}{ccc}
	e^{-4\pi i \theta} & \beta & \gamma \\
	0 & e^{2\pi i \theta} & \mu \\
	0 & 0 & e^{2\pi i \theta}\\
	\end{array}\right], &
	\gamma = \left[\begin{array}{ccc}
	e^{2\pi i \theta} & \beta & \gamma \\
	0 & e^{2\pi i \theta} & \mu \\
	0 & 0 & e^{-4\pi i \theta}\\
	\end{array}\right]\\
	\mu\neq 0 & \beta\neq 0
\end{array},$$
with $\theta\in\mathbb{R}\setminus\mathbb{Q}$. Then $\Gamma$ is commutative. 
\end{prop}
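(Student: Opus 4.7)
The plan is to mirror the accumulation strategy of Proposition \ref{prop_IS_discreto_invariante}: given an arbitrary $\alpha \in \Gamma$, I would show that $\alpha$ must commute with $\gamma$, and then conclude commutativity of $\Gamma$ from the fact that the centralizer of $\gamma$ in $U_+$ is commutative. The centralizer statement is a short direct computation after passing to the Jordan form of $\gamma$: since $\theta$ is irrational, the eigenvalues $a = e^{-4\pi i \theta}$ and $b = e^{2\pi i \theta}$ are distinct, so $\gamma$ is conjugate to a block-diagonal matrix with blocks $(a)$ and $J_2(b)$, whose centralizer in $\GL$ is isomorphic to $\C^\ast$ times the centralizer of $J_2(b)$ in $\gL$---both factors abelian. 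The author's remark that this is an immediate consequence of Lemma~5.10 of \cite{ppar} is consistent with this plan, because the argument below uses only the explicit form of $\gamma$ together with the discreteness of $\Gamma$, never the fact that other elements of $\Gamma$ are parabolic.

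For the first form of $\gamma$, the core computation runs as follows. Using the irrationality of $\theta$, extract a subsequence $n_k \to \infty$ with $b^{n_k} \to 1$ (and hence $a^{n_k} = b^{-2 n_k} \to 1$). An induction shows that the diagonal of $\gamma^n$ is $(a^n, b^n, b^n)$ and the parabolic $(2,3)$-entry is $n\mu b^{n-1}$, which diverges linearly along $n_k$ since $\mu \ne 0$; the $(1,2)$- and $(1,3)$-entries of $\gamma^n$ are explicit sums in $a,b,\beta,\mu$. I would then compute $\gamma^{-n_k} \alpha \gamma^{n_k}$ for $\alpha = [\alpha_{ij}] \in \Gamma$ and show that, unless the algebraic relations equivalent to $\alpha \gamma = \gamma \alpha$ all hold, the result is a sequence of distinct elements of $\Gamma$ converging to a non-trivial quasi-projective map in $\QP$, contradicting discreteness. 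The second form of $\gamma$ is handled symmetrically: the parabolic block now sits in the top $2 \times 2$ corner, and the analogous conjugation produces the corresponding accumulation.

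The main obstacle I foresee is the bookkeeping for the powers $\gamma^n$: the $(1,3)$-entry is a mixed geometric--arithmetic sum in the parameters, and one must carefully identify which combinations of the entries $\alpha_{ij}$ produce divergent or distinct limits along $n_k$, so that the relations one extracts are exactly $\alpha\gamma = \gamma\alpha$ rather than something strictly weaker. Once this bookkeeping is in place the argument closes, and the transfer from the purely parabolic setting of \cite{ppar} to the triangular setting here requires no modification, since at no point is the structure of $\Gamma \setminus \{\gamma\}$ used beyond discreteness.
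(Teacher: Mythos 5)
Your overall architecture is reasonable: after conjugating inside $U_+$ so that $\gamma$ is in Jordan form (possible since $e^{-4\pi i\theta}\neq e^{2\pi i\theta}$ for irrational $\theta$), the centralizer of $\gamma$ in $\GL$ is abelian, so it would indeed suffice to force every $\alpha\in\Gamma$ into that centralizer. Since the paper gives no proof of this proposition (it only cites Lemma 5.10 of \cite{ppar}), the question is whether your mechanism closes, and it does not. Take the first form with $\gamma=\mathrm{Diag}(e^{-4\pi i\theta})\oplus\bigl(\begin{smallmatrix}b&\mu\\0&b\end{smallmatrix}\bigr)$, $b=e^{2\pi i\theta}$, and $\alpha=\corchetes{\alpha_{ij}}$. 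A direct computation gives that the $(2,3)$ entry of $\gamma^{n}\alpha\gamma^{-n}$ is $\alpha_{23}+n\mu b^{-1}(\alpha_{33}-\alpha_{22})$, and the $(1,3)$ entry contains the summand $-e^{-6\pi i n\theta}\,n\mu b^{-1}\alpha_{12}$. Hence in precisely the two configurations you must exclude --- $\alpha_{22}\neq\alpha_{33}$, or $\alpha_{22}=\alpha_{33}$ with $\alpha_{12}\neq 0$ --- the conjugated sequence has an entry growing linearly in $n$ while the diagonal stays on the unit circle, so it \emph{diverges} in $\QP$ to a singular quasi-projective map. That is not a contradiction with discreteness: by Proposition \ref{prop_convergencia_qp} \emph{every} infinite discrete subgroup of $\PSL$ admits sequences of distinct elements converging to singular elements of $\QP$; only convergence of distinct elements to an element of $\PSL$ is forbidden. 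Your stated criterion, ``converging to a non-trivial quasi-projective map in $\QP$, contradicting discreteness,'' is therefore incorrect, and your computation only disposes of the residual configuration $\alpha_{12}=0$, $\alpha_{22}=\alpha_{33}$, $\alpha_{13}\neq 0$, where $\gamma^{n_k}\alpha\gamma^{-n_k}$ genuinely converges to $\alpha$ through distinct elements along a subsequence with $e^{-6\pi i n_k\theta}\rightarrow 1$.

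To handle the divergent cases a second device is needed. For example, if $\alpha_{22}\neq\alpha_{33}$ the commutator $\corchetes{\alpha,\gamma}$ is unipotent with $(2,3)$ entry $\mu b^{-1}(\alpha_{22}\alpha_{33}^{-1}-1)\neq 0$, and conjugating it by powers of $\alpha$ rescales that entry by $(\alpha_{22}\alpha_{33}^{-1})^{k}$; one then argues according to whether $\valorabs{\alpha_{22}\alpha_{33}^{-1}}$ is different from $1$ (contraction towards a bounded family of unipotent elements), equal to $1$ and irrational (accumulation on a circle of unipotent elements of $\PSL$), or a root of unity (torsion in $\lambda_{23}(\Gamma)$), in each case producing either non-discreteness or a contradiction with the standing torsion-freeness assumptions. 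A parallel commutator argument is needed for $\alpha_{12}\neq 0$, and the second form of $\gamma$ requires the symmetric statements. Without some step of this kind the proof establishes strictly less than $\alpha\gamma=\gamma\alpha$ and is incomplete.
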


\begin{lem}\label{lem_control_nodiscreto_descartar_grL_vacio}
Let $\Gamma\subset U_+$ be a torsion-free, non-commutative, discrete group such that $\Sigma=\Pi(\Gamma)$ is not discrete. Then $\Lambda_{\text{Gr}}(\Sigma)\neq \emptyset$.
\end{lem}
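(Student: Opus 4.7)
The plan is to argue by contradiction: suppose $\grL(\Sigma) = \emptyset$. Combined with $\Sigma$ being non-discrete, this should force the closure $\overline{\Sigma}\subset\psl$ to be a non-trivial compact Lie subgroup, since any closed non-compact subgroup of $\psl$ has orbits accumulating on $\CP^1$ and hence non-empty Greenberg limit set. Up to conjugation the non-discrete compact subgroups of $\psl$ are $\rot$, $\dih$ and $\so$, so I would split into these three cases.

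Since $\Gamma \subset U_+$ is solvable, its homomorphic image $\Sigma$ is solvable, and hence so is the closure $\overline{\Sigma}$, because the closure of a solvable subgroup of a Lie group is solvable. The group $\so$ is not solvable (see the example preceding Theorem \ref{teo_tits2}), so this rules out $\overline{\Sigma} = \so$. The case $\overline{\Sigma}=\rot$ is dispatched immediately by Corollary \ref{cor_IS_RotInf_prohibidas}, which concludes that $\Gamma$ is commutative, contradicting the hypothesis.

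This leaves the case $\overline{\Sigma}=\dih$, which I expect to be the main obstacle. Here I would pass to the index-two subgroup $\Gamma_0 = \Pi^{-1}(\rot)\cap\Gamma$. Its projection $\Pi(\Gamma_0)$ still has closure $\rot$, so Corollary \ref{cor_IS_RotInf_prohibidas} applied to $\Gamma_0$ yields that $\Gamma_0$ is commutative, and the analysis in that corollary actually shows $\Gamma_0$ consists of irrational screws. For any $\gamma \in \Gamma\setminus\Gamma_0$, the element $\Pi(\gamma)$ lies in the reflection coset of $\dih$, so it is an involution of $\psl$. Writing $\gamma=[a_{ij}]$ in upper-triangular form and using $\Pi(\gamma)^2 = \id$ imposes the relation $a_{22}=-a_{33}$ with $|a_{22}|=1$. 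I would then compute $\gamma^2\in\Gamma_0$ and the commutators $[\gamma,\alpha]$ for $\alpha\in\Gamma_0$, using the specific diagonal form of elements of $\Gamma_0$ to see that the conjugation action of $\gamma$ on $\Gamma_0$ is forced either to be trivial (so $\gamma$ commutes with $\Gamma_0$ and $\Gamma$ is abelian, contradicting the hypothesis) or to produce a non-trivial finite-order element (contradicting torsion-freeness).

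The main difficulty is precisely this last case: the matrix bookkeeping needed to extract a contradiction for $\overline{\Sigma}=\dih$ is delicate, because one must handle simultaneously the rotation part of $\Sigma$, the reflection coset, the upper-triangular structure of $\Gamma$, and the torsion-free hypothesis, all while keeping track of which diagonal entries are required to be roots of unity versus free on the unit circle.
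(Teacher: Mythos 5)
Your overall skeleton matches the paper's: assume $\grL(\Sigma)=\emptyset$, reduce to $\overline{\Sigma}\in\set{\so,\dih,\rot}$ (the paper gets this from Theorem 1.14 of \cite{cs2014}), kill $\so$ by solvability, and kill $\rot$ via Corollary \ref{cor_IS_RotInf_prohibidas}. The problem is your third case. You flag the reflection-coset analysis for $\overline{\Sigma}=\dih$ as ``the main obstacle'' and leave it as a sketch (``I would then compute \dots to see that the conjugation action \dots is forced either to be trivial \dots or to produce a non-trivial finite-order element''); that dichotomy is asserted, not established, so as written the $\dih$ case is a genuine gap in your argument.

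The gap is also unnecessary, because the machinery you already invoked closes it. In both the $\dih$ and $\rot$ cases the paper simply observes that $\Sigma\cap\rot\neq\emptyset$ (otherwise $\Sigma$ would be discrete), picks $\gamma\in\Gamma$ with $\Pi(\gamma)=\text{Diag}\parentesis{e^{2\pi i\theta},e^{-2\pi i\theta}}$, notes $\theta\in\R\setminus\Q$ by torsion-freeness of $\lambda_{23}(\Gamma)$, so that $\gamma$ is (after conjugation) an irrational screw, and then applies Proposition \ref{prop_IS_discreto_invariante} / Corollary \ref{cor_IS_conmutativo}. The point you miss is that this corollary is a statement about the whole ambient discrete group: a single irrational screw in $\Gamma$ forces \emph{every} element of $\Gamma$ to be diagonal, not merely the elements of your index-two subgroup $\Gamma_0=\Pi^{-1}(\rot)\cap\Gamma$. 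Hence $\Gamma$ is commutative and you are done without touching the reflection coset. (Alternatively, under the paper's standing convention that $\lambda_{23}(\Gamma)$ is torsion free, an element $\gamma$ with $\Pi(\gamma)$ an involution would give $\lambda_{23}(\gamma)=-1$, a torsion element, so the reflection coset cannot meet $\Gamma$ at all.) Either repair makes your case (iii) a one-line reduction to case (ii) rather than a delicate matrix computation.
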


\begin{proof}
Let us suppose that $\Lambda_{\text{Gr}}(\Sigma)=\emptyset$. According to Theorem 1.14 of \cite{cs2014}, we have two possibilities:
	\begin{enumerate}[(i)]
	\item $\overline{\Sigma}=\text{SO}(3)$. Since $\Gamma$ is solvable, $\overline{\Sigma}$ is solvable. However $\text{SO}(3)$ is not solvable. 
	\item $\overline{\Sigma}=\text{Dih}_\infty$ or $\overline{\Sigma}=\text{Rot}_\infty$. Observe that $\Sigma\cap \text{Rot}_\infty\neq \emptyset$, otherwise $\Sigma$ would be discrete. Let $\Pi(\gamma)\in\Sigma\cap \text{Rot}_\infty$, then $\Pi(\gamma)=\text{Diag}\left(e^{2\pi i \theta}, e^{-2\pi i \theta}\right)$. Since $\lambda_{23}(\Gamma)$ is a torsion-free group, it follows $\theta\in\mathbb{R}\setminus\mathbb{Q}$, and then $\gamma$ is an irrational screw. Conjugating $\gamma$ by a suitable element of $\text{PSL}\left(3,\mathbb{C}\right)$ and applying Corollary \ref{cor_IS_RotInf_prohibidas}, $\Gamma$ would be commutative. 
	\end{enumerate}
	These contradictions verify the lemma.
\end{proof}

Now, we study complex homotheties. For the rest of the paper, we will say that $\gamma\in\text{PSL}\left(3,\mathbb{C}\right)$ is a type I complex homothety if, up to conjugation, $\gamma=\text{Diag}(\lambda^{-2},\lambda,\lambda)$ for some $\lambda\in\mathbb{C}^\ast$ with $|\lambda|\neq 1$.

\begin{prop}\label{prop_HC_discreto_invariante}
Let $\gamma\in \text{PSL}\left(3,\mathbb{C}\right)$ be a type I complex homothety. Let $\alpha\in U_{+}\setminus\langle\gamma\rangle$ be an element such that $\alpha$ is neither a complex homothety nor a screw, then the group $\langle\alpha,\gamma\rangle$ is discrete if and only if $\alpha$ leaves $\Lambda_{\text{Kul}}(\gamma)$ invariant. 
\end{prop}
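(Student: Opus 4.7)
The plan is first to identify $\KulL(\gamma)$ explicitly. Taking $\gamma=\text{Diag}(\lambda^{-2},\lambda,\lambda)$ with $\valorabs{\lambda}>1$ without loss of generality, the fixed-point set of $\gamma$ in $\CP^2$ consists of the line $\ell=\set{[0:y:z]}$ together with the point $e_1=[1:0:0]$. A direct orbit calculation in the affine chart $x\neq 0$, where $\gamma$ acts as the homothety $\mathbf{w}\mapsto \lambda^{3}\mathbf{w}$, shows that forward orbits of points in $\CP^2\setminus(\ell\cup\set{e_1})$ accumulate on $\ell$ and backward orbits accumulate at $e_1$. Putting this together with the isotropy analysis yields $\KulL(\gamma)=\ell\cup\set{e_1}$.

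For the implication $(\Rightarrow)$, I would assume $\prodint{\alpha,\gamma}$ is discrete, write $\alpha=\corchetes{\alpha_{ij}}\in U_+$, and examine the conjugation sequence. A direct matrix computation gives
$$\gamma^k\alpha\gamma^{-k}=\corchetes{\begin{array}{ccc} \alpha_{11} & \lambda^{-3k}\alpha_{12} & \lambda^{-3k}\alpha_{13} \\ 0 & \alpha_{22} & \alpha_{23} \\ 0 & 0 & \alpha_{33} \end{array}}.$$
If $\alpha_{12}\neq 0$ or $\alpha_{13}\neq 0$, this is a sequence of pairwise distinct elements of $\prodint{\alpha,\gamma}$ converging in $\PSL$ to an element with nonzero determinant (hence a genuine projective automorphism, not a degenerate quasi-projective map), contradicting discreteness. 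Therefore $\alpha_{12}=\alpha_{13}=0$, and since every element of $U_+$ already fixes $e_1$, this gives $\alpha(\ell)=\ell$ and $\alpha(e_1)=e_1$, so $\alpha$ leaves $\KulL(\gamma)$ invariant.

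For the implication $(\Leftarrow)$, I would first observe that $\ell\cup\set{e_1}$ admits a unique decomposition as a projective line together with an isolated point, so any projective automorphism preserving it must preserve each piece. Invariance of $\ell$ under $\alpha\in U_+$ forces $\alpha_{12}=\alpha_{13}=0$, whence $\alpha$ commutes with $\gamma$ and $\prodint{\alpha,\gamma}$ is abelian. To conclude discreteness I would case-split on whether $\alpha_{23}=0$: if so, $\alpha$ is diagonal and the exclusion of complex homotheties and screws forces the moduli and arguments of the diagonal entries of $\alpha$ to be in generic position relative to those of $\gamma$; if $\alpha_{23}\neq 0$, the lower $2\times 2$ block of $\alpha$ has Jordan-type behaviour and contributes polynomial growth in powers. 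In each situation I would write a generic element $\alpha^m\gamma^n$ explicitly and check, using Proposition \ref{prop_convergencia_qp}, that no subsequence with $(m_k,n_k)\to\infty$ can converge to the identity: such a sequence would impose simultaneous equalities among $\valorabs{\lambda}$ and the eigenvalue moduli of $\alpha$ that are precisely ruled out by the hypothesis.

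The main obstacle is the $(\Leftarrow)$ direction: while the reduction to an abelian group is clean, verifying discreteness requires a careful sub-case analysis of eigenvalue moduli and arguments, since the rank-two abelian group generated by $\alpha$ and $\gamma$ lies in a torus where non-discreteness can arise from incommensurable exponents. The exclusion of complex homotheties and screws is exactly the hypothesis that prevents $\prodint{\alpha,\gamma}$ from embedding into a non-discrete one-parameter family of diagonal elements, and this is where the technical weight of the argument will lie.
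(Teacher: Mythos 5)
Your computation of $\KulL(\gamma)$ and your $(\Rightarrow)$ direction are exactly the paper's: it runs the same conjugation sequence $\gamma^n\alpha\gamma^{-n}$ and gets a sequence of distinct elements converging in $\PSL$ whenever $\valorabs{\alpha_{12}}+\valorabs{\alpha_{13}}\neq 0$. Your reduction in $(\Leftarrow)$ to the abelian group generated by $\gamma$ and a block-diagonal $\alpha$, followed by a case split on the Jordan structure of the lower $2\times 2$ block, is also the paper's strategy (the paper organizes the cases by whether $\Pi(\alpha)$ is loxodromic, parabolic or elliptic, which amounts to the same thing after diagonalizing). The parabolic/Jordan case is handled as you indicate: the $(2,3)$ entry of $\alpha^{j}$ grows linearly in $j$, so no nontrivial sequence $\gamma^{i_k}\alpha^{j_k}$ can tend to the identity.

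The gap is in the diagonal case, which you leave as a claim justified by the wrong mechanism. You predict a delicate analysis of "eigenvalue moduli \emph{and arguments}" and worry about non-discreteness from "incommensurable exponents" in a torus. In fact no Diophantine condition is involved: excluding homotheties and screws (after diagonalizing the lower block) gives $\valorabs{\alpha_{22}}\neq\valorabs{\alpha_{33}}$, and normalizing the $(3,3)$ entries to $1$, the generators map under $\parentesis{\log\valorabs{\cdot_{11}},\log\valorabs{\cdot_{22}}}$ to $\parentesis{-3\log\valorabs{\lambda},0}$ and $\parentesis{\log\valorabs{\alpha_{11}\alpha_{33}^{-1}},\log\valorabs{\alpha_{22}\alpha_{33}^{-1}}}$, which are linearly independent in $\R^2$ precisely because $\valorabs{\lambda}\neq 1$ and $\valorabs{\alpha_{22}}\neq\valorabs{\alpha_{33}}$. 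So $\gamma^{i_k}\alpha^{j_k}\rightarrow\id$ forces $(i_k,j_k)=(0,0)$ eventually from the moduli alone; the arguments never enter. This is the content of the paper's loxodromic sub-case, and your proof is not complete until this (easy, but different from what you sketch) verification is written down. One further caveat to your assertion that the hypothesis is "exactly" what is needed: a diagonal $\alpha$ with all three eigenvalue moduli equal to $1$ (an infinite-order elliptic element) is neither a homothety nor a screw, preserves $\KulL(\gamma)$, yet generates a non-discrete group by itself; the paper's elliptic sub-case silently assumes this cannot occur, and your plan inherits the same blind spot, so you should exclude elliptic $\alpha$ explicitly when you execute the argument.
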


\begin{proof}
We have that $\Lambda_{\text{Kul}}(\gamma)=\{e_1\}\cup\overleftrightarrow{e_2,e_3}$ (Proposition 4.2.23 of \cite{ckg_libro}). Denote $\alpha=\left[\alpha_{ij}\right]$, a straightforward calculation shows that $\alpha$ leaves $\overleftrightarrow{e_2,e_3}$ invariant if and only if $\alpha_{12}=\alpha_{13}=0$.\\

First we prove that if $\langle\alpha,\gamma\rangle$ is discrete then $\alpha$ leaves $\Lambda_{\text{Kul}}(\gamma)$ invariant. Assume that $\alpha$ doesn't leave $\Lambda_{\text{Kul}}(\gamma)$ invariant, this means that $|\alpha_{12}|+|\alpha_{13}|\neq 0$. Let $\{g_n\}\subset\langle\alpha,\gamma\rangle$ be the sequence of distinct elements given by $g_n:=\gamma^n \alpha \gamma^{-n}$. Clearly, 
	$$g_n\rightarrow \left[\begin{array}{ccc}
	\alpha_{11} & 0 & 0\\
	0 & \alpha_{22} & \alpha_{23} \\
	0 & 0 & \alpha_{33}\\
	\end{array}\right]\in\text{PSL}\left(3,\mathbb{C}\right).$$
Therefore $\langle\alpha,\gamma\rangle$ is not discrete.\\

Now we prove that if $\alpha$ leaves $\Lambda_{\text{Kul}}(\gamma)$ invariant, then $\langle\alpha,\gamma\rangle$ is discrete. We rewrite $\gamma=\text{Diag}(\lambda^{-3},1,1)$. Suppose that $\langle\alpha,\gamma\rangle$ is not discrete, then there exists a sequence of distinct elements $\{w_k\}\subset\langle\alpha,\gamma\rangle$ such that $w_k\rightarrow \mathit{id}$. Since $\alpha$ leaves $\Lambda_{\text{Kul}}(\gamma)$ invariant, then
	$$\alpha=\left[\begin{array}{ccc}
	\alpha_{11} & 0 & 0\\
	0 & \alpha_{22} & \alpha_{23} \\
	0 & 0 & \alpha_{33}\\
	\end{array}\right],$$
and therefore $\left[\alpha,\gamma\right]=\mathit{id}$. Then the sequence $w_k$ can be expressed as 
	\begin{equation}\label{eq_dem_prop_HC_discreto_invariante_1}
	w_k=\gamma^{i_k}\alpha^{j_k}.
	\end{equation}
Observe that $h\gamma h^{-1}=\gamma$ for any $h\in\text{PSL}\left(3,\mathbb{C}\right)$ of the form
	$$h=\left[\begin{array}{ccc}
	h_{11} & 0 & 0\\
	0 & h_{22} & h_{23} \\
	0 & 0 & h_{33}\\
	\end{array}\right].$$
Furthermore, there exists an element $h\in\text{PSL}\left(3,\mathbb{C}\right)$ with the previous form such that $h\alpha h^{-1}$ has one of the following forms:
	\begin{enumerate}
	\item 
		$$h\alpha h^{-1}=\text{Diag}\left(\alpha_{22}^{-1}\alpha_{33}^{-1},\alpha_{22},\alpha_{33}\right)$$
	if $\Pi(\alpha)$ is loxodromic. In this case, from the previous equation and (\ref{eq_dem_prop_HC_discreto_invariante_1}) it follows $w_k=\text{Diag}\left(\lambda^{-3i_k}\alpha_{22}^{-j_k}\alpha_{33}^{-j_k},\alpha_{22}^{j_k},\alpha_{33}^{j_k}\right)\rightarrow \mathit{id}$. Therefore $\alpha_{22}^{j_k}$, $\alpha_{33}^{j_k}\rightarrow 1$, but then we cannot have $\lambda^{-3i_k}\alpha_{22}^{-j_k}\alpha_{33}^{-j_k}\rightarrow 1$, since $|\lambda|\neq 1$.
	\item $$h\alpha h^{-1}=\left[\begin{array}{ccc}
	\alpha_{11} & 0 & 0\\
	0 & 1 & 1\\
	0 & 0 & 1\\
	\end{array}\right],$$
	if $\Pi(\alpha)$ is parabolic. Then we have
	$$w_k=\left[\begin{array}{ccc}
	\lambda^{-3i_k}\alpha_{11}^{j_k} & 0 & 0\\
	0 & 1 & 1 \\
	0 & 0 & 1\\
	\end{array}\right]\rightarrow \mathit{id}.$$
	But this cannot happen.	
	\item $h\alpha h^{-1}=\text{Diag}\left(\alpha_{11},\alpha_{22},\alpha_{22}\right)$, with $|\alpha_{22}|=1$, if $\Pi(\alpha)$ is elliptic. But in this case $\alpha$ is a complex homothety or a screw.
	\end{enumerate}	 
Therefore neither of the three cases can occur and thus, $\langle\alpha,\gamma\rangle$ is discrete.
\end{proof}

From the proof of the previous proposition, we have the following immediate consequence.

\begin{cor}\label{cor_forma_afin_HC}
If $\Gamma\subset U_+$ is a discrete subgroup, and $\Gamma$ contains a type I complex homothety as in the previous proposition, then every element of $\Gamma$ has the form
	\begin{equation}\label{eq_cor_forma_afin_HC}
	\alpha= \left[\begin{array}{ccc}
	\alpha_{11} & 0 & 0\\
	0 & \alpha_{22} & \alpha_{23} \\
	0 & 0 & \alpha_{33}\\
	\end{array}\right].
	\end{equation} 
\end{cor}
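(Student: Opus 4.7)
My plan is to push the argument from the first half of the proof of Proposition \ref{prop_HC_discreto_invariante} one step further, now applied directly inside $\Gamma$ rather than inside $\prodint{\alpha,\gamma}$. Working with $\gamma=\text{Diag}(\lambda^{-2},\lambda,\lambda)$ and $\valorabs{\lambda}\neq 1$, I would assume WLOG that $\valorabs{\lambda}>1$, fix an arbitrary $\alpha=[\alpha_{ij}]\in\Gamma$, and note that the $(i,j)$ entry of $\gamma^n\alpha\gamma^{-n}$ equals $(\gamma^n)_{ii}\,\alpha_{ij}\,(\gamma^{-n})_{jj}$. Hence the $(1,2)$ and $(1,3)$ entries become $\lambda^{-3n}\alpha_{12}$ and $\lambda^{-3n}\alpha_{13}$ respectively, while the three diagonal entries and the $(2,3)$ entry are fixed. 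As $n\to\infty$, the sequence $\gamma^n\alpha\gamma^{-n}$ therefore converges to the matrix $\alpha'$ obtained from $\alpha$ by zeroing its two top superdiagonal entries; since the diagonal of $\alpha'$ agrees with that of $\alpha\in\PSL$, the limit $\alpha'$ lies in $\PSL$ and not merely in $\QP$.

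Next I would suppose, aiming for a contradiction, that $\alpha_{12}\neq 0$ or $\alpha_{13}\neq 0$, and show that $\set{\gamma^n\alpha\gamma^{-n}}_{n\in\N}$ is a sequence of pairwise distinct elements. For $n\neq m$ the two conjugates coincide iff $\alpha$ commutes with $\gamma^{n-m}$; comparing the $(1,2)$ entries of $\alpha\gamma^k$ and $\gamma^k\alpha$ produces the relation $\alpha_{12}(\lambda^k-\lambda^{-2k})=0$, and similarly for $\alpha_{13}$. Since $\valorabs{\lambda}\neq 1$ rules out $\lambda^{3k}=1$ for any $k\neq 0$, commutativity with any nontrivial power of $\gamma$ would force $\alpha_{12}=\alpha_{13}=0$, contradicting the standing assumption. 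So $\set{\gamma^n\alpha\gamma^{-n}}$ is an infinite sequence of distinct elements of $\Gamma$ converging to an element of $\PSL$, contradicting the discreteness of $\Gamma$.

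This argument handles every $\alpha\in\Gamma$ at once, with no case-splitting on the dynamical type of $\alpha$ (parabolic, screw, complex homothety, etc.), because it uses only the diagonal form of $\gamma$ and the upper-triangular form of $\alpha$. The only step that requires a moment's care is the distinctness of the conjugates, which I have reduced to the elementary eigenvalue computation above; everything else mirrors the contraction already carried out in the proof of Proposition \ref{prop_HC_discreto_invariante}, which is precisely why the author labels the corollary an immediate consequence.
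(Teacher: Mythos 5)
Your proof is correct and follows essentially the same route as the paper, which derives the corollary from the first half of the proof of Proposition \ref{prop_HC_discreto_invariante}: conjugating an arbitrary $\alpha\in\Gamma$ by powers of the homothety contracts the $(1,2)$ and $(1,3)$ entries while fixing everything else, so discreteness forces those entries to vanish. The only addition is your explicit verification that the conjugates $\gamma^n\alpha\gamma^{-n}$ are pairwise distinct, a point the paper asserts without proof; that check is a worthwhile (and correct) supplement rather than a different argument.
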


\begin{prop}\label{prop_HC_conmutativo}
Let $\Gamma\subset U_+$ be a discrete group containing a type I complex homothety. If the control group $\Pi(\Gamma)$ is discrete, then both conclusions hold:
	\begin{enumerate}
	\item $\Pi(\Gamma)$ is purely parabolic or purely loxodromic.
	\item $\Gamma$ is commutative.
	\end{enumerate}
\end{prop}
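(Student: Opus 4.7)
The plan is to exploit Corollary \ref{cor_forma_afin_HC}, which forces every element of $\Gamma$ to fix $e_1$ and preserve the line $\ell=\linproy{e_2,e_3}$, and then study the induced control group $\Pi=\Pi_{e_1,\ell}\colon\Gamma\to\psl$. Since $\Pi(\alpha)$ is precisely the M\"obius transformation given by the lower-right $2\times 2$ block of $\alpha$, and that block is upper triangular, $\Pi(\alpha)$ fixes the common point $[e_2]\in\ell$. Hence $\Pi(\Gamma)$ is a discrete subgroup of $\psl$ fixing a common point, and after an affine change of coordinates on $\ell$ sending $[e_2]$ to $\infty$, I identify $\Pi(\Gamma)$ with a discrete subgroup of the affine group of $\C$.

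For part (1), I first rule out elliptic elements. If $\Pi(\alpha)$ were elliptic, then $\lambda_{23}(\alpha)=\alpha_{22}\alpha_{33}^{-1}$ would lie on the unit circle and differ from $1$, and discreteness of $\Pi(\Gamma)$ would force this ratio to be a non-trivial root of unity, contradicting the torsion-freeness of $\lambda_{23}(\Gamma)$ standing throughout this section (cf.\ Corollary \ref{cor_IS_RotInf_prohibidas}). I then exclude the simultaneous presence of a parabolic and a loxodromic element: writing $\Pi(\alpha)\colon z\mapsto z+c$ with $c\neq 0$ and $\Pi(\beta)\colon z\mapsto az+b$ with $\valorabs{a}\neq 1$, a direct computation gives $\Pi(\beta)^n\Pi(\alpha)\Pi(\beta)^{-n}\colon z\mapsto z+a^nc$. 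Taking $n\to\infty$ if $\valorabs{a}<1$, or $n\to-\infty$ if $\valorabs{a}>1$, produces a sequence of distinct non-trivial elements of $\Pi(\Gamma)$ converging to the identity, contradicting discreteness.

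For part (2), I split along the dichotomy of part (1). If $\Pi(\Gamma)$ is purely parabolic, then every $\alpha\in\Gamma$ satisfies $\alpha_{22}=\alpha_{33}$, and a direct multiplication shows the only potentially non-zero entry of $\alpha\beta-\beta\alpha$, namely $\beta_{23}(\alpha_{22}-\alpha_{33})-\alpha_{23}(\beta_{22}-\beta_{33})$, vanishes identically. If $\Pi(\Gamma)$ is purely loxodromic, I invoke the standard fact that the derived subgroup of the affine group of $\C$ consists of translations; therefore any commutator in $\Pi(\Gamma)$ is a parabolic, and by part (1) must be trivial, so $\Pi(\Gamma)$ is abelian. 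The very same identity above shows that $\Pi(\alpha)\Pi(\beta)=\Pi(\beta)\Pi(\alpha)$ is equivalent to $\alpha\beta=\beta\alpha$, since the $(1,1)$-entries commute trivially and the remaining content is captured entirely by the $2\times 2$ block.

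The main obstacle I expect is the exclusion of the mixed parabolic/loxodromic case in part (1): one must verify that the sequence of conjugate translations $a^nc$ furnishes genuinely distinct non-trivial elements of $\Pi(\Gamma)$ that converge to the identity in the topology of $\psl$. The elliptic step is also mildly delicate, since it leans on the torsion-freeness of $\lambda_{23}(\Gamma)$, a hypothesis implicit in the surrounding section rather than explicit in the statement.
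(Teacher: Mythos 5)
Your proposal is correct and follows essentially the same route as the paper: reduce via Corollary \ref{cor_forma_afin_HC} to block-diagonal matrices, eliminate elliptic and mixed parabolic/loxodromic possibilities for the control group, and read off commutativity of $\Gamma$ from commutativity of the lower-right $2\times 2$ blocks. The only differences are in two sub-steps and are cosmetic: where the paper invokes the J\o rgensen inequality to exclude a loxodromic coexisting with a parabolic you give the direct conjugation argument $\Pi(\beta)^n\Pi(\alpha)\Pi(\beta)^{-n}$, and where the paper analyzes shared fixed points of two loxodromic elements to show their commutator is parabolic you cite the fact that the derived subgroup of the affine group consists of translations --- both substitutions are sound.
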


\begin{proof}
Using Corollary \ref{cor_forma_afin_HC} we know that every element of $\Gamma$ has the form given by (\ref{eq_cor_forma_afin_HC}). Denote the control group by $\Sigma=\Pi(\Gamma)$:
\begin{enumerate}
\item If $\Sigma$ contains an elliptic element $\Pi(\alpha)=\text{Diag}\left(e^{2\pi i \theta_1},e^{2\pi i \theta_2}\right)$, then $\alpha= \text{Diag}\left(e^{-2\pi i (\theta_1+\theta_2)}, e^{2\pi i \theta_1}, e^{2\pi i \theta_2}\right)\in\Gamma$ is elliptic. If $\alpha$ has infinite order, this contradicts that $\Gamma$ is discrete and if $\alpha$ has finite order, this contradicts that $\Gamma$ is torsion-free. Therefore, $\Sigma$ has no elliptic elements.  	  
\item If $\Sigma$ has a parabolic element $\Pi(\alpha)$ then $\Sigma$ is purely parabolic. Otherwise, there exists $\beta\in\Gamma$ such that $\Pi(\beta)$ is loxodromic and then, using the J\o rgensen inequality, it would follow that $\langle \Pi(\alpha),\Pi(\beta)\rangle \subset\Sigma$ is not discrete. Then $\Gamma$ is commutative, since every element of $\Gamma$ has the form
	$$\alpha= \left[\begin{array}{ccc}
	\alpha_{11} & 0 & 0\\
	0 & 1 & \alpha_{23} \\
	0 & 0 & 1\\
	\end{array}\right].$$
\item If $\Sigma$ has a loxodromic element $\Pi(\alpha)$, then $\Sigma$ is purely loxodromic (see the previous case). Let $\beta\in\Gamma\setminus\{\alpha\}$, then $\Pi(\beta)$ is loxodromic. Since both $\Pi(\alpha)$ and $\Pi(\beta)$ are upper triangular elements, they share at least one fixed point. We have two cases:
	\begin{enumerate}
	\item If $\Pi(\alpha)$ and $\Pi(\beta)$ share exactly one fixed point, then they have the form
		$$\begin{array}{cc}
		\Pi(\alpha)=\left[\begin{array}{cc}
	\alpha_{22} & \alpha_{23}\\
	0 & \alpha_{33} \\
	\end{array}\right], & \Pi(\beta)=\left[\begin{array}{cc}
	\beta_{22} & \beta_{23}\\
	0 & \beta_{33} \\
	\end{array}\right]
		\end{array}$$
	with $\alpha_{23},\beta_{23}\neq 0$ and $[\alpha_{23}:\alpha_{33}-\alpha_{22}]\neq[\beta_{23}:\beta_{33}-\beta_{22}]$. Then $[\Pi(\alpha),\Pi(\beta)]\neq \mathit{id}$ is parabolic, contradicting that $\Sigma$ is purely loxodromic. 	\item If $\text{Fix}(\Pi(\alpha))=\text{Fix}(\Pi(\beta))$, then $\Pi(\alpha)$ and $\Pi(\beta)$ commute (Lemma \ref{lem_bloques_conmutativos}). Since every element of $\Gamma$ has the form given by (\ref{eq_cor_forma_afin_HC}), it follows that $\alpha$ and $\beta$ commute, and thus, $\Gamma$ is commutative.
\end{enumerate} 	
	\end{enumerate}
\end{proof}

\begin{prop}\label{prop_HC_conmutativo_control_no_discreto}
Let $\Gamma\subset U_+$ be a discrete group such that $\Sigma=\Pi(\Gamma)$ is not discrete and $\Lambda_{Gr}(\Sigma)=\mathbb{S}^1$. Then $\Gamma$ cannot contain a type I complex homothety.
\end{prop}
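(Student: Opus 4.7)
My plan is to assume $\Gamma$ contains a type I complex homothety $\gamma$ and exploit the non-discreteness of $\Sigma$ together with the rigidity already imposed by $\gamma$ (via Corollary \ref{cor_forma_afin_HC}) to produce a sequence of distinct elements of $\Gamma$ that converges inside $\PSL$, thereby contradicting discreteness. The hypothesis $\grL(\Sigma)=\Ss^1$ plays the role of a case-distinction marker: it places us in the genuinely non-discrete regime not covered by Corollary \ref{cor_IS_RotInf_prohibidas} and Lemma \ref{lem_control_nodiscreto_descartar_grL_vacio}, where the closure of $\Sigma$ carries a positive-dimensional continuous family of near-identity elements.

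After conjugation I would write $\gamma=\text{Diag}(\lambda^{-2},\lambda,\lambda)$ with $|\lambda|\neq 1$; by Corollary \ref{cor_forma_afin_HC} every $\alpha\in\Gamma$ has the block-diagonal shape (\ref{eq_cor_forma_afin_HC}), and in particular $\gamma\in\kernel(\Pi)$. Since $\Sigma$ is non-discrete, I would extract a sequence of distinct non-identity $\sigma_n\in\Sigma$ with $\sigma_n\to\id$ in $\psl$, and lift to a sequence of distinct $\alpha_n\in\Gamma$ with $\Pi(\alpha_n)=\sigma_n$. The convergence of the control group translates into the two ratio limits $\alpha_{22,n}/\alpha_{33,n}\to 1$ and $\alpha_{23,n}/\alpha_{33,n}\to 0$.

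The core step is a renormalization using powers of $\gamma$: for each $n$ choose $k_n\in\Z$ such that $|\alpha_{22,n}\lambda^{k_n}|$ lies in the fixed compact interval $[1,|\lambda|)$, and set $\beta_n=\alpha_n\gamma^{k_n}\in\Gamma$. A short computation, combining the determinant relation $\alpha_{11,n}\alpha_{22,n}\alpha_{33,n}=1$ with the two ratio limits above, shows that all three diagonal entries of $\beta_n$ remain in a compact subset of $\C^*$ while the $(2,3)$-entry tends to $0$. Extracting a subsequence, $\beta_n$ converges in $\MC$ to a diagonal element of $\PSL$. Because $\gamma\in\kernel(\Pi)$ we have $\Pi(\beta_n)=\sigma_n$, so the $\beta_n$ remain distinct in $\Gamma$; this contradicts discreteness.

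The delicate point is the simultaneous control of all entries of $\beta_n$: bounding $\alpha_{22,n}\lambda^{k_n}$ comes for free from the choice of $k_n$, but bounding $\alpha_{33,n}\lambda^{k_n}$ requires the ratio $\alpha_{22,n}/\alpha_{33,n}\to 1$, bounding $\alpha_{11,n}\lambda^{-2k_n}$ away from both $0$ and $\infty$ requires the determinant identity, and killing the off-diagonal entry of $\beta_n$ requires $\alpha_{23,n}/\alpha_{33,n}\to 0$. All four estimates must align so that the subsequential limit is an invertible diagonal matrix; this is where $|\lambda|\neq 1$, built into the notion of a type I complex homothety, is essential, as it makes the multiplicative action of $\gamma$ on the scale of $\alpha_{22,n}$ non-trivial and allows the renormalization to succeed.
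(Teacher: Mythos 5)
Your argument is correct, and it takes a genuinely different route from the paper. The paper exploits the hypothesis $\grL(\Sigma)=\Ss^1$ directly: it picks two elements of $\Gamma$ whose control images are loxodromic with exactly one common fixed point, observes that their commutator is a nontrivial unipotent $h_{0,b_0}$, and contracts it to the identity by conjugating with powers of an element $f$ whose control image is loxodromic. You instead use only the non-discreteness of $\Sigma$: you lift a sequence $\sigma_n\to\id$ of distinct elements and then use the homothety $\gamma\in\kernel(\Pi)$ as a one-parameter gauge to renormalize the lifts into a compact subset of $\PSL$; the determinant identity and the two ratio limits $\alpha_{22,n}/\alpha_{33,n}\to 1$, $\alpha_{23,n}/\alpha_{33,n}\to 0$ do exactly the work you say they do, the choice $\valorabs{\alpha_{22,n}\lambda^{k_n}}\in[1,\valorabs{\lambda})$ is available precisely because $\valorabs{\lambda}\neq 1$, and distinctness of the $\beta_n$ survives because $\Pi(\beta_n)=\sigma_n$. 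The trade-off is instructive: the paper's proof produces an explicit parabolic sequence accumulating at the identity but is wedded to the case $\grL(\Sigma)=\Ss^1$, whereas your renormalization argument never uses that hypothesis, so it proves the stronger statement that a discrete subgroup of $U_+$ containing a type I complex homothety must have discrete control group --- thereby subsuming Propositions \ref{prop_HC_conmutativo_control_no_discreto_grenlim_2ptos} and \ref{prop_HC_conmutativo_control_no_discreto_grenlim_1pto} in one stroke and even dispensing with their non-commutativity hypotheses. The only point you share with (and inherit from) the paper is the tacit normalization that puts $\gamma$ in the diagonal form $\text{Diag}(\lambda^{-2},\lambda,\lambda)$ while keeping $\Gamma$ upper triangular and $\Pi$ the projection from $e_1$, which is exactly how Corollary \ref{cor_forma_afin_HC} is invoked in the paper's own proof.
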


\begin{proof}
Suppose that $\Gamma$ contains such a complex homothety. Corollary \ref{cor_forma_afin_HC} states that each element in $\Gamma$ has the form given in (\ref{eq_cor_forma_afin_HC}). Since $\Lambda_{Gr}(\Sigma)=\mathbb{S}^1$, then $\Gamma$ is a non-elementary, non-discrete group, and since $\Lambda_{Gr}(\Sigma)$ is the closure of fixed points of loxodromic elements of $\Sigma$, then there are an infinite number of loxodromic elements in $\Sigma$ sharing exactly one fixed point. Let $f\in\Sigma$ such that $\Pi(f)$ is loxodromic and $f=\text{Diag}\left(\alpha^{-1},\alpha,1\right)$, with $|\alpha|<1$. Let $\gamma_1,\gamma_2\in\Gamma$ be two elements such that $\Pi(\gamma_1),\Pi(\gamma_2)$ are loxodromic and share exactly one fixed point, also $\gamma_i\neq f$, for $i=1,2$. It follows that $\gamma:=\left[\gamma_1,\gamma_2\right]\neq \mathit{id}$ is a parabolic element. Let $\{f^k \gamma f^{-k}\}\subset\Gamma$ be the sequence given by $f^k \gamma f^{-k}=h_{0,y}$, where $y=\alpha^k b_0$ and $b_0$ is the entry 23 of $\gamma$. Then $f^k \gamma f^{-k}\rightarrow\mathit{id}$, contradicting that $\Gamma$ is discrete. If $|\alpha|>1$, we consider the sequence $\{f^{-k} \gamma f^{k}\}$ instead. This concludes the proof.
\end{proof}

\begin{prop}\label{prop_HC_conmutativo_control_no_discreto_grenlim_2ptos}
Let $\Gamma\subset U_+$ be a non-commutative discrete group such that $\Sigma=\Pi(\Gamma)$ is not discrete and $|\Lambda_{\text{Gr}}(\Sigma)|=2$. Then $\Gamma$ cannot contain a type I complex homothety.
\end{prop}

\begin{proof}
Suppose that $\Gamma$ contains such a complex homothety. Then, by Corollary \ref{cor_forma_afin_HC}, each element in $\Gamma$ has the form given by (\ref{eq_cor_forma_afin_HC}). If $|\Lambda_{\text{Gr}}(\Sigma)|=2$, then, up to conjugation, every element of $\Sigma$ has the form $\text{Diag}(\beta,\delta)$ for some $\beta,\delta\in\mathbb{C}^\ast$. Then, using (\ref{eq_cor_forma_afin_HC}), it follows that each element in $\Gamma$ is diagonal and hence, $\Gamma$ would be commutative.
\end{proof}

\begin{prop}\label{prop_HC_conmutativo_control_no_discreto_grenlim_1pto}
Let $\Gamma\subset U_+$ be a non-commutative discrete group such that $\Sigma=\Pi(\Gamma)$ is not discrete and $|\Lambda_{\text{Gr}}(\Sigma)|=1$. Then $\Gamma$ cannot contain a type I complex homothety.
\end{prop}

\begin{proof}
Suppose that there exists such a complex homothety $\gamma\in\Gamma$. Corollary \ref{cor_forma_afin_HC} implies that each element in $\Gamma$ has the form given by (\ref{eq_cor_forma_afin_HC}). Since $\Sigma=\Pi(\Gamma)$ is not discrete and $|\Lambda_{\text{Gr}}(\Sigma)|=1$, then $\Sigma$ is a dense subgroup of $\text{Epa}\left(\mathbb{C}\right)$ containing parabolic elements (Theorem 2.14 of \cite{cs2014}). Then every element of $\Sigma$ has the form 
	$$\Pi(\mu)= \left[\begin{array}{cc}
	a & b \\
	0 & a^{-1} \\
	\end{array}\right], \text{ for }|a|=1\text{ and }b\in\mathbb{C}^\ast.$$
This implies that every element of $\Gamma$ has the form
	$$\mu= \left[\begin{array}{ccc}
	\alpha^{-2} & 0 & 0\\
	0 & \alpha a & \alpha b \\
	0 & 0 & \alpha a^{-1} \\
	\end{array}\right]\text{, for some }\alpha\in\mathbb{C}^\ast.$$
We have two possibilities:
\begin{enumerate}[(1)]
\item If every element of $\Sigma$ is parabolic, then $\Gamma$ would be commutative. This is because every element of $\Gamma\setminus\{\mathit{id}\}$ has the form
	$$\mu= \left[\begin{array}{ccc}
	\alpha^{-2} & 0 & 0\\
	0 & \alpha & \alpha b \\
	0 & 0 & \alpha \\
	\end{array}\right]\text{, for some }\alpha\in\mathbb{C}^\ast.$$
\item If there is an elliptic element $\Pi(\gamma)\in\Sigma$, then $\Pi(\gamma)= \text{Diag}\left(e^{2\pi i \theta}, e^{-2\pi i \theta}\right)$, for some $\theta\not\in\mathbb{Z}$. Then 
	\begin{equation}\label{eq_dem_prop_HC_conmutativo_control_no_discreto_grenlim_1pto_1}
	\gamma= \text{Diag}\left(\beta^{-2},\beta e^{2\pi i \theta},\beta e^{-2\pi i \theta}\right)\text{, for some }\beta\in\mathbb{C}^{\ast}.
	\end{equation}		
	Observe that, if $\theta\in\mathbb{Q}$, then $\lambda_{23}(\gamma)=e^{4\pi i \theta}$ would be a torsion element in $\lambda_{23}(\Gamma)$. This contradictions implies that, if there exists an elliptic element in $\Sigma$, then $\theta\in\mathbb{R}\setminus\mathbb{Q}$. On the other hand, since $\Sigma$ is not discrete, there is a sequence of distinct elements $\{\Pi(\mu_k)\}\subset\Sigma$ such that $\Pi(\mu_k)\rightarrow\mathit{id}$. We have two cases:
		\begin{enumerate}[(i)]
		\item If $\{\Pi(\mu_k)\}$ contains an infinite number of parabolic elements, then consi-dering an adequate subsequence, we can assume that $\{\Pi(\mu_k)\}$ is a sequence of distinct parabolic elements. Let us denote
			$$\Pi(\mu_k)= \left[\begin{array}{cc}
	1 & b_k \\
	0 & 1 \\
	\end{array}\right]$$
where $\{b_k\}\subset\mathbb{C}^{\ast}$ is a sequence of distinct elements such that $b_k\rightarrow 0$. Then
	$$\mu_k = \left[\begin{array}{ccc}
	\alpha_k^{-2} & 0 & 0\\
	0 & \alpha_k & \alpha_k b_k \\
	0 & 0 & \alpha_k\\
	\end{array}\right]\text{, for some }\{\alpha_k\}\subset \mathbb{C}^{\ast}.$$
Let $\{\xi_k\}\subset\Gamma$ be the sequence of distinct elements given by $\xi_k = \left[\gamma,\mu_k\right]= h_{0,y}$, where $y=b_k\left(1- e^{-4\pi i \theta}\right)$ and $\gamma$ is an elliptic element in $\Sigma$ given by (\ref{eq_dem_prop_HC_conmutativo_control_no_discreto_grenlim_1pto_1}). Then $\xi_k\rightarrow \mathit{id}$, contradicting that $\Gamma$ is discrete.
		\item  If $\{\Pi(\mu_k)\}$ contains only a finite number of parabolic elements, then we can assume that the whole sequence $\{\Pi(\mu_k)\}$ is made up of irrational elliptic elements. We denote
		$$\Pi(\mu_k)= \left[\begin{array}{cc}
	e^{2\pi i \theta_k} & b_k \\
	0 & e^{-2\pi i \theta_k} \\
	\end{array}\right],$$
	with $b_k\rightarrow 0$. Since $\{\theta_k\}\subset\mathbb{R}\setminus\mathbb{Q}$, we can pick an adequate subsequence of $\{\Pi(\mu_k)\}$, still denoted in the same way, such that $\theta_k\rightarrow 0$ by distinct elements $\{\theta_k\}$. Let $\{\Pi(\sigma_k)\}\subset\Sigma$ be the sequence of distinct elements given by $\Pi(\sigma_k)= \left[\Pi(\mu_k),\Pi(\mu_{k+1})\right]$. This sequence is made up of distinct parabolic elements of $\Sigma$, and since $\theta_k\rightarrow 0$, we have that $\Pi(\sigma_k)\rightarrow\mathit{id}$. Applying the same argument as in the previous case (i), we get a contradiction. 
		\end{enumerate}
\end{enumerate}
\end{proof}

Together, Lemma \ref{lem_control_nodiscreto_descartar_grL_vacio} and Propositions \ref{prop_HC_conmutativo}, \ref{prop_HC_conmutativo_control_no_discreto}, \ref{prop_HC_conmutativo_control_no_discreto_grenlim_2ptos}, \ref{prop_HC_conmutativo_control_no_discreto_grenlim_1pto} imply the following conclusion.

\begin{cor}\label{cor_HC_no_hay_en_no_conmutativos}
Let $\Gamma\subset U_+$ be a non-commutative, torsion-free discrete subgroup, then $\Gamma$ cannot contain a type I complex homothety.  
\end{cor}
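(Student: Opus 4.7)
The plan is a straightforward case analysis on the control group $\Sigma = \Pi(\Gamma)$, arguing by contradiction: assume $\Gamma$ contains a type I complex homothety and derive that $\Gamma$ must be commutative (or fail some other hypothesis) in every possible case. The division into cases matches exactly the collection of propositions cited just before the corollary, so the proof amounts to verifying that these propositions cover every possibility.

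First I would split on whether $\Sigma$ is discrete. If $\Sigma$ is discrete, then Proposition \ref{prop_HC_conmutativo} applied to $\Gamma$ (which by hypothesis is discrete, torsion-free, and contains a type I complex homothety) immediately yields that $\Gamma$ is commutative, contradicting the hypothesis.

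If instead $\Sigma$ is non-discrete, I would invoke Lemma \ref{lem_control_nodiscreto_descartar_grL_vacio}: since $\Gamma$ is a torsion-free, non-commutative discrete subgroup of $U_+$ with non-discrete control group, the Greenberg limit set $\grL(\Sigma)$ must be non-empty. The possible shapes of $\grL(\Sigma)$ for a subgroup of $\psl$ are exhausted by the three cases $\grL(\Sigma) = \Ss^1$, $\valorabs{\grL(\Sigma)} = 2$, and $\valorabs{\grL(\Sigma)} = 1$. In these three cases one applies, respectively, Propositions \ref{prop_HC_conmutativo_control_no_discreto}, \ref{prop_HC_conmutativo_control_no_discreto_grenlim_2ptos}, and \ref{prop_HC_conmutativo_control_no_discreto_grenlim_1pto}; each asserts directly that a group with the relevant assumptions cannot contain a type I complex homothety, producing a contradiction.

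Since every possibility for $\Sigma$ leads to a contradiction, no such complex homothety can exist in $\Gamma$. The only point requiring care is ensuring that the three possibilities $\grL(\Sigma) = \Ss^1$, $\valorabs{\grL(\Sigma)} \in \set{1,2}$ together with $\grL(\Sigma) = \emptyset$ form a complete classification of Greenberg limit sets for subgroups of $\psl$; this is a standard fact used implicitly via the quoted theorems of \cite{cs2014}, and combined with Lemma \ref{lem_control_nodiscreto_descartar_grL_vacio} it rules out the empty case. I do not anticipate any genuine obstacle here: the technical work has already been carried out in the preceding propositions, and the corollary is essentially a bookkeeping statement assembling them.
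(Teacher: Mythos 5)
Your proposal is correct and coincides with the paper's own argument: the paper gives no separate proof of this corollary, stating only that Propositions \ref{prop_HC_conmutativo}, \ref{prop_HC_conmutativo_control_no_discreto}, \ref{prop_HC_conmutativo_control_no_discreto_grenlim_2ptos}, \ref{prop_HC_conmutativo_control_no_discreto_grenlim_1pto} together with Lemma \ref{lem_control_nodiscreto_descartar_grL_vacio} imply it, which is exactly the case analysis you spell out. The one point you rightly flag---that $\emptyset$, one point, two points, and $\Ss^1$ exhaust the possibilities for $\grL(\Sigma)$---is treated just as implicitly by the paper; note that elsewhere (case {\footnotesize(\textbf{LS2})} in the proof of Theorem \ref{thm_descomposicion_caso_noconmutativo2}) the paper also admits $\grL(\Sigma)=\CP^1$ as a possibility and excludes it only by invoking that $\Gamma$ is Kleinian, so your assembly is no less complete than the paper's.
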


\begin{prop}\label{prop_kernel_finito_kernel_trivial}
Let $\Gamma\subset U_+$ be a non-commutative, torsion-free discrete group. If $\text{Ker}(\Gamma)$ is finite, then $\text{Ker}(\Gamma)=\{\mathit{id}\}$.
\end{prop}

\begin{proof}
Let $\Gamma\subset U_+$ be a non-commutative, torsion-free discrete group. Let $\gamma\in\text{Ker}(\Gamma)$, and assume that $\gamma\neq\mathit{id}$. Then $\gamma$ has the form
$$\gamma = \left[\begin{array}{ccc}
	\alpha^{-2} & x & y \\
	0 & \alpha & 0 \\
	0 & 0 & \alpha\\
	\end{array}\right],$$
for some $x,y\in\mathbb{C}$ and $\alpha\in\mathbb{C}^\ast$ such that either $|x|+|y|\neq 0$ or $\alpha\neq 1$. If $|x|+|y|\neq 0$, we can form the sequence of distinct elements $\{\gamma^k\}\subset \text{Ker}(\Gamma)$ contradicting that $\text{Ker}(\Gamma)$ is finite. If $\alpha\neq 1$, we have two possibilities: $|\alpha|\neq 1$ or $\alpha=e^{2\pi i \theta}$ for some $\theta\not\in\mathbb{Z}$. The former implies that $\gamma$ is a complex homothety, thus contradicting Corollary \ref{cor_HC_no_hay_en_no_conmutativos}. The latter implies that either $\alpha$ is an irrational rotation (contradicting that $\Gamma$ is discrete) or $\lambda_{12}(\gamma)$ is a torsion element (contradicting that $\Gamma$ is a torsion-free group). Therefore, $\text{Ker}(\Gamma)=\{\mathit{id}\}$. 
\end{proof}

\begin{lem}\label{lem_parte_parab_no_discreta}
Let $\Sigma$ be a non-discrete, upper triangular subgroup of $\text{PSL}\left(2,\mathbb{C}\right)$ such that $\Lambda_{Gr}(\Sigma)=\mathbb{S}^1$. Then the parabolic part of $\Sigma$ is a non-discrete group.
\end{lem}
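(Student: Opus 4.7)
The plan is to combine the non-discreteness of $\Sigma$ with the hypothesis $\grL(\Sigma)=\Ss^1$ to manufacture a null sequence of non-trivial parabolic elements, by taking commutators of a null sequence with carefully chosen loxodromic elements.

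First I would use the non-discreteness to extract a sequence $\set{\sigma_n}\subset\Sigma\setminus\set{\id}$ of distinct elements with $\sigma_n\to\id$. Because $\grL(\Sigma)=\Ss^1$, the fixed points of loxodromic elements of $\Sigma$ are dense in $\Ss^1\setminus\set{\infty}$, where $\infty$ denotes the common fixed point imposed by upper-triangularity. This circle being infinite, I can choose two loxodromic elements $f,g\in\Sigma$ whose ``second'' fixed points $p_f,p_g$ are distinct.

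Next I would analyze the commutators $[f,\sigma_n]$. Since $\Sigma$ is upper triangular, $[f,\sigma_n]$ is upper triangular with diagonal $(1,1)$ in $\sL$, hence is either the identity or a non-trivial parabolic translation in $\Sigma$; moreover $[f,\sigma_n]\to\id$ by continuity of the commutator, and the same holds for $g$. The crucial step is to ensure that $[f,\sigma_n]$ (or $[g,\sigma_n]$) is non-trivial for infinitely many $n$. In $\psl$ the centralizer of a loxodromic element coincides with the stabilizer of its two fixed points, so $[f,\sigma_n]=\id$ forces $\sigma_n$ to fix both $\infty$ and $p_f$, and similarly for $g$. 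If both commutator sequences were eventually trivial, $\sigma_n$ would eventually fix the three distinct points $\infty,p_f,p_g\in\CP^1$, forcing $\sigma_n=\id$ and contradicting our choice.

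After passing to a subsequence where $[f,\sigma_n]\neq\id$ (say), I obtain a sequence of non-identity parabolic elements of $\Sigma$ converging to $\id$. Any value repeated infinitely often in this sequence would have to equal $\id$, so in fact infinitely many distinct parabolic elements of $\Sigma$ accumulate at the identity, proving that the parabolic part of $\Sigma$ is non-discrete. The main subtle point is precisely the non-triviality step in the previous paragraph, where the hypothesis $\grL(\Sigma)=\Ss^1$ is essential: without two loxodromics having distinct second fixed points, every commutator could vanish and the argument would collapse.
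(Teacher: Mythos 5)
Your proof is correct, but it follows a genuinely different route from the paper's. The paper never touches the non-discreteness of $\Sigma$: it uses $\grL(\Sigma)=\Ss^1$ to pick a loxodromic $g=\text{Diag}(\alpha,\alpha^{-1})$, $\valorabs{\alpha}<1$, and two further loxodromics $h_1,h_2$ with pairwise distinct fixed-point sets, observes that $h=\corchetes{h_1,h_2}$ is a non-trivial parabolic (two upper triangular loxodromics with different second fixed points cannot commute), and then contracts it via $g^k h g^{-k}\rightarrow\id$ to get a null sequence of distinct parabolics. You instead start from a null sequence $\sigma_n\rightarrow\id$ supplied by the non-discreteness of $\Sigma$ and push it into the parabolic part by commutating with a fixed loxodromic, using $\grL(\Sigma)=\Ss^1$ only to secure two loxodromics $f,g$ with distinct second fixed points so that at least one of $\corchetes{f,\sigma_n}$, $\corchetes{g,\sigma_n}$ is non-trivial; your non-triviality argument (the centralizer of an upper triangular loxodromic inside the upper triangular group is the pointwise stabilizer of its two fixed points, so simultaneous vanishing of both commutators would make $\sigma_n$ fix three points) is sound, as is the final passage from a null sequence of non-identity parabolics to infinitely many distinct ones. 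The trade-off: the paper's argument is shorter and shows that, for upper triangular groups, $\grL(\Sigma)=\Ss^1$ alone already forces the parabolic part to be non-discrete; yours needs the non-discreteness hypothesis but only the existence of two loxodromics with distinct axes rather than the full circle of fixed points, and it avoids having to exhibit a non-trivial parabolic commutator of loxodromics directly.
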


\begin{proof}
Let $\Sigma_p$ be the parabolic part of $\Sigma$. Let $g\in\Sigma$ be a loxodromic element such that $g=\text{Diag}\left(\alpha,\alpha^{-1}\right)$ for some $|\alpha|<1$. Let $h_1,h_2\in\Sigma$ be two loxodromic elements such that $\text{Fix}(h_1)\neq\text{Fix}(h_2)$ and $\text{Fix}(h_i)\neq\text{Fix}(g)$. Let $h=\left[h_1,h_2\right]$, then $h\neq\mathit{id}$ is a parabolic element in $\Sigma$. Hence $f_k:=g^k h g^{-k}\rightarrow\mathit{id}$, and therefore $\Sigma$ is not discrete. The fact that one can take the elements $g,h_1,h_2$ is a consequence of $\Lambda_{Gr}(\Sigma)=\mathbb{S}^1$.
\end{proof}

\subsection{The core of a group}\label{subsec_core}

In this subsection we define and study an important purely parabolic subgroup of a complex Kleinian group $\Gamma$ which determines the dynamics of $\Gamma$. This subgroup will be called the \emph{core} of $\Gamma$. We state and prove several technical results which will play an important role in the proofs of the main theorems of Subsections \ref{subsec_decomposition} and \ref{subsec_consequences}. Let us define 
	$$\text{Core}(\Gamma)=\text{Ker}(\Gamma)\cap\text{Ker}(\lambda_{12})\cap\text{Ker}(\lambda_{23}).$$
	
The following proposition describes the elements of the $\text{Core}(\Gamma)$.

\begin{prop}\label{prop_forma_core}
The elements of $\text{Core}(\Gamma)$ have the form
	$$g_{x,y}=\left[\begin{array}{ccc}
	1 & x & y \\
	0 & 1 & 0 \\
	0 & 0 & 1\\
	\end{array}\right],$$
for some $x,y\in \mathbb{C}$.
\end{prop}

\begin{proof}
Let $\gamma=\left[ \gamma_{ij} \right]\in\text{Core}(\Gamma)$. Since $\gamma\in \text{Ker}(\Gamma)$, it follows that $\gamma_{23}=0$ and $\gamma_{22}=\gamma_{33}=\gamma_{11}^{-2}$. Therefore, $\gamma$ has the form:
$$\gamma=\left[\begin{array}{ccc}
	\alpha^{-2} & \gamma_{12} & \gamma_{13} \\
	0 & \alpha & 0 \\
	0 & 0 & \alpha\\
	\end{array}\right],$$
for some $\alpha\in\mathbb{C}^\ast$.  Since $\gamma\in \text{Ker}(\lambda_{12})$, it follows that $\alpha^{-2} = \alpha$, and  
$$\gamma=\left[\begin{array}{ccc}
	\alpha & \gamma_{12} & \gamma_{13} \\
	0 & \alpha & 0 \\
	0 & 0 & \alpha\\
	\end{array}\right]=
	\left[\begin{array}{ccc}
	1 & \alpha^{-1}\gamma_{12} & \alpha^{-1}\gamma_{13} \\
	0 & 1 & 0 \\
	0 & 0 & 1\\
	\end{array}\right].$$
Therefore, $\gamma=g_{\alpha^{-1}\gamma_{12},\alpha^{-1}\gamma_{13}}$. It is straightforward to verify that $g_{x,y}\in\text{Core}(\Gamma)$ for any $x,y\in\mathbb{C}$.
\end{proof}

We will use the notation of Proposition \ref{prop_forma_core} for the rest of the work. It is straightforward to verify that $\Lambda_{\text{Kul}}\left(\text{Core}(\Gamma)\right)=\bigcup_{g_{x,y}\in\text{Core}(\Gamma)} \overleftrightarrow{e_1,[0:-y:x]}$. We denote this pencil of lines by $\mathcal{C}(\Gamma)=\Lambda_{\text{Kul}}\left(\text{Core}(\Gamma)\right)$.

\begin{prop}\label{prop_cono_invariante}
Let $\Gamma\subset U_+$ be a discrete group, then every element of $\Gamma$ leaves $\mathcal{C}(\Gamma)$ invariant.
\end{prop}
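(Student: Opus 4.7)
The plan is to exploit the fact that $\core(\Gamma)$ is a normal subgroup of $\Gamma$ and to combine this with the elementary principle that the Kulkarni limit set transforms equivariantly under conjugation. First I would observe that by construction
\[
\core(\Gamma)=\kernel(\Pi)\cap\kernel(\lambda_{12})\cap\kernel(\lambda_{23})
\]
is the intersection (inside $\Gamma$) of the kernels of the three group homomorphisms $\Pi:\Gamma\to\psl$ and $\lambda_{12},\lambda_{23}:\Gamma\to\C^{\ast}$ defined in the introduction to the section. Each such kernel is automatically a normal subgroup of $\Gamma$, and hence so is their intersection $\core(\Gamma)$.

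Second, I would record the following general equivariance statement: for any subgroup $H\subset\PSL$ and any $\alpha\in\PSL$,
\[
\alpha\parentesis{\KulL(H)}=\KulL\parentesis{\alpha H\alpha^{-1}}.
\]
This is a direct consequence of Definition~\ref{defn_kulkarni}, checked term by term. The map $h\mapsto\alpha h\alpha^{-1}$ is a bijection between the isotropy subgroup of $z$ in $H$ and that of $\alpha(z)$ in $\alpha H\alpha^{-1}$, so $\alpha(L_0(H))=L_0(\alpha H\alpha^{-1})$. Since $\alpha(H\cdot w)=(\alpha H\alpha^{-1})\cdot\alpha(w)$ and $\alpha$ is a homeomorphism of $\CP^2$, the accumulation-point conditions defining $L_1$ and $L_2$ are preserved by the same formal manipulation, yielding $\alpha(L_i(H))=L_i(\alpha H\alpha^{-1})$ for $i=1,2$. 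Combining this with the normality established in the first step, for any $\alpha\in\Gamma$ one has $\alpha\core(\Gamma)\alpha^{-1}=\core(\Gamma)$, and therefore
\[
\alpha(\mathcal{C}(\Gamma))=\alpha\parentesis{\KulL(\core(\Gamma))}=\KulL\parentesis{\alpha\core(\Gamma)\alpha^{-1}}=\KulL(\core(\Gamma))=\mathcal{C}(\Gamma),
\]
which is the desired invariance.

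I do not anticipate any real obstacle; the argument is essentially formal, the only non-trivial ingredient being the equivariance of $\KulL$ under conjugation, which is built into its definition. As a concrete alternative (and sanity check) one could verify the statement by direct computation: for $\alpha=[\alpha_{ij}]\in U_{+}$ and $g_{x,y}\in\core(\Gamma)$, a short matrix calculation gives $\alpha g_{x,y}\alpha^{-1}=g_{x',y'}$ with $x'=\alpha_{11}x/\alpha_{22}$ and $y'=\alpha_{11}(\alpha_{22}y-\alpha_{23}x)/(\alpha_{22}\alpha_{33})$, and the direction $[-y':x']$ of the associated line agrees, up to projective scaling, with $[-\alpha_{22}y+\alpha_{23}x:\alpha_{33}x]$, which is precisely the direction of $\alpha\parentesis{\linproy{e_1,[0:-y:x]}}$. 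This exhibits the invariance line by line and bypasses any appeal to the general equivariance principle above.
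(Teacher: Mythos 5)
Your proposal is correct, and your primary argument takes a genuinely different route from the paper. The paper proves the proposition entirely by the direct matrix computation that you relegate to a ``sanity check'': it conjugates $g_{x,y}$ by $\gamma=\corchetes{\gamma_{ij}}$, observes that the result is $g_{\frac{\gamma_{11}}{\gamma_{22}}x,\,\frac{\gamma_{11}}{\gamma_{22}\gamma_{33}}(\gamma_{22}y-\gamma_{23}x)}\in\core(\Gamma)$, identifies the line this element determines, and checks separately that $\gamma(\ell_{x,y})$ is that same line $\ell_{\gamma_{33}x,\,\gamma_{22}y-\gamma_{23}x}$ --- so your formulas for $x'$, $y'$ and for the image direction $[-\alpha_{22}y+\alpha_{23}x:\alpha_{33}x]$ agree exactly with the paper's. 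Your main argument instead combines two soft facts: $\core(\Gamma)$ is normal in $\Gamma$ because it is an intersection of kernels of homomorphisms defined on $\Gamma$, and $\KulL$ is equivariant under conjugation, $\alpha\parentesis{\KulL(H)}=\KulL\parentesis{\alpha H\alpha^{-1}}$, which indeed follows term by term from Definition~\ref{defn_kulkarni}. Since $\mathcal{C}(\Gamma)$ is \emph{defined} as $\KulL(\core(\Gamma))$, this gives the invariance in one line without ever touching the explicit description of $\mathcal{C}(\Gamma)$ as a pencil of lines. What the abstract route buys is brevity and generality (it would work for any normal subgroup in place of $\core(\Gamma)$); what the paper's computation buys is the explicit transformation law $\gamma(\ell_{x,y})=\ell_{\gamma_{33}x,\,\gamma_{22}y-\gamma_{23}x}$, which the author reuses immediately afterwards (e.g.\ in Proposition~\ref{prop_control_nodiscreto_conjlim_2puntos_irreducible}) and which your soft argument does not produce. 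Since you supply both, nothing is missing.
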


\begin{proof}
For $g_{x,y}\in\text{Core}(\Gamma)$, denote by $\ell_{x,y}=\overleftrightarrow{e_1,[0:-y:x]}\subset \mathcal{C}(\Gamma)$ the line determined by the element $g_{x,y}$. Let $\gamma=\left[\gamma_{ij}\right]\in\Gamma$, observe that
		$$\gamma g_{x,y}\gamma^{-1}=g_{\frac{\gamma_{11}}{\gamma_{22}}x, \frac{\gamma_{11}}{\gamma_{22}\gamma_{33}}(\gamma_{22} y - \gamma_{23} x)}\in \text{Core}(\Gamma).$$ 
	This element determines the line $\ell_{\gamma_{33}x,\gamma_{22}y-\gamma_{23}x}$. Therefore, this is a line in $\mathcal{C}(\Gamma)$. On the other hand, a direct calculation shows that $\gamma(\ell_{x,y})=\ell_{\gamma_{33}x,\gamma_{22}y-\gamma_{23}x}$. This proves that $\gamma$ leaves $\mathcal{C}(\Gamma)$ invariant.
\end{proof}

We have shown that every element $\gamma\in\Gamma$ moves the line $\ell_{x,y}$ to the line $\gamma(\ell_{x,y})$ according to the proof of the last proposition, also, the line $\overleftrightarrow{e_1,e_2}$ is fixed by every element of $\Gamma$. In particular, loxodromic elements leave $\mathcal{C}(\Gamma)$ invariant. This inva-riance imposes strong restrictions on these loxodromic elements.\\

We say that the discrete group $\Gamma\subset U_+$ is \emph{conic} if $\overline{\mathcal{C}(\Gamma)}$ is a cone homeomorphic to the complement of $\mathbb{C}\times\left(\mathbb{H}^+\cup\mathbb{H}^-\right)$. If $\mathcal{C}(\Gamma)$ is a line, we say that $\Gamma$ is \emph{non-conic}. In \cite{ppar}, conic groups are called \emph{irreducible}, and non-conic groups, \emph{reducible}. 

\begin{prop}\label{prop_kernel_igual_core}
Let $\Gamma\subset U_+$ be a non-commutative discrete group such that one of the following hypothesis hold:
	\begin{itemize}
	\item Its control group $\Sigma=\Pi(\Gamma)$ is discrete.
	\item $\Sigma$ is not discrete and $\Lambda_{\text{Gr}}(\Sigma)=\mathbb{S}^1$.
	\item $\Sigma$ is not discrete and $|\Lambda_{\text{Gr}}(\Sigma)|=2$.
	\end{itemize}
Then $\text{Ker}(\Gamma)=\text{Core}(\Gamma)$.
\end{prop}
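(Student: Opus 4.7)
My plan is to establish the nontrivial inclusion $\kernel(\Gamma)\subseteq\core(\Gamma)$; the reverse one is immediate from the definition. I would start by fixing the natural control projection for the upper-triangular setting, namely from $p=e_1$ (a common fixed point of all of $U_+$) onto $\ell=\linproy{e_2,e_3}$, so that every $\alpha=\corchetes{\alpha_{ij}}\in U_+$ has
	$$\Pi(\alpha)=\corchetes{\begin{array}{cc} \alpha_{22} & \alpha_{23} \\ 0 & \alpha_{33} \end{array}}\in\psl.$$
Then $\Pi(\alpha)=\id$ forces $\alpha_{22}=\alpha_{33}$ and $\alpha_{23}=0$, so every element of $\kernel(\Gamma)$ automatically lies in $\kernel(\lambda_{23})$, and the problem reduces to showing that $\lambda_{12}(\alpha)=1$ as well.

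Suppose, for contradiction, that $\alpha_{11}\neq\alpha_{22}$. Since $\alpha_{23}=0$ and $\alpha_{11}-\alpha_{22}\neq 0$, the eigenspace of the repeated eigenvalue $\alpha_{22}$ is two-dimensional, so $\alpha$ is diagonalizable with simple eigenvalue $\alpha_{11}$ and double eigenvalue $\alpha_{22}$. The $\SL$ normalization yields $\alpha_{11}=\alpha_{22}^{-2}$, and setting $\lambda:=\alpha_{22}$ we see that $\alpha$ is conjugate to $\text{Diag}(\lambda^{-2},\lambda,\lambda)$.

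I would then split into two cases. If $\valorabs{\lambda}\neq 1$, $\alpha$ is a type I complex homothety, contradicting one of Propositions \ref{prop_HC_conmutativo}, \ref{prop_HC_conmutativo_control_no_discreto} or \ref{prop_HC_conmutativo_control_no_discreto_grenlim_2ptos} depending on which of the three hypotheses on $\Sigma$ is in force. If $\valorabs{\lambda}=1$, then $\alpha$ is elliptic and, up to projective rescaling, acts as $\text{Diag}(\lambda^{-3},1,1)$: if $\lambda^{-3}$ is a root of unity then $\alpha$ has finite order, excluded by the standing torsion-freeness assumption (cf.\ Lemma \ref{lem_control_nodiscreto_descartar_grL_vacio}); if $\lambda^{-3}=e^{2\pi i \theta}$ with $\theta\in\R\setminus\Q$, picking $n_k$ so that $\lambda^{-3n_k}\to 1$ gives $\alpha^{n_k}\to\id$ in $\PSL$, contradicting the discreteness of $\Gamma$. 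Hence $\alpha_{11}=\alpha_{22}$; then all three diagonal entries coincide, the $\SL$-normalization makes them a cube root of unity which is projectively trivial, and so $\alpha=g_{\alpha_{12},\alpha_{13}}\in\core(\Gamma)$.

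The main obstacle is the elliptic case $\valorabs{\lambda}=1$: discreteness alone only disposes of the irrationally-rotating subcase, while the finite-order subcase really needs the torsion-freeness hypothesis. Apart from this subtlety, the proof is a careful case analysis that routes each of the three control-group scenarios to its dedicated "no type I complex homothety" proposition from Subsection \ref{subsec_restrictions}.
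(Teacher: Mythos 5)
Your proof is correct and follows essentially the same route as the paper: reduce a kernel element to the form with $\alpha_{22}=\alpha_{33}$, $\alpha_{23}=0$ and $\alpha_{11}=\alpha_{22}^{-2}$, dispose of $\valorabs{\alpha_{22}}\neq 1$ by routing each of the three hypotheses on $\Sigma$ to its corresponding ``no type I complex homothety'' proposition (Propositions \ref{prop_HC_conmutativo}, \ref{prop_HC_conmutativo_control_no_discreto}, \ref{prop_HC_conmutativo_control_no_discreto_grenlim_2ptos}), and dispose of $\valorabs{\alpha_{22}}=1$ with $\lambda_{12}(\alpha)\neq 1$ by elliptic/torsion considerations. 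The only divergence is in that last sub-case: the paper labels the element an irrational ellipto-parabolic and cites Proposition \ref{prop_EPI_conmutativo}, whereas you correctly note that with $\alpha_{23}=0$ the element is diagonalizable, hence an infinite-order elliptic element when the rotation is irrational, and conclude directly from discreteness (resp.\ torsion-freeness in the root-of-unity sub-case) --- which is the cleaner and, strictly speaking, more accurate way to close that case, since the hypotheses of Proposition \ref{prop_EPI_conmutativo} require a non-diagonalizable element.
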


\begin{proof}
We only have to prove that $\text{Ker}(\Gamma)\subset\text{Core}(\Gamma)$. Let $\gamma\in\text{Ker}(\Gamma)$, then
	$$\gamma=\left[\begin{array}{ccc}
	\alpha^{-2} & \gamma_{12} & \gamma_{13} \\
	0 & \alpha & 0 \\
	0 & 0 & \alpha\\
	\end{array}\right]\text{, for some }\alpha\in\mathbb{C}^\ast$$
	\begin{itemize}
	\item If $|\alpha|\neq 1$, then $\gamma$ is a complex homothety. Using the hypotheses and Propositions \ref{prop_HC_conmutativo}, \ref{prop_HC_conmutativo_control_no_discreto} and \ref{prop_HC_conmutativo_control_no_discreto_grenlim_2ptos}, $\Gamma$ would be commutative.
	\item If $|\alpha|=1$ but $\alpha\neq 1$, then $\alpha=e^{2\pi i \theta}$ for some $\theta\in\mathbb{R}\setminus\mathbb{Q}$ (otherwise, $\lambda_{12}(\Gamma)$ would not be a torsion-free group). Then $\gamma$ is a irrational ellipto-parabolic element, and then $\Gamma$ would be commutative (Proposition \ref{prop_EPI_conmutativo}).
	\end{itemize}
This contradictions imply that $\alpha=1$ and therefore, $\gamma\in\text{Core}(\Gamma)$.
\end{proof}

The proof of the following corollary is similar to the proof of the previous proposition.

\begin{cor}\label{cor_diagonal_es_1}
Under the hypotheses of the previous proposition, if 
	$$\gamma=\left[\begin{array}{ccc}
	\alpha^{-2} & \gamma_{12} & \gamma_{13} \\
	0 & \alpha & \gamma_{23} \\
	0 & 0 & \alpha\\
	\end{array}\right]\in\Gamma,$$
then $\alpha=1$. 
\end{cor}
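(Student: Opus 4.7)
The plan is to extend the case analysis from the proof of Proposition \ref{prop_kernel_igual_core} by splitting each of the two cases $|\alpha|\neq 1$ and $|\alpha|=1,\alpha\neq 1$ according to whether $\gamma_{23}=0$ or $\gamma_{23}\neq 0$; in every case I will exhibit a contradiction with either the discreteness or the non-commutativity of $\Gamma$, forcing $\alpha=1$.

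Suppose first $|\alpha|\neq 1$. If $\gamma_{23}=0$ the element $\gamma$ is diagonalizable with eigenvalues $\alpha^{-2},\alpha,\alpha$ and is therefore a type I complex homothety, so Propositions \ref{prop_HC_conmutativo}, \ref{prop_HC_conmutativo_control_no_discreto} and \ref{prop_HC_conmutativo_control_no_discreto_grenlim_2ptos} (applied according to which sub-hypothesis on $\Sigma$ holds) each force $\Gamma$ to be commutative. If $\gamma_{23}\neq 0$ then $\gamma$ is loxo-parabolic and, after conjugating inside $U_+$, I can assume $\gamma_{12}=\gamma_{13}=0$ without altering any relevant property of $\Gamma$. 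Proposition \ref{prop_kernel_igual_core} yields $\kernel(\Gamma)=\core(\Gamma)$, so when $\core(\Gamma)\neq\set{\id}$ I pick a non-trivial $g_{x,y}\in\core(\Gamma)$ and verify inductively that
\[
\gamma^n g_{x,y}\gamma^{-n} \;=\; g_{\alpha^{-3n}x,\;\alpha^{-3n}y - n\alpha^{-3n-1}x\gamma_{23}};
\]
choosing the sign of $n$ so that $|\alpha|^{-3n}\to 0$, the right-hand side produces a sequence of pairwise distinct elements of $\Gamma$ converging to $\id$, contradicting discreteness.

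If instead $\core(\Gamma)=\set{\id}$, Proposition \ref{prop_kernel_finito_kernel_trivial} gives $\kernel(\Gamma)=\set{\id}$ and $\Pi$ is injective on $\Gamma$; I then pick $\sigma\in\Gamma$ not commuting with $\gamma$ and exploit that $[\sigma,\gamma]$ lies automatically in $\kernel(\lambda_{23})$ and is a non-trivial unipotent element. A direct computation of $\gamma^n [\sigma,\gamma] \gamma^{-n}$ along the same lines produces again a sequence of distinct elements converging in $\PSL$ --- the potentially problematic $(2,3)$-entry stays constant because $[\sigma,\gamma]$ has equal diagonal entries --- again contradicting discreteness.

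In the remaining case $|\alpha|=1$ with $\alpha\neq 1$, torsion-freeness of $\lambda_{12}(\Gamma)$ forces $\alpha=e^{2\pi i\theta}$ with $\theta\in\R\setminus\Q$. If $\gamma_{23}\neq 0$ the element $\gamma$ matches the first form of Proposition \ref{prop_EPI_conmutativo} as an irrational ellipto-parabolic element, hence $\Gamma$ is commutative, a contradiction. If $\gamma_{23}=0$, then $\gamma$ is diagonalizable with eigenvalues of modulus $1$ but not roots of unity, so by Weyl equidistribution there is a subsequence $n_k$ with $\alpha^{n_k}\to 1$, giving $\gamma^{n_k}\to \id$ in $\PSL$ and contradicting discreteness. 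The technically most delicate step is the loxo-parabolic sub-case $|\alpha|\neq 1$, $\gamma_{23}\neq 0$: because there is no analogue of Propositions \ref{prop_HC_conmutativo}--\ref{prop_HC_conmutativo_control_no_discreto_grenlim_2ptos} for loxo-parabolic elements, the contradiction has to be extracted by hand from the conjugation dynamics, and the trivial-core sub-sub-case requires the commutator trick above to produce an element of $\kernel(\lambda_{23})$ on which the argument can be run.
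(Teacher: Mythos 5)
Your case division is the right one, and you have correctly identified the real issue that the paper's one-line proof (``similar to the previous proposition'') glosses over: elements of $\kernel(\Pi)$ automatically have $\gamma_{23}=0$, so the loxo-parabolic case $\valorabs{\alpha}\neq 1$, $\gamma_{23}\neq 0$ is precisely the part that Proposition \ref{prop_kernel_igual_core} never meets and that needs new work. Your handling of the other cases is correct (type I complex homothety excluded by Propositions \ref{prop_HC_conmutativo}--\ref{prop_HC_conmutativo_control_no_discreto_grenlim_2ptos}, irrational ellipto-parabolic excluded by Proposition \ref{prop_EPI_conmutativo}, infinite-order elliptic excluded by discreteness), the unipotent conjugation normalizing $\gamma_{12}=\gamma_{13}=0$ changes neither $\Sigma$ nor $\core(\Gamma)$, and the formula $\gamma^n g_{x,y}\gamma^{-n}=g_{\alpha^{-3n}x,\;\alpha^{-3n}y-n\alpha^{-3n-1}x\gamma_{23}}$ does produce, for the right sign of $n$, a sequence of distinct elements converging to the identity whenever $(x,y)\neq(0,0)$. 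So the branch $\core(\Gamma)\neq\set{\id}$ is solid.

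The branch $\core(\Gamma)=\set{\id}$ has a genuine gap. The commutator $[\sigma,\gamma]$ is indeed unipotent, but conjugating it by powers of $\gamma$ only yields distinct elements when its $(1,2)$- or $(1,3)$-entry is non-zero: one checks directly that $\gamma h_{0,w}\gamma^{-1}=h_{0,w}$, so if $[\sigma,\gamma]=h_{0,w}$ the sequence $\gamma^n[\sigma,\gamma]\gamma^{-n}$ is constant and no contradiction is obtained. This degenerate configuration does occur at the level of the computation: for $\sigma=\text{Diag}(a,b,c)$ with $b\neq c$ one gets $[\sigma,\gamma]=h_{0,\,\alpha^{-1}\gamma_{23}(bc^{-1}-1)}$, which commutes with $\gamma$. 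To close the case you must argue further, e.g.\ conjugate $h_{0,w}$ by $\sigma$ itself: $\sigma h_{0,w}\sigma^{-1}=h_{w\sigma_{12}\sigma_{33}^{-1},\,w\sigma_{22}\sigma_{33}^{-1}}$, and since $w\neq 0$ forces $\lambda_{23}(\sigma)=\sigma_{22}\sigma_{33}^{-1}\neq 1$, either $\valorabs{\lambda_{23}(\sigma)}\neq 1$ and iteration gives distinct elements tending to $\id$, or $\lambda_{23}(\sigma)$ is an irrational rotation (torsion-freeness of $\lambda_{23}(\Gamma)$) and a subsequence of the conjugates accumulates at $h_{0,w}\in\Gamma$; either way discreteness fails. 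A second, smaller omission in the same branch: the existence of $\sigma\in\Gamma$ not commuting with $\gamma$ is asserted but not proved, and non-commutativity of $\Gamma$ does not by itself prevent $\gamma$ from being central. It is true---Proposition \ref{prop_formas_conmutativas} would force every non-trivial element of a group centralizing $\gamma$ into $F_3$ with $\sigma_{12}=\sigma_{13}=0$ and $\sigma_{22}=\sigma_{33}$, and all such elements commute with one another, contradicting non-commutativity---but this step has to be said.
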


\begin{prop}\label{prop_control_nodiscreto_conjlim_2puntos_irreducible}
Let $\Gamma\subset U_+$ be a non-commutative, discrete group such that $|\Lambda_{\text{Gr}}(\Pi(\Gamma))|=2$. Let $\ell$ be a line passing through $e_1$ such that $\ell\neq \overleftrightarrow{e_1,e_2}$, $\ell\neq \overleftrightarrow{e_1,e_3}$ and $\ell\subset\mathcal{C}(\Gamma)$, then $\Gamma$ is conic.  
\end{prop}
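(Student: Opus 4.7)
The plan is to argue by contradiction: assume $\Gamma$ is non-conic, so that $\mathcal{C}(\Gamma)$ reduces to the single line $\ell = \ell_{x_0,y_0}$ with $x_0 y_0\neq 0$, and derive that $\Gamma$ must be commutative. First, exploit that $\core(\Gamma)$ is normal in $\Gamma$: for every $\gamma\in\Gamma$, the conjugate $\gamma g_{x_0,y_0}\gamma^{-1}\in\core(\Gamma)$ determines a line of $\mathcal{C}(\Gamma)$, which by hypothesis must be $\ell$. Using the formula from the proof of Proposition \ref{prop_cono_invariante}, equality of projective parameters yields the rigidity relation
$$\gamma_{23}x_0 = (\gamma_{22}-\gamma_{33})y_0 \quad \text{for every } \gamma\in\Gamma,$$
and the conjugation then reduces to the scaling $\gamma g_{x_0,y_0}\gamma^{-1} = g_{\lambda x_0,\lambda y_0}$ with $\lambda = \lambda_{12}(\gamma)$.

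Iterating, $\gamma^n g_{x_0,y_0}\gamma^{-n} = g_{\lambda^n x_0,\lambda^n y_0}\in\core(\Gamma)$ for every $n\in\Z$. Discreteness of $\core(\Gamma)$ in $\C^2$ rules out $|\lambda|\neq 1$ (accumulation at $0$ or $\infty$) as well as irrational units (density on the unit circle); thus $\lambda_{12}(\Gamma)$ is a finite cyclic group of roots of unity. Invoking the standing torsion-freeness of $\lambda_{12}(\Gamma)$, which is the same convention tacitly used in the proof of Corollary \ref{cor_IS_RotInf_prohibidas}, we conclude $\lambda_{12}(\Gamma) = \set{1}$, i.e., $\gamma_{11} = \gamma_{22}$ and $\gamma_{33} = \gamma_{22}^{-2}$ for every $\gamma\in\Gamma$.

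For the endgame, conjugate $\Gamma$ inside $U_+$ by the element $h\in U_+$ with $h_{23} = y_0/x_0$; this preserves the $U_+$ structure and the conic/non-conic dichotomy, sends every $\gamma_{23}$ to $0$, and moves $\ell$ to $\linproy{e_1,e_3}$. Every element of $\Gamma$ now has the form
$$\gamma = \corchetes{\begin{array}{ccc}a & a_{12} & a_{13} \\ 0 & a & 0 \\ 0 & 0 & a^{-2}\end{array}}, \qquad a=\gamma_{22}\in\C^{\ast},$$
and a direct computation shows that the commutator $[\alpha,\beta]\in\core(\Gamma)$ has zero $(1,2)$-entry, while its $(1,3)$-entry is proportional to $\beta_{13}f(a)-\alpha_{13}f(b)$ with $f(t)=t-t^{-2}$. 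Since $\core(\Gamma)$ now consists solely of elements $g_{x,0}$, this $(1,3)$-entry must vanish, which forces $\gamma_{13}=C\,f(\gamma_{22})$ for a single constant $C$ and every $\gamma\in\Gamma$. A further conjugation by $g_{0,C}\in U_+$ kills every $(1,3)$-entry, leaving elements whose $2\times 2$ upper-triangular blocks have equal diagonals and therefore commute pairwise. Hence $\Gamma$ is commutative, contradicting the hypothesis. The key technical obstacle is the previous step, where discreteness alone restricts each $\lambda_{12}(\gamma)$ only to a root of unity; the passage to the trivial image genuinely requires the torsion-freeness assumption on $\lambda_{12}(\Gamma)$, without which rational-screw elements such as $\text{Diag}(2,-2,-1/4)$ produce non-commutative discrete triangular groups with single-line $\mathcal{C}(\Gamma)$ that satisfy every stated hypothesis.
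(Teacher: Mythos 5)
Your proof is correct, but it takes a genuinely different route from the paper's. The paper uses the hypothesis $\valorabs{\grL(\Pi(\Gamma))}=2$ at the outset: it forces every element of $\Gamma$ (outside the kernel) to have $\gamma_{23}=0$ and $\valorabs{\gamma_{22}}\neq\valorabs{\gamma_{33}}$, and then the invariance of the single line $\ell_{x,y}$ under conjugation of $g_{x,y}$ immediately yields $\gamma_{22}=\gamma_{33}$ --- a three-line contradiction. You never invoke $\valorabs{\grL(\Pi(\Gamma))}=2$; instead you show directly that a non-conic group whose cone is a single generic line must be commutative, contradicting non-commutativity. Your rigidity relation $\gamma_{23}x_0=(\gamma_{22}-\gamma_{33})y_0$, the scaling $\gamma g_{x_0,y_0}\gamma^{-1}=g_{\lambda x_0,\lambda y_0}$ with $\lambda=\lambda_{12}(\gamma)$, the elimination of $\lambda\neq 1$ via discreteness of $\core(\Gamma)$ plus torsion-freeness of $\lambda_{12}(\Gamma)$, and the two normalizing conjugations are all verifiable and correct; the net effect is a strictly stronger statement (the Greenberg hypothesis is not needed), at the cost of a much longer argument. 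The only step you leave implicit is the degenerate case where $f(\gamma_{22})=\gamma_{22}-\gamma_{22}^{-2}=0$ for every element, but then every element is unipotent in $\PSL$ and commutativity is immediate. One caveat: your closing remark about necessity of torsion-freeness is not supported by the example you give, since $\text{Diag}(2,-2,-1/4)$ violates your own rigidity relation ($0\neq(-2+\tfrac14)y_0$ for $y_0\neq 0$) and hence cannot preserve a single generic line of $\mathcal{C}(\Gamma)$; it maps $\ell_{x_0,y_0}$ to $\ell_{x_0,8y_0}$ and would make the group conic. This does not affect the validity of the main argument.
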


\begin{proof} 
If $|\Lambda_{\text{Gr}}(\Pi(\Gamma))|=2$, then up to conjugation, every element of $\Sigma$ has the form $\gamma=\text{Diag}\left(\beta,\delta\right)$, where $|\beta|\neq |\delta|$. Then every element of $\Gamma$ has the form
	\begin{equation}\label{eq_dem_prop_control_nodiscreto_conjlim_2puntos_irreducible_1}
	\gamma=\left[\begin{array}{ccc}
	\gamma_{22}^{-1}\gamma_{33}^{-1} & \gamma_{12} & \gamma_{13} \\
	0 & \gamma_{22} & 0 \\
	0 & 0 & \gamma_{33}\\
	\end{array}\right]\text{, with }|\gamma_{22}|\neq |\gamma_{33}|.
	\end{equation}
Let us assume that $\Gamma$ is not conic, with $\mathcal{C}(\Gamma)=\ell$. Let $[0:-y:x]\in\overleftrightarrow{e_2,e_3}$ such that $\ell=\ell_{x,y}$, then, by hypothesis, $x\neq 0$ and $y\neq 0$. Since $|\text{Ker}(\Gamma)|=\infty$, using Proposition \ref{prop_kernel_igual_core} it follows that $\text{Ker}(\Gamma)=\text{Core}(\Gamma)$. Since $\text{Ker}(\Gamma)$ is a normal subgroup of $\Gamma$, so is $\text{Core}(\Gamma)$. Therefore, if $\gamma=\left[\gamma_{ij}\right]\in\Gamma$, then $\gamma g_{x,y} \gamma^{-1} = h_{x',y'}\in\text{Core}(\Gamma)$ where $x'=x\gamma_{22}^{-2}\gamma_{33}^{-1}$ and $y'=y\gamma_{22}^{-1}\gamma_{33}^{-2}$. This element $\gamma g_{x,y} \gamma^{-1}$ determines the same line $\ell_{x,y}$ in $\mathcal{C}(\Gamma)$. Then 
	$$\left[-y\gamma_{22}^{-1}\gamma_{33}^{-2}:x\gamma_{22}^{-2}\gamma_{33}^{-1}\right]=\left[-y:x\right],$$
which means that $\gamma_{22}=\gamma_{33}$, contradicting (\ref{eq_dem_prop_control_nodiscreto_conjlim_2puntos_irreducible_1}). This contradiction proves the proposition.	
\end{proof}

\begin{prop}\label{prop_proyectiv_adit}
Let $W\subset\mathbb{C}^2$ be a non-empty and $\mathbb{R}$-linearly independent set, consider $\ell=\overline{\left[\text{Span}_\mathbb{Z}(W)\setminus\{0\}\right]}$. Then:
	\begin{enumerate}[(i)]
	\item If $W$ has exactly one point or has exactly two points, which are $\mathbb{C}$-linearly dependent, then $\ell$ is a single point.
	\item If $W$ has exactly two points, such that they are $\mathbb{C}$-linearly independent, then $\ell$ is a real line in $\mathbb{CP}^1$.
	\item If $W$ contains more that two points, then $\ell=\mathbb{CP}^1$.
	\end{enumerate}
\end{prop}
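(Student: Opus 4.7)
The approach is to reduce the computation of $\ell$ to a question about the real span $V:=\text{Span}_\R(W)\subset\C^2$. First, observe that $\corchetes{\text{Span}_\Z(W)\setminus\set{0}}=\corchetes{\text{Span}_\Q(W)\setminus\set{0}}$: any $\Q$-combination of elements of $W$ can be scaled by the common denominator of its coefficients to become an integer combination with the same projective class. Since $\Q$ is dense in $\R$, the set $\text{Span}_\Q(W)$ is dense in $V$, and continuity of the projectivization map on $\C^2\setminus\set{0}$ then gives $\ell\supset\corchetes{V\setminus\set{0}}$. Conversely, $\corchetes{V\setminus\set{0}}$ is the continuous image of the compact set $V\cap S^3$, hence closed in $\CP^1$, so we obtain $\ell=\corchetes{V\setminus\set{0}}$. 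The proposition thus reduces to a case analysis on $\dim_\R V$ and on whether $V$ is a complex line.

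The first two cases are then immediate. For case (i), if $|W|=1$ then $\dim_\R V=1$ and $\corchetes{V\setminus\set{0}}$ is a single point; if $W=\set{w_1,cw_1}$ with $c\in\C\setminus\R$ (forced by $\R$-linear independence), then $V=(\R+\R c)w_1=\C w_1$ is a complex line, again projecting to a single point. For case (ii), choosing a $\C$-basis that sends $w_1,w_2$ to $e_1,e_2$, we have $V=\R^2\subset\C^2$, so $\corchetes{V\setminus\set{0}}=\R\cup\set{\infty}\cong\R\mathbb{P}^1$, a real line in $\CP^1$.

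The main technical step, and the step I expect to be the main obstacle, is case (iii). When $|W|=4$, $V=\C^2$ and there is nothing to prove. When $|W|=3$, I must show both that $V$ contains a complex line and that its projectivization is all of $\CP^1$. For the first point, both $V$ and $iV$ are $3$-dimensional $\R$-subspaces of $\C^2\cong\R^4$, so $\dim_\R(V\cap iV)\geq 3+3-4=2$; a direct check shows $V\cap iV$ is $i$-stable (if $v=iw$ with $w\in V$, then $iv=-w\in V$), hence a $\C$-subspace $L\subset V$ of complex dimension one. Writing $V=L\oplus\R w$ for some $w\in V\setminus L$ and $L=\C l_0$, the projectivization of $V\setminus\set{0}$ splits into $\set{[l_0]}$ (the contribution from $t=0$) together with the image of the affine complex line $\set{cl_0+w:c\in\C}$; the linear system $cl_0+w=\lambda(\alpha,\beta)$ in the unknowns $c,\lambda\in\C$ has a unique (nonzero) solution whenever $[\alpha:\beta]\neq[l_0]$, so the total image is $\CP^1$.
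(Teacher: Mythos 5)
Your proof is correct, and the key step is handled by a genuinely different argument than the paper's. Both proofs share the same skeleton: identify $\ell$ with $\corchetes{\text{Span}_\R(W)\setminus\set{0}}$ and then analyze the real span $V$ case by case. You make that reduction explicit and clean (passing through $\text{Span}_\Q$, density, and compactness of $V\cap S^3$ to get closedness), whereas the paper only asserts the corresponding equality of sets in normalized coordinates. The real divergence is in case (iii): the paper normalizes to $W=\set{(1,0),(0,1),(w_1,w_2)}$ with $w_1,w_2\neq 0$ and exhibits an explicit solution $(r_0,s_0,t_0)$ of a real-linear system to show every $\corchetes{1:z}$ is attained; you instead prove the structural fact that any real $3$-dimensional subspace $V\subset\C^2$ contains a complex line $L=V\cap iV$ (by the dimension count $3+3-4=2$ plus $i$-stability) and then show $\corchetes{(L\oplus\R w)\setminus\set{0}}=\CP^1$ by a coordinate-free surjectivity argument. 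Your route is slightly longer to set up but buys two things: it avoids explicit formulas, and it covers the degenerate normalizations (e.g.\ $W=\set{(1,0),(0,1),(i,0)}$, where $w_2=0$) that the paper's stated normalization quietly excludes. It also handles $\valorabs{W}=4$ with no extra work.
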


\begin{proof}
If $|W|=1$ or $W$ has exactly two points, which are $\mathbb{C}$-linearly dependent, it follows trivially that $\ell$ is a single point. If $W$ has exactly two points, such that they are $\mathbb{C}$-linearly independent, we can assume that $W=\{(1,0),(0,1)\}$, then $\left[\text{Span}_\mathbb{Z}(W)\setminus\{0\}\right]\cong \mathbb{Q}$ and therefore $\ell=\hat{\mathbb{R}}$, which is a line in $\mathbb{CP}^1$. Finally, if $W$ contains more that two points, we can assume that $W=\{(1,0),(0,1),(w_1,w_2)\}$, where $w_1,w_2\neq 0$. Then 
	$$\ell = \overline{\left\{\left[k+nw_1:m+nw_2\right]\,\middle|\,k,m,n\in\mathbb{Z}\right\}} = \left\{\left[r+sw_1:t+sw_2\right]\,\middle|\,r,s,t\in\mathbb{R}\right\}.$$
Clearly $\left[0:1\right]\in\ell$. Let $z\in\mathbb{C}$ such that $\text{Im}(z)\neq 0$, denoting
	$$r_0=\frac{\text{Im}(w_2)-\text{Re}(z)\text{Im}(w_1)}{\text{Im}(z)},\;\; s_0=1,\;\;t_0=z(r+w_1)-w_2,$$
it follows that $\left[1:z\right]=\left[r_0+s_0w_1:t_0+s_0w_2\right]$. If $\text{Im}(z)=0$, clearly $\left[1:z\right]\in\ell$. Therefore $\ell=\mathbb{CP}^1$.
\end{proof}

\subsection{Decomposition of non-commutative discrete groups of $U_+$}\label{subsec_decomposition}

In this subsection we state and prove the main theorem of this section. For the sake of clarity, we will divide the theorem in two parts (Theorems \ref{thm_descomposicion_caso_noconmutativo} and \ref{thm_descomposicion_caso_noconmutativo2}).

\begin{thm}\label{thm_descomposicion_caso_noconmutativo}
Let $\Gamma\subset U_{+}$ be a non-commutative, torsion-free, complex Kleinian group, then $\Gamma$ can be written in the following way
	$$\Gamma = \text{Core}(\Gamma)\rtimes\langle\xi_1\rangle\rtimes...\rtimes\langle\xi_r\rangle\rtimes\langle\eta_1\rangle\rtimes...\rtimes\langle\eta_m\rangle\rtimes\langle\gamma_1\rangle\rtimes...\rtimes\langle\gamma_n\rangle$$
	where
	$$\begin{array}{rl}
	\lambda_{23}(\Gamma)=\langle\lambda_{23}(\gamma_1),...,\lambda_{23}(\gamma_n)\rangle, & n=\text{rank}\left(\lambda_{23}(\Gamma)\right).\\
	\lambda_{12}\left(\text{Ker}(\lambda_{23})\right)=\langle\lambda_{12}(\eta_1),...,\lambda_{12}(\eta_m)\rangle, & m=\text{rank}\left(\lambda_{12}(\text{Ker}(\lambda_{23})\right).\\
	\Pi\left(\text{Ker}(\lambda_{12})\cap\text{Ker}(\lambda_{23})\right)=\langle\Pi(\xi_1),...,\Pi(\xi_r)\rangle, & r=\text{rank}\left(\Pi\left(\text{Ker}(\lambda_{12})\cap\text{Ker}(\lambda_{23})\right)\right).
	\end{array}$$
	
\end{thm}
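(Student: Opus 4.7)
The plan is to construct the iterated semi-direct product by repeatedly splitting the canonical tower of normal subgroups
$$\core(\Gamma)\ \trianglelefteq\ \kernel(\lambda_{12})\cap\kernel(\lambda_{23})\ \trianglelefteq\ \kernel(\lambda_{23})\ \trianglelefteq\ \Gamma,$$
where each inclusion is normal in $\Gamma$ because $\lambda_{12}$, $\lambda_{23}$ and $\Pi$ are group homomorphisms (so $\kernel(\lambda_{12})$, $\kernel(\lambda_{23})$ and $\kernel(\Pi)$ are each normal in $\Gamma$, and their intersection $\core(\Gamma)$ is therefore normal as well). The quotients of this tower are $\Pi(\kernel(\lambda_{12})\cap\kernel(\lambda_{23}))$, $\lambda_{12}(\kernel(\lambda_{23}))$ and $\lambda_{23}(\Gamma)$, via the first isomorphism theorem applied to $\Pi$, $\lambda_{12}$ and $\lambda_{23}$ respectively. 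The strategy is then to show that each of these three quotients is a finitely generated torsion-free abelian group; by projectivity of $\Z^k$ every such extension splits, lifts of bases of each quotient provide the elements $\xi_1,\ldots,\xi_r$, $\eta_1,\ldots,\eta_m$ and $\gamma_1,\ldots,\gamma_n$, and composing the three splittings assembles the claimed decomposition with the prescribed generator counts.

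The first technical point is torsion-freeness of each quotient. If $\lambda_{23}(\gamma)$ were a non-trivial root of unity, an appropriate power of $\gamma$ would either be a torsion element of $\Gamma$ (forbidden) or be conjugate to an irrational screw or an irrational ellipto-parabolic element; in the latter cases Corollary \ref{cor_IS_conmutativo}, Corollary \ref{cor_IS_RotInf_prohibidas} or Proposition \ref{prop_EPI_conmutativo} would force $\Gamma$ to be commutative. The same reasoning handles $\lambda_{12}(\kernel(\lambda_{23}))$. For the innermost quotient, a direct computation shows that every element of $\kernel(\lambda_{12})\cap\kernel(\lambda_{23})$ is unipotent upper triangular and that its image under $\Pi$ is parabolic (or trivial), lying in the additive subgroup of $\psl$ fixing the base point of the control projection, which is isomorphic to $(\C,+)$ and has no torsion at all.

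Next I would establish discreteness of each quotient in its natural ambient abelian group. For the two outer quotients in $(\C^\ast,\cdot)$, an accumulation of values at $1$ would, via a conjugation argument of the type used in Propositions \ref{prop_IS_discreto_invariante} and \ref{prop_HC_discreto_invariante} (combined with the absence of type I complex homotheties granted by Corollary \ref{cor_HC_no_hay_en_no_conmutativos}), produce a sequence of distinct elements of $\Gamma$ converging to $\id$, contradicting discreteness of $\Gamma$. For the innermost quotient, Proposition \ref{prop_kernel_igual_core} gives $\kernel(\Gamma)=\core(\Gamma)$, so $\Pi$ is injective on $(\kernel(\lambda_{12})\cap\kernel(\lambda_{23}))/\core(\Gamma)$ and this image sits discretely inside $(\C,+)\cong\R^2$ because $\Gamma$ is discrete. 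A discrete torsion-free subgroup of $(\C^\ast,\cdot)$ and a discrete subgroup of $(\C,+)$ are both finitely generated free abelian, so each quotient is a copy of $\Z^k$ for some finite $k$, and these $k$'s are exactly $n$, $m$, $r$.

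The remaining step is routine: free abelian groups are projective, so each short exact sequence in the tower splits, producing sections whose images are abelian subgroups $\prodint{\gamma_1,\ldots,\gamma_n}\cong\Z^n$, $\prodint{\eta_1,\ldots,\eta_m}\cong\Z^m$ and $\prodint{\xi_1,\ldots,\xi_r}\cong\Z^r$ mapping bijectively onto the respective quotients. Composing the three splittings yields
$$\Gamma = \core(\Gamma)\rtimes\prodint{\xi_1}\rtimes\cdots\rtimes\prodint{\xi_r}\rtimes\prodint{\eta_1}\rtimes\cdots\rtimes\prodint{\eta_m}\rtimes\prodint{\gamma_1}\rtimes\cdots\rtimes\prodint{\gamma_n}.$$
The hardest part will be the simultaneous verification of torsion-freeness and discreteness of the three quotients: every potential obstruction (irrational screws, irrational ellipto-parabolic elements, type I complex homotheties, or accumulation of eigenvalues in $\C^\ast$) must be ruled out by combining the non-commutativity and torsion-freeness hypotheses with the delicate case analysis of Subsection \ref{subsec_restrictions}; once this finiteness is in hand the splitting step is formal.
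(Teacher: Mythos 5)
Your skeleton --- the tower $\core(\Gamma)\trianglelefteq A\trianglelefteq\kernel(\lambda_{23})\trianglelefteq\Gamma$ with $A=\kernel(\lambda_{12})\cap\kernel(\lambda_{23})$, quotients identified via $\Pi$, $\lambda_{12}$, $\lambda_{23}$, and torsion-freeness of the quotients used to kill the intersections --- is the paper's. But your two load-bearing steps fail as stated. The splitting step is the serious one: you invoke ``projectivity of $\Z^k$'' to split each extension and to produce an \emph{abelian} complement $\prodint{\gamma_1,\dots,\gamma_n}\cong\Z^n$. Free abelian groups are projective in the category of abelian groups, not of groups: for $k\geq 2$ an extension $1\to N\to G\to\Z^k\to 1$ with $N$ non-abelian need not split (the discrete Heisenberg group, precisely the kind of unipotent subgroup of $U_+$ in play here, is the standard counterexample, viewed as an extension of $\Z^2$ by its center), and the lifts $\gamma_1,\dots,\gamma_n$ have no reason to commute --- their commutators merely land in $\kernel(\lambda_{23})$. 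The theorem only asserts the weaker iterated decomposition by \emph{cyclic} factors, and that is what must be proved directly: adjoin $\prodint{\gamma_1}$, check $\kernel(\lambda_{23})\cap\prodint{\gamma_1}=\set{\id}$ from torsion-freeness of $\lambda_{23}(\Gamma)$, then check that $\kernel(\lambda_{23})\rtimes\prodint{\gamma_1}$ is normal in the group generated by it and $\gamma_2$ (using $\lambda_{23}(\gamma_2^m\gamma_1^n\gamma_2^{-m})=\lambda_{23}(\gamma_1^n)$) and meets $\prodint{\gamma_2}$ trivially, and iterate. This one-generator-at-a-time argument is not a formality you can replace by a splitting lemma.

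The second gap is finite generation: you derive it from discreteness of the three quotients, but these need not be discrete --- the paper itself handles non-commutative discrete $\Gamma\subset U_+$ with $\Sigma=\Pi(\Gamma)$, hence $\lambda_{23}(\Gamma)$, non-discrete (case [Div-2] in the proof of Theorem \ref{thm_descomposicion_caso_noconmutativo2}), so ``discrete subgroup of $\C^\ast$, hence free abelian of finite rank'' is not available. The correct source is that a discrete triangular subgroup of $\GL$ is itself finitely generated (the paper cites \cite{auslander}), so its homomorphic images $\lambda_{23}(\Gamma)$, $\lambda_{12}(\kernel(\lambda_{23}))$ and $\Pi(A)$ are finitely generated; torsion-freeness then comes from the standing finite-index reduction rather than from your case analysis. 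With these two repairs the rest of your architecture matches the paper's proof.
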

  
\begin{proof}
We will divide the proof in three parts.\\

\underline{Part I. Decomposition of $\Gamma$ in terms of $\text{Ker}(\lambda_{23})$.} Let $\Gamma\subset U_{+}$ be a torsion-free complex Kleinian group. Since $\Gamma$ is triangular, it is finitely generated (see \cite{auslander}), and therefore $\lambda_{23}(\Gamma)$ is finitely generated. Let $n=\text{rank}\left(\lambda_{23}(\Gamma)\right)$, and let $\{\tilde{\gamma}_1,...,\tilde{\gamma}_n\}\subset\mathbb{C}^\ast$ be a generating set for $\lambda_{23}(\Gamma)$. We choose elements $\gamma_1,...,\gamma_n\in\Gamma$ such that $\gamma_i\in\lambda_{23}^{-1}(\tilde{\gamma}_i)$. Observe that 
	\begin{equation}\label{eq_thm_descomposicion_caso_noconmutativo_1}
	\Gamma=\langle\text{Ker}(\lambda_{23}),\gamma_1,...,\gamma_n\rangle.
	\end{equation}
Furthermore, we will prove that 
	\begin{equation}\label{eq_thm_descomposicion_caso_noconmutativo_2}
	\Gamma=\left(\left(\text{Ker}(\lambda_{23})\rtimes\langle\gamma_1\rangle\right)\rtimes...\right)\rtimes\langle\gamma_n\rangle.
	\end{equation}
Since $\lambda_{23}$ is a group homomorphism, $\text{Ker}(\lambda_{23})$ is a normal subgroup of $\Gamma$, and therefore it is a normal subgroup of $\langle\text{Ker}(\lambda_{23}),\gamma_1\rangle$. Now, assume that $\text{Ker}(\lambda_{23})\cap\langle\gamma_1\rangle$ is not trivial, then there exist $p\in\mathbb{Z}$ such that $\gamma_1^p\in\text{Ker}(\lambda_{23})$. If $\gamma_1=\left[a_{ij}\right]$, then the previous assumption means that either $a_{22}=a_{33}$ or $a_{22}^p=a_{33}^p$. In the latter case, this means, without loss of generality that 
	\begin{equation}\label{eq_thm_descomposicion_caso_noconmutativo_3}
	a_{22}a^{-1}_{33}=\omega,
	\end{equation}	 
where $\omega$ is a $p$-th root of the unity, with $p>1$. On the other hand, $\lambda_{23}(\Gamma)\subset\mathbb{C}^{\ast}$ is a torsion-free group, it follows from (\ref{eq_thm_descomposicion_caso_noconmutativo_3}) that $a_{22}a^{-1}_{33}$ is a torsion element of $\lambda_{23}(\Gamma)$; thus $a_{22}=a_{33}$. If this is the case, then $\gamma_1\in\text{Ker}(\lambda_{23})$, contradicting that $\lambda_{23}(\gamma_1)=\tilde{\gamma}_1$ belongs to a generating set for $\lambda_{23}(\Gamma)$. Thus, we can form the semi-direct product $\text{Ker}(\lambda_{23})\rtimes\langle\gamma_1\rangle$.\\ 

Now we verify that we can form the semi-direct product $\left(\left(\text{Ker}(\lambda_{23})\rtimes\langle\gamma_1\rangle\right)\right)\rtimes\langle\gamma_2\rangle$. First, we verify that $\text{Ker}(\lambda_{23})\rtimes\langle\gamma_1\rangle$ is a normal subgroup of $\langle\text{Ker}(\lambda_{23})\rtimes\langle\gamma_1\rangle,\gamma_2\rangle$. Let $g\in\text{Ker}(\lambda_{23})$ and consider arbitrary elements $g\gamma_1^n\in \text{Ker}(\lambda_{23})\rtimes\langle\gamma_1\rangle$ and $\gamma_2^m\in\langle\gamma_2\rangle$, then
	$$
	\gamma_2^m \left(g \gamma_1^n\right) \gamma_2^{-m} = \underbrace{\left(\gamma_2^m g \gamma_2^{-m}\right)}_{\text{Ker}(\lambda_{23})\triangleleft \Gamma}\gamma_2^m \gamma_1^n\gamma_2^{-m}= g_1\cdot \left(\gamma_2^m \gamma_1^n\gamma_2^{-m}\right)$$
where $g_1=\gamma_2^m g \gamma_2^{-m}\in\text{Ker}(\lambda_{23})$. Since $\lambda_{23}\left(g_1\gamma_2^m \gamma_1^n\gamma_2^{-m}\right)=\lambda_{23}\left(\gamma_1^n\right)$, we know that there exists $g_2\in\text{Ker}(\lambda_{23})$ such that $g_1\gamma_2^m \gamma_1^n\gamma_2^{-m}=g_2 \gamma_1^n$. Thus 
	$$\gamma_2^m \left(g \gamma_1^n\right) \gamma_2^{-m}=g_2 \gamma_1^n\in \text{Ker}(\lambda_{23})\rtimes\langle\gamma_1\rangle.$$
Therefore $\text{Ker}(\lambda_{23})\rtimes\langle\gamma_1\rangle$ is a normal subgroup of $\langle\text{Ker}(\lambda_{23})\rtimes\langle\gamma_1\rangle,\gamma_2\rangle$.\\

Now, we verify that $\left(\text{Ker}(\lambda_{23})\rtimes\langle\gamma_1\right)\rangle\cap \langle\gamma_2\rangle=\{\mathit{id}\}$. Assume that there exist $p,q\in\mathbb{Z}$ such that $\gamma_2^p = g\gamma_1^q$ for some $g\in\text{Ker}(\lambda_{23})$ with $\gamma_2^p\neq\mathit{id}$, then 
	\begin{equation}\label{eq_thm_descomposicion_caso_noconmutativo_4}
	\gamma_2^p\gamma_1^{-q}\in \text{Ker}(\lambda_{23})
	\end{equation}		 	
If we write $\gamma_1=\left[\alpha_{ij}\right]$ and $\gamma_2=\left[\beta_{ij}\right]$, then (\ref{eq_thm_descomposicion_caso_noconmutativo_4}) yields 
	\begin{equation}\label{eq_thm_descomposicion_caso_noconmutativo_5}
	\beta_{22}\alpha_{22}^{-\frac{q}{p}}\left(\beta_{33}\alpha_{33}^{-\frac{q}{p}}\right)^{-1}=\omega,
	\end{equation}
where $\omega$ is a $p$-th root of the unity. Then (\ref{eq_thm_descomposicion_caso_noconmutativo_5}) gives a torsion element in the torsion-free group $\lambda_{23}(\Gamma)$. This contradiction proves that  
	$$\left(\text{Ker}(\lambda_{23})\rtimes\langle\gamma_1\right)\rangle\cap \langle\gamma_2\rangle=\{\mathit{id}\}.$$
This verifies that we can define the semi-direct product $\left(\text{Ker}(\lambda_{23})\rtimes\langle\gamma_1\rangle\right)\rtimes\langle\gamma_2\rangle$. Analogously, we can form the semi-direct product $\left(\left(\text{Ker}(\lambda_{23})\rtimes\langle\gamma_1\rangle\right)\rtimes...\right)\rtimes\langle\gamma_n\rangle$. For the sake of clarity, we will just write $\text{Ker}(\lambda_{23})\rtimes\langle\gamma_1\rangle\rtimes...\rtimes\langle\gamma_n\rangle$ instead. Using (\ref{eq_thm_descomposicion_caso_noconmutativo_1}) we have proven (\ref{eq_thm_descomposicion_caso_noconmutativo_2}). \\

\underline{Part II. Decompose $\text{Ker}(\lambda_{23})$ in terms of $\text{Ker}(\lambda_{12})$.} Now, consider the restriction $\lambda_{12}:\text{Ker}(\lambda_{23})\rightarrow \mathbb{C}^\ast$ still denoted by $\lambda_{12}$. Again, $\lambda_{12}\left(\text{Ker}(\lambda_{23})\right)$ is finitely generated, and let $m$ be its rank. Let $\{\tilde{\eta}_1,...,\tilde{\eta}_m\}$ be a generating set, we choose elements $\eta_i\in\lambda_{12}^{-1}\left(\tilde{\eta}_i\right)$. Denote $A=\text{Ker}(\lambda_{12})\cap \text{Ker}(\lambda_{23})$, and observe that $\text{Ker}(\lambda_{23})=\langle A,\eta_1,...,\eta_m\rangle$. Every element of $A$ has the form
	$$\left[\begin{array}{ccc}
	1 & x & y \\
	0 & 1 & z\\
	0 & 0 & 1\\ 
	\end{array}
	\right].$$
Since $A$ is the kernel of the morphism $\lambda_{12}$, $A$ is a normal subgroup of $\langle A,\eta_1\rangle$.\\ 

Now suppose that $A\cap\langle \eta_1\rangle$ is not trivial, let $\eta_1^p\in A$ with $\eta_1^p\neq\mathit{id}$. Denote $\eta_1=\left[a_{ij}\right]$, since $\eta_1^p\in A$ it must hold $a_{11}^p=a_{22}^p=a_{33}^p$. This means that $a_{11}a_{22}^{-1}=\omega$, where either $\omega\neq 1$ is a $p$-th root of unity or $\omega=1$. In the former, $a_{11}a_{22}^{-1}$ is a torsion element in $\lambda_{12}\left(\text{Ker}(\lambda_{23})\right)$, which is a torsion-free group. If $\omega=1$ then $a_{11}=a_{22}$ and since $\eta_1\in\text{Ker}(\lambda_{23})$, then $a_{22}=a_{33}$. It follows that $a_{11}=a_{22}=a_{33}$ and thus, $\eta_1\in A$ which contradicts that $\eta_1$ is part of the generating set. All this proves that $A\cap\langle \eta_1\rangle= \emptyset$. This guarantees that we can form the semi-direct product $A\rtimes\langle \eta_1\rangle$.\\

Now we verify that we can make the semi-direct product with $\langle \eta_2\rangle$. The same argument used in part I to prove normality when we added $\gamma_2$ to $\text{Ker}(\lambda_{23})\rtimes\langle \gamma_1\rangle$ can be applied in the same way now to prove that $A\rtimes\langle\eta_1\rangle$ is a normal subgroup of $\langle A\rtimes\langle \eta_1\rangle,\eta_2\rangle$ and that $\left(A\rtimes\langle\eta_1\rangle\right)\cap\langle\eta_2\rangle$ is trivial. Using this argument for $\eta_3,...,\eta_m$ we get,
	\begin{equation}\label{eq_thm_descomposicion_caso_noconmutativo_8}
	\text{Ker}(\lambda_{23})=A\rtimes\langle\eta_1\rangle\rtimes...\rtimes\langle\eta_m\rangle.
	\end{equation}
	
\underline{Part III. Decompose $A$ in terms of $\text{Ker}(\Gamma)$.} Consider the restriction $\Pi:A\rightarrow\text{PSL}\left(2,\mathbb{C}\right)$. As before, $\text{Core}(\Gamma)$ it is a normal subgroup of $\Gamma$. Since $A$ is solvable, it is finitely generated and so is $\Pi(A)\subset\text{PSL}\left(2,\mathbb{C}\right)$; denote by $r$ its rank.  Let $\{\tilde{\xi}_1,...,\tilde{\xi}_r\}\subset\Pi(A)$ be a generating set for $\Pi(A)$, choose $\xi_i\in\tilde{\xi}_i^{-1}\subset A$, then $A=\langle\text{Core}(\Gamma),\xi_1,...,\xi_r\rangle$. Observe that $\text{Core}(\Gamma)$ is a normal subgroup of $\langle\text{Core}(\Gamma),\xi_1\rangle$. Assume that $\text{Core}(\Gamma)\cap\xi_1$ is not trivial, then $\xi_1^p\in\text{Core}(\Gamma)$ for some $p\in\mathbb{Z}\setminus\{0\}$. Since $\xi_1\in A$, we write
	$$\xi_1=\left[\begin{array}{ccc}
	1 & x & y \\
	0 & 1 & z\\
	0 & 0 & 1\\ 
	\end{array}
	\right].$$
Then 
	$$\xi^p_1=\left[\begin{array}{ccc}
	1 & x' & y' \\
	0 & 1 & pz\\
	0 & 0 & 1\\ 
	\end{array}
	\right]\text{, for some }x',y'\in\mathbb{C}.$$
Observe that $\xi_1^p\in\text{Core}(\Gamma)$ if and only if $z=0$, which contradicts that $\tilde{\xi}_1$ is a generator of $\Pi(A)$. Then we can form the semi-direct product $\text{Core}(\Gamma)\rtimes\langle\xi_1\rangle$. Observe that $\Pi(A)$ is a commutative subgroup of $\text{PSL}\left(2,\mathbb{C}\right)$, then we can apply the same argument we used when we formed the semi-direct product with $\lambda_2$ to conclude that $\text{Core}(\Gamma)\rtimes\langle\xi_1\rangle$ is a normal subgroup of $\langle\text{Core}(\Gamma)\rtimes\langle\xi_1\rangle,\xi_2\rangle$. This concludes the proof of the first part of the theorem.
\end{proof}

Before proving Theorem \ref{thm_descomposicion_caso_noconmutativo2}, we state some results that will be used in its proof.

\begin{prop}[Chapter 2 of \cite{TPR}]\label{prop_forma_subgrupos_aditivos}
The closed subgroup $H\subset\mathbb{R}^n$ is additive if and only if
	$$H\cong \mathbb{R}^p\oplus\mathbb{Z}^q$$
with non-negative integers $p,q$ such that $p+q\leq n$. 
\end{prop}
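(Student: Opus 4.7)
The plan is to decompose $H$ into a maximal connected (vector) part and a complementary discrete part, in the spirit of the classical structure theorem for closed subgroups of $\R^n$.

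First, I would define
\[
V = \SET{v\in \R^n}{\R v \subset H},
\]
the union of all lines through the origin that are entirely contained in $H$. Because $H$ is closed and a subgroup, a short argument shows $V$ is a closed vector subspace of $\R^n$: closure of scalar multiplication is built into the definition, and for additivity, if $u,v\in V$ then $t(u+v)=tu+tv\in H$ for all $t\in\R$, so $u+v\in V$. Set $p=\dim V$, and choose a linear complement $W$ with $\R^n=V\oplus W$.

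Next, I would analyze $H':=H\cap W$, a closed subgroup of $W\cong \R^{n-p}$. By the maximality built into the definition of $V$, $H'$ contains no nontrivial line through the origin. The key technical step is to show that \emph{this forces $H'$ to be discrete}. Suppose it were not; then $0$ would be an accumulation point of $H'$, so we could choose a sequence $w_k\in H'\setminus\set{0}$ with $w_k\to 0$. Passing to a subsequence, the normalized directions $w_k/\norma{w_k}$ converge to some unit vector $u\in W$. A standard approximation argument then shows $tu\in H'$ for every $t\in\R$ (for each $t$, approximate $t$ by integer multiples $n_k\norma{w_k}$ and observe $n_k w_k\to tu$), contradicting maximality of $V$. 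This step is the main obstacle, and the rest of the proof rests on it.

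Once $H'$ is known to be discrete, I would invoke the structure theorem for discrete subgroups of $\R^m$: such a subgroup is free abelian of rank $q\leq m$. This is proved by induction on $m$, picking a shortest nonzero vector $v_1\in H'$, verifying $\R v_1\cap H'=\Z v_1$, projecting $H'$ onto the orthogonal complement of $v_1$, checking that the projection is again discrete, and applying the inductive hypothesis. In our setting this yields $H'\cong \Z^q$ with $q\leq n-p$.

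Finally, I would assemble the pieces. Every $h\in H$ decomposes uniquely as $h=v+w$ with $v\in V$, $w\in W$; since $V\subset H$, we get $w=h-v\in H\cap W = H'$, so $H=V\oplus H'\cong \R^p\oplus\Z^q$ with $p+q\leq n$. The converse is immediate: for any non-negative integers $p,q$ with $p+q\leq n$, the set $\R^p\times \Z^q\times \set{0}^{n-p-q}\subset\R^n$ is a closed subgroup isomorphic to $\R^p\oplus\Z^q$.
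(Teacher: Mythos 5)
Your proof is correct. Note that the paper does not actually prove this proposition: it is quoted verbatim from Chapter 2 of the cited reference \cite{TPR}, so there is no in-paper argument to compare against. Your self-contained argument is the standard one for the structure theorem on closed subgroups of $\R^n$ (split off the maximal vector subspace $V=\SET{v}{\R v\subset H}$, show the complementary piece $H\cap W$ is discrete via the normalized-sequence argument, and apply the lattice theorem for discrete subgroups), and all the steps check out: $V$ is indeed a (closed) linear subspace, $H\cap W$ is closed and line-free, the approximation $n_k w_k\to tu$ with $n_k=\lfloor t/\norma{w_k}\rfloor$ is valid, and the final splitting $H=V\oplus(H\cap W)$ follows since $V\subset H$.
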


\begin{thm}[Theorem 1 of \cite{kapovich02}]\label{thm_obdim_1}
Let $\Gamma$ be a group acting properly and discontinuously on a contractible manifold of dimension $m$, then $\text{obdim}(\Gamma)\leq m$.
\end{thm}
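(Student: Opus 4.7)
This theorem is cited from \cite{kapovich02} and belongs to the Bestvina--Kapovich theory of obstructor dimension, so my plan follows that framework rather than reconstructing the machinery from scratch. The overall strategy is contrapositive: assuming the existence of a proper discontinuous $\Gamma$-action on a contractible $m$-manifold $M$, I would derive a topological obstruction preventing any $n$-obstructor for $\Gamma$ with $n>m$ from existing, thus forcing $\text{obdim}(\Gamma)\leq m$.

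First, I would exploit the contractibility of $M$ and the properness of the action to produce a proper $\Gamma$-equivariant map $\Phi: E\Gamma \to M$ from a model of the classifying space for proper $\Gamma$-actions. The existence and $\Gamma$-equivariance of this map come from standard obstruction theory, using that $M$ is contractible and that all cell stabilizers are finite. Properness of the $\Gamma$-action on $M$ ensures that $\Phi$ is a proper map, which is the crucial ingredient for transferring coarse topological invariants from $E\Gamma$ to $M$.

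Next, I would pass to the sphere at infinity. A proper map to a contractible $m$-manifold induces, after quotienting out a large compact set, a map into a space that is coarsely an $(m-1)$-sphere at infinity. An $n$-obstructor for $\Gamma$ is, by definition, a finite complex $K$ equipped with data that produces a non-null-homotopic class in $\pi_{n-1}$ of the corresponding sphere-at-infinity target after composition with any equivariant map. Composing with $\Phi$ then pushes this class into something detected by the $(m-1)$-sphere at infinity of $M$. When $n>m$, the class must automatically vanish for dimensional reasons ($\pi_{n-1}(S^{m-1})=0$ in the relevant range when the cell structure is set up correctly), contradicting the non-triviality built into the obstructor's definition.

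The main obstacle, and the reason this is quoted rather than reproven, is the careful coarse-geometric setup: one must define the sphere at infinity of a general contractible manifold, verify that the relevant Čech cohomology/homotopy groups behave as expected under proper equivariant maps, and match these invariants precisely with the combinatorial definition of an obstructor. These technical points occupy the bulk of \cite{kapovich02}, and any short self-contained proof would essentially amount to reproducing them; for the purposes of this paper the theorem is used as a black box, so citing the result suffices.
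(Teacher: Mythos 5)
The paper offers no proof of this statement at all: it is imported verbatim as Theorem 1 of the cited reference and used purely as a black box, which is exactly what you do, so your approach coincides with the paper's. One caution about your supplementary sketch, though it is not load-bearing: the assertion that $\pi_{n-1}(S^{m-1})=0$ for $n>m$ is false in general (e.g.\ $\pi_3(S^2)\cong\mathbb{Z}$), and the actual Bestvina--Kapovich argument is cohomological, resting on the van Kampen obstruction in the \v{C}ech cohomology of deleted products rather than on vanishing of homotopy groups of spheres; since you explicitly decline to offer the sketch as a proof, this does not affect the correctness of citing the result.
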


In the statement of the previous Theorem, $\text{obdim}(\Gamma)$ is called the \emph{obstructor dimension}. It satisfies the following properties (see Corollary 27 of \cite{kapovich02} and \cite{bestvina02} respectively):
	\begin{itemize}
	\item If $\Gamma=H\rtimes Q$ with $H$ and $Q$ finitely generated and $H$ weakly convex, then
		$$\text{obdim}(\Gamma)\geq \text{obdim}(H)+\text{obdim}(Q).$$
	\item If $\Gamma=\mathbb{Z}^n$, then $\text{obdim}(\mathbb{Z}^n) = n$.		
	\end{itemize}

We will use the following notation in Theorems \ref{thm_obdim_2}, \ref{thm_descomposicion_caso_noconmutativo2} and Corollary \ref{cor_descomposicion_bloques_noconm}. For a non-commutative, torsion-free, complex Kleinian group $\Gamma\subset U_{+}$ we denote

\begin{align*}
k &= \text{rank}\left(\text{Core}(\Gamma)\right), \\
r &= \text{rank}\left(\Pi\left(\text{Ker}(\lambda_{12})\cap\text{Ker}(\lambda_{23})\right)\right), \\
m &= \text{rank}\left(\lambda_{12}(\text{Ker}(\lambda_{23})\right), \\
n &= \text{rank}\left(\lambda_{23}(\Gamma)\right).
\end{align*}

Some of this notation is taken from Theorem \ref{thm_descomposicion_caso_noconmutativo}. Using the previous properties of the obstructor dimension and Theorem \ref{thm_descomposicion_caso_noconmutativo}, we have the following reformulation of Theorem \ref{thm_obdim_1}.

\begin{thm}\label{thm_obdim_2}
Let $\Gamma\subset U_{+}$ be a non-commutative, torsion-free, complex Kleinian group acting properly and discontinuously on a simply connected domain $\Omega\subset\mathbb{CP}^2$, then $k+r+m+n\leq 4$.
\end{thm}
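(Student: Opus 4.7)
The plan is to string together three ingredients: the semi-direct product decomposition of Theorem \ref{thm_descomposicion_caso_noconmutativo}, the additivity of obstructor dimension under semi-direct products, and the upper bound from Theorem \ref{thm_obdim_1} applied to $\Omega$.

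First I would identify the rank $k$ appearing in the statement as $\text{rank}(\core(\Gamma))$. Elements of $\core(\Gamma)$ have the form $g_{x,y}$, and since $(x,y)\mapsto g_{x,y}$ is a group isomorphism onto an additive subgroup of $\C^2\cong\R^4$, the discreteness of $\Gamma$ forces $\core(\Gamma)$ to be a discrete additive subgroup of $\R^4$. By Proposition \ref{prop_forma_subgrupos_aditivos} this gives $\core(\Gamma)\cong\Z^k$ for some $0\leq k\leq 4$, so $\text{obdim}(\core(\Gamma))=k$. (This is where I would also record that $\core(\Gamma)$ is weakly convex, being a lattice in Euclidean space, as needed to invoke the inequality for obdim of semi-direct products.)

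Next, I would iterate the semi-direct product inequality $\text{obdim}(H\rtimes Q)\geq \text{obdim}(H)+\text{obdim}(Q)$ along the tower provided by Theorem \ref{thm_descomposicion_caso_noconmutativo}. Since $\Gamma$ is torsion free and each generator $\xi_i,\eta_j,\gamma_l$ projects to a nontrivial element of a torsion free quotient, every outer cyclic factor $\langle\xi_i\rangle,\langle\eta_j\rangle,\langle\gamma_l\rangle$ is isomorphic to $\Z$, contributing $1$ to the obstructor dimension at each stage. Weak convexity of each intermediate factor follows inductively (this is the routine bookkeeping step). Altogether,
$$\text{obdim}(\Gamma)\geq \text{obdim}(\core(\Gamma))+r+m+n = k+r+m+n.$$

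Finally I would apply Theorem \ref{thm_obdim_1}: since $\Omega\subset\CP^2$ is an open, simply connected $4$-manifold on which $\Gamma$ acts properly discontinuously (and simply connected open subsets carry the contractibility hypothesis needed in that reference — one invokes the classification of the possible $\Omega$ listed elsewhere in the paper to ensure $\Omega$ is actually contractible), one obtains $\text{obdim}(\Gamma)\leq 4$. Combining the two inequalities yields $k+r+m+n\leq 4$.

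The main obstacle is the verification of the technical hypotheses behind the obstructor-dimension inequality: namely, that each of the layers in the semi-direct product decomposition is weakly convex, so that the additive inequality may be applied at every stage of the iteration. A secondary subtlety is justifying that the simply connected $\Omega$ is contractible in the sense required by Theorem \ref{thm_obdim_1}; here one leans on the very short list of possible shapes of $\Omega$ (such as $\C^2$, $\C\times\Hh^\pm$, etc.) identified by the ambient classification results of the paper.
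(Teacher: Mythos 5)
Your proposal is correct and follows essentially the same route as the paper: the paper likewise obtains $\operatorname{obdim}(\Gamma)\geq k+r+m+n$ by feeding the semi-direct product tower of Theorem \ref{thm_descomposicion_caso_noconmutativo} (repackaged into $\Z^{r_i}$ blocks via Corollary \ref{cor_descomposicion_bloques_noconm}) into the additivity property $\operatorname{obdim}(H\rtimes Q)\geq\operatorname{obdim}(H)+\operatorname{obdim}(Q)$ together with $\operatorname{obdim}(\Z^n)=n$, and then caps this by $4$ using Theorem \ref{thm_obdim_1}. The two caveats you flag (weak convexity of each layer, and contractibility versus mere simple connectivity of $\Omega$) are real but are left implicit in the paper as well, and are resolved exactly as you suggest by inspecting the explicit list of domains $\Omega$ that actually arise.
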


The strategy to prove Theorem \ref{thm_descomposicion_caso_noconmutativo2} will be to find a simply connected domain $\Omega\subset\mathbb{CP}^2$ where $\Gamma$ acts properly and discontinuously, and then apply Theorem \ref{thm_obdim_2}. In some cases, we will write the explicit decomposition of $\Gamma$ and verify that $\text{rank}(\Gamma)\leq 4$.\\


\begin{thm}\label{thm_descomposicion_caso_noconmutativo2}
Let $\Gamma\subset U_{+}$ be a non-commutative, torsion-free, complex Kleinian group, then $\text{rank}(\Gamma)\leq 4$. Furthermore, $k+r+m+n\leq 4$.
\end{thm}

\begin{proof}
We denote $\Sigma=\Pi(\Gamma)$. Observe that
	\begin{equation}\label{eq_thm_descomposicion_rango_caso_noconmutativo_1}
	\text{rank}(\Gamma)\leq k+r+m+n.
	\end{equation}

The proof of this theorem will require several cases and subcases, each one of them will have a unique label so it can be referenced when needed. For the sake of clarity, we will only give details for the most representative cases, for further details, see \cite{mitesis}.\\
 
We will divide the proof in three main cases:
	\begin{enumerate}[(i)]
	\item $\Sigma$ is discrete, and $\text{Ker}(\Gamma)$ is finite.
	\item $\Sigma$ is discrete, and $\text{Ker}(\Gamma)$ is infinite.
	\item $\Sigma$ is not discrete.	\\
	\end{enumerate}

	\begin{enumerate}[(i)]
	\item Assume that \textbf{$\Sigma$ is discrete and $\text{Ker}(\Gamma)$ is finite}. If $|\Lambda(\Sigma)|\neq2$, let 
		$$\Omega = \left(\bigcup_{z\in\Omega(\Sigma)} \overleftrightarrow{e_1,z}\right)\setminus \{e_1\}.$$
	Using Theorem 5.8.2 of \cite{ckg_libro} we know that $\Gamma$ acts properly and discontinuously on $\Omega$. If $|\Lambda(\Sigma)|=0,1$ or $\infty$ then each connected component of $\Omega$ is simply connected, since they are respectively homeomorphic to $\mathbb{CP}^2$, $\mathbb{C}^2$ or $\mathbb{C}\times\mathbb{H}$ respectively. Using Theorem \ref{thm_obdim_2} it follows $k+r+m+n\leq 4$.\\
		
	 If $|\Lambda(\Sigma)|=2$, then $\Sigma$ is elementary. Therefore, it is a cyclic group generated by a loxodromic element, and then $\Sigma\cong\mathbb{Z}$. On the other hand, $\text{Ker}(\Gamma)=\{\mathit{id}\}$ (Proposition \ref{prop_kernel_finito_kernel_trivial}), then $\Pi:\Gamma\rightarrow\Sigma$ is a group isomorphism, and then $\Gamma\cong \Sigma\cong\mathbb{Z}$. Therefore, $\text{rank}(\Gamma)=1$. \\
	
	\item Now, let us assume that \textbf{$\Sigma$ is discrete and $\text{Ker}(\Gamma)$ is infinite}. Then, $\text{Core}(\Gamma)$ is infinite (Proposition \ref{prop_kernel_igual_core}), which means that there exist elements $g_{x,y}\in\text{Core}(\Gamma)$, with $g_{x,y}\neq\mathit{id}$. Denote $\mathcal{B}(\Gamma)=\pi\left(\mathcal{C}(\Gamma)\setminus\{e_1\}\right)$, then $\overline{\mathcal{B}(\Gamma)}\cong\mathbb{S}^1$ or it is a single point (Proposition \ref{prop_proyectiv_adit}). On the other hand, consider a sequence of distinct elements $\{\mu_k\}\subset \Gamma$. Since $\Sigma$ is discrete, the sequence $\{\Pi(\mu_k)\}$ is either constant or converges, by distinct elements, to a quasi-projective map in $\text{QP}\left(2,\mathbb{C}\right)$. In the latter case, $\Sigma$ is elementary and then, we have three possibilities (Theorem 1.6 of \cite{series}):
		\begin{itemize}
		\item[{\scriptsize \textbf{[e1]}}] $\Sigma=\langle h\rangle$, with $h$ a loxodromic element. 
		\item[{\scriptsize \textbf{[e2]}}] $\Sigma=\langle h\rangle$, with $h$ a parabolic element. 
		\item[{\scriptsize \textbf{[e3]}}] $\Sigma=\langle g,h\rangle$, with $g,h$ parabolic elements with different translation directions.	
		\end{itemize}		 
		
		Depending on whether $\Gamma$ is conic or not, we have the two cases:\\
		
		\begin{itemize}
		\item[{\scriptsize \textbf{[con]}}] $\Gamma$ is conic, then $\mathcal{B}(\Gamma)\cong\mathbb{S}^1$. Define $\Omega(\Gamma)=\mathbb{CP}^2\setminus \overline{\mathcal{C}(\Gamma)}$. We will verify that $\Gamma$ acts properly and discontinuously on $\Omega(\Gamma)$ using an argument which will be used several times through this proof. Let $K\subset \Omega(\Gamma)$ be a compact set, denote $\mathcal{K}=\left\{\gamma\in\Gamma\,\middle|\,\gamma(K)\cap K\neq \emptyset\right\}$, and assume that $|\mathcal{K}|=\infty$. Then we can write $\mathcal{K}=\{\gamma_1,\gamma_2,...\}$. Consider the sequence of distinct elements $\{\gamma_k\}\subset\Gamma$, and the sequence $\{\Pi(\gamma_k)\}\subset\Sigma$. Let $\sigma\in\text{QP}\left(2,\mathbb{C}\right)$ be the quasi-projective limit of $\{\Pi(\gamma_k)\}$, then $\text{Ker}(\sigma),\text{Im}(\sigma)\subset\mathcal{B}(\Gamma)$.\\ 
		By definition, $\pi(K)\cap\mathcal{B}(\Gamma)=\emptyset$, and then Proposition \ref{prop_convergencia_qp} implies that $\Pi(\gamma_k)(\pi(K))$ accumulates on $e_1$ (observe that we are considering this $e_1$ as a point in $\mathbb{CP}^1\cong \overleftrightarrow{e_2,e_3}$, but we are actually referring to $\pi(e_2)$). On the other hand, since $|\mathcal{K}|=\infty$, then $|\left\{\alpha\in\Sigma\,\middle|\,\alpha(\pi(K))\cap \pi(K)\neq \emptyset\right\}|=\infty$. This contradicts the fact that $\Pi(\gamma_k)(\pi(K))$ accumulates on $e_1$. Therefore $|\mathcal{K}|<\infty$, and then $\Gamma$ acts properly and discontinuously on each connected component of $\Omega(\Gamma)$. Since $\Omega(\Gamma)\cong \mathbb{C}\times\left(\mathbb{H}\cup\mathbb{H}^-\right)$, each connected component of $\Omega(\Gamma)$ is simply connected. Theorem \ref{thm_obdim_2} yields $k+r+m+n\leq 4$.
		
		\item[{\scriptsize \textbf{[n-con]}}] $\Gamma$ is not conic, then $\mathcal{B}(\Gamma)=\{e_1\}$ or $\mathcal{B}(\Gamma)=\{e_2\}$ (Proposition \ref{prop_control_nodiscreto_conjlim_2puntos_irreducible}). We have two possibilities: If $\Sigma$ is of the form \small \textbf{[e2]} or \small \textbf{[e3]}, then $\Lambda(\Sigma)=\{e_1\}$ and, as in case \small \textbf{[con]}, it follows that $\Gamma$ acts properly and discontinuously on $\Omega(\Gamma)=\mathbb{CP}^2\setminus\overleftrightarrow{e_1,e_2}\cong \mathbb{C}^2$. If $\Sigma$ has the form \small \textbf{[e1]}, then $\Sigma$ is cyclic. Since $\text{Core}(\Gamma)$ is not conic, then $\text{Core}(\Gamma)\cong \mathbb{Z}$. Since $\pi$ is a group morphism, then $\Gamma\cong \text{Ker}(\Gamma)\rtimes \text{Im}(\Gamma)=\text{Core}(\Gamma)\rtimes \Sigma\cong\mathbb{Z}\rtimes\mathbb{Z}$. In both cases, we have $\text{rank}(\Gamma)\leq 4$.\\					
		\end{itemize}
		
		Now, let us assume that $\{\Pi(\mu_k)\}$ is a constant sequence. Then there exists a sequence $\{g_k\}\subset\ker(\Gamma)$ such that $\gamma_k=\gamma_0 g_k$. It follows that $\{g_k\}\subset\text{Core}(\Gamma)$ (Proposition \ref{prop_kernel_igual_core}). Let $\tau\in\text{QP}\left(3,\mathbb{C}\right)$ be the quasi-projective limit of $\{g_k\}$, since $\text{Im}(\tau)\nsubseteq \text{Ker}(\tau)$, then $\gamma_0 g_k=\gamma_k\rightarrow \gamma_0\tau$. It is straightforward to verify that $\text{Im}(\tau)=\text{Im}(\gamma_0\tau)$ and $\text{Ker}(\tau)=\text{Ker}(\gamma_0 \tau)$. We now have two possibilities, depending on whether $\Gamma$ is conic or not.
		
		If $\Gamma$ is conic, then $\text{Core}(\Gamma)=\langle g_{x_1,y_1},g_{x_2,y_2}\rangle$ for some $x_1,x_2,y_1,y_2\in\mathbb{C}$. Using Lemma \ref{prop_convergencia_qp}, $\Gamma$ acts properly and discontinuously on each component of $\Omega=\mathbb{CP}^2\setminus\overline{\mathcal{C}(\Gamma)}\cong \mathbb{C}\times\left(\mathbb{H}^+\cup\mathbb{H}^-\right)$. If $\Gamma$ is not conic, then $\text{Core}(\Gamma)=\langle g_{x_0,y_0}\rangle$ for some $x_0,y_0\in\mathbb{C}$, and $\mathcal{C}(\Gamma)$ is a line. As in previous cases, $\Gamma$ acts properly and discontinuously on each connected component of $\Omega = \mathbb{CP}^2\setminus\mathcal{C}(\Gamma)$. In both cases, Theorem \ref{thm_obdim_2} completes the proof for the case when $\{\Pi(\mu_k)\}$ is a constant sequence.
			
	\item Assume that \textbf{$\Sigma$ is not discrete}. Let $\{\gamma_k\}\subset\Gamma$ be a sequence of distinct elements, denote $\gamma_k=\left[\gamma_{ij}^{(k)}\right]$. Depending on whether the sequence $\{\Pi(\gamma_k)\}\subset\Sigma$ converges or not, we have two cases:
		\begin{itemize}
		\item[{\scriptsize \textbf{[Conv]}}] The sequence $\{\Pi(\gamma_k)\}\subset\Sigma$ converges to some $\alpha\in\text{PSL}\left(2,\mathbb{C}\right)$ where
				\begin{equation}\label{eq_thm_descomposicion_noconmutativo_conv_forma_limqp}
				\alpha=\left[\begin{array}{cc}
				\gamma_{22} & \gamma_{23} \\
				0 & 1\\ 
				\end{array}\right]\text{, for some }\gamma_{22}\in\mathbb{C}^\ast				
				\end{equation}									 
		such that $\gamma_{22}^{(k)}\left(\gamma_{33}^{(k)}\right)^{-1}\rightarrow \gamma_{22}$ and $\gamma_{23}^{(k)}\left(\gamma_{33}^{(k)}\right)^{-1}\rightarrow \gamma_{23}$. Consider the sequence $\{\gamma_{33}^{(k)}\}\subset\mathbb{C}^{\ast}$, we have three cases:
			\begin{enumerate}
			\item $\gamma_{33}^{(k)}\longrightarrow \gamma_{33}\in\mathbb{C}^{\ast}$. Since $\gamma_{11}^{(k)}\gamma_{22}^{(k)}\gamma_{33}^{(k)}=1$, and considering (\ref{eq_thm_descomposicion_noconmutativo_conv_forma_limqp}), it follows that	$\gamma_{11}^{(k)}\left(\gamma_{33}^{(k)}\right)^{-1}\longrightarrow \gamma_{22}^{-1}\gamma_{33}^{-3}$. Observe that 
			$$\gamma_{12}^{(k)}\left(\gamma_{33}^{(k)}\right)^{-1}\rightarrow \infty\text{ or }\gamma_{13}^{(k)}\left(\gamma_{33}^{(k)}\right)^{-1}\rightarrow \infty.$$ 
			Otherwise, $\gamma_k$ would converge to an element of $\text{PSL}\left(3,\mathbb{C}\right)$, contradicting that $\Gamma$ is discrete. Thus,	
				$$
				\gamma_k\longrightarrow\tau_{a,b}:=\left[\begin{array}{ccc}
				0 & a & b \\
				0 & 0 & 0\\
				0 & 0 & 0\\ 
				\end{array}\right],\;\;\; \begin{array}{l}
				\text{for some }a,b\in\mathbb{C},\\
				|a|+|b|\neq 0
				\end{array}
				$$
				Observe that $\text{Ker}(\tau_{a,b}) = \overleftrightarrow{e_1,\left[0:-b:a\right]}$ and $\text{Im}(\tau_{a,b}) =\{e_1\}$. For each quasi-projective limit $\tau_{a,b}\in\text{QP}\left(3,\mathbb{C}\right)$, we consider the point $\left[0:-b:a\right]\in\overleftrightarrow{e_2,e_3}$ and the horocycle determined by this point and $e_2$, then we consider the pencil of lines passing through $e_1$ and each point of the horocycle. Denote by $\Omega$ the complement of this pencils of lines in $\mathbb{CP}^2$. Each connected component of $\Omega$ is simply connected. As a consequence of Proposition $\ref{prop_convergencia_qp}$, the action of $\Gamma$ in $\Omega$ is proper and discontinuous. Theorem \ref{thm_obdim_2} concludes this case.\\
	 
			\item $\gamma_{33}^{(k)}\rightarrow \infty$. From (\ref{eq_thm_descomposicion_noconmutativo_conv_forma_limqp}), it follows that $\gamma_{22}^{(k)}\rightarrow\infty$. Given that $\gamma_{11}^{(k)}\gamma_{22}^{(k)}\gamma_{33}^{(k)}=1$ for all $k\in\mathbb{N}$, then $\gamma_{11}^{(k)}\rightarrow 0$ and $\gamma_{11}^{(k)}\left(\gamma_{33}^{(k)}\right)^{-1}\longrightarrow 0$. Therefore
				\begin{equation}\label{eq_thm_descomposicion_noconmutativo_iii_conv_b_2}
				\gamma_k\longrightarrow\left[\begin{array}{ccc}
				0 & \gamma_{12} & \gamma_{13} \\
				0 & \gamma_{22} & \gamma_{23}\\
				0 & 0 & 1\\ 
				\end{array}\right],
				\text{ where }
				\begin{array}{l}
				\gamma_{12}^{(k)}\left(\gamma_{33}^{(k)}\right)^{-1}\rightarrow\gamma_{12}\in\mathbb{C}\\
				\gamma_{13}^{(k)}\left(\gamma_{33}^{(k)}\right)^{-1}\rightarrow\gamma_{13}\in\mathbb{C}
				\end{array}
				\end{equation}
				If $\gamma_{12}^{(k)}\left(\gamma_{33}^{(k)}\right)^{-1}\rightarrow\infty$ or $\gamma_{13}^{(k)}\left(\gamma_{33}^{(k)}\right)\rightarrow\infty$, we consider the sequence $\{\gamma_k^{-1}\}$ instead. Considering a conjugation of $\tau$ by an adequate element of $\text{PSL}\left(3,\mathbb{C}\right)$, we can assume that $\gamma_{12}=\gamma_{13}=0$. Using Lemma \ref{lem_control_nodiscreto_descartar_grL_vacio}, we have the following possibilities for the Greenberg limit set of $\Sigma$:
				
				\begin{multicols}{2}
			    \begin{itemize}
        		\item[{\scriptsize(\textbf{LS1})}] $\Lambda_{Gr}(\Sigma)=\mathbb{S}^1$.
				\item[{\scriptsize(\textbf{LS2})}] $\Lambda_{Gr}(\Sigma)=\mathbb{CP}^1$. 
				\item[{\scriptsize(\textbf{LS3})}] $|\Lambda_{Gr}(\Sigma)|=1$.
				\item[{\scriptsize(\textbf{LS4})}] $|\Lambda_{Gr}(\Sigma)|=2$.
    			\end{itemize}
    			\end{multicols}	
    			
    			Now we prove that neither of this possibilities can happen. In the case {\footnotesize(\textbf{LS1})} there are four possibilities for the convergence of $\{\Pi(\gamma_k)\}$:
				\begin{itemize}
				\item[{\scriptsize(\textbf{LS1.1})}] The sequence $\{\Pi(\gamma_k)\}$ converges to $\mathit{id}\in \Sigma$.
				\item[{\scriptsize(\textbf{LS1.2})}] The sequence $\{\Pi(\gamma_k)\}$ converges to a elliptic element in $\Sigma$.
				\item[{\scriptsize(\textbf{LS1.3})}] The sequence $\{\Pi(\gamma_k)\}$ converges to a parabolic element in $\Sigma$.
				\item[{\scriptsize(\textbf{LS1.4})}] The sequence $\{\Pi(\gamma_k)\}$ converges to a loxodromic element in $\Sigma$.\\
				\end{itemize}					
				
				In Case {\footnotesize(\textbf{LS1.1})} we have $\gamma_{22}=1$ and $\gamma_{23}=0$, denote the quasi-projective limit of $\{\gamma_k\}$ by $\tau_{id}=\text{Diag}(0, 1, 1)$. Since $\Lambda_{Gr}(\Sigma)=\mathbb{S}^1$, we can take $g=\left[g_{ij}\right]\in\Gamma$ such that $\Pi(g)$ is parabolic (see the proof of Proposition \ref{prop_HC_conmutativo_control_no_discreto}). Then $g_{22}=g_{33}=\lambda\in\mathbb{C}^\ast$, $g_{23}\neq 0$ and $g_{11}=\lambda^{-2}$. A straightforward calculation shows that
				$$\gamma_k g\gamma_k^{-1}\rightarrow \left[\begin{array}{ccc}
				\lambda^{-2} & 0 & 0 \\
				0 & \lambda & g_{23}\\
				0 & 0 & \lambda\\ 
				\end{array}\right]\in\text{PSL}\left(3,\mathbb{C}\right),$$
				contradicting that $\Gamma$ is discrete, unless the sequence $\{\gamma_k g\gamma_k^{-1}\}$ is eventually constant. Therefore we can assume that this sequence is constant, yielding $\gamma_{22}^{(k)}=\gamma_{33}^{(k)}=\xi_k\in\mathbb{C}^\ast$. Corollary \ref{cor_diagonal_es_1} implies that $\xi_k=1$ for all $k$, contradicting that $\gamma_k\rightarrow\tau_{id}$. Then sub-case {\footnotesize(\textbf{LS1.1})} cannot happen.\\
				
				Case {\footnotesize(\textbf{LS1.2})} cannot occur. Otherwise, $\Pi(\gamma_k)$ would converge to an elliptic element $\text{Diag}\left(e^{2\pi i \theta},1\right)\in\Sigma$. After an appropriate conjugation, we can assume that
				\begin{equation}\label{eq_thm_descomposicion_caso_noconmutativo_indices_6}
				\gamma_k\rightarrow\tau_\theta=\left[\begin{array}{ccc}
				0 & 0 & 0 \\
				0 & e^{2\pi i \theta} & 0\\
				0 & 0 & 1\\ 
				\end{array}\right].				
				\end{equation}
				A straightforward calculation shows that $\gamma_k^n\rightarrow\tau_\theta^n$ for any $n\in\mathbb{N}$. If $\theta\in\mathbb{Q}$, then there exist $p\in\mathbb{Z}$ such that $\gamma_k^p\rightarrow\tau_{\theta}^p=\tau_{\mathit{id}}$; this cannot happen, as we have already proven. If $\theta\in\mathbb{R}\setminus\mathbb{Q}$, there is a subsequence $\{e^{2\pi i n_j \theta}\}$ such that $e^{2\pi i n_j \theta}\rightarrow 1$. Then $\tau_\theta^{n_j}\rightarrow \tau_\mathit{id}$ as $j\rightarrow\infty$. Consider the diagonal sequence $\{\gamma_k^{n_k}\}\subset\Gamma$, then $\gamma_k^{n_k}\rightarrow \tau_\mathit{id}$ which cannot happen.\\
				
				Now we dismiss Case {\footnotesize(\textbf{LS1.3})}. Assuming it occurs, 
				\begin{equation}\label{eq_thm_descomposicion_caso_noconmutativo_indices_10}
				\gamma_k\rightarrow\tau_b = \left[\begin{array}{ccc}
				0 & 0 & 0\\
				0 & 1 & b\\
				0 & 0 & 1				
				\end{array}\right].
				\end{equation}
				Since the sequence $\{\Pi(\gamma_k)\}$ converges to a parabolic element and $\Sigma$ is not discrete, it follows from Lemma \ref{lem_parte_parab_no_discreta} that there exists a sequence $\{h_k\}\subset\Gamma$ such that $\Pi(h_k)$ is parabolic for all $k$, and 
				$$\Pi(h_k)=\left[\begin{array}{cc}
				1 & \varepsilon_k\\
				0 & 1\\
				\end{array}\right]\rightarrow \mathit{id},$$
				for some sequence $\varepsilon_k\rightarrow 0$. Then using the same reasoning as in the case {\footnotesize(\textbf{LS1.1})}, we know that 
				$$h_k=\left[\begin{array}{ccc}
				1 & h_{12} & h_{13}\\
				0 & 1 & \varepsilon_k\\
				0 & 0 & 1				
				\end{array}\right].$$
				A direct calculation shows that $\gamma_k h_k \gamma_k^{-1} h_k^{-1}\rightarrow g_{h_{12},0}\in\text{PSL}\left(3,\mathbb{C}\right)$ contradicting that $\Gamma$ is discrete. \\
				
				Finally, Case {\footnotesize(\textbf{LS1.4})} is dismissed in a similar way to Case {\footnotesize(\textbf{LS1.3})}. We proved that Case {\footnotesize(\textbf{LS1})} cannot occur. Case {\footnotesize(\textbf{LS2})} cannot happen either, otherwise the action of $\Gamma$ would be nowhere proper and discontinuous. Case {\footnotesize(\textbf{LS3})} canoot occur, otherwise $\Sigma$ is conjugate to a subgroup of $\text{Epa}\left(\mathbb{C}\right)$ (Theorem 2.14 of \cite{cs2014}). Let $h\in\Gamma$ be a loxo-parabolic element such that $\Pi(h)$ is parabolic. A straightforward computation shows that the sequence of distinct elements $\{\eta_k:=h\gamma_k h^{-1} \gamma_k^{-1}\}\subset \Gamma$ converges to an element of $\text{PSL}\left(3,\mathbb{C}\right)$, contradicting that $\Gamma$ is discrete. \\				
				
				Now, finally, consider the case {\footnotesize(\textbf{LS4})}. Then $\Sigma$ is conjugate to a subgroup of $\text{Aut}\left(\mathbb{C}^\ast\right)$ (Theorem 2.14 of \cite{cs2014}). We can take two loxodromic elements $g_1,g_2\in\Gamma$ such that $\left[g_1,g_2\right]\neq\mathit{id}$ and such that the sequence $\{h \gamma_k h^{-1}\gamma_k^{-1}\}\in\Gamma$ converges to an element of $\text{PSL}\left(3,\mathbb{C}\right)$. This would contradict that $\Gamma$ is discrete. \\			
					 
			\item $\gamma_{33}^{(k)}\rightarrow 0$. Considering the sequence $\{\gamma_k^{-1}\}$ instead, this case is the same as case (b).\\			
			\end{enumerate}
				
		\item[{\scriptsize \textbf{[Div]}}] The sequence $\{\Pi(\gamma_k)\}\subset\Sigma$ diverges. Since $\Gamma$ is not discrete, there are 7 possibilites: 
			\begin{itemize}
			\item $\overline{\Sigma}=\text{SO}(3)$ or $\overline{\Sigma}=\text{PSL}\left(2,\mathbb{C}\right)$. These cases cannot happen since $\Sigma$ is solvable, but $\text{SO}(3)$ and $\text{PSL}\left(2,\mathbb{C}\right)$ are not solvable.
			\item $\overline{\Sigma}=\text{Rot}_\infty$ or $\overline{\Sigma}=\text{Dih}_\infty$. These cases cannot happen either, otherwise $\Lambda_{\text{Gr}}(\Sigma)=\emptyset$, contradicting Lemma \ref{lem_control_nodiscreto_descartar_grL_vacio}.
			\item[{\scriptsize \textbf{[Div-1]}}] The group $\overline{\Sigma}$ is a subgroup of the affine group $\text{Epa}\left(\mathbb{C}\right)$. There cannot be elliptic elements in $\Sigma$, otherwise $\lambda_{23}(\Gamma)$ would have a torsion element or $\Gamma$ would contain an irrational screw and therefore, it would be commutative (Corollary \ref{cor_IS_conmutativo}). Then every element of $\Sigma$ has the form
				$$\Pi(\gamma)=\left[\begin{array}{cc}
				1 & a\\
				0 & 1\\
				\end{array}\right],$$
			where $a\in A$, for some additive group $A\subset(\mathbb{C},+)$. Since $\Sigma$ is not discrete, $A$ is not discrete. Then either $\overline{A}\cong \mathbb{R}$ or $\overline{A}\cong \mathbb{R}\oplus\mathbb{Z}$ (Proposition \ref{prop_forma_subgrupos_aditivos}). Let us define the following union of pencils of lines passing through $e_1$,		
					$$\Lambda=\{e_1\}\cup\left(\bigcup_{p\in\overline{A}}\pi(p)^{-1}\right),$$
				and $\Omega=\mathbb{CP}^2\setminus \Lambda$. Using similar arguments as before, the action of $\Gamma$ on each connected component of $\Omega$ is proper and discontinuous. Observe that each connected component of $\Omega$ is simply connected.\\
				
			\item[{\scriptsize \textbf{[Div-2]}}] The group $\overline{\Sigma}$ is a subgroup of the group $\text{Aut}\left(\mathbb{C}^\ast\right)$. Then $\Sigma$ is a purely loxodromic group and then, up to conjugation, each element has the form $\Pi(\gamma)=\text{Diag}\left(\alpha, 1\right)$ for some $|\alpha|\neq 1$. Let $G=\lambda_{23}(\Gamma)$, then $G\cong\Sigma$, hence, $G$ is not discrete. Let us write each $\alpha\in G$ as $\alpha = re^{i\theta}$, $r\in A$ and $\theta\in B$, for some multiplicative group $A$ and some additive group $B$. Then $G\cong A\times B$, and since $G$ is not discrete, then either $A$ is not discrete or $B$ is not discrete (it is not possible that both $A$ and $B$ are not discrete, otherwise the action of $\Gamma$ would be nowhere proper and discontinuous). Let us examine each of the two possibilities:
			\begin{itemize}
			\item $A$ is discrete, and $B$ is not discrete. This case cannot happen, otherwise, since $A$ is a discrete, then $A=\left\{r^n\,\middle|\,n\in\mathbb{Z}\right\}$ for some $r\in\mathbb{C}^\ast$. Hence, $\mathbb{S}^1\subset\overline{G}$. This means that there exists an element $\gamma\in\Gamma$ such that $\lambda_{23}(\gamma)=e^{i\theta}$ for some $\theta\in\left[0,2\pi\right)$. Then either $\lambda_{23}(\Gamma)$ contains a non-trivial torsion element or $\Gamma$ contains an irrational screw (contradicting Corollary \ref{cor_IS_conmutativo}). 
			\item $A$ is not discrete, and $B$ is discrete. Since $B$ is discrete, then $B$ is finite. Then $B\cong\mathbb{Z}_k$ for some $k\in\mathbb{N}$. Regarding $\overleftrightarrow{e_2,e_3}$ homeomorphic to $\hat{\mathbb{C}}$ and for the sake of simplifying notation, we will indistinctly denote a point $x\in\overline{G}\subset\mathbb{C}$ and its corresponding image in $ \overleftrightarrow{e_2,e_3}$. Let $\Lambda\subset\mathbb{CP}^2$ be given by
					$$\Lambda=\{e_1\}\cup\left(\bigcup_{p\in\overline{G}}\pi(p)^{-1}\right),$$
				and $\Omega=\mathbb{CP}^2\setminus\Lambda$. Using a similar argument as in Case {\small \textbf{[con]}}, we deduce that the action of $\Gamma$ on $\Omega$ is proper and discontinuous. Besides, each connected component of $\Omega$ is simply connected.\\
			\end{itemize}			 
			\item[{\scriptsize \textbf{[Div-3]}}] The group $\overline{\Sigma}$ is a subgroup of $\text{PSL}\left(2,\mathbb{R}\right)$. Then $\Lambda_{\text{Gr}}(\Sigma)\cong\hat{\mathbb{R}}$ (Theorem 2.14 of \cite{cs2014}). Therefore, up to conjugation, the orbits of compact subsets of $\mathbb{CP}^1\setminus \Lambda_{\text{Gr}}(\Sigma)$ accumulate on $\hat{\mathbb{R}}$. Regarding the points $\pi(e_2)$ and $\pi(e_3)$ as the points $0$ and $\infty$ in this euclidian circle. We define the pencil of lines passing through $e_1$,
				$$\Lambda=\{e_1\}\cup\left(\bigcup_{p\in\hat{\mathbb{R}}}\pi(p)^{-1}\right).$$ 
			Again, the action of $\Gamma$ on each of these connected components of $\Omega=\mathbb{CP}^2\setminus\Lambda$ is proper and discontinuous. Each of the connected components of $\Omega$ are simply connected.				
			\end{itemize}
		\end{itemize}
	\end{enumerate}
\end{proof}

\subsection{Consequences of the theorem of decomposition of non-commutative groups}
\label{subsec_consequences}

In this subsection we prove some consequences of Theorems \ref{thm_descomposicion_caso_noconmutativo} and \ref{thm_descomposicion_caso_noconmutativo2}. In Corollary \ref{cor_descomposicion_bloques_noconm}, we give a simplified decomposition of a group in terms of copies of $\mathbb{Z}^d$. Theorem \ref{thm_descomposicion_caso_noconmutativo} gives a decomposition of the group $\Gamma$ in four layers. In Corollary \ref{cor_lox_nivel34}, we prove that the first two layers are made of parabolic elements and the last two layers are made of loxodromic elements. The description of these four layers are summarized in Table \ref{fig_noconmutativo_capas}. Before proving Corollaries \ref{cor_descomposicion_bloques_noconm} and \ref{cor_lox_nivel34}, we need to prove some technical results.

\begin{table}
\begin{center}
  \begin{tabular}{  c  c  c  c  }
    
    \multicolumn{2}{c}{Parabolic} & \multicolumn{2}{c}{Loxodromic}\\
    \multicolumn{2}{c}{$\overbrace{\hspace{25ex}}$} & \multicolumn{2}{c}{$\overbrace{\hspace{45ex}}$}\\
    $\left[\begin{array}{ccc}
	1 & x & y\\
	0 & 1 & 0 \\
	0 & 0 & 1\\
	\end{array}\right]$ & $\left[\begin{array}{ccc}
	1 & x & y\\
	0 & 1 & z\\
	0 & 0 & 1\\
	\end{array}\right]$ & $\left[\begin{array}{ccc}
	\alpha & x & y\\
	0 & \beta & z\\
	0 & 0 & \beta\\
	\end{array}\right]$ & $\left[\begin{array}{ccc}
	\alpha & x & y\\
	0 & \beta & z\\
	0 & 0 & \gamma\\
	\end{array}\right]$\\ 
     & $z\neq 0$ & $\alpha\neq\beta$, $z\neq 0$ & $\beta\neq\gamma$     \\
     & & \begin{tabular}{c}
     \small{Loxo-parabolic} \\
     \end{tabular} & \begin{tabular}{c}
     \small{Loxo-parabolic} \\
     \small{Complex homothety} \\
     \small{Strongly loxodromic} \\
     \end{tabular} \\  
    $\text{Core}(\Gamma)$ & $A\setminus\text{Ker}(\Gamma)$ & $\text{Ker}(\lambda_{23})\setminus A$ & \\
    
  \end{tabular}
\end{center}
\caption{\small The decomposition of a non-commutative subgroup of $U_+$ in four layers.}
\label{fig_noconmutativo_capas}
\end{table}

Let $F_1,F_2,F_3\subset U_+$ be the pairwise disjoint subsets defined as
	\begin{align*}
	F_1&=\left\{\alpha=\left[\begin{array}{ccc}
	\alpha_{11} & 0 & \alpha_{13}\\
	0 & \alpha_{22} & 0\\
	0 & 0 & \alpha_{33}
	\end{array}\right]\,\middle|\,\alpha\in U_+\right\} \\
	F_2&=\left\{\alpha=\left[\begin{array}{ccc}
	\alpha_{11} & \alpha_{12} & \alpha_{13}\\
	0 & \alpha_{22} & 0\\
	0 & 0 & \alpha_{33}
	\end{array}\right]\,\middle|\,\alpha_{12}\neq 0\text{, }\alpha\in U_+\right\} \\
	F_3&=\left\{\alpha=\left[\begin{array}{ccc}
	\alpha_{11} & 0 & \alpha_{13}\\
	0 & \alpha_{22} & \alpha_{23}\\
	0 & 0 & \alpha_{33}
	\end{array}\right]\,\middle|\,\alpha_{23}\neq 0\text{, }\alpha\in U_+\right\}	\\
	F_4&=\left\{\alpha=\left[\begin{array}{ccc}
	\alpha_{11} & \alpha_{12} & \alpha_{13}\\
	0 & \alpha_{22} & \alpha_{23}\\
	0 & 0 & \alpha_{33}
	\end{array}\right]\,\middle|\,\alpha_{12},\alpha_{23}\neq 0,\alpha\in U_+\right\}	
	\end{align*}	

These four subsets classify the elements of $U_+$ depending on whether they have zeroes in entries (1,2) and (2,3). We need this classification because, as we will see in Proposition \ref{prop_formas_conmutativas}, a necessary condition for two elements of $U_+$ to commute is that they both have the same \emph{form} given by the sets $F_1,F_2,F_3$. This is equivalent to say that the two $2\times 2$ diagonal sub-blocks of one element share the same fixed points with the corresponding sub-block of the other element. This argument will be key to prove Corollary \ref{cor_descomposicion_bloques_noconm}.

\begin{prop}\label{prop_formas_conmutativas}
Let $\Gamma\subset U_+$ be a subgroup and let $\alpha=\left[\alpha_{ij}\right],\beta=\left[\beta_{ij}\right]\in\Gamma\setminus\{\mathit{id}\}$. If $\left[\alpha,\beta\right]=\mathit{id}$ then $\alpha,\beta\in F_i$ for some $i=1,2,3,4$.
\end{prop}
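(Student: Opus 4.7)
The plan is to prove this by explicit computation: translate the projective relation $[\alpha,\beta]=\id$ into an equality of matrices in $\SL$ and then compare the off-diagonal entries of $\alpha\beta$ and $\beta\alpha$.

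First I would argue that $[\alpha,\beta]=\id$ in $\PSL$ lifts to an identity $\alpha\beta = \zeta\,\beta\alpha$ in $\SL$ with $\zeta$ a cube root of unity. The $(1,1)$, $(2,2)$ and $(3,3)$ entries of $\alpha\beta$ and $\beta\alpha$ are simply the scalar products $a_{ii}b_{ii}=b_{ii}a_{ii}$, so comparing any one of them forces $\zeta=1$. Hence one can work with the genuine matrix identity $\alpha\beta=\beta\alpha$.

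Next I would read off the off-diagonal entries of both products. Equating the $(1,2)$, $(2,3)$ and $(1,3)$ entries yields the system
\begin{align*}
(a_{11}-a_{22})\,b_{12} &= (b_{11}-b_{22})\,a_{12}, \\
(a_{22}-a_{33})\,b_{23} &= (b_{22}-b_{33})\,a_{23}, \\
(a_{11}-a_{33})\,b_{13}-(b_{11}-b_{33})\,a_{13} &= b_{12}\,a_{23}-a_{12}\,b_{23}.
\end{align*}
From the first equation, if $a_{12}=0$ then either $b_{12}=0$ or the top-left $2\times 2$ block of $\alpha$ is a scalar ($a_{11}=a_{22}$); the second equation plays the same role at the $(2,3)$ entry. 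I would then translate this algebra into the geometric picture announced before the statement: the diagonal $2\times 2$ blocks of $\alpha$ and $\beta$, regarded as elements of $\psl$ acting on $\CP^1$, must share the same fixed-point configuration. This means $\alpha$ and $\beta$ exhibit the same zero-pattern at positions $(1,2)$ and $(2,3)$, i.e.\ they lie in a common $F_i$.

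The main obstacle is the handling of the degenerate cases where a diagonal block is already a scalar multiple of the identity (for instance $a_{11}=a_{22}$), for then the first two equations are satisfied automatically and put no restriction on $b_{12}$ or $b_{23}$. In such situations I would bring in the third equation, combined with the hypothesis $\alpha,\beta\neq\id$, to show that the remaining entries are forced into a compatible form so that $\alpha$ and $\beta$ still belong to the same $F_i$; this bookkeeping is the technically unpleasant part of the argument.
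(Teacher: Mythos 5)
Your plan follows essentially the same route as the paper's own proof: expand the commutator entrywise and interpret the resulting equations through the fixed points of the two diagonal $2\times 2$ blocks acting on $\CP^1$. The three equations you display are correct, and your remark that the ambient scalar (a cube root of unity) is killed by comparing diagonal entries is a point the paper leaves implicit. In the non-degenerate situations the first two equations do what you want: if $a_{12}=0$ and $a_{11}\neq a_{22}$, the first equation forces $b_{12}=0$, and symmetrically, so the zero patterns at positions $(1,2)$ and $(2,3)$ match.

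The gap sits exactly where you park the ``technically unpleasant bookkeeping,'' and it cannot be closed, because the statement fails there. Take
$$\alpha=\corchetes{\begin{array}{ccc} 1 & 1 & 0\\ 0 & 1 & 0\\ 0 & 0 & 1\end{array}},\qquad \beta=\corchetes{\begin{array}{ccc} 1 & 0 & 1\\ 0 & 1 & 0\\ 0 & 0 & 1\end{array}}.$$
These are the elements $g_{1,0}$ and $g_{0,1}$ of a core-type subgroup: they commute, neither is the identity, yet $\alpha\in F_2$ while $\beta\in F_1$. (The problem is not confined to parabolics: a strongly loxodromic $\alpha$ with $a_{11}\neq a_{22}$, $a_{12}\neq 0$, $a_{13}=a_{23}=0$ commutes with the complex homothety $\beta=\text{Diag}(\mu,\mu,\mu^{-2})\in F_1$.) In these examples your third equation is satisfied identically, so it imposes no further constraint, and the hypothesis $\alpha,\beta\neq\id$ does not help since neither element is the identity; whenever one of the diagonal $2\times 2$ blocks of $\beta$ is scalar it commutes with everything, and no bookkeeping recovers the matching zero pattern. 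Note that the paper's own proof has the same lacuna: from $\beta_{11}=\beta_{22}$ it concludes $\fix(\beta_1)=\set{e_1}$, which silently assumes $\beta_{12}\neq 0$. As stated the proposition needs an additional hypothesis (for instance, that no diagonal $2\times 2$ block of $\alpha$ or $\beta$ is a scalar matrix, which holds for the loxodromic layers where the result is actually invoked); an honest completion of your plan would have to add such a hypothesis rather than resolve the degenerate case by further computation.
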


\begin{proof}
Let $\alpha=\left[\alpha_{ij}\right]$ and $\beta=\left[\beta_{ij}\right]$ be two elements in $\Gamma$ and suppose that they commute. As in Lemma \ref{lem_bloques_conmutativos}, let us denote by $\alpha_1$ and $\alpha_2$ (resp. $\beta_1$ and $\beta_2$) the upper left and bottom right $2\times 2$ blocks of $\alpha$ (resp. $\beta$). Since $\alpha$ and $\beta$ commute, a direct calculation shows that $\alpha_i$ and $\beta_i$ commute, for $i=1,2$. If we consider $\alpha_i$ and $\beta_i$ as elements of $\text{PSL}\left(2,\mathbb{C}\right)$, Lemma \ref{lem_bloques_conmutativos} states that 
	$$
	\text{Fix}(\alpha_1)=\{e_1,\left[\alpha_{12}:\alpha_{22}-\alpha_{11}\right]\},\;\;\;\text{Fix}(\alpha_2)=\{e_1,\left[\alpha_{23}:\alpha_{33}-\alpha_{22}\right]\}.
	$$
Similar expressions hold for $\text{Fix}(\beta_i)$. Also, $\left[\alpha_1,\beta_1\right]=\mathit{id} \Leftrightarrow \text{Fix}(\alpha_1)=\text{Fix}(\beta_1)$. This allows us to conclude that, if $\alpha$ and $\beta$ commute then $\alpha_{12}$ and $\beta_{12}$ are zero or non-zero simultaneously, the same holds for $\alpha_{23}$ and $\beta_{23}$. This proves the proposition.
\end{proof}

\begin{prop}\label{prop_transitividad_conmutatividad}
Let $\alpha,\beta\in F_i$, $i=1,2,3$. Then the relation defined by $\alpha\sim\beta$ if and only if $\left[\alpha,\beta\right]=\mathit{id}$, is an equivalence relation on each subset $F_i$, $i=1,2,3$. 
\end{prop}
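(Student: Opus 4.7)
Reflexivity and symmetry are immediate — $[\alpha,\alpha] = \id$ and $[\alpha,\beta] = \id \iff [\beta,\alpha] = \id$ — so the substance of the proposition is transitivity: given $\alpha, \beta, \gamma \in F_i$ with $[\alpha,\beta] = [\beta,\gamma] = \id$, I need $[\alpha,\gamma] = \id$. The first step in my plan is to translate the commutator condition into scalar equations. Multiplying out $\alpha\beta$ and $\beta\alpha$ for $\alpha, \beta \in U_+$ (exactly as in the proof of Proposition \ref{prop_formas_conmutativas}) and comparing the $(1,2)$, $(2,3)$, and $(1,3)$ entries shows that $[\alpha,\beta] = \id$ is equivalent to
\begin{align*}
\alpha_{12}(\beta_{22}-\beta_{11}) &= \beta_{12}(\alpha_{22}-\alpha_{11}),\\
\alpha_{23}(\beta_{33}-\beta_{22}) &= \beta_{23}(\alpha_{33}-\alpha_{22}),\\
\alpha_{13}(\beta_{33}-\beta_{11}) - \beta_{13}(\alpha_{33}-\alpha_{11}) &= \beta_{12}\alpha_{23} - \alpha_{12}\beta_{23}.
\end{align*}

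My second step is to specialize these three equations within each $F_i$. Since every element of $F_1 \cup F_2 \cup F_3$ satisfies $\alpha_{12} = 0$ or $\alpha_{23} = 0$, whenever $\alpha$ and $\beta$ lie in the same $F_i$ the right-hand side of the third equation vanishes, and either the first or the second equation is trivially satisfied. In every case, the commutator condition collapses to one or two equations, each of the form $u_\alpha v_\beta = u_\beta v_\alpha$, that is, $[u_\alpha : v_\alpha] = [u_\beta : v_\beta]$ in $\CP^1$. Geometrically, this is the familiar fact that two upper-triangular $2\times 2$ elements of $\psl$ commute iff they share a fixed-point set on the relevant projective line.

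The third step is transitivity itself, which now follows from transitivity of equality in $\CP^1$: the hypotheses deliver $[u_\alpha : v_\alpha] = [u_\beta : v_\beta]$ and $[u_\beta : v_\beta] = [u_\gamma : v_\gamma]$, hence $[u_\alpha : v_\alpha] = [u_\gamma : v_\gamma]$, which is exactly the equation required for the pair $(\alpha,\gamma)$. Reassembling the surviving equations then yields $[\alpha,\gamma] = \id$.

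The main obstacle, and where I expect the argument to require the most care, is the degenerate case in which one of the relevant blocks of $\beta$ reduces to the identity in $\psl$ (so that $(u_\beta, v_\beta) = (0,0)$ and the corresponding projective equation is vacuous). I plan to dispatch this either by invoking the defining condition of $F_i$ — for instance, in $F_2$ the nonzero $(1,2)$ entries prevent the upper-left blocks from collapsing — or, when the $(1,3)$ equation is at issue, by a direct case analysis checking that the remaining non-vacuous equations together with the structural constraints of $F_i$ still force $\alpha$ and $\gamma$ to commute.
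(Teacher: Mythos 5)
Your steps 1--3 are essentially the paper's own argument: the paper likewise observes that on $F_i$ the right-hand side $\alpha_{23}\beta_{12}-\alpha_{12}\beta_{23}$ of the $(1,3)$ commutator equation vanishes, reduces commutation to an equality of fixed-point data, i.e.\ of points of $\CP^1$, and concludes by transitivity of equality. (Your version is in fact the more careful one, since you record all three commutator equations, whereas the paper keeps only the $(1,3)$ relation and asserts it is equivalent to $\corchetes{\alpha,\beta}=\id$.) The genuine problem is exactly the degenerate case you single out in your last paragraph and then only \emph{promise} to dispatch: it cannot be dispatched, because transitivity actually fails there. If the pair $(u_\beta,v_\beta)$ attached to $\beta$ for one of the surviving equations is $(0,0)$, that condition is vacuous, so $\beta$ is $\sim$-related to elements with genuinely different projective data. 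Your remark that membership in $F_2$ or $F_3$ protects against collapse only covers the $(1,2)$ or $(2,3)$ equation; it does nothing for the $(1,3)$ equation, whose pair $(\beta_{13},\beta_{11}-\beta_{33})$ can vanish for elements of every $F_i$.

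Concretely, in $F_2$ take
$$\alpha=\corchetes{\begin{array}{ccc} 1&1&1\\0&1&0\\0&0&1\end{array}},\qquad \beta=\corchetes{\begin{array}{ccc} 1&1&0\\0&1&0\\0&0&1\end{array}},\qquad \gamma=\corchetes{\begin{array}{ccc} \lambda&1&0\\0&\lambda&0\\0&0&\lambda^{-2}\end{array}},\qquad \lambda^{3}\neq 1.$$
All three lie in $F_2$, and a direct computation gives $\corchetes{\alpha,\beta}=\corchetes{\beta,\gamma}=\id$, yet the $(1,3)$ entries of $\alpha\gamma$ and $\gamma\alpha$ are $\lambda^{-2}$ and $\lambda$, so $\corchetes{\alpha,\gamma}\neq\id$ (and the commutator is unipotent, hence not a nontrivial scalar). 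Similar examples exist in $F_1$ (e.g.\ $\beta=\text{Diag}(\lambda,\lambda^{-2},\lambda)$ or $\beta=\id$) and in $F_3$. So the relation is not transitive on all of $F_i$, and the proposition as stated is false without excluding the elements for which every surviving pair $(u,v)$ vanishes. I should add that the paper's own proof has the identical unclosed hole --- the step ``it follows that $\fix(\alpha)=\fix(\beta)$'' is empty when the relevant block of $\beta$ is central --- so your proposal faithfully reproduces both the strategy and the gap; the difference is that you at least name the gap, but your plan for closing it would not succeed without adding a nondegeneracy hypothesis to the statement.
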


\begin{proof}
It is straightforward to verify that the relation is reflexive and symmetric. We now verify that it is also transitive, let $\alpha,\beta,\gamma\in F_i$ for some $i=1,2,3$ such that $\left[\alpha,\beta\right]=\mathit{id}$ and $\left[\beta,\gamma\right]=\mathit{id}$. Denote $\alpha=\left[\alpha_{ij}\right]$, $\beta=\left[\beta_{ij}\right]$ and $\gamma=\left[\gamma_{ij}\right]$. Then $\left[\alpha,\beta\right]=\mathit{id}$ if and only if 
	\begin{equation}\label{eq_prop_transitividad_conmutatividad_1}	
	\beta_{13}(\alpha_{11}-\alpha_{33})+\alpha_{13}(\beta_{11}-\beta_{33})=\alpha_{23}\beta_{12}-\alpha_{12}\beta_{23}.
	\end{equation}
If both $\alpha,\beta\in F_i$, $i=1,2,3$, then $\alpha_{12}=\beta_{12}=0$ or $\alpha_{23}=\beta_{23}=0$, which implies that the right side of (\ref{eq_prop_transitividad_conmutatividad_1}) is zero and then $\beta_{13}(\alpha_{11}-\alpha_{33})+\alpha_{13}(\beta_{11}-\beta_{33})=0$. From this, analogously to the proof of Proposition \ref{prop_formas_conmutativas}, it follows that $\text{Fix}(\alpha)=\text{Fix}(\beta)$. Analogously, $\left[\beta,\gamma\right]=\mathit{id}$ implies that $\text{Fix}(\beta)=\text{Fix}(\gamma)$, then $\text{Fix}(\alpha)=\text{Fix}(\gamma)$ and this implies that  $\left[\alpha,\gamma\right]=\mathit{id}$. This proves that the relation is transitive and thus, it is an equivalence relation on $F_i$, $i=1,2,3$.
\end{proof}

In \cite{tesisvanessa}, the author proves a version of Corollary \ref{cor_descomposicion_bloques_noconm} for groups with real entries. In \cite{ppar}, a version of this corollary for purely parabolic groups is proven. Corollary \ref{cor_descomposicion_bloques_noconm} is a generalization of these previous results.\\

Propositions \ref{prop_formas_conmutativas} and \ref{prop_transitividad_conmutatividad} state that, unlike the purely parabolic case (see Lemma 7.9 of \cite{ppar}), commutativity does not define an equivalence relation on $U_+$. This equivalence relation occurs separately on $F_1$, $F_2$ and $F_3$. On $F_4$, commutativity does not define an equivalence relation.\\

Corollary \ref{cor_descomposicion_bloques_noconm} simplifies the decomposition described in Theorem \ref{thm_descomposicion_caso_noconmutativo}. We will need this proposition to prove the corollary.


\begin{cor}\label{cor_descomposicion_bloques_noconm}
Under the same hypothesis and notation of Theorem \ref{thm_descomposicion_caso_noconmutativo}, the group $\Gamma$ can be written as
	$$\Gamma\cong\mathbb{Z}^{r_0}\rtimes ...\rtimes \mathbb{Z}^{r_m},$$
for integers $r_0,...,r_m\geq 1$ satisfying $r_0+\cdots+r_m\leq 4$.
\end{cor}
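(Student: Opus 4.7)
The plan is to translate the semi-direct decomposition of Theorem \ref{thm_descomposicion_caso_noconmutativo} into one whose factors are already free abelian groups. The key observation is that $\core(\Gamma)$ is itself a torsion-free commutative group: for any $g_{x_1,y_1}, g_{x_2,y_2} \in \core(\Gamma)$ a direct computation yields
$$g_{x_1,y_1} \cdot g_{x_2,y_2} \;=\; g_{x_1+x_2,\,y_1+y_2} \;=\; g_{x_2,y_2} \cdot g_{x_1,y_1},$$
so $\core(\Gamma)$ is abelian, and it is torsion-free as a subgroup of the torsion-free group $\Gamma$. Proposition \ref{prop_grupos_conm_sintorsion_Zk} then gives $\core(\Gamma) \cong \Z^{k}$ with $k = \rank(\core(\Gamma))$.

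Each of the cyclic factors $\prodint{\xi_i}, \prodint{\eta_j}, \prodint{\gamma_\ell}$ appearing in Theorem \ref{thm_descomposicion_caso_noconmutativo} is infinite cyclic (cyclic by construction, infinite because $\Gamma$ has no torsion), so each is isomorphic to $\Z$. Substituting these abstract identifications into the semi-direct decomposition of Theorem \ref{thm_descomposicion_caso_noconmutativo} yields
$$\Gamma \;\cong\; \Z^{k} \rtimes \underbrace{\Z \rtimes \cdots \rtimes \Z}_{r+m+n \text{ factors}},$$
which is of the desired form $\Z^{r_0} \rtimes \cdots \rtimes \Z^{r_m}$ with $r_0 = k$ and $r_1 = \cdots = r_{r+m+n} = 1$. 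Theorem \ref{thm_descomposicion_caso_noconmutativo2} then bounds $r_0 + r_1 + \cdots + r_m = k + r + m + n \le 4$, which is the content of the corollary.

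The one delicate point is that replacing $\core(\Gamma)$ by an abstract copy of $\Z^{k}$ has to respect the iterated semi-direct product structure inherited from Theorem \ref{thm_descomposicion_caso_noconmutativo}. This is purely formal, since any group isomorphism transports the defining conjugation actions along, and requires no new dynamical input. As an optional refinement, one could consolidate successive cyclic factors into larger $\Z^{r_i}$ blocks by invoking Propositions \ref{prop_formas_conmutativas} and \ref{prop_transitividad_conmutatividad} to recognize maximal commuting families among the generators: within each $F_i$ commutativity is an equivalence relation, so such families form torsion-free abelian subgroups and hence copies of $\Z^{r_i}$ by Proposition \ref{prop_grupos_conm_sintorsion_Zk}. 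This refinement is not needed to obtain the stated bound.
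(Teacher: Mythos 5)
Your proposal is correct, and it reaches the stated conclusion by a genuinely more economical route than the paper. The paper does not simply relabel the factors of Theorem \ref{thm_descomposicion_caso_noconmutativo}: it first invokes Theorem 7.11 of \cite{ppar} (together with Corollary \ref{cor_lox_nivel34}, which identifies $A=\core(\Gamma)\rtimes\prodint{\xi_1}\rtimes\cdots\rtimes\prodint{\xi_r}$ as purely parabolic) to write $A\cong\Z^{k_0}\rtimes\cdots\rtimes\Z^{k_{n_1}}$, and then carries out in full the consolidation you flag as an ``optional refinement'': it sorts the loxodromic generators $\eta_j$ and $\gamma_\ell$ by their membership in $F_1,\dots,F_4$, uses the equivalence relation of Proposition \ref{prop_transitividad_conmutatividad} on $F_1\cup F_2\cup F_3$ to gather maximal commuting families into blocks $B_i\cong\Z^{p_i}$ via Proposition \ref{prop_grupos_conm_sintorsion_Zk}, and leaves the $F_4$ generators as single $\Z$ factors. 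Your version takes the coarsest admissible decomposition --- $\core(\Gamma)\cong\Z^{k}$ followed by $r+m+n$ copies of $\Z$ --- which already has the required shape, and both proofs obtain the numerical bound identically, by quoting $k+r+m+n\le 4$ from Theorem \ref{thm_descomposicion_caso_noconmutativo2}; so the logical dependencies are unchanged. What the paper's extra work buys is a finer structural statement (the blocks are maximal free abelian subgroups among the generators, minimizing the number of semidirect factors), which the corollary as literally stated does not require; what your version buys is brevity and independence from the $F_i$-machinery. Two cosmetic points: if $\core(\Gamma)$ happens to be trivial the factor $\Z^{r_0}$ with $r_0=k=0$ must be dropped to respect the condition $r_i\ge 1$ (the same convention is implicit in the paper), and your identification $k=\rank(\core(\Gamma))$ does match the $k$ of Theorem \ref{thm_descomposicion_caso_noconmutativo2}, so the arithmetic $r_0+\cdots=k+r+m+n$ is exactly the quantity that theorem bounds.
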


\begin{proof}
Using the notation of Theorem \ref{thm_descomposicion_caso_noconmutativo}, we know that the group $A=\text{Core}(\Gamma)\rtimes\langle \xi_1\rangle\rtimes...\rtimes \langle\xi_r\rangle$ is purely parabolic and therefore, by Theorem 7.11 of \cite{ppar} we can write
	\begin{equation}\label{eq_dem_cor_descomposicion_bloques_noconm_1}
	A\cong \mathbb{Z}^{k_0}\rtimes ...\rtimes \mathbb{Z}^{k_{n_1}}.
	\end{equation}
For some integers $k_0,...,k_{n_1}$ such that $k_0+...+k_{n_1}\leq 4$. We denote 
	\begin{equation}\label{eq_dem_cor_descomposicion_bloques_noconm_5}
	r_i=k_i,\;\;\;\text{for }i=0,...,n_1.
	\end{equation}

Let us re-order the elements $\{\eta_1,...,\eta_m\}$ in the third layer in such way that if $i<j$ , then $\eta_i\in F_{s_1}$, $\eta_j\in F_{s_2}$ with $s_1\leq s_2$. We re-order the elements $\{\gamma_1,...,\gamma_n\}$ in the same way. Define the relation in 
	$$\Gamma_1 := \langle\eta_1,...,\eta_m\rangle\cap\left(F_1\cup F_2\cup F_3\right)$$
given by $\alpha\sim \beta$ if and only if $\left[\alpha,\beta\right]=\mathit{id}$. This is an equivalence relation (Proposition \ref{prop_transitividad_conmutatividad}). Denote by $A_1,...,A_{n_2}$ the equivalence classes in $\Gamma_1$. Let $B_i=\langle A_i\rangle$, clearly $B_i$ is a commutative and torsion-free group. Denoting $p_i=\text{rank}(B_i)$, we have $B_i\cong \mathbb{Z}^{p_i}$. Then 
	\begin{equation}\label{eq_dem_cor_descomposicion_bloques_noconm_2}
	\Gamma_1\cong \mathbb{Z}^{p_1}\rtimes ...\rtimes \mathbb{Z}^{p_{n_2}}.
	\end{equation}
Denote by $\eta_{\tilde{p}_i}$ the remaining elements of the third layer. That is, 
	$$\langle\eta_1,...,\eta_m\rangle\cap F_4=\{\eta_{\tilde{p}_1},...,\eta_{\tilde{p}_{n_3}}\}.$$
Then it follows from (\ref{eq_dem_cor_descomposicion_bloques_noconm_2})
	\begin{equation}\label{eq_dem_cor_descomposicion_bloques_noconm_3}
	\langle \eta_1,...,\eta_m\rangle\cong \mathbb{Z}^{p_1}\rtimes ...\rtimes \mathbb{Z}^{p_{n_2}}\rtimes\langle\eta_{\tilde{p}_1}\rangle\rtimes...\rtimes \langle\eta_{\tilde{p}_{n_3}}\rangle.
	\end{equation}
Let us denote
	\begin{align}
	r_{n_1+i}=p_i &, \;\;\;\text{for }i=1,...,n_2. \label{eq_dem_cor_descomposicion_bloques_noconm_6} \\
	r_{n_1+n_2+i}= 1 &,\;\;\;\text{for }i=1,...,n_3. \nonumber
	\end{align}
	
Applying the same argument to the elements of the fourth layer $\{\gamma_1,...,\gamma_n\}$ we have  
	\begin{equation}\label{eq_dem_cor_descomposicion_bloques_noconm_4}
	\langle\gamma_1,...,\gamma_n\rangle\cong \mathbb{Z}^{q_1}\rtimes ...\rtimes \mathbb{Z}^{q_{n_4}}\rtimes\langle\gamma_{\tilde{q}_1}\rangle\rtimes...\rtimes \langle\gamma_{\tilde{q}_{n_5}}\rangle.
	\end{equation}
Again, we denote
\begin{align}
	r_{n_1+n_2+n_3+i}=q_i &, \;\;\;\text{for }i=1,...,n_4. \label{eq_dem_cor_descomposicion_bloques_noconm_7} \\
	r_{n_1+n_2+n_3+n_4+i}= 1 &,\;\;\;\text{for }i=1,...,n_5. \nonumber
	\end{align}
Putting together (\ref{eq_dem_cor_descomposicion_bloques_noconm_1}), (\ref{eq_dem_cor_descomposicion_bloques_noconm_3}) and (\ref{eq_dem_cor_descomposicion_bloques_noconm_4}) we prove the corollary. The indices $r_0,...,r_m$ are given by (\ref{eq_dem_cor_descomposicion_bloques_noconm_5}), (\ref{eq_dem_cor_descomposicion_bloques_noconm_6}) and (\ref{eq_dem_cor_descomposicion_bloques_noconm_7}) and $m=n_1+...+n_5$.
\end{proof}

The following corollary describes the type of elements found in each layer of the decomposition.

\begin{cor}\label{cor_lox_nivel34}
Let $\Gamma\subset U_+$ be a non-commutative discrete subgroup. Consider the decomposition in four layers described in the proof of Theorem \ref{thm_descomposicion_caso_noconmutativo} and summarized in Table \ref{fig_noconmutativo_capas}.\\
The first two layers $\text{Core}(\Gamma)$ and $A\setminus\text{Core}(\Gamma)$ are purely parabolic and the last two layers $\text{Ker}(\lambda_{23})\setminus A$ and $\Gamma\setminus\text{Ker}(\lambda_{23})$ are made up entirely of loxodromic elements. Furthermore,
	\begin{enumerate}[(i)]
	\item The third layer $\text{Ker}(\lambda_{23})\setminus A$ contains only loxo-parabolic elements.
	\item The fourth layer $\Gamma\setminus\text{Ker}(\lambda_{23})$ contains only loxo-parabolic and strongly loxodromic elements or complex homotheties of the form $\text{Diag}(\lambda,\lambda^{-2},\lambda)$. 
	\end{enumerate}
\end{cor}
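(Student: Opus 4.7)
The plan is to analyze the diagonal entries of a representative matrix in each layer and combine the classification of elements of $\PSL$ with the exclusion results of Subsection \ref{subsec_restrictions}. The key technical tools are the torsion-freeness results established during the proof of Theorem \ref{thm_descomposicion_caso_noconmutativo}, together with Corollary \ref{cor_HC_no_hay_en_no_conmutativos}.

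For the parabolic nature of Layers 1 and 2, every element $\alpha\in A=\kernel(\lambda_{12})\cap\kernel(\lambda_{23})$ has $\alpha_{11}=\alpha_{22}=\alpha_{33}$, so the common diagonal value cubes to $1$ and in $\PSL$ may be rescaled to $1$. Then $\alpha$ is unipotent; if $\alpha\neq\id$ at least one super-diagonal entry is non-zero, so $\alpha$ is non-diagonalizable with every eigenvalue equal to $1$, hence parabolic. The split between $\core(\Gamma)$ and $A\setminus\core(\Gamma)$ depends only on whether $\alpha_{23}=0$, which matches Table \ref{fig_noconmutativo_capas}.

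For (i), an element $\alpha\in\kernel(\lambda_{23})\setminus A$ has $\alpha_{22}=\alpha_{33}$, $\alpha_{11}\neq\alpha_{22}$, and $\alpha_{11}\alpha_{22}^2=1$. I would first show $\valorabs{\alpha_{11}}\neq\valorabs{\alpha_{22}}$, hence $\alpha$ is loxodromic with repeated eigenvalue $\alpha_{22}$: otherwise $\lambda_{12}(\alpha)\in\Ss^1\setminus\set{1}$, and since $\lambda_{12}(\kernel(\lambda_{23}))$ is torsion-free (Part II of the proof of Theorem \ref{thm_descomposicion_caso_noconmutativo}), $\lambda_{12}(\alpha)$ is an irrational rotation. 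Then $\alpha$ is either an irrational screw (diagonal case) or an irrational ellipto-parabolic element (non-diagonal case), and Corollary \ref{cor_IS_conmutativo} or Proposition \ref{prop_EPI_conmutativo} respectively forces $\Gamma$ to be commutative, a contradiction. A direct rank computation of $\alpha-\alpha_{22}I$ then shows $\alpha$ is diagonalizable iff $\alpha_{23}=0$, in which case $\alpha=\text{Diag}(\alpha_{22}^{-2},\alpha_{22},\alpha_{22})$ is a type I complex homothety, excluded by Corollary \ref{cor_HC_no_hay_en_no_conmutativos}. Hence $\alpha_{23}\neq 0$, the repeated eigenvalue $\alpha_{22}$ has a non-trivial Jordan block, and $\alpha$ is loxo-parabolic.

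For (ii), an element $\alpha\in\Gamma\setminus\kernel(\lambda_{23})$ has $\alpha_{22}\neq\alpha_{33}$. The parallel argument with $\lambda_{23}$ in place of $\lambda_{12}$, using the torsion-freeness of $\lambda_{23}(\Gamma)$ from Part I of Theorem \ref{thm_descomposicion_caso_noconmutativo}, gives $\valorabs{\alpha_{22}}\neq\valorabs{\alpha_{33}}$. Depending on the third eigenvalue, either $\valorabs{\alpha_{11}}$ is distinct from both remaining norms (strongly loxodromic), or $\alpha_{11}$ actually equals $\alpha_{22}$ or $\alpha_{33}$, in which case a further rank computation shows $\alpha$ is a loxo-parabolic element when the corresponding off-diagonal entry is non-zero, or a complex homothety conjugate to $\text{Diag}(\lambda,\lambda^{-2},\lambda)$ when diagonalizable.

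The main obstacle is to exclude the remaining degenerate subcases in Layer 4 where $\valorabs{\alpha_{11}}$ coincides with $\valorabs{\alpha_{22}}$ or $\valorabs{\alpha_{33}}$ without the eigenvalues being equal, which would produce a screw whose ratio lies in $\Ss^1\setminus\set{1}$. The irrational subcase again reduces to Corollary \ref{cor_IS_conmutativo}; the rational subcase is more delicate because $\lambda_{12}(\Gamma)$ is not a priori torsion-free. The strategy is to pass to a suitable power $\alpha^n\in\kernel(\lambda_{12})$, apply the Layer 3 analysis (or Corollary \ref{cor_HC_no_hay_en_no_conmutativos}) to $\alpha^n$, and close the argument using the torsion-freeness of $\Gamma$ itself to preclude a proper power of a non-identity element from becoming trivial.
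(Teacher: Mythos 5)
Your overall route is the paper's: read off the diagonal entries layer by layer, use torsion-freeness of the relevant $\lambda_{ij}$-images to force any unit-modulus eigenvalue ratio to be an irrational rotation, and then contradict non-commutativity via Proposition \ref{prop_EPI_conmutativo}, Corollary \ref{cor_IS_conmutativo} and Corollary \ref{cor_HC_no_hay_en_no_conmutativos}. Two of your steps need repair. First, in Layer 3 the relations $\alpha_{11}\alpha_{22}\alpha_{33}=1$ and $\alpha_{22}=\alpha_{33}$ together with $\valorabs{\alpha_{11}}=\valorabs{\alpha_{22}}$ force all three eigenvalues to be unitary, so the diagonalizable subcase is an \emph{elliptic} element, not an irrational screw, and Corollary \ref{cor_IS_conmutativo} does not apply to it. The exclusion still goes through, but by the paper's direct dichotomy: an elliptic element of infinite order contradicts discreteness, and one of finite order contradicts torsion-freeness.

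Second, your treatment of a rational screw in Layer 4 does not close as written. If $\lambda_{12}(\alpha)$ is a primitive $q$-th root of unity, then $\alpha^q\in\kernel(\lambda_{12})$, but $\lambda_{23}(\alpha^q)=\lambda_{23}(\alpha)^q\neq 1$ because you have already shown $\valorabs{\lambda_{23}(\alpha)}\neq 1$; hence $\alpha^q$ still lies in $\Gamma\setminus\kernel(\lambda_{23})$, not in Layer 3, and the Layer 3 analysis cannot be applied to it. Moreover $\alpha^q$ is a complex homothety of the form $\text{Diag}(\mu,\mu,\nu)$, whose repeated eigenspace is $\linproy{e_1,e_2}$; this is not the type I form $\text{Diag}(\lambda^{-2},\lambda,\lambda)$ to which Corollary \ref{cor_HC_no_hay_en_no_conmutativos} (through Corollary \ref{cor_forma_afin_HC}) applies, so that corollary cannot be quoted either. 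To finish you would either have to prove the mirror analogue of Corollary \ref{cor_HC_no_hay_en_no_conmutativos} for homotheties of that form, or do what the paper does implicitly: invoke the reduction of Lemma 5.8 of \cite{ppar} (quoted at the start of Section \ref{sec_commutative_triangular}) to a finite-index subgroup on which the $\lambda_{ij}$ have torsion-free images, which excludes rational rotations, and hence rational screws, outright.
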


\begin{proof}
From the definition of $\text{Core}(\Gamma)$ and $A=\text{Ker}(\lambda_{12})\cap\text{Ker}(\lambda_{23})$ is clear that this two subgroups are purely parabolic. Now we deal with the third layer $\text{Ker}(\lambda_{23})\setminus A$. If there were a elliptic element $\gamma$ in this layer, we have two cases:
	\begin{itemize}
	\item If $\gamma$ has infinite order, then $\Gamma$ cannot be discrete. 
	\item If $\gamma$ has finite order $p>0$, then $\Gamma$ cannot be torsion-free.
	\end{itemize}

If there were a parabolic element $\gamma$ in this layer, then it must have exactly two repeated eigenvalues (if it had 3, then $\gamma\in A$). Furthermore, all of its eigenvalues must be unitary (they cannot be 1, otherwise $\gamma\in A$). Then
	$$\gamma=\left[\begin{array}{ccc}
	e^{-4\pi i \theta} & x & y\\
	0 & e^{2\pi i \theta} & z\\
	0 & 0 & e^{2\pi i \theta}\\
	\end{array}\right]$$
with $z\neq 0$ and $\theta\in\mathbb{R}\setminus\mathbb{Q}$ (since $\lambda_{12}(\Gamma)$ is a torsion-free group). This means that $\gamma$ is an irrational ellipto-parabolic element, and by Proposition \ref{prop_EPI_conmutativo}, $\Gamma$ would be commutative. All of these arguments prove that the third layer $\text{Ker}(\lambda_{23})\setminus A$ is purely loxodromic. Finally, since $\Gamma$ is not commutative, it cannot contain a complex homothety as a consequence of Corollary \ref{cor_HC_no_hay_en_no_conmutativos}.\\

Now we deal with the fourth layer $\Gamma\setminus\text{Ker}(\lambda_{23})$. Using the same argument as in the third layer, there cannot be elliptic elements. Now assume that there is a parabolic element $\gamma\in\Gamma\setminus\text{Ker}(\lambda_{23})$. In the same way as before, $\gamma$ must have exactly two distinct eigenvalues and none of them are equal to 1. Since $\gamma\not\in\text{Ker}(\lambda_{23})$, then
	$$\gamma=\left[\begin{array}{ccc}
	e^{2\pi i \theta} & x & y\\
	0 & e^{2\pi i \theta} & z\\
	0 & 0 & e^{-4\pi i \theta}\\
	\end{array}\right]$$
with $x\neq 0$ and $\theta\in\mathbb{R}\setminus\mathbb{Q}$. Then $\gamma$ is an irrational ellipto-parabolic element, and by Proposition \ref{prop_EPI_conmutativo}, $\Gamma$ is commutative. Then the fourth layer $\Gamma\setminus\text{Ker}(\lambda_{23})$ is purely loxodromic. Inspecting the form of these elements, they can be strongly loxodromic or complex homotheties of the form $\text{Diag}(\lambda,\lambda^{-2},\lambda)$ (Corollary \ref{cor_HC_no_hay_en_no_conmutativos}).
\end{proof}

\section{Commutative triangular groups}\label{sec_commutative_triangular}

In this section we describe the commutative triangular groups of $\text{PSL}\left(3,\mathbb{C}\right)$. We describe how these groups should look in order to be commutative and discrete. We also describe the Kulkarni limit sets and obtain a bound for their rank.\\ 

Recall the definitions of the group morphisms $\Pi$, $\lambda_{12}$, $\lambda_{13}$ and $\lambda_{23}$. Let us define the group morphism $\Pi^\ast:U_+\rightarrow\text{M\"ob}\left(\hat{\mathbb{C}}\right)$ given by
	$$\Pi^\ast\left(\left[\alpha_{ij}\right]\right)(z)=\alpha_{11}\alpha_{22}^{-1}z+\alpha_{12}\alpha_{22}^{-1}.$$
We will also define the projections $\pi_{kl}\left(\left[\alpha_{ij}\right]\right)=\alpha_{kl}$.\\

Whenever we have a discrete subgroup $\Gamma\subset U_+$, we have a finite index torsion-free subgroup $\Gamma'\subset\Gamma$ such that $\Pi^\ast(\Gamma)$, $\lambda_{12}(\Gamma')$, $\lambda_{13}(\Gamma)$ and $\lambda_{23}(\Gamma')$ are torsion-free groups as well (Lemma 5.8 of \cite{ppar}). This finite index subgroup and the original group satisfy $\Lambda_{\text{Kul}}(\Gamma)=\Lambda_{\text{Kul}}(\Gamma')$ (Proposition 3.6 of \cite{bcn16}). Therefore we can assume for the rest of this work that all discrete subgroups $\Gamma\subset U_+$ are torsion-free.\\

The following lemma describes the form of the upper triangular commutative subgroups of $\text{PSL}\left(3,\mathbb{C}\right)$. The proof is taken from \cite{ppar}.

\begin{lem}[Lemma 5.13 of \cite{ppar}]\label{lem_7casos_conmutativos}
Let $\Gamma\subset U_+$ be a commutative group, then $\Gamma$ is conjugate in $\text{PSL}\left(3,\mathbb{C}\right)$ to a subgroup of one of the following Abelian Lie Groups:
	\begin{multicols}{2}
	\begin{enumerate}
	\item
		$$C_1=\left\{\left(\begin{array}{ccc}
		\alpha^{-2} & 0 & 0 \\
		0 & \alpha & \beta \\
		0 & 0 & \alpha
		\end{array}\right)\,\middle|\,\alpha\in\mathbb{C}^\ast,\beta\in\mathbb{C}\right\}.$$		
	\item 
		$$C_2=\left\{\text{Diag}\left(\alpha,\beta,\alpha^{-1}\beta^{-1}\right)\,\middle|\,\alpha,\beta\in\mathbb{C}^\ast\right\}.$$
	\item
		$$C_3=\left\{\left(\begin{array}{ccc}
		1 & 0 & \beta \\
		0 & 1 & \gamma \\
		0 & 0 & 1
		\end{array}\right)\,\middle|\,\beta,\gamma\in\mathbb{C}\right\}.$$
	\item
		$$C_4=\left\{\left(\begin{array}{ccc}
		1 & \beta & \gamma \\
		0 & 1 & 0 \\
		0 & 0 & 1
		\end{array}\right)\,\middle|\,\beta,\gamma\in\mathbb{C}\right\}.$$
	\item
		$$C_5=\left\{\left(\begin{array}{ccc}
		1 & \beta & \gamma \\
		0 & 1 & \beta \\
		0 & 0 & 1
		\end{array}\right)\,\middle|\,\beta,\gamma\in\mathbb{C}\right\}.$$
	\end{enumerate}
	\end{multicols}
\end{lem}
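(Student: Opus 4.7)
The plan is to classify $\Gamma$ via the three diagonal characters $\chi_i\colon\Gamma\to\C^{\ast}$, $\chi_i(\gamma)=\gamma_{ii}$, which are group homomorphisms satisfying $\chi_1\chi_2\chi_3\equiv 1$, and split according to how many distinct values these characters take on a single element. If there exists $\gamma_0\in\Gamma$ whose three diagonal entries are pairwise distinct, then $\gamma_0$ has simple spectrum; being upper triangular, it admits an eigenbasis of the form $(e_1,v_2,v_3)$ with $v_2\in e_2+\C e_1$ and $v_3\in e_3+\C e_1+\C e_2$, yielding a unipotent change-of-basis matrix $P\in U_+$. Conjugation by $P$ diagonalizes $\gamma_0$, and any $\alpha\in\Gamma$ commuting with a diagonal matrix of simple spectrum must itself be diagonal, so $P^{-1}\Gamma P\subset C_2$.

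Second, suppose no element has three distinct diagonal entries but some element has exactly two. After a suitable $\PSL$ conjugation (possibly involving a coordinate permutation so as to place the distinguished eigenvalue at the first diagonal slot), we may assume every $\gamma\in\Gamma$ has the form
$$\gamma=\corchetes{\begin{array}{ccc}\alpha^{-2}&x&y\\0&\alpha&z\\0&0&\alpha\end{array}}.$$
A direct computation of $\gamma_1\gamma_2-\gamma_2\gamma_1$ shows that the $(2,3)$-entry equation is automatic, while the $(1,2)$-equation forces $x_\gamma/(\alpha-\alpha^{-2})$ to be a constant $c_1$ independent of $\gamma$ (on the subset $\alpha^3\neq 1$, the rest being handled separately since there $\gamma$ is already diagonal). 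Conjugation by the unipotent matrix $I+c_1 E_{12}\in U_+$ kills the $(1,2)$-entry of every element of $\Gamma$ simultaneously. Repeating this argument on the resulting $(1,3)$-entries gives a second universal constant $c_2$, and a further conjugation by $I+c_2 E_{13}$ annihilates them while leaving the $(1,2)$- and $(2,3)$-entries untouched. The outcome is $\Gamma\subset C_1$ up to conjugation.

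Third, if $\chi_1\equiv\chi_2\equiv\chi_3$, then projectively every element lifts to a unipotent matrix
$$\gamma=\corchetes{\begin{array}{ccc}1&x&y\\0&1&z\\0&0&1\end{array}},$$
and a direct calculation gives
$$[\gamma_1,\gamma_2]=\corchetes{\begin{array}{ccc}1&0&x_1 z_2-x_2 z_1\\0&1&0\\0&0&1\end{array}}.$$
Hence commutativity of $\Gamma$ is equivalent to the pairs $(x_\gamma,z_\gamma)$ being pairwise $\C$-proportional. Three sub-cases arise: if $z\equiv 0$ on $\Gamma$ then $\Gamma\subset C_4$; if $x\equiv 0$ then $\Gamma\subset C_3$; otherwise the common ratio $r=z/x$ is a nonzero constant on $\Gamma$, and a diagonal $\PSL$-conjugation normalizing $r$ to $1$ yields $\Gamma\subset C_5$.

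The delicate point is the second case: one must verify that the commutativity equations really force the $(1,2)$- and $(1,3)$-entries to depend \emph{linearly} on $\alpha-\alpha^{-2}$ with a \emph{single} universal constant each, so that only two unipotent conjugations eliminate all such entries for \emph{every} $\gamma\in\Gamma$ at once. This is where the hypothesis of commutativity is decisive, and the same rigidity explains why the classification collapses to just the five canonical forms $C_1,\ldots,C_5$.
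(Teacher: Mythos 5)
Your proposal takes a genuinely different route from the paper: the paper splits into cases according to whether the images $\Pi(\Gamma)\subset\psl$ and $\Pi^\ast(\Gamma)\subset\Mob$ contain parabolic elements and conjugates by translations through common fixed points, whereas you classify by the multiplicity pattern of the diagonal entries and solve the commutator equations directly. Your first case (an element of simple spectrum forces $\Gamma$ into $C_2$) and your third case (the Heisenberg computation $[\gamma_1,\gamma_2]=I+(x_1z_2-x_2z_1)E_{13}$, proportionality of the pairs $(x_\gamma,z_\gamma)$, and the resulting split into $C_3$, $C_4$, $C_5$) are correct, and the explicit computations inside your second case (the universal constants $c_1,c_2$ and the two unipotent conjugations) are also correct \emph{once the normal form is granted}.

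The gap is precisely in granting that normal form. In your second case you assert that a single $\PSL$-conjugation (plus one coordinate permutation) puts \emph{every} element of $\Gamma$ into the shape with diagonal $(\alpha^{-2},\alpha,\alpha)$. This presupposes that all elements of $\Gamma$ share the same repeated-eigenvalue pattern, i.e.\ that one of the three characters $\lambda_{12},\lambda_{23},\lambda_{13}$ vanishes identically on $\Gamma$; equivalently, that $\Gamma$ is not covered by the union of the three proper subgroups $\kernel(\lambda_{12})\cup\kernel(\lambda_{23})\cup\kernel(\lambda_{13})$. That can fail. The matrices
	$$\gamma_0=\corchetes{\begin{array}{ccc} -1 & 2 & 1\\ 0 & 1 & 0\\ 0 & 0 & 1\end{array}},\qquad \gamma_1=\corchetes{\begin{array}{ccc} 1 & 0 & 1\\ 0 & 1 & 2\\ 0 & 0 & -1\end{array}}$$
commute (both products equal the matrix with rows $(-1,2,2)$, $(0,1,2)$, $(0,0,-1)$), so after rescaling the lifts they generate a commutative subgroup of $U_+$; every $\gamma_0^n\gamma_1^m$ has diagonal $((-1)^n,1,(-1)^m)$ up to scale, so no element has three distinct diagonal entries, yet $\gamma_0$ has pattern $(a,b,b)$ while $\gamma_1$ has pattern $(c,c,d)$, and no single permutation aligns both. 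This group does satisfy the lemma --- commutativity forces the $(2,3)$-entry of $\gamma_0$ and the $(1,2)$-entry of $\gamma_1$ to vanish, both become diagonalizable, and the whole group is simultaneously diagonalizable, hence lands in $C_2$ --- but it lands there by a mechanism your case division does not provide. To close the gap you must either add this ``crossed patterns'' sub-case explicitly, or invoke the torsion-freeness of the groups $\lambda_{ij}(\Gamma)$ assumed elsewhere in the paper to exclude a Klein-four quotient of $\Gamma$. Relatedly, the pattern $\lambda_{13}\equiv 1$ (diagonal $(\alpha,\alpha^{-2},\alpha)$) deserves explicit treatment: its non-diagonalizable elements carry a Jordan block spanning the first and third coordinate directions, and the transposition of slots $2$ and $3$ does not preserve upper triangularity, so the normalization there must go through the invariant flag rather than a raw permutation.
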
 

\begin{proof}
Since $\Gamma$ is commutative, $\Pi(\Gamma)$ and $\Pi^\ast(\Gamma)$ are commutative. Let us consider the following 4 possible cases:

\begin{enumerate}
\item Both groups $\Pi(\Gamma)$ and $\Pi^\ast(\Gamma)$ contain a parabolic element. Since they are Abelian groups, it is direct to verify that they are purely parabolic, and then $\Gamma\subset \text{Ker}(\lambda_{12})\cap\text{Ker}(\lambda_{13})$. Let $h\in \Gamma$ such that $\Pi(g)$ and $\Pi^\ast(g)$ are parabolic, then
	$$h=\left[
	\begin{array}{ccc}
	1 & a & b \\
	0 & 1 & c \\
	0 & 0 & 1 \\	
	\end{array}		
	\right],$$
where $ac\neq 0$. Let us define $h_0\in\text{PSL}\left(3,\mathbb{C}\right)$ by $h_0 = \text{Diag}(a^{-1},1,c)$, then for every $g=\left[g_{ij}\right]\in h_0 \Gamma h_0^{-1}$,
	$$\left[h_0 h h_0^{-1},g\right] = \left[
	\begin{array}{ccc}
	1 & 0 & -g_{12}+g_{23} \\
	0 & 1 & 0 \\
	0 & 0 & 1 \\	
	\end{array}		
	\right].$$
Since $\Gamma$ is commutative, it follows that $g_{12}=g_{13}$. This means that $\Gamma$ is conjugate, via $h_0$, to a subgroup of $C_5$.

\item The group $\Pi(\Gamma)$ does not contain a parabolic element, but $\Pi^\ast(\Gamma)$ does. Under this assumptions, we deduce that $\Pi^\ast(\Gamma)$ is purely parabolic, hence $\Gamma\subset\text{Ker}(\lambda_{12})$. Since $\Pi(\Gamma)$ does not contain parabolic elements, there exists $w\in\mathbb{C}$ such that $\Pi(\Gamma) w = w$. We define
	$$h=\left[
	\begin{array}{ccc}
	1 & 0 & 0 \\
	0 & 1 & w \\
	0 & 0 & 1 \\	
	\end{array}\right],$$
it is straightforward to verify that, for every $g=\left[g_{ij}\right]\in\Gamma$, there exists $c_g\in\mathbb{C}$ such that
	$$hgh^{-1}=\left[
	\begin{array}{ccc}
	g_{11} & g_{12} & c_g \\
	0 & g_{11} & 0 \\
	0 & 0 & g_{11}^{-2} \\	
	\end{array}\right].$$
Therefore $\Gamma_1=h\Gamma h^{-1}$ leaves $\overleftrightarrow{e_1,e_3}$ invariant. Then $\Pi_1:\Gamma_1\rightarrow\text{M\"ob}\left(\hat{\mathbb{C}}\right)$ given by $\Pi_1\left(\left[g_{ij}\right]\right)=g_{11}g_{33}^{-1}z+g_{13}g_{33}^{-1}$ is a well defined group morphism. Now, we have two sub-cases, depending on whether $\Pi_1(\Gamma_1)$ has a parabolic element or not.

\begin{itemize}
\item The group $\Pi_1(\Gamma_1)$ contains a parabolic element. Then $\Pi_1(\Gamma_1)$ is purely parabolic and then $\Gamma_1\subset\text{Ker}(\lambda_{13})$. From this, we deduce that $\Gamma$ is conjugate, via $h$, to the group $\Gamma_1$ of the form 
	$$hgh^{-1}=\left[
	\begin{array}{ccc}
	1 & g_{12} & c_g \\
	0 & 1 & 0 \\
	0 & 0 & 1 \\	
	\end{array}\right],$$
corresponding to a subgroup of $C_4$.

\item The group $\Pi_1(\Gamma_1)$ does not contains parabolic elements. Then there exists $p\in\mathbb{C}$ such that $\Pi_1(\Gamma_1)p=p$. We define
	$$h_1 = \left[
	\begin{array}{ccc}
	1 & 0 & p \\
	0 & 1 & 0 \\
	0 & 0 & 1 \\	
	\end{array}\right],$$
then for every $g=\left[g_{ij}\right]\in\Gamma_1$ it holds
	$$(h_1 h) g (h_1 h)^{-1} = \left[
	\begin{array}{ccc}
	g_{11} & g_{12} & 0 \\
	0 & g_{11} & 0 \\
	0 & 0 & g_{11}^{-2} \\	
	\end{array}\right].$$
This means that $\Gamma$ is conjugate to a subgroup of $C_1$.
\end{itemize}

\item The group $\Pi(\Gamma)$ contains a parabolic element but $\Pi^\ast(\Gamma)$ does not. This case is similar to the last case, but the roles of $\Pi(\Gamma)$ and $\Pi^\ast(\Gamma)$ are reversed. In this case, $\Gamma$ leaves $\overleftrightarrow{e_1,e_3}$ invariant again and therefore, we consider the group morphism $\Pi_2=\Pi_{e_2,\overleftrightarrow{e_1,e_3}}$. The group $\Pi_2(\Gamma)$ is Abelian and it might contain parabolic elements or not. In the former case, $\Gamma$ is conjugate to a subgroup of $C_3$; in the latter case, it is conjugate to a subgroup of $C_1$.

\item Neither $\Pi(\Gamma)$ nor $\Pi^\ast(\Gamma)$ contains a parabolic element. Therefore there are $z,w\in\mathbb{C}$ such that $\Pi^{\ast}(\Gamma) z = z$ and $\Pi^{\ast}(\Gamma) w = w$. Define 
	$$h = \left[
	\begin{array}{ccc}
	1 & z & 0 \\
	0 & 1 & w \\
	0 & 0 & 1 \\	
	\end{array}\right].$$
Then for every $g=\left[g_{ij}\right]\in\Gamma$ there exists $c_g\in\mathbb{C}$ such that
	$$hgh^{-1}=\left[
	\begin{array}{ccc}
	g_{11} & 0 & c_g \\
	0 & g_{22} & 0 \\
	0 & 0 & g_{33} \\	
	\end{array}\right].$$
Using the same arguments as in the previous cases and considering the group morphism $\Pi_2$ of Case (3) and the Abelian group $\Pi_2(\Gamma)$, we have the same two possibilities: either $\Pi_2(\Gamma)$ contains a parabolic element or not. In the former case, $\Gamma\subset\text{Ker}(\lambda_13)$ and therefore it is conjugate to a subgroup of $C_1$. In the latter case, there exists $p\in\mathbb{C}$ such that $\Pi_2(\Gamma) p = p$. Let 
	$$h_1 = \left[
	\begin{array}{ccc}
	1 & 0 & p \\
	0 & 1 & 0 \\
	0 & 0 & 1 \\	
	\end{array}\right].$$
Then the group $(h_1h)\Gamma(h_1 h)^{-1}$ contains only diagonal elements. Hence $\Gamma$ is conjugate to a subgroup of $C_2$.
\end{enumerate}
 
\end{proof}

Observe that $C_3$, $C_4$ and $C_5$ contain only parabolic elements, and their subgroups have already been studied in \cite{ppar}. Groups $C_1$ and $C_2$ can be purely parabolic or they can have loxodromic elements, therefore we will only study discrete subgroups of $C_1$ and $C_2$ containing loxodromic elements. We will call this two cases Cases 1 and 2 respectively. We study each case in each of the following two subsections.

\subsection{Case 1}

This first proposition describes the form of subgroups of $C_1$.

\begin{prop}\label{prop_conmutativo_c1_descripcion}
Let $\Gamma\subset U_+$ be a commutative subgroup such that each element of $\Gamma$ has the form given by (1) of Lemma \ref{lem_7casos_conmutativos}. Then there exists an additive subgroup $W\subset(\mathbb{C},+)$, and a group morphism $\mu:(W,+)\rightarrow(\mathbb{C}^{\ast},\cdot)$ such that
	$$
	\Gamma=\Gamma_{W,\mu}=\left\{\left[\begin{array}{ccc}
	\mu(w)^{-2} & 0 & 0 \\
	0 & \mu(w) & w\mu(w) \\
	0 & 0 & \mu(w)
	\end{array}\right]\,\middle|\,w\in W\right\}.	
	$$
\end{prop}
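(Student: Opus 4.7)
My plan is to parameterize $\Gamma$ by the natural ratio $\beta/\alpha$ read from the form in Lemma \ref{lem_7casos_conmutativos}(1). For $g \in \Gamma$ written in that form, let $\alpha(g)$ denote the $(2,2)$ entry and $\beta(g)$ the $(2,3)$ entry of any $\SL$-lift; two such lifts differ by a common cube root of unity, so the quantity $w(g) := \beta(g)/\alpha(g)$ is well-defined on the projective class. A direct matrix multiplication yields
\[
\alpha(g_1 g_2) = \alpha(g_1)\alpha(g_2), \qquad \beta(g_1 g_2) = \alpha(g_1)\beta(g_2) + \beta(g_1)\alpha(g_2),
\]
and therefore $w(g_1 g_2) = \beta(g_2)/\alpha(g_2) + \beta(g_1)/\alpha(g_1) = w(g_1) + w(g_2)$. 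Hence $\phi: \Gamma \to (\C, +)$, $\phi(g) = w(g)$, is a group homomorphism, and $W := \phi(\Gamma)$ is automatically an additive subgroup of $(\C,+)$.

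The next (and main) step is to show that $\phi$ is injective, so that $g \mapsto w(g)$ is a bijection $\Gamma \to W$. If $\phi(g_1) = \phi(g_2)$, the product formula above forces $g_1 g_2^{-1}$ to be a diagonal element $\text{Diag}(c^{-2},c,c)$ with $c = \alpha(g_1)/\alpha(g_2)$, so the task reduces to showing that no non-identity element of this particular shape lies in $\Gamma$. Under the standing hypotheses of this subsection (Case 1: $\Gamma$ is discrete and torsion-free, and we are normalizing so that $\Gamma$ contains a loxodromic witness of the form in (1) with $\beta \neq 0$), torsion-freeness excludes $c$ being a non-trivial root of unity, while combining a putative non-scalar diagonal $d$ with the loxodromic witness produces a sequence of distinct elements converging in $\PSL$, contradicting discreteness. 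I expect this step to be the main obstacle: ruling out both the elliptic shape ($|c|=1$) and the complex homothety shape ($|c|\neq 1$) requires invoking discreteness in two slightly different ways, whereas the rest of the argument is a formal algebraic manipulation.

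With the bijection in hand, I define $\mu: W \to \C^\ast$ by $\mu(w(g)) := \alpha(g)$, which is well-defined by injectivity and unambiguous with respect to the cube-root ambiguity of $\alpha(g)$ once the projective class is fixed. The product formula above immediately gives $\mu(w_1+w_2) = \alpha(g_1 g_2) = \alpha(g_1)\alpha(g_2) = \mu(w_1)\mu(w_2)$, so $\mu$ is a group morphism $(W,+)\to(\C^\ast,\cdot)$. By construction each $g\in \Gamma$ has entries $\mu(w(g))^{-2}$, $\mu(w(g))$ and $w(g)\mu(w(g))$, matching the form displayed in the statement; conversely, every matrix built from $w \in W$ by this recipe is, by definition of $W$ and $\mu$, an element of $\Gamma$. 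This gives $\Gamma = \Gamma_{W, \mu}$.
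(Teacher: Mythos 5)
Your route is the same as the paper's: the paper defines the homomorphism $\zeta\parentesis{\corchetes{\alpha_{ij}}}=\alpha_{23}\alpha_{33}^{-1}$ (your $\phi$), sets $W=\zeta(\Gamma)$, and defines $\mu$ as the $(2,2)$-entry of $\zeta^{-1}$, exactly as you do; the formal algebra (additivity of $\phi$, multiplicativity of $\mu$, reconstruction of $\Gamma$ from $W$ and $\mu$) is correct and matches the paper. The entire content therefore sits in the injectivity of $\phi$, which the paper dismisses with ``clearly $\text{Ker}(\zeta)=\set{\id}$'' and which you rightly identify as the main obstacle.

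Your argument for that step fails in the case $\valorabs{c}\neq 1$. A non-identity element of $\kernel(\phi)$ is $d=\text{Diag}(c^{-2},c,c)$, a type I complex homothety, and it commutes with \emph{every} element of $C_1$, in particular with your loxodromic witness $g$ having $\beta\neq 0$. Because they commute, conjugation produces nothing new, and no sequence of distinct elements of $\prodint{d,g}$ converges in $\PSL$: normalizing $d^ng^m$ by its $(2,2)$-entry gives $(1,1)$-entry $c^{-3n}\mu^{-3m}$ and $(2,3)$-entry $m\beta\mu^{-1}$, so an invertible limit forces $m$ eventually constant and then $c^{-3n}$ convergent in $\C^\ast$, which pins down $n$ as well since $\valorabs{c}\neq 1$. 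Hence $\prodint{d,g}$ is a discrete, torsion-free, commutative subgroup of $C_1$ on which $\phi$ is not injective, and it is not of the form $\Gamma_{W,\mu}$ (any element with $w=0$ in $\Gamma_{W,\mu}$ is the identity). So the claimed contradiction with discreteness does not materialize; the elliptic and torsion cases you handle are fine, but the homothety case is a genuine gap --- one the paper's own one-line assertion also glosses over. Closing it requires either an extra hypothesis excluding type I complex homotheties (cf.\ Corollary \ref{cor_HC_no_hay_en_no_conmutativos}, which only covers non-commutative groups) or treating $\kernel(\phi)$ as a separate cyclic factor in the statement.
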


\begin{proof}
Let $\zeta:(\Gamma,\cdot)\rightarrow(\mathbb{C},+)$ be the group homomorphism given by $\left[\alpha_{ij}\right]\overset{\zeta}{\mapsto}\alpha_{23}\alpha^{-1}_{33}$, clearly $\text{Ker}(\zeta)=\{\mathit{id}\}$. Thus we can define the group homomorphism $\mu:(\zeta(\Gamma),+)\rightarrow(\mathbb{C}^{\ast},\cdot)$ as $x\overset{\mu}{\mapsto}\pi_{22}\left(\zeta^{-1}(x)\right)$. Define the additive group $W=\zeta\left(\Gamma\right)$. It is straightforward to verify that 
	$$
	\Gamma=\left\{\left[\begin{array}{ccc}
	\mu(w)^{-2} & 0 & 0 \\
	0 & \mu(w) & w\mu(w) \\
	0 & 0 & \mu(w)
	\end{array}\right]\,\middle|\,w\in W\right\}.	\qedhere
	$$
\end{proof}

For $w\in W$, we will denote 
	$$\gamma_w=\left[\begin{array}{ccc}
	\mu(w)^{-3} & 0 & 0 \\
	0 & 1 & w \\
	0 & 0 & 1
	\end{array}\right]\in \Gamma_{W,\mu}.$$

\begin{prop}\label{prop_conmutativo_caso1_rangoG_rangoW}
Let $\Gamma=\Gamma_{W,\mu}\subset U_+$ be a commutative subgroup of the form given by Proposition \ref{prop_conmutativo_c1_descripcion}. If $\text{rank}(\Gamma)=r$, then $\text{rank}(W)=r$.
\end{prop}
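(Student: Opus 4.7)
The plan is to exhibit an explicit group isomorphism $\phi\colon(W,+)\to(\Gamma_{W,\mu},\cdot)$. Since rank is invariant under isomorphisms of abelian groups, this will immediately give $\text{rank}(W)=\text{rank}(\Gamma)=r$.

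I would define $\phi(w)=\gamma_w$, using the notation introduced right after Proposition \ref{prop_conmutativo_c1_descripcion}. Surjectivity holds by the very definition of $\Gamma_{W,\mu}$. For injectivity, observe that $\gamma_w=\id$ forces $\mu(w)=1$ from the $(2,2)$-entry; then the $(2,3)$-entry $w\mu(w)=0$ gives $w=0$, while the $(1,1)$-entry $\mu(w)^{-2}=1$ is automatic once $\mu(w)=1$ and so imposes no additional constraint.

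To verify that $\phi$ is a homomorphism, a direct matrix multiplication yields
$$\gamma_{w_1}\gamma_{w_2}=\corchetes{\begin{array}{ccc} \mu(w_1)^{-2}\mu(w_2)^{-2} & 0 & 0 \\ 0 & \mu(w_1)\mu(w_2) & \mu(w_1)w_2\mu(w_2)+w_1\mu(w_1)\mu(w_2) \\ 0 & 0 & \mu(w_1)\mu(w_2) \end{array}}.$$
Since $\mu$ is a morphism from $(W,+)$ to $(\C^\ast,\cdot)$, we have $\mu(w_1)\mu(w_2)=\mu(w_1+w_2)$, and factoring the $(2,3)$-entry yields $(w_1+w_2)\mu(w_1+w_2)$. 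This identifies the product with $\gamma_{w_1+w_2}$, so $\phi$ is a group isomorphism and the equality of ranks follows.

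I do not anticipate a genuine obstacle; the statement is essentially an unpacking of the parameterization already established in Proposition \ref{prop_conmutativo_c1_descripcion}. The only subtlety worth double-checking is that this parameterization is truly a bijection, so that $\phi$ is well defined and both injective and surjective; once that is confirmed, the rank identity is immediate from the isomorphism-invariance of rank for abelian groups.
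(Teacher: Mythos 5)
Your proof is correct and takes essentially the same route as the paper: the paper's argument also rests on the fact that $w\mapsto\gamma_w$ is a group isomorphism, reading the additive relation $w=n_1w_1+\cdots+n_rw_r$ off the $(2,3)$ entries of products of generators and then checking the two rank inequalities by hand. Your packaging of this as an explicit isomorphism $(W,+)\cong(\Gamma_{W,\mu},\cdot)$ followed by invariance of rank is simply a cleaner statement of the same idea (the only nitpick being that in $\PSL$ the identity is the class of scalar matrices, so equality of the diagonal entries, rather than the $(2,2)$ entry alone, is what one should invoke before concluding $w=0$ from the off-diagonal entry).
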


\begin{proof}
Let $r=\text{rank}(\Gamma)$ and $\gamma_1,...,\gamma_r\in\Gamma$ be elements such that $\Gamma=\langle\gamma_1,...,\gamma_r\rangle$. Let $w_1,...,w_r\in W$ such that $\gamma_j=\gamma_{w_j}$ for $j=1,...,r$. Let $w\in W$ and consider $\gamma_w\in \Gamma$, then there exist $n_1,...,n_r\in\mathbb{Z}$ such that $\gamma_w=\gamma_1^{n_1}\cdots \gamma_r^{n_r}$. Comparing entries (2,3) of both sides yields $w=n_1 w_1+...+n_r w_r$. This means that $W=\langle w_1,...,w_r\rangle$, and therefore $\text{rank}(W)\leq r$. To prove that $\text{rank}(W)= r$, assume without loss of generality that $w_r=n_1 w_1 +...+n_{r-1} w_{r-1}$. Then $\gamma_r=\gamma_1^{n_1}\cdots \gamma_r^{n_{r-1}}$, contradicting that $\text{rank}(\Gamma)=r$. This completes the proof.
\end{proof}

Now, we show examples of non-discrete additive subgroups $W$ such that $\Gamma_{W,\mu}$ is discrete. 

\begin{example}\label{lem_span_no_discreto}
If $\alpha$ and $\beta$ are two rationally independent real numbers, then $W=\text{Span}_{\mathbb{Z}}(\alpha,\beta)$ is a non-discrete additive subgroup of $\mathbb{C}$.
\end{example}

\begin{proof}
Let $h:(W,+)\rightarrow(\mathbb{S}^1,\cdot)$ be a group homomorphism given by $x\mapsto e^{2\pi i \frac{x}{\alpha}}$. Observe that $\{1,\frac{\beta}{\alpha}\}$ are rationally independent, then $h(W)$ is a sequence of distinct elements in $\mathbb{S}^1$. Therefore there is a subsequence, denoted by $\{g_n\}\subset h(W)$, such that $g_n=e^{2\pi i q_n \frac{\beta}{\alpha}}\rightarrow \xi \in \mathbb{S}^1$ for some $\{q_n\}\subset\mathbb{Z}$ and some $\xi \in \mathbb{S}^1$. Define the sequence $\{h_n\}\subset \mathbb{S}^1$ by $h_n=g_n g_{n+1}^{-1}$, then $h_n\rightarrow 1$. Denoting $h_n=e^{2\pi i r_n \frac{\beta}{\alpha}}$ for some $\{r_n\}\subset\mathbb{Z}$, and taking the logarithm of the sequence we have
	\begin{equation}\label{eq_lem_span_no_discreto_1}
	2\pi i r_n \frac{\beta}{\alpha} + 2\pi i s_n \rightarrow 0
	\end{equation}		
for some logarithm branch defined by $\{s_n\}\subset\mathbb{Z}$. As a consequence of (\ref{eq_lem_span_no_discreto_1}), $r_n \beta + s_n \alpha\rightarrow 0$ and therefore $W$ is not discrete.
\end{proof}

Proposition \ref{prop_conmutativo_c1_rangoW} will provide the full description of discrete commutative subgroups of $U_+$ of the case 1. In order to prove this proposition, we first need to determine the equicontinuity region for these groups (Proposition \ref{prop_eq_c1}). To do this, consider Table \ref{fig_c1_casos_qp}, in which we list all possible quasi-projective limits $\tau$ of sequences of distinct elements in these groups along with the condition under which they occur.

\begin{table}[H]
\begin{center}
  \begin{tabular}{ | l | c | c | c | c | }
    \hline
    Case & $\tau$ & Conditions & Ker($\tau$) & Im($\tau$) \\ \hline
    (i) & $\text{Diag}\left(1,0,0\right)$ & \begin{tabular}{c}
	\small $w_n\rightarrow b\in\mathbb{C}$ and $\mu(w_n)\rightarrow 0$ \\
	\small or \\
	\small $w_n\rightarrow \infty$, $\mu(w_n)\rightarrow 0$ and $w_n \mu(w_n)^3\rightarrow 0$\\
	\end{tabular}	
	 & $\overleftrightarrow{e_2,e_3}$ & $\{e_1\}$  \\ \hline
    (ii) & $\left[\begin{array}{ccc}
	0 & 0 & 0\\
	0 & 1 & b \\
	0 & 0 & 1\\
	\end{array}\right]$ & \small $w_n\rightarrow b\in\mathbb{C}$ and $\mu(w_n)\rightarrow \infty$ & $\{e_1\}$ & $\overleftrightarrow{e_2,e_3}$  \\ \hline
    (iii) & $\left[\begin{array}{ccc}
	0 & 0 & 0\\
	0 & 0 & 1 \\
	0 & 0 & 0\\
	\end{array}\right]$ & \begin{tabular}{c}
\small $w_n\rightarrow \infty$ and $\mu(w_n)\rightarrow \infty$ \\
\small or \\
\small $w_n\rightarrow \infty$ and $\mu(w_n)\rightarrow a\in\mathbb{C}^{\ast}$\\
\small or \\
\small $w_n\rightarrow \infty$, $\mu(w_n)\rightarrow 0$ and $w_n \mu(w_n)^3\rightarrow\infty$\\
	\end{tabular}
   & $\overleftrightarrow{e_1,e_2}$ & $\{e_2\}$  \\ \hline
    (iv) & $\left[\begin{array}{ccc}
	1 & 0 & 0\\
	0 & 0 & b \\
	0 & 0 & 0\\
	\end{array}\right]$ & \small $w_n\rightarrow \infty$, $\mu(w_n)\rightarrow 0$ and $w_n \mu(w_n)^3\rightarrow b\in\mathbb{C}^\ast$ & $\{e_2\}$ & $\overleftrightarrow{e_1,e_2}$ \\
    \hline
  \end{tabular}
\caption{Quasi-projective limits $\tau$ of sequences $\{w_n\}$ of distinct elements in $\Gamma$.}
\label{fig_c1_casos_qp}
\end{center}
\end{table}

\begin{prop}\label{prop_eq_c1}
Let $\Gamma\subset\text{PSL}\left(3,\mathbb{C}\right)$ be a commutative discrete group of the form given in Propositions \ref{prop_conmutativo_c1_descripcion}. If we assume that $\Gamma$ contains loxodromic elements, then $Eq(\Gamma)=\mathbb{CP}^{2}\setminus\left(\overleftrightarrow{e_1,e_2}\cup\overleftrightarrow{e_2,e_3}\right)$.
\end{prop}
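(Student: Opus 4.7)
The plan is to apply Proposition \ref{prop_descripcion_Eq}, which identifies $\Eq(\Gamma)$ as the complement of $\overline{\bigcup_{\tau\in\text{Lim}(\Gamma)}\kernel(\tau)}$. So the task reduces to (a) classifying all quasi-projective limits $\tau$ of sequences of distinct elements $\{\gamma_{w_n}\}\subset\Gamma_{W,\mu}$, and (b) showing the union of their kernels is exactly $\linproy{e_1,e_2}\cup\linproy{e_2,e_3}$.

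First I would carry out the classification. By Proposition \ref{prop_convergencia_qp}, every sequence of distinct elements admits a quasi-projective limit after extracting a subsequence. Using the explicit form of $\gamma_{w_n}$ from Proposition \ref{prop_conmutativo_c1_descripcion}, I would extract a subsequence so that both $w_n$ and $\mu(w_n)$ converge in $\hC$, and then, after normalizing the matrix by dividing by its largest-modulus entry, inspect the limiting matrix entry by entry. The outcome depends only on the relative rates of $w_n$ and $\mu(w_n)$; the degenerate case where $\mu(w_n)\to 0$ requires a further splitting according to the limit of the auxiliary sequence $w_n\mu(w_n)^3$ (which controls whether the $(2,3)$-entry or the $(1,1)$-entry dominates after normalization). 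This produces precisely the four cases listed in Table \ref{fig_c1_casos_qp}, with kernels $\linproy{e_2,e_3}$, $\{e_1\}$, $\linproy{e_1,e_2}$, $\{e_2\}$ respectively. In every case $\kernel(\tau)\subset\linproy{e_1,e_2}\cup\linproy{e_2,e_3}$, giving the inclusion
\[
\Eq(\Gamma)\supset\CP^{2}\setminus\parentesis{\linproy{e_1,e_2}\cup\linproy{e_2,e_3}}.
\]

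For the reverse inclusion I must show that both lines $\linproy{e_1,e_2}$ and $\linproy{e_2,e_3}$ actually appear as kernels (they are already closed, so once exhibited, no extra closure is needed). This is where the loxodromic hypothesis is used: since $\Gamma$ contains a loxodromic element, there exists $w_0\in W$ with $\valorabs{\mu(w_0)}\neq 1$; replacing $w_0$ by $-w_0$ if necessary, assume $\valorabs{\mu(w_0)}<1$. Then the sequence $\gamma_{nw_0}$ has $w_n=nw_0\to\infty$, $\mu(w_n)=\mu(w_0)^n\to 0$, and $w_n\mu(w_n)^3=nw_0\mu(w_0)^{3n}\to 0$, so it lands in Case (i) and its quasi-projective limit has kernel $\linproy{e_2,e_3}$. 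Dually, $\gamma_{-nw_0}$ has $w_n\to\infty$ and $\mu(w_n)\to\infty$, so it lands in Case (iii) with kernel $\linproy{e_1,e_2}$. Combining, $\overline{\bigcup\kernel(\tau)}=\linproy{e_1,e_2}\cup\linproy{e_2,e_3}$ and the equality of $\Eq(\Gamma)$ follows.

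The main obstacle is the classification step: one has to be systematic in checking that the four cases in Table \ref{fig_c1_casos_qp} exhaust all asymptotic regimes of the pair $(w_n,\mu(w_n))$ in $\hC\times\hC$ and that the correct normalization of the matrix is chosen in each regime. This is essentially bookkeeping but is the only nontrivial part; the rest (applying the characterizations from Propositions \ref{prop_descripcion_Eq} and \ref{prop_convergencia_qp}, and the simple construction of realizing sequences) is routine once the table is established.
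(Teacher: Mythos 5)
Your proposal is correct and follows essentially the same route as the paper: both apply Proposition \ref{prop_descripcion_Eq}, use the classification of quasi-projective limits in Table \ref{fig_c1_casos_qp} to see that every kernel lies in $\linproy{e_1,e_2}\cup\linproy{e_2,e_3}$, and then realize both lines as kernels via the forward and backward power sequences of a loxodromic element. The only difference is cosmetic: you sketch a derivation of the table (which the paper states without proof) and you choose the normalization $\valorabs{\mu(w_0)}<1$ instead of $\valorabs{\mu(w_0)}>1$.
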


\begin{proof}
We use Proposition \ref{prop_descripcion_Eq} to determine $\text{Eq}(\Gamma)$. Let $\gamma_w\in\Gamma$ be a loxodromic element, then $|\mu(w)|\neq 1$. Let us suppose, without loss of generality, that $|\mu(w)|>1$. Consider the sequence $\{\gamma_w^n\}_{n\in\mathbb{N}}\subset\Gamma$, then
	\begin{equation}\label{eq_prop_eq_c1_1}
	\gamma_w^n\rightarrow\tau_1=\left[\begin{array}{ccc}
	0 & 0 & 0 \\
	0 & 0 & 1 \\
	0 & 0 & 0 \\
	\end{array}\right],\;\;\;\text{ with Ker}(\tau_1)=\overleftrightarrow{e_1,e_2}.
	\end{equation}
Considering the sequence $\{\gamma_w^{-n}\}_{n\in\mathbb{N}}\subset\Gamma$ instead, we have $\gamma_w^{-n}\rightarrow\tau_2=\text{Diag}\left(1,0,0\right)$ with $\text{Ker}(\tau_2)=\overleftrightarrow{e_2,e_3}$. This, together with Proposition \ref{prop_descripcion_Eq} and (\ref{eq_prop_eq_c1_1}) imply that 
	\begin{equation}\label{eq_prop_eq_c1_3}
	\mathbb{CP}^{2}\setminus\left(\overleftrightarrow{e_1,e_2}\cup\overleftrightarrow{e_2,e_3}\right)\subset\text{Eq}(\Gamma).
	\end{equation} 
Proposition \ref{prop_descripcion_Eq} and Table \ref{fig_c1_casos_qp} imply that $\text{Eq}(\Gamma)\subset \mathbb{CP}^{2}\setminus\left(\overleftrightarrow{e_1,e_2}\cup\overleftrightarrow{e_2,e_3}\right)$. This together with (\ref{eq_prop_eq_c1_3}) prove the proposition.
\end{proof}

The following observation is important for the proof of Theorem \ref{thm_case1_kulkarni}.

\begin{obs}\label{obs_caso1_llenando_las_2_lineas_ya}
If $\Gamma\subset C_1$ is a complex Kleinian group and $\overleftrightarrow{e_1,e_2}\cup\overleftrightarrow{e_2,e_3}\subset \Lambda_{\text{Kul}}(\Gamma)$, then Propositions \ref{prop_eq_in_kuld} and \ref{prop_eq_c1} imply that 
	$$\Lambda_{\text{Kul}}(\Gamma)=\overleftrightarrow{e_1,e_2}\cup\overleftrightarrow{e_2,e_3}.$$
\end{obs}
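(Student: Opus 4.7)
The statement is an immediate consequence of the two propositions cited, so the plan is simply to chain the two containments in the correct order. First I would invoke Proposition \ref{prop_eq_c1}, which gives the explicit description $\Eq(\Gamma)=\CP^{2}\setminus\parentesis{\linproy{e_1,e_2}\cup\linproy{e_2,e_3}}$ under the hypothesis that $\Gamma\subset C_1$ contains loxodromic elements; note that the hypothesis $\linproy{e_1,e_2}\cup\linproy{e_2,e_3}\subset\KulL(\Gamma)$ forces $\Gamma$ to be infinite and discrete enough that Proposition \ref{prop_conmutativo_c1_descripcion} applies, so Proposition \ref{prop_eq_c1} is indeed available.

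Next I would apply Proposition \ref{prop_eq_in_kuld}, which asserts $\Eq(\Gamma)\subset\KulD(\Gamma)$. Taking complements in $\CP^2$, this is equivalent to the inclusion
\begin{equation*}
\KulL(\Gamma)\subset\CP^2\setminus\Eq(\Gamma)=\linproy{e_1,e_2}\cup\linproy{e_2,e_3},
\end{equation*}
where the equality uses Proposition \ref{prop_eq_c1}. Combining this with the hypothesis $\linproy{e_1,e_2}\cup\linproy{e_2,e_3}\subset\KulL(\Gamma)$ yields the desired equality $\KulL(\Gamma)=\linproy{e_1,e_2}\cup\linproy{e_2,e_3}$.

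There is no real obstacle here; the observation is stated precisely so that its proof is a one-line chain of containments. The only mild subtlety is ensuring that the hypotheses of Proposition \ref{prop_eq_c1} are met, namely that $\Gamma$ is of the form described in Proposition \ref{prop_conmutativo_c1_descripcion} and contains a loxodromic element. Both follow from the setup: membership in $C_1$ gives the form, and the condition $\linproy{e_1,e_2}\cup\linproy{e_2,e_3}\subset\KulL(\Gamma)$ rules out the purely parabolic case (whose Kulkarni limit set is a single line), so loxodromic elements must be present.
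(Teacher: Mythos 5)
Your proposal is correct and is exactly the argument the paper intends: Proposition \ref{prop_eq_c1} identifies $\CP^2\setminus\Eq(\Gamma)$ with the two lines, Proposition \ref{prop_eq_in_kuld} gives $\KulL(\Gamma)\subset\CP^2\setminus\Eq(\Gamma)$, and the hypothesis supplies the reverse inclusion. Your extra check that the hypothesis forces the presence of loxodromic elements (so that Proposition \ref{prop_eq_c1} applies) is a reasonable refinement of a point the paper handles by its standing assumption that only subgroups of $C_1$ containing loxodromic elements are under consideration.
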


The following proposition characterize discrete subgroups belonging to this first case.

\begin{prop}\label{prop_conmutativo_c1_rangoW}
Let $\Gamma=\Gamma_{W,\mu}\subset U_+$ be a group as described in Proposition \ref{prop_conmutativo_c1_descripcion}. $\Gamma$ is discrete if and only if $\text{rank}(W)\leq 3$ and the morphism $\mu$ satisfies the following condition:
\begin{enumerate}[(C)]
\item Whenever we have a sequence $\{w_k\}\in W$ of distinct elements such that $w_k\rightarrow 0$, either $\mu(w_k)\rightarrow 0$ or $\mu(w_k)\rightarrow \infty$. 
\end{enumerate}
\end{prop}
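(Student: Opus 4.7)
Denote by $T:W\to\Gamma$ the group homomorphism $w\mapsto\gamma_w$, which is injective since the entries $\mu(w)$ and $w\mu(w)$ determine $w$. Scaling the $\SL$-lift of $\gamma_w$ by $\mu(w)^{-1}$ yields the $\PSL$-representative
$$\corchetes{\begin{array}{ccc} \mu(w)^{-3} & 0 & 0 \\ 0 & 1 & w \\ 0 & 0 & 1 \end{array}},$$
so $\gamma_{w_k}\to\id$ in $\PSL$ is equivalent to $w_k\to 0$ together with $\mu(w_k)^3\to 1$, and in particular $\valorabs{\mu(w_k)}\to 1$. The plan is to translate this asymptotic characterization of the identity into the two conditions of the proposition.

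For $(\Leftarrow)$, I assume $\rank(W)\leq 3$ and (C), and suppose for contradiction that $\Gamma$ is not discrete: a sequence of distinct $\gamma_{w_k}\to\id$ yields, by the above, distinct nonzero $w_k\to 0$ with $\valorabs{\mu(w_k)}\to 1$. Thus $\mu(w_k)$ admits no subsequence tending to $0$ or $\infty$, directly contradicting (C).

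For $(\Rightarrow)$, assume $\Gamma$ is discrete. To prove (C), suppose distinct $w_k\to 0$ are such that $\mu(w_k)$ stays bounded in $\C^\ast$ away from $0$ and $\infty$ along some subsequence; extract so that $\mu(w_{k_j})\to a\in\C^\ast$. The normalized matrices then converge in $\PSL$ to the non-degenerate element $\corchetes{\text{Diag}(a^{-3},1,1)}$, so by injectivity of $T$ the distinct elements $\gamma_{w_{k_j}}$ supply an accumulation point of $\Gamma$ in $\PSL$, contradicting discreteness. To prove $\rank(W)\leq 3$, assume $r:=\rank(W)\geq 4$. Since $\Gamma$ is finitely generated (see \cite{auslander}), so is $W$, and being torsion-free it admits a $\Z$-basis $w_1,\ldots,w_r$, automatically $\Q$-linearly independent in $\C$. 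I then form the additive homomorphism
$$\phi:\Z^r\longrightarrow\C\times\R\cong\R^3,\qquad \phi(n_1,\ldots,n_r)=\parentesis{\textstyle\sum n_iw_i,\; \sum n_i\log\valorabs{\mu(w_i)}},$$
whose injectivity follows from $\Q$-independence of $\set{w_i}$. Its image is therefore a rank-$r\geq 4$ subgroup of $\R^3$, but by Proposition~\ref{prop_forma_subgrupos_aditivos} every closed subgroup of $\R^3$ has rank at most $3$, so $\phi(\Z^r)$ is non-discrete and must accumulate at $0$. Extracting distinct $n^{(k)}\in\Z^r$ with $\phi(n^{(k)})\to 0$ produces distinct $w_k:=\sum n_i^{(k)}w_i\in W$ with $w_k\to 0$ and $\valorabs{\mu(w_k)}\to 1$; this violates (C) and, by the previous step, contradicts the discreteness of $\Gamma$.

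The main obstacle is the rank bound: condition (C) reduces to the single limit computation above, whereas showing that $r\geq 4$ forces the failure of (C) requires extracting, from the purely arithmetic hypothesis on rank, an actual sequence in $W$ witnessing non-discreteness. The crucial input for that extraction is Proposition~\ref{prop_forma_subgrupos_aditivos}, which packages the needed simultaneous Diophantine approximation statement: any rank-$\geq 4$ subgroup of $\R^3$ must accumulate at the origin.
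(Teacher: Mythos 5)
Your proof is correct, and two of its three pieces coincide with the paper's argument: the $(\Leftarrow)$ direction and the deduction of condition (C) from discreteness both rest, as in the paper, on normalizing the lift of $\gamma_w$ to $\text{Diag}(\mu(w)^{-3},1,1)$ plus the translation entry $w$, so that accumulation of distinct $\gamma_{w_k}$ at a point of $\PSL$ forces $w_k\to 0$ with $\mu(w_k)$ accumulating in $\C^\ast$. Where you genuinely diverge is the bound $\rank(W)\leq 3$. The paper derives it from global dynamics: it computes $\Eq(\Gamma)=\CP^2\setminus\parentesis{\linproy{e_1,e_2}\cup\linproy{e_2,e_3}}\cong\C\times\C^\ast$ (Proposition \ref{prop_eq_c1}), lifts the properly discontinuous action to the universal cover $\C\times\C$, and applies the obstructor-dimension bound (Theorem \ref{thm_obdim_2}) to the covering group, whose rank is $\rank(\Gamma)+1$, transferring the bound to $W$ via Proposition \ref{prop_conmutativo_caso1_rangoG_rangoW}. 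You instead embed $W$ into $\R^3$ by $w\mapsto\parentesis{w,\log\valorabs{\mu(w)}}$ and invoke the classification of closed subgroups of $\R^n$ (Proposition \ref{prop_forma_subgrupos_aditivos}) to show that rank $\geq 4$ forces accumulation at $0$, hence a sequence $w_k\to 0$ with $\valorabs{\mu(w_k)}\to 1$ that kills discreteness by the same quasi-projective computation used for (C). Your route is more elementary and self-contained, avoiding both the equicontinuity computation and Kapovich's theorem, whereas the paper's route is uniform with the machinery it deploys in the non-commutative case and produces the domain of proper discontinuity as a by-product. Two small remarks: you can drop the appeal to \cite{auslander} by simply choosing four $\Z$-independent elements of $W$ and restricting $\phi$ to the copy of $\Z^4$ they span; and both your proof and the paper's treat the negation of (C) as yielding a subsequence of $\mu(w_k)$ bounded away from $0$ and $\infty$, which silently excludes oscillation of $\mu(w_k)$ between $0$ and $\infty$ --- a shared imprecision in the formulation of (C) rather than a defect of your argument.
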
 

\begin{proof}
First, assume that $\text{rank}(W)\leq 3$ and that the group morphism $\mu$ satisfies condition (C). If $W$ is discrete, then it is straightforward to see that $\Gamma$ is discrete. Now, assume that $W$ is not discrete. Let $\{\gamma_k\}\subset\Gamma$ be a sequence of distinct elements such that $\gamma_k\rightarrow\mathit{id}$, denote $\gamma_k=\gamma_{w_k}$. Then $\mu(w_k)$ converges to some cubic root of the unity and $w_k\rightarrow 0$. Since $\mu$ satisfies condition (C), then $\mu(w_k)\rightarrow 0$ or $\mu(w_k)\rightarrow \infty$ contradicting that $\mu(w_k)$ converges to some cubic root of the unity. This contradiction proves that $\Gamma$ is discrete.\\

Now assume that $\Gamma$ is discrete. Propositions \ref{prop_eq_in_kuld} and \ref{prop_eq_c1} imply that $\Gamma$ acts properly and discontinuously on $\text{Eq}(\Gamma)=\mathbb{CP}^2\setminus\left(\overleftrightarrow{e_1,e_2}\cup \overleftrightarrow{e_2,e_3}\right)\cong \mathbb{C}\times\mathbb{C}^{\ast}$. Consider the universal covering $\pi=(\mathit{id},\text{exp}):\mathbb{C}\times\mathbb{C}\rightarrow\mathbb{C}\times\mathbb{C}^\ast$, where $\text{exp}(z)=e^z$ for $z\in\mathbb{C}$. The group $\Gamma$ can be written as $\Gamma\cong \Gamma_1\times\Gamma_2$ with
	$$
	\Gamma_1 = \left\{\left[\begin{array}{cc}
	\mu(w)^{-3} & 0\\
	0 & 1 \end{array}\right]\,\middle|\,w\in W\right\},\;\;\;
	\Gamma_2 = \left\{\left[\begin{array}{cc}
	1 & w\\
	0 & 1 \end{array}\right]\,\middle|\,w\in W\right\}
	$$
with the multiplicative group $\Gamma_1$ acting on $\mathbb{C}^\ast$, and the additive group $\Gamma_2$ acting on $\mathbb{C}$. Let $\tilde{\Gamma}$, $\tilde{\Gamma}_2$ be covering groups of $\Gamma$ and $\Gamma_2$ respectively (Theorem 9.1 of \cite{bredon}). Let $\tilde{\Gamma}_1=\Gamma_1$. There is a group morphism, induced by $\pi$ and still denoted by $\pi$, given by
	\begin{align*}
	\pi=(\mathit{id},\text{exp}):\tilde{\Gamma}\cong\tilde{\Gamma}_1\times\tilde{\Gamma}_2 &\rightarrow \Gamma_1\times\Gamma_2\\
	(\alpha,\beta) &\mapsto (\alpha,e^\beta) 
	\end{align*}

Observe that $\Gamma_1\cong\mathbb{Z}^{k_1}$ and $\Gamma_2\cong\mathbb{Z}^{k_2}$ with $k_1=\text{rank}(\Gamma_1)$ and $k_2=\text{rank}(\Gamma_2)$. Then $\Gamma \cong \mathbb{Z}^{k_1}\times\mathbb{Z}^{k_2}=\mathbb{Z}^k$ with $k=k_1+k_2=\text{rank}(\Gamma)$. Since $\text{Ker}(\pi)=\text{Ker}(\mathit{id})\times\ker(\text{exp})\cong \mathbb{Z}$, then $\tilde{\Gamma}\cong \text{Ker}(\pi)\times \Gamma\cong \mathbb{Z}\times \mathbb{Z}^k$. Therefore,
	\begin{equation}\label{eq_dem_prop_conmutativo_c1_rangoW_1}
	\text{rank}\left(\Gamma\right)=k+1.	
	\end{equation}	 
On the other hand, since $\Gamma$ acts properly and discontinuously on $\mathbb{C}\times\mathbb{C}^\ast$, then $\tilde{\Gamma}$ acts properly and discontinuously on $\mathbb{C}\times\mathbb{C}$, which is simply connected. Then Theorem \ref{thm_obdim_2} implies $\text{rank}\left(\tilde{\Gamma}\right)\leq 4$. This, together with (\ref{eq_dem_prop_conmutativo_c1_rangoW_1}) yields $\text{rank}(\Gamma)\leq 3$. Using Proposition \ref{prop_conmutativo_caso1_rangoG_rangoW} we conclude that $\text{rank}(W)\leq 3$.\\

Now we will verify that $\mu$ satisfies the condition (C). Let $\{w_k\}\subset W$ be a sequence of distinct elements such that $w_k\rightarrow 0$. Consider the sequence $\{\mu(w_k)\}\subset\mathbb{C}^\ast$, and assume that it does not converge to $0$ or $\infty$, then there are open neighbourhoods $U_0$ and $U_\infty$ of $0$ and $\infty$ respectively such that $\{\mu(w_k)\}\subset \mathbb{CP}^1\setminus\left(U_0\cup U_\infty\right)$. Since $\mathbb{CP}^1$ is compact and $\mathbb{CP}^1\setminus\left(U_0\cup U_\infty\right)$ is a closed subset of $\mathbb{CP}^1$, then $\mathbb{CP}^1\setminus\left(U_0\cup U_\infty\right)$ is compact, and therefore there is a converging subsequence of $\{\mu(w_k)\}$, still denoted the same way. Let $z\in\mathbb{C}^\ast$ such that $\mu(w_k)\rightarrow z$, then $\gamma_k\rightarrow \text{Diag}\left(z^{-3}, 1,1\right)$, contradicting that $\Gamma$ is discrete. This proves that $\mu$ satisfies the condition (C). 
\end{proof}

In order to describe the Kulkarni limit set, we will divide all groups $\Gamma = \Gamma_{W,\mu}$ of this case into the following sub-cases:

\begin{center}
\begin{tabular}{l|p{10cm}}
Case & Conditions \\ \hline
C1.1 & $\mu(W)$ has rational rotations and $W$ is discrete. \\
C1.2 & $\mu(W)$ has rational rotations and $W$ is not discrete. \\
C1.3 & $\mu(W)$ has no rational rotations but has irrational rotations, and $W$ is discrete.\\
C1.4 & $\mu(W)$ has no rational or irrational rotations, and $W$ is discrete.\\
C1.5 & $\mu(W)$ has no rational rotations but has irrational rotations, and $W$ is not discrete.\\
C1.6 & $\mu(W)$ has no rational or irrational rotations, and $W$ is not discrete.\\
\end{tabular}
\end{center}

We say that the commutative group $\Gamma=\Gamma_{W,\mu}$ satisfy the condition {\small \textbf{(F)}} if there is a sequence $\{w_k\}\subset W$ such that $w_k\rightarrow\infty$, $\mu(w_k)\rightarrow 0$ and $w_k\mu(w_k)^3\rightarrow b\in\mathbb{C}^\ast$.

\begin{obs}
If $\Gamma=\Gamma_{W,\mu}$ is cyclic, then $W=\langle w\rangle$ (Proposition \ref{prop_conmutativo_caso1_rangoG_rangoW}). Then the group $\Gamma$ is either generated by a loxo-parabolic or an ellipto-parabolic element depending on whether $|\mu(w)|\neq 1$ or $|\mu(w)|=1$. According to Propositions 4.2.10 and 4.2.19 of \cite{ckg_libro}, $\Lambda_{\text{Kul}}(\Gamma)=\overleftrightarrow{e_1,e_2}\cup\overleftrightarrow{e_2,e_3}$ and $\Lambda_{\text{Kul}}(\Gamma)=\overleftrightarrow{e_1,e_2}$ respectively. Therefore we will assume that the group $\Gamma$ is not cyclic.
\end{obs}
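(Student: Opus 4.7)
The plan is to verify the three assertions in the observation directly, since each follows from results already in hand together with two facts cited from \cite{ckg_libro}. The argument is essentially bookkeeping and I do not expect a substantial obstacle; the only mildly nontrivial step is reading off the Jordan type of the generator, and even there the subtlety is only in distinguishing \emph{loxo-parabolic} from \emph{ellipto-parabolic} on the basis of $\valorabs{\mu(w)}$.

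First, I would apply Proposition~\ref{prop_conmutativo_caso1_rangoG_rangoW}, which equates $\rank(W)$ with $\rank(\Gamma)$. The cyclicity hypothesis gives $\rank(\Gamma) = 1$, so $\rank(W) = 1$; hence $W = \prodint{w}$ for some nonzero $w \in W$, and under the parametrization of Proposition~\ref{prop_conmutativo_c1_descripcion} this forces $\Gamma = \prodint{\gamma_w}$.

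Second, I would classify $\gamma_w$ by inspection of its matrix form. The element has eigenvalues $\mu(w)^{-2}$ and $\mu(w)$ (with multiplicity two), and its lower-right $2 \times 2$ block is a genuine Jordan block because the entry $w\mu(w)$ is nonzero; in particular $\gamma_w$ is not diagonalizable. If $\valorabs{\mu(w)} \neq 1$ then the eigenvalue $\mu(w)^{-2}$ has modulus different from $1$, so $\gamma_w$ is loxodromic, and comparison with the canonical form in the definition of loxo-parabolic (taking $\lambda = \mu(w)$) gives the claimed type. If $\valorabs{\mu(w)} = 1$ every eigenvalue has unit modulus and the Jordan block forces $\gamma_w$ to be ellipto-parabolic, matching the form appearing in Proposition~\ref{prop_EPI_conmutativo}.

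Finally, I would quote Propositions~4.2.10 and 4.2.19 of \cite{ckg_libro}, which compute the Kulkarni limit set of a cyclic group generated by a loxo-parabolic (respectively ellipto-parabolic) element, yielding $\linproy{e_1,e_2} \cup \linproy{e_2,e_3}$ and $\linproy{e_1,e_2}$ respectively. With the cyclic case thus completely described, no information is lost by restricting the subsequent subcase analysis of $\KulL(\Gamma_{W,\mu})$ to non-cyclic groups, which justifies the closing sentence of the observation.
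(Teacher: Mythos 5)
Your proposal is correct and follows exactly the route the paper takes: Proposition \ref{prop_conmutativo_caso1_rangoG_rangoW} to get $W=\prodint{w}$, direct inspection of the matrix $\gamma_w$ (non-diagonalizable since $w\mu(w)\neq 0$) to identify the generator as loxo-parabolic or ellipto-parabolic according to $\valorabs{\mu(w)}$, and the cited Propositions 4.2.10 and 4.2.19 of \cite{ckg_libro} for the limit sets. The paper offers no additional argument beyond the text of the observation itself, so your fleshed-out version is if anything slightly more detailed than the original.
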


Now, we describe the Kulkarni limit sets.
 
\begin{thm}\label{thm_case1_kulkarni}
Let $\Gamma\subset\text{PSL}\left(3,\mathbb{C}\right)$ be a commutative discrete group having the form given in Proposition \ref{prop_conmutativo_c1_rangoW}, then
	$$
	\Lambda_{\text{Kul}}(\Gamma) = \begin{cases}
	\{e_1,e_2\}, & \text{Cases C1.3 or C1.4 with condition \small \textbf{(F)} not holding.} \\
	\overleftrightarrow{e_1,e_2}, & \begin{cases}
						 \text{Cases C1.3 or C1.4, satisfying condition \small \textbf{(F)}} &\\
						 \text{Case C1.1} & \\	
						 \end{cases}	  \\
	\{e_1\}\cup\overleftrightarrow{e_2,e_3}, & \text{Cases C1.5 or C1.6 with condition \small \textbf{(F)} not holding}.\\
	\overleftrightarrow{e_1,e_2}\cup\overleftrightarrow{e_2,e_3}, & \begin{cases}
						 \text{Cases C1.5 or C1.6, satisfying condition \small \textbf{(F)}} &\\
						 \text{Case C1.2} & \\	
						 \end{cases}	  \\
	\end{cases}
	$$
\end{thm}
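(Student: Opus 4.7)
The overall plan is to combine the containment $\KulL(\Gamma) \subset \linproy{e_1,e_2} \cup \linproy{e_2,e_3}$, which is immediate from Propositions \ref{prop_eq_c1} and \ref{prop_eq_in_kuld}, with a case-by-case analysis of which of the four regimes of quasi-projective convergence listed in Table \ref{fig_c1_casos_qp} is actually realized by some sequence of distinct elements of $\Gamma = \Gamma_{W,\mu}$ in each subcase. First I compute $L_0(\Gamma) = \set{e_1,e_2}$, since $e_1$ and $e_2$ are fixed by every $\gamma_w$ while any other point has trivial isotropy (the identity $\gamma_w = \id$ forces $w = 0$). Then, by Proposition \ref{prop_convergencia_qp}, every accumulation point of a $\Gamma$-orbit is of the form $\tau(p)$ for some quasi-projective limit $\tau$ and $p \notin \kernel(\tau)$, so $\KulL(\Gamma)$ equals the closure of $L_0$ together with the images of all realized regimes; and by Observation \ref{obs_caso1_llenando_las_2_lineas_ya}, once both lines appear the limit set is exactly their union.

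The case analysis then rests on four recurring ingredients. (a) If $W$ is discrete, no sequence of distinct elements of $W$ converges in $\C$, which excludes the first branch of regime (i) and all of regime (ii). (b) Iterating a loxodromic generator $\gamma_{w_\ell}$, which exists by hypothesis, yields regime (i) in the direction $\valorabs{\mu(w_\ell)^n} \to 0$ and regime (iii) in the opposite direction, so $\set{e_1,e_2} \subset \KulL(\Gamma)$ in all six subcases. (c) When $W$ is not discrete, condition~(C) from Proposition \ref{prop_conmutativo_c1_rangoW} yields a sequence $w_n \to 0$ with $\mu(w_n) \to 0$ or $\mu(w_n) \to \infty$; together with its inverse sequence this realizes regimes (i) and (ii) simultaneously, forcing $\linproy{e_2,e_3} \subset \KulL(\Gamma)$. (d) Regime (iv) is realizable: by definition precisely when condition \titemT{(F)} holds in cases C1.3--C1.6; and always in cases C1.1 and C1.2, using the rational rotation $\mu(w_0) = e^{2\pi i r}$, via combinations $w_n = a_n w_0 + b_n w_\ell$ with $b_n \to \infty$, where restricting $a_n$ to a fixed residue class modulo the denominator of $3r$ freezes the phase $e^{6\pi i r a_n}$ and $a_n$ is then chosen so that $a_n w_0\, \mu(w_\ell)^{3 b_n}$ approximates any prescribed nonzero $b \in \C$ (the lattice spacing $\valorabs{w_0}\,\valorabs{\mu(w_\ell)}^{3 b_n}$ tends to zero, so every $b$ can be reached).

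Assembling these four ingredients subcase by subcase produces exactly the four alternatives listed in the theorem. The step I expect to be the main obstacle is the construction in ingredient (d) for cases C1.1 and C1.2: balancing the integer parameter $a_n$ against the exponentially small complex weight $\mu(w_\ell)^{3 b_n}$ and the finite-order phase so that $a_n \in \Z$ can target any prescribed complex value $b \in \C^*$; when $\arg(\mu(w_\ell))/(2\pi)$ is irrational an additional Weyl-type density argument is needed to control the phase of $\mu(w_\ell)^{3 b_n}$ along the chosen subsequence. The opposite direction, ruling out regime (iv) in cases C1.3 and C1.4 when \titemT{(F)} fails, is straightforward, being essentially the negation of \titemT{(F)}.
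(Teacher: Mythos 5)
Your overall architecture is the same as the paper's: compute $L_0$, decide which rows of Table \ref{fig_c1_casos_qp} are realized in each of the six subcases, use the images of the realized quasi-projective limits for the lower bound and the absence of the remaining rows (plus Observation \ref{obs_caso1_llenando_las_2_lineas_ya}) for the upper bound. Ingredients (a), (b), (c) and the identification of condition \titemT{(F)} with row (iv) are exactly what the paper does in Lemmas \ref{lem_casoc1_L0}--\ref{lem_Kul_casoc15c16}. However, there is a genuine error in your first step: $L_0(\Gamma)$ is \emph{not} $\set{e_1,e_2}$ in Cases C1.1 and C1.2. Your justification, ``the identity $\gamma_w=\id$ forces $w=0$,'' only shows that $w\mapsto\gamma_w$ is injective, not that nontrivial elements act freely. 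A point $[z_1:z_2:0]$ with $z_1z_2\neq 0$ is fixed by $\gamma_w$ exactly when $\mu(w)^3=1$, and if $\mu(W)$ contains a rational rotation $\mu(w_0)=e^{2\pi i p/q}$ then $qw_0\neq 0$ lies in the infinite subgroup $\set{w\in W:\mu(w)^3=1}$; hence every point of $\linproy{e_1,e_2}$ has infinite isotropy and $L_0(\Gamma)=\linproy{e_1,e_2}$ in Cases C1.1 and C1.2 (this is precisely the paper's Lemma \ref{lem_casoc1_L0}, and it is how the paper gets $\linproy{e_1,e_2}\subset\KulL(\Gamma)$ there with no further work).

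Your final answers survive this mistake only because, in exactly those two cases, your ingredient (d) recovers $\linproy{e_1,e_2}\subset\KulL(\Gamma)$ by realizing row (iv) with the combination $w_n=a_nw_0+b_nw_\ell$. That construction can be made to work, but not as you state it: for a fixed $b_n$ and $a_n$ ranging over integers in a fixed residue class, the reachable values $a_nw_0\,\mu(w_\ell)^{3b_n}$ form a discrete subset of a \emph{single real line} through the origin whose direction rotates with $b_n$ --- not a two-dimensional lattice --- so ``every $b$ can be reached'' fails whenever $\arg\mu(w_\ell)/\pi$ is rational, and no Weyl argument is available then. What saves you is that you only need row (iv) to be realized for \emph{one} $b\in\C^\ast$, since its image is all of $\linproy{e_1,e_2}$ independently of $b$: choose $\valorabs{a_n}\sim\valorabs{\mu(w_\ell)}^{-3b_n}/\valorabs{w_0}$ so the modulus of $a_nw_0\,\mu(w_\ell)^{3b_n}$ stays in a compact annulus, and pass to a subsequence along which the phase converges. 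If you correct the $L_0$ computation as above, ingredient (d) is needed only where \titemT{(F)} is assumed, i.e.\ nowhere it has to be constructed, and your proof reduces to the paper's.
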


For the sake of clarity, we break down the proof of Theorem \ref{thm_case1_kulkarni} into Lemmas \ref{lem_casoc1_L0}-\ref{lem_Kul_casoc15c16}. Lemma \ref{lem_casoc1_L0} determines the set $L_0(\Gamma)$ for all cases. Lemma \ref{lem_Kul_casoc11} describe Case C1.1, Lemma \ref{lem_Kul_casoc12} describes Case C1.2, Lemma \ref{lem_Kul_casoc13c14} describes Cases C1.3 and C1.4. Finally, Lemma \ref{lem_Kul_casoc15c16} describes Cases C1.5 and C1.6.

\begin{lem}\label{lem_casoc1_L0}
Let $\Gamma\subset U_+$ be a commutative discrete group of the form given in Proposition \ref{prop_conmutativo_c1_rangoW}, then
	$$L_0(\Gamma)=\begin{cases}
	\{e_1,e_2\},& \text{If }\mu(W)\text{ does not contain rational rotations}\\	
	\overleftrightarrow{e_1,e_2},& \text{If }\mu(W)\text{ contains rational rotations}	
	\end{cases}	
	$$
\end{lem}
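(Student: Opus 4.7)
The plan is to compute the stabilizer $\Gamma_p$ for each $p \in \CP^2$ directly from the normal form
$$\gamma_w = \corchetes{\begin{array}{ccc} \mu(w)^{-3} & 0 & 0 \\ 0 & 1 & w \\ 0 & 0 & 1 \end{array}},$$
whose action on $\corchetes{x:y:z}$ is $\corchetes{\mu(w)^{-3}x : y + wz : z}$, and then to identify the points with infinite stabilizer and take their closure.

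Three cases cover all of $\CP^2$. First, $e_1$ and $e_2$ are fixed by every $\gamma_w$, so their stabilizers equal $\Gamma$; since $\Gamma$ contains a loxodromic element it is infinite, hence $\set{e_1, e_2} \subseteq L_0(\Gamma)$. Second, if $p = \corchetes{x:y:1}$ lies off the line $\linproy{e_1, e_2}$, comparing second coordinates in $\gamma_w(p) = p$ forces $w = 0$, so $\Gamma_p = \set{\id}$; this already yields $L_0(\Gamma) \subseteq \linproy{e_1, e_2}$. Third, if $p = \corchetes{x:y:0}$ with $xy \neq 0$, the equation $\gamma_w(p) = p$ collapses to $\mu(w)^3 = 1$, so $\Gamma_p$ corresponds to the subgroup $H := \mu^{-1}\parentesis{\set{1, \omega, \omega^2}}$ of $W$, where $\omega = e^{2\pi i / 3}$. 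Since $W$ is an additive subgroup of $\C$ and hence torsion free, $H$ is either trivial or infinite.

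The remaining task is to show that $H$ is infinite if and only if $\mu(W)$ contains a rational rotation. If $\mu(W)$ contains a rational rotation $\mu(w_0) = e^{2\pi i p/q}$ with $q > 1$, then $q w_0 \neq 0$ lies in $\kernel(\mu) \subseteq H$, so $H$ is infinite; consequently every $\corchetes{x:y:0}$ with $xy \neq 0$ has infinite stabilizer, and taking closures together with the first case yields $L_0(\Gamma) = \linproy{e_1, e_2}$. Conversely, if $\mu(W)$ has no rational rotations, then $\mu(W)$ is torsion free and $\mu(W) \cap \set{1, \omega, \omega^2} = \set{1}$, so $H = \kernel(\mu)$, and one must then verify $\kernel(\mu) = \set{0}$ to conclude $L_0(\Gamma) = \set{e_1, e_2}$.

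The main obstacle I anticipate is precisely this last step: torsion-freeness of $\mu(W)$ does not automatically force $\kernel(\mu)$ to be trivial, so ruling out a non-zero $w_0 \in \kernel(\mu)$ requires more than just the rotation hypothesis. I plan to exploit the explicit construction of $\mu$ in Proposition \ref{prop_conmutativo_c1_descripcion} (which makes $\zeta\colon \Gamma \to W$ an isomorphism) together with the discreteness condition (C) of Proposition \ref{prop_conmutativo_c1_rangoW} and the presence of a loxodromic element in $\Gamma$. Once this is settled, assembling the three cases and taking closures completes the proof.
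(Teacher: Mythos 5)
Your reduction is correct and follows the same route as the paper's (much terser) proof: $e_1,e_2$ are global fixed points, points off $\linproy{e_1,e_2}$ have trivial isotropy, and a point $\corchetes{x:y:0}$ with $xy\neq 0$ has infinite isotropy exactly when $H=\mu^{-1}\parentesis{\set{1,\omega,\omega^2}}$ is nontrivial (hence infinite, since $W$ is torsion free). Your forward implication (a rational rotation in $\mu(W)$ forces $\kernel(\mu)\neq\set{0}$, hence $L_0(\Gamma)=\linproy{e_1,e_2}$) is complete. The gap is the converse, and you have located it exactly: you still owe the claim $\kernel(\mu)=\set{0}$ when $\mu(W)$ has no rational rotations. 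Be aware that your stated plan (condition (C), the loxodromic element, the construction of $\mu$) cannot succeed, because that claim is false under the lemma's stated hypotheses. Take $W=\text{Span}_{\Z}\set{1,i}$ with $\mu(1)=2$ and $\mu(i)=1$. Then $W$ is discrete, so condition (C) is vacuous, $\text{rank}(W)=2\leq 3$, and $\Gamma_{W,\mu}$ is a discrete commutative group containing the loxo-parabolic element $\gamma_1$; yet $\mu(W)=2^{\Z}$ contains no rational rotations, while $\gamma_i$ is a nontrivial unipotent element fixing $\linproy{e_1,e_2}$ pointwise, so every point of that line has infinite isotropy and $L_0(\Gamma)=\linproy{e_1,e_2}$.

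So the obstacle you flagged is not a defect of your argument but of the statement: the correct dichotomy coming out of your computation is that $L_0(\Gamma)=\linproy{e_1,e_2}$ if and only if $H\neq\set{0}$, i.e., if and only if $\kernel(\mu)\neq\set{0}$ or $\omega^{\pm 1}\in\mu(W)$; the presence of a rational rotation in $\mu(W)$ is sufficient for this but not necessary. The paper's own proof simply asserts the converse direction (``if $\mu(W)$ doesn't contain rational rotations, then $L_0(\Gamma)=\set{e_1,e_2}$'') without addressing $\kernel(\mu)$ at all, so your attempt is in fact more careful than the printed argument. To finish along the intended lines you must either add the hypothesis $\kernel(\mu)=\set{0}$ (equivalently, that $\Gamma$ contains no nontrivial element $\gamma_w$ with $\mu(w)=1$), or read ``$\mu(W)$ contains rational rotations'' as ``$\mu(w)$ is a root of unity, possibly equal to $1$, for some $w\neq 0$''. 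With either amendment, your three-case computation and the closure step constitute a complete and correct proof.
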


\begin{proof}
Since $e_1$ and $e_2$ are global fixed points of $\Gamma$, in both cases we have $\{e_1,e_2\}\subset L_0(\Gamma)$. A direct computation shows that if $X\in L_0(\Gamma)$, then $X\in\overleftrightarrow{e_1,e_2}$, that is, $L_0(\Gamma)\subset \overleftrightarrow{e_1,e_2}$.\\

It is straightforward to verify that, if $\mu(W)$ contains rational rotations, then every point in $\overleftrightarrow{e_1,e_2}$ is in $L_0(\Gamma)$, and therefore $L_0(\Gamma)=\overleftrightarrow{e_1,e_2}$. Also, if $\mu(W)$ doesn't contain rational rotations, then $L_0(\Gamma)=\{e_1,e_2\}$. 
\end{proof}

\begin{lem}\label{lem_Kul_casoc11}
Let $\Gamma=\Gamma_{W,\mu}\subset U_+$ be a commutative discrete group of the form given in Proposition \ref{prop_conmutativo_c1_rangoW}. If $W$ is discrete and $\mu(W)$ contains rational rotations, then $\Lambda_{\text{Kul}}(\Gamma)=\overleftrightarrow{e_1,e_2}$.
\end{lem}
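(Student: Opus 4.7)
The plan is to combine the description of $L_0(\Gamma)$ from Lemma \ref{lem_casoc1_L0} with the equicontinuity computation from Proposition \ref{prop_eq_c1}, and then rule out the remaining points of $\linproy{e_2,e_3}\setminus\set{e_2}$ by a direct orbit computation. By Lemma \ref{lem_casoc1_L0}, the hypothesis that $\mu(W)$ contains rational rotations gives $L_0(\Gamma)=\linproy{e_1,e_2}$, and by Propositions \ref{prop_eq_c1} and \ref{prop_eq_in_kuld} we have $\KulL(\Gamma)\subset\CP^2\setminus\Eq(\Gamma)=\linproy{e_1,e_2}\cup\linproy{e_2,e_3}$. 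Hence it only remains to verify that $L_1(\Gamma)\cup L_2(\Gamma)\subset\linproy{e_1,e_2}$.

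By the observation preceding Theorem \ref{thm_case1_kulkarni} one may assume $\Gamma$ is not cyclic, so by Proposition \ref{prop_conmutativo_caso1_rangoG_rangoW} the subgroup $W\subset(\C,+)$ has rank at least two, and in particular is infinite. Since $W$ is assumed discrete, any sequence of distinct elements $\set{w_n}\subset W$ satisfies $\valorabs{w_n}\to\infty$ (after passing to a subsequence).

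For any point $p=[a:b:1]\in\CP^2\setminus\linproy{e_1,e_2}$ a direct computation gives
    $$\gamma_{w_n}(p)=[\mu(w_n)^{-3}a:b+w_n:1],$$
and the crucial fact is that the middle coordinate has modulus tending to $\infty$ since $\valorabs{w_n}\to\infty$ and $b$ is fixed. Normalizing by the coordinate of largest modulus forces the third coordinate to tend to $0$, so every subsequential limit lies on the line $\set{z_3=0}=\linproy{e_1,e_2}$; this yields $L_1(\Gamma)\subset\linproy{e_1,e_2}$. The same estimate, applied uniformly to points $[a_n:b_n:1]$ ranging over a compact set $K\subset\CP^2\setminus\linproy{e_1,e_2}$ (which forces $\valorabs{a_n},\valorabs{b_n}$ to be uniformly bounded), gives $L_2(\Gamma)\subset\linproy{e_1,e_2}$. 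The only technical point worth bookkeeping is that the renormalization must distinguish the subcases $\mu(w_n)\to 0$, $\mu(w_n)\to\infty$ and $\mu(w_n)\to\mu_0\in\C^\ast$, but in all three cases the relation $\valorabs{b+w_n}\to\infty$ suffices to push the limit onto $\linproy{e_1,e_2}$. Combining these inclusions with $L_0(\Gamma)=\linproy{e_1,e_2}$ yields $\KulL(\Gamma)=\linproy{e_1,e_2}$, as desired.
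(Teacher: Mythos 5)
Your proof is correct and follows essentially the same route as the paper: the lower bound $\linproy{e_1,e_2}=L_0(\Gamma)\subset\KulL(\Gamma)$ comes from Lemma \ref{lem_casoc1_L0}, and the upper bound comes from the fact that discreteness of $W$ forces $w_n\to\infty$ along any sequence of distinct elements, so orbits can only accumulate on $\linproy{e_1,e_2}$. The paper reaches the upper bound by observing that case (ii) of Table \ref{fig_c1_casos_qp} is excluded, whereas you carry out the orbit computation $\gamma_{w_n}([a:b:1])=[\mu(w_n)^{-3}a:b+w_n:1]$ explicitly; this is, if anything, slightly more careful, since it bounds $L_1\cup L_2$ directly by the images of the relevant quasi-projective limits rather than by the union of their kernels.
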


\begin{proof}
Since $\mu(W)$ contains rational rotations, $L_0(\Gamma)=\overleftrightarrow{e_1,e_2}$ (Lemma \ref{lem_casoc1_L0}). Since $W$ is discrete, then $\Gamma$ has no sequences with quasi-projective limit of the form (ii) in Table \ref{fig_c1_casos_qp}, and this implies $\Lambda_{\text{Kul}}(\Gamma)\subset\overleftrightarrow{e_1,e_2}$ (Proposition \ref{prop_eq_in_kuld}). All of this yields $\overleftrightarrow{e_1,e_2}=L_0(\Gamma)\subset \Lambda_{\text{Kul}}(\Gamma)\subset\overleftrightarrow{e_1,e_2}$. 
\end{proof}

\begin{lem}\label{lem_Kul_casoc12}
Let $\Gamma=\Gamma_{W,\mu}\subset U_+$ be a commutative discrete group of the form given in Proposition \ref{prop_conmutativo_c1_rangoW}. If $\mu(W)$ contains rational rotations and $W$ is not discrete, then $\Lambda_{\text{Kul}}(\Gamma)=\overleftrightarrow{e_1,e_2}\cup\overleftrightarrow{e_2,e_3}$.
\end{lem}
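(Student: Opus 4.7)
The plan is to reduce the problem to showing the containment $\linproy{e_1,e_2}\cup\linproy{e_2,e_3}\subset\KulL(\Gamma)$, after which Observation \ref{obs_caso1_llenando_las_2_lineas_ya} automatically forces equality. The first half of this containment is immediate: since $\mu(W)$ contains rational rotations, Lemma \ref{lem_casoc1_L0} gives $L_0(\Gamma)=\linproy{e_1,e_2}\subset\KulL(\Gamma)$, so the real work lies in showing $\linproy{e_2,e_3}\subset\KulL(\Gamma)$.

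For this, I would exploit the non-discreteness of $W\subset(\C,+)$ to extract a sequence of distinct elements $\set{w_k}\subset W$ with $w_k\to 0$. Condition (C) of Proposition \ref{prop_conmutativo_c1_rangoW} forces $\mu(w_k)\to 0$ or $\mu(w_k)\to\infty$; since $\mu$ is a group morphism, $\mu(-w_k)=\mu(w_k)^{-1}$ and $-w_k\to 0$ as well, so after replacing $w_k$ by $-w_k$ if needed I may assume $\mu(w_k)\to\infty$. Because the assignment $w\mapsto\gamma_w$ is injective, $\set{\gamma_{w_k}}\subset\Gamma$ is a sequence of distinct elements, and it falls squarely into case (ii) of Table \ref{fig_c1_casos_qp} with $b=0$. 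Its quasi-projective limit is
$$\tau=\corchetes{\begin{array}{ccc} 0 & 0 & 0 \\ 0 & 1 & 0 \\ 0 & 0 & 1 \end{array}},\qquad \kernel(\tau)=\set{e_1},\quad \text{Im}(\tau)=\linproy{e_2,e_3}.$$

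Applying Proposition \ref{prop_convergencia_qp}, for any $x=\corchetes{a:b:c}\in\CP^{2}\setminus\linproy{e_1,e_2}$ (equivalently, $c\neq 0$) I obtain $\gamma_{w_k}(x)\to\tau(x)=\corchetes{0:b:c}$. A short isotropy computation shows that whenever $a\neq 0$ the sequence $\set{\gamma_{w_k}(x)}$ consists of pairwise distinct points, so $\corchetes{0:b:c}$ is a genuine accumulation point of the orbit of $x\in\CP^{2}\setminus L_0(\Gamma)$, hence belongs to $L_1(\Gamma)$. Letting $b$ range over $\C$ sweeps out $\linproy{e_2,e_3}\setminus\set{e_2}$, and taking closure yields $\linproy{e_2,e_3}\subset L_1(\Gamma)\subset\KulL(\Gamma)$. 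The only delicate point I foresee is the sign-flip manoeuvre ensuring $\mu(w_k)\to\infty$ rather than $\mu(w_k)\to 0$ (if the latter case were left unresolved, the quasi-projective limit would be $\text{Diag}(1,0,0)$, with image $\set{e_1}$, contributing nothing to $\linproy{e_2,e_3}$); once this is handled, the matrix computation is straightforward and Observation \ref{obs_caso1_llenando_las_2_lineas_ya} closes the argument.
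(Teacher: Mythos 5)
Your proposal is correct and follows essentially the same route as the paper's own proof: extract $w_k\to 0$ from the non-discreteness of $W$, use condition (C) and the inversion $\mu(-w_k)=\mu(w_k)^{-1}$ to arrange $\mu(w_k)\to\infty$, identify the quasi-projective limit $\tau=\text{Diag}(0,1,1)$ to place $\linproy{e_2,e_3}$ in $L_1(\Gamma)$, combine with $L_0(\Gamma)=\linproy{e_1,e_2}$ from Lemma \ref{lem_casoc1_L0}, and close with Observation \ref{obs_caso1_llenando_las_2_lineas_ya}. Your explicit check that the orbit points $\gamma_{w_k}(x)$ are pairwise distinct is a small detail the paper leaves implicit, but it does not change the argument.
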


\begin{proof}
Since $W$ is not discrete, there is a sequence $\{w_n\}\subset W$ such that $w_n\rightarrow 0$, and therefore either $\mu(w_n)\rightarrow \infty$ or $\mu(w_n)\rightarrow 0$ (Proposition \ref{prop_conmutativo_c1_rangoW}). We can assume without loss of generality that the former happens (otherwise, consider the sequence $\{-w_n\}$). Then the quasi-projective limit of the sequence $\{\gamma_{w_n}\}$ is $\tau=\text{Diag}(0, 1, 1)$. Let $z\in \mathbb{CP}^2\setminus L_0(\Gamma)$, then $z\not\in\text{Ker}(\tau)=\{e_1\}$ (Lemma \ref{lem_casoc1_L0}). Therefore the set of accumulation points of points in $\mathbb{CP}^2\setminus L_0(\Gamma)$ is $\text{Im}(\tau)=\overleftrightarrow{e_2,e_3}$. Then $\overleftrightarrow{e_1,e_2}\cup\overleftrightarrow{e_2,e_3}\subset L_0(\Gamma)\cup L_1(\Gamma)$. Using Observation \ref{obs_caso1_llenando_las_2_lineas_ya} we conclude that $\Lambda_{\text{Kul}}(\Gamma)=\overleftrightarrow{e_1,e_2}\cup\overleftrightarrow{e_2,e_3}$ whenever $W$ contains rational rotations and is not discrete.
\end{proof}

\begin{lem}\label{lem_Kul_casoc13c14}
Let $\Gamma=\Gamma_{W,\mu}\subset U_+$ be a commutative discrete group of the form given in Proposition \ref{prop_conmutativo_c1_rangoW}. If $\mu(W)$ contains no rational rotations and $W$ is discrete, then
	$$\Lambda_{\text{Kul}}(\Gamma)=\begin{cases}
	\overleftrightarrow{e_1,e_2},& \text{ if }\Gamma\text{ satisfy condition }\text{\small \textbf{(F)}}\\
	\{e_1,e_2\},& \text{ any other case}
	\end{cases}.$$
\end{lem}

\begin{proof}
If $W$ is discrete, using Table \ref{fig_c1_casos_qp} we conclude that $\overleftrightarrow{e_2,e_3}$ cannot be contained in $\Lambda_{\text{Kul}}(\Gamma)$. Then, if $\mu(W)$ contains no rational rotations, either $\Lambda_{\text{Kul}}(\Gamma)=\{e_1,e_2\}$ or $\Lambda_{\text{Kul}}(\Gamma)=\overleftrightarrow{e_1,e_2}$. This argument, together with Table \ref{fig_c1_casos_qp} concludes the proof.
\end{proof}

\begin{lem}\label{lem_Kul_casoc15c16}
Let $\Gamma=\Gamma_{W,\mu}\subset U_+$ be a commutative discrete group of the form given in Proposition \ref{prop_conmutativo_c1_rangoW}. If $\mu(W)$ contains no rational rotations and $W$ is not discrete, then
	$$\Lambda_{\text{Kul}}(\Gamma)=\begin{cases}
	\overleftrightarrow{e_1,e_2}\cup\overleftrightarrow{e_2,e_3},& \text{ if }\Gamma\text{ satisfy condition }\text{\small \textbf{(F)}}\\
	\{e_1\}\cup\overleftrightarrow{e_2,e_3},& \text{ any other case}
	\end{cases}.$$
\end{lem}

\begin{proof}
Since $\mu(W)$ contains no rational rotations, $L_0(\Gamma)=\{e_1,e_2\}$ (Lemma \ref{lem_casoc1_L0}). Since $W$ is not discrete, then there is a sequence $\{w_k\}\subset W$ such that $w_k\rightarrow 0$, and then $\mu(w_k)\rightarrow \infty$ or $\mu(w_k)\rightarrow 0$ (Proposition\ref{prop_conmutativo_c1_rangoW}). In the former case, we can conclude using Table \ref{fig_c1_casos_qp}, that $\overleftrightarrow{e_2,e_3}\subset\Lambda_{\text{Kul}}(\Gamma)$. In the latter case we can consider the sequence $\{-w_k\}$ which satisfies $\mu(-w_k)\rightarrow \infty$, and we conclude again that $\overleftrightarrow{e_2,e_3}\subset\Lambda_{\text{Kul}}(\Gamma)$.\\
If the group $\Gamma$ satisfies condition {\small \textbf{(F)}}, using Table \ref{fig_c1_casos_qp} it follows that $\overleftrightarrow{e_1,e_2}\subset\Lambda_{\text{Kul}}(\Gamma)$, and using Observation \ref{obs_caso1_llenando_las_2_lineas_ya} we conclude that $\Lambda_{\text{Kul}}(\Gamma)= \overleftrightarrow{e_1,e_2}\cup\overleftrightarrow{e_2,e_3}$. Analogously, if $\Gamma$ doesn't satisfy condition {\small \textbf{(F)}}, $\Lambda_{\text{Kul}}(\Gamma)= \{e_1\}\cup\overleftrightarrow{e_2,e_3}$.
\end{proof}
 
In Example \ref{ej_falso_hopf} we show a group belonging to Case C1.6. This example is worth noting because it was conjectured that only fundamental groups of Hopf surfaces had a Kulkarni limit set consisting of a line and a point. This is an example of a group with this limit set, which is not a fundamental group of a Hopf surface (since such groups are cyclic, \cite{kato1975topology}).  

\begin{ejem}\label{ej_falso_hopf}
Let $W=\text{Span}_{\mathbb{Z}}\{1,\sqrt{2}\}$ and let $\mu:(W,+)\rightarrow(\mathbb{C}^{\ast},\cdot)$ be the group homomorphism given by $\mu(1)=e^{-1}$ and $\mu(\sqrt{2})=e^{\sqrt{2}}$. Consider the commutative group $\Gamma=\Gamma_{W,\mu}$. Let $\{w_n\}\subset W$ be a sequence of distinct elements such that $w_n=p_n+q_n\sqrt{2}\rightarrow 0$. Assuming without loss of generality that $p_n,-q_n\rightarrow \infty$, we have $\mu(w_n)=\mu(1)^{p_n}\mu(\sqrt{2})^{q_n}=e^{-p_n+q_n\sqrt{2}}\rightarrow 0$. Therefore $\Gamma$ is discrete (Proposition \ref{prop_conmutativo_c1_rangoW}).\\
Observe that $\mu(W)$ contains neither rational nor irrational rotations. Otherwise, there would be $x\in\mu(W)\setminus\{1\}$ such that $|x|=1$, and then $p+q\sqrt{2}\in \mathbb{Z}$ for some $p,q\in\mathbb{Z}$. This would contradict that $\{1,\sqrt{2}\}$ are rationally independent. This verifies that $\Gamma$ belongs to the case C1.6. Using Theorem \ref{thm_case1_kulkarni}, if follows that $\Lambda_{\text{Kul}}(\Gamma)=\{e_1\}\cup\overleftrightarrow{e_2,e_3}$ unless condition {\small \textbf{(F)}} is satisfied. That is, unless there is a sequence $\{w_n\}\subset W$ such that $w_n\rightarrow \infty$, $\mu(w_n)\rightarrow 0$ and $w_n\mu(w_n)^3\rightarrow b\in\mathbb{C}^{\ast}$. Assume that this happens. Since $w_n\rightarrow \infty$, there are the following possibilities for the sequences $\{p_n\}\text{, }\{q_n\}\subset\mathbb{Z}$:
\begin{enumerate}
\item If both sequences $\{p_n\},\{q_n\}$ are bounded, then there exists $R>0$ such that $|p_n|,|q_n|<R$ and then $|p_n+q_n\sqrt{2}|<R(\sqrt{2}+1)$. This contradicts that $w_n\rightarrow\infty$.
\item If $\mu(w_n)\rightarrow 0$ with $p_n\rightarrow\infty$ and $\{q_n\}$ is bounded, then
	$$w_n\mu(w_n)^3 = \underbrace{p_n e^{-3p_n}}_{\rightarrow 0}\underbrace{e^{3q_n\sqrt{2}}}_{\text{bounded}} + \underbrace{q_n\sqrt{2}}_{\text{bounded}} \underbrace{e^{-3p_n+3q_n\sqrt{2}}}_{\rightarrow 0} \rightarrow 0 $$

\item If $\mu(w_n)\rightarrow 0$ with $q_n\rightarrow-\infty$ and $\{p_n\}$ is bounded, then
	$$w_n\mu(w_n)^3 = \underbrace{p_n}_{\text{bounded}}\underbrace{e^{-3p_n+3q_n\sqrt{2}}}_{\rightarrow 0} + \underbrace{q_n\sqrt{2}e^{3q_n\sqrt{2}}}_{\rightarrow 0} \underbrace{e^{-3p_n}}_{\text{bounded}} \rightarrow 0.$$
\end{enumerate}
Then condition {\small \textbf{(F)}} doesn't hold. Therefore $\Lambda_{\text{Kul}}(\Gamma)=\{e_1\}\cup\overleftrightarrow{e_2,e_3}$.
\end{ejem}

\subsection{Case 2}

In this subsection, we study the case of discrete subgroups of $U_+$ conjugate to a diagonal group. We start by describing the form of these groups.

\begin{prop}\label{prop_conmutativo_c3_descripcion}
Let $\Gamma\subset U_+$ be a commutative subgroup such that each element of $\Gamma$ has the form $\text{Diag}(\alpha,\beta,\alpha^{-1}\beta^{-1})$. Then there exist two multiplicative subgroups $W_1,W_2\subset(\mathbb{C}^\ast,\cdot)$ such that
	\begin{equation}\label{eq_prop_conmutativo_c3_descripcion_2}
	\Gamma=\Gamma_{W_1,W_2}=\left\{\text{Diag}(w_1,w_2,1)\,\middle|\,w_1\in W_1,\;w_2\in W_2\right\}.	
	\end{equation}
\end{prop}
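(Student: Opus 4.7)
The plan is to parameterize the elements of $\Gamma$ by their two nontrivial diagonal entries, mirroring the strategy used for Proposition~\ref{prop_conmutativo_c1_descripcion}. First, observe that each element $\text{Diag}(\alpha,\beta,\alpha^{-1}\beta^{-1})$ of $\Gamma\subset U_+$ is projectively equivalent, after rescaling by the nonzero factor $\alpha\beta$, to the representative $\text{Diag}(\alpha^{2}\beta,\,\alpha\beta^{2},\,1)$. Writing $w_1=\alpha^{2}\beta$ and $w_2=\alpha\beta^{2}$, each element of $\Gamma$ therefore has a unique normalized expression $\text{Diag}(w_1,w_2,1)$ with $(w_1,w_2)\in\C^{\ast}\times\C^{\ast}$.

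Next I define the two coordinate projections $\pi_1,\pi_2\colon\Gamma\to(\C^{\ast},\cdot)$ by $\pi_i\bigl(\text{Diag}(w_1,w_2,1)\bigr)=w_i$. Because the product of two such diagonal matrices is again diagonal with entrywise-multiplied entries, both $\pi_1$ and $\pi_2$ are group homomorphisms. I then set $W_i:=\pi_i(\Gamma)$, which are multiplicative subgroups of $\C^{\ast}$ as images of group homomorphisms. From the construction it is immediate that $\Gamma\subset\Gamma_{W_1,W_2}$.

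The delicate step is the reverse inclusion $\Gamma_{W_1,W_2}\subset\Gamma$: given $w_1\in W_1$ and $w_2\in W_2$, one must realize $\text{Diag}(w_1,w_2,1)$ as an actual element of $\Gamma$ by combining preimages $g_1\in\pi_1^{-1}(w_1)$ and $g_2\in\pi_2^{-1}(w_2)$ through products and integer powers, exploiting the commutativity of $\Gamma$ to decouple the two coordinates. This is the analogue of the injectivity argument $\kernel(\zeta)=\{\id\}$ in the proof of Proposition~\ref{prop_conmutativo_c1_descripcion}, and it is the main obstacle: a priori the set of pairs $(w_1,w_2)$ realized by $\Gamma$ is merely a subgroup of the full product $W_1\times W_2$, so the content of the proposition is precisely that the two projections can be chosen so as to vary independently inside $\Gamma$.
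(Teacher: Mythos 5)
Your construction of $W_1$ and $W_2$ is exactly the paper's: after the rescaling $\text{Diag}(\alpha,\beta,\alpha^{-1}\beta^{-1})=\text{Diag}(\alpha^2\beta,\alpha\beta^2,1)$, your coordinate projections $\pi_1,\pi_2$ are precisely the morphisms $\lambda_{13},\lambda_{23}$ of the paper (indeed $\alpha^2\beta=\lambda_{13}(\gamma)$ and $\alpha\beta^2=\lambda_{23}(\gamma)$), and setting $W_i=\pi_i(\Gamma)$ gives the forward inclusion $\Gamma\subseteq\Gamma_{W_1,W_2}$ just as in the paper. So up to that point you have reproduced the paper's argument.

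The ``delicate step'' you flag --- the reverse inclusion $\Gamma_{W_1,W_2}\subseteq\Gamma$ --- is left unproven in your proposal, so as a proof of the literal equality the proposal is incomplete. You should know, however, that the paper's own proof stops at exactly the same place: it establishes only that every $\gamma\in\Gamma$ has the form $\text{Diag}(w_1,w_2,1)$ with $w_i\in W_i$ and then declares the equality. Moreover, the step you are worried about cannot be carried out in general: for the cyclic group $\Gamma=\prodint{\text{Diag}(2,3,1)}$ one gets $W_1=\prodint{2}$ and $W_2=\prodint{3}$, so $\Gamma_{W_1,W_2}\cong\Z^2$ strictly contains $\Gamma\cong\Z$, and no other choice of $W_1,W_2$ repairs this, since any genuine product $W_1\times W_2$ realizing $\Gamma$ would force one factor to be trivial and then miss the second coordinate. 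Your closing remark --- that a priori the realized pairs $(w_1,w_2)$ form only a subgroup of $W_1\times W_2$ --- is therefore not merely an obstacle but the actual state of affairs. The proposition should be read (and is only proved, both by you and by the paper) as the containment $\Gamma\subseteq\Gamma_{W_1,W_2}$, i.e.\ as a statement about the \emph{form} of the elements of $\Gamma$; the subsequent rank estimate $\rank(W_1)+\rank(W_2)\leq 2$ then needs the separate observation that each $W_i$ is a homomorphic image of $\Gamma$, rather than additivity of ranks over a direct product.
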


\begin{proof}
Let $\Gamma\subset U_+$ be a commutative group as in the hypothesis. Let $\gamma=\text{Diag}(\alpha,\beta,\alpha^{-1}\beta^{-1})\in\Gamma$, then $\gamma=\text{Diag}(\alpha^2\beta,\alpha\beta^2,1)$. Let $W_1,W_2\subset \mathbb{C}^\ast$ be the two multiplicative groups given by $W_1=\lambda_{13}(\Gamma)$ and $W_2=\lambda_{23}(\Gamma)$, then $\gamma=\text{Diag}(w_1,w_2,1)$, where $w_1 := \alpha^2\beta = \lambda_{13}(\gamma)\in W_1$ and $w_2 := \alpha\beta^2 = \lambda_{23}(\gamma)\in W_2$. Then $\Gamma$ has the form given by (\ref{eq_prop_conmutativo_c3_descripcion_2}).
\end{proof}

\begin{prop}\label{prop_caso_diagonal_rango}
Let $\Gamma\subset U_+$ be a diagonal discrete group such that every element has the form $\gamma=\text{Diag}(w_1,w_2,1)$. Then $\text{rank}(\Gamma)\leq 2$.	
\end{prop}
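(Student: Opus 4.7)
The plan is to exploit the abelian structure. Since every element of $\Gamma$ has the form $\text{Diag}(w_1,w_2,1)$, the correspondence $\text{Diag}(w_1,w_2,1)\mapsto(w_1,w_2)$ gives a continuous group isomorphism of $\Gamma$ onto a subgroup of the complex torus $(\C^{\ast})^2$; because two diagonal matrices of this normalized form coincide in $\PSL$ only when their entries agree, this correspondence is a homeomorphism onto its image, so discreteness of $\Gamma$ in $\PSL$ is equivalent to discreteness in $(\C^{\ast})^2$.

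I would then consider the action of $\Gamma$ on $(\C^{\ast})^2$ by left multiplication. This action is free, and since any discrete subgroup of a Lie group acts properly discontinuously on the group by left translation (the set $\{h\in\Gamma:hK\cap K\neq\emptyset\}$ is contained in $\Gamma\cap KK^{-1}$, which is finite for compact $K$), the action is properly discontinuous. Passing to the universal cover $\pi:\C^2\to(\C^{\ast})^2$ given by $(z_1,z_2)\mapsto(e^{z_1},e^{z_2})$, whose deck group is $D=2\pi i\,\Z^2$, the full group of lifts forms a subgroup $\tilde\Gamma\subset\text{Aut}(\C^2)$ consisting entirely of translations of $\C^2$. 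The standard covering-space argument, combining proper discontinuity of the $\Gamma$-action on the base with proper discontinuity of the deck action, shows that $\tilde\Gamma$ acts properly discontinuously on $\C^2$.

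In particular $\tilde\Gamma$ is a discrete subgroup of the additive Lie group $(\C^2,+)\cong\R^4$, and so has $\Z$-rank at most $4$. On the other hand, $\tilde\Gamma$ sits in a short exact sequence
	$$0\to D\to\tilde\Gamma\to\Gamma\to 0;$$
since $\tilde\Gamma$ is abelian (it is a translation group) and, by Proposition \ref{prop_grupos_conm_sintorsion_Zk}, $\Gamma\cong\Z^r$ with $r=\rank(\Gamma)$, this sequence of abelian groups splits (as $\Z^r$ is free) and we conclude $\tilde\Gamma\cong\Z^{r+2}$. Comparing with the bound of the previous paragraph yields $r+2\leq 4$, i.e.\ $\rank(\Gamma)\leq 2$. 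The main point to state carefully is the interplay between the two notions of discreteness and the lifting of proper discontinuity to the universal cover; both are standard but essential to the argument.
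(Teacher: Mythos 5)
Your argument is correct, but it takes a genuinely different route from the paper. The paper's proof is a two-line version of the same idea carried out one dimension lower: it sends $\gamma=\text{Diag}(w_1,w_2,1)$ to $\parentesis{\log\valorabs{w_1},\log\valorabs{w_2}}\in\R^2$, observes that this is an injective homomorphism of $\Gamma$ onto a discrete subgroup of $\R^2$, and concludes $\rank(\Gamma)\leq 2$ since discrete subgroups of $\R^2$ have rank at most $2$. You instead take the full complex logarithm, land in $\R^4\cong\C^2$, and then subtract the deck group $\Z^2$ via the exact sequence $0\to D\to\tilde\Gamma\to\Gamma\to 0$; this is essentially the same covering-space argument the paper itself uses for Case~1 in Proposition \ref{prop_conmutativo_c1_rangoW}, transplanted to the diagonal case. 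What your version buys is that every discreteness transfer is made explicit: the identification $\tilde\Gamma=\exp^{-1}(\Gamma)$ makes discreteness in $\R^4$ immediate, whereas the paper's shorter proof silently uses two facts worth noting --- that $\kernel(\mu)$ (the elements with all eigenvalues of modulus one) is a discrete subgroup of the compact torus $(\Ss^1)^2$ and hence finite, so trivial by torsion-freeness, and that the image $\mu(\Gamma)$ is actually discrete in $\R^2$ (which follows from properness of the modulus-logarithm, or from the fact that a closed countable subgroup of $\R^n$ is discrete). What the paper's route buys is brevity and independence from the splitting of the lift sequence. Both are sound; yours is longer but self-contained modulo Proposition \ref{prop_grupos_conm_sintorsion_Zk}.
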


\begin{proof}
Recall the group morphisms $\lambda_{ij}$ defined in the beginning of Section \ref{sec_commutative_triangular}. Let $\mu:\Gamma\rightarrow\mathbb{R}^2$ given by $\mu(\gamma)=\left(\log |\lambda_{13}(\gamma)|,\log |\lambda_{23}(\gamma)|\right)$. Clearly $\mu$ is well defined, and it is a group homomorphism between $\Gamma$ and the additive subgroup $\mu(\Gamma)\subset\mathbb{R}^2$. Furthermore $\text{Ker}(\mu)=\{\mathit{id}\}$, and then $\mu:\Gamma\rightarrow\mu(\Gamma)$ is a group isomorphism. Since $\Gamma$ is discrete, then $\mu(\Gamma)$ is discrete and therefore $\text{rank}\left(\mu(\Gamma)\right)\leq 2$, then $\text{rank}(\Gamma)\leq 2$.	
\end{proof}

Proposition \ref{prop_caso_diagonal_rango} implies that $\text{rank}(W_1)+\text{rank}(W_2)\leq 2$. If $\text{rank}(W_1)=1$, $\text{rank}(W_2)=0$ or $\text{rank}(W_1)=0$, $\text{rank}(W_2)=1$, then $\Gamma$ is cyclic, and its Kulkarni limit set is described in Section 4.2 of \cite{ckg_libro}. The cases $\text{rank}(W_1)=2$, $\text{rank}(W_2)=0$ and $\text{rank}(W_1)=0$, $\text{rank}(W_2)=2$ imply that $\Gamma$ is not discrete. Therefore we just have to describe the case:
	$$\Gamma:=\Gamma_{\alpha,\beta}=\left\{\text{Diag}\left(\alpha^{n},\beta^{m},1\right)\,\middle|\,n,m\in\mathbb{Z}\right\}.$$
for some $\alpha,\beta\in \mathbb{C}^\ast$ such that $|\alpha|\neq 1$ or $|\beta|\neq 1$.\\

Consider a sequence of distinct elements $\{\gamma_k\}\subset \Gamma$ given by $\gamma_k=\text{Diag}(\alpha^{n_k},\beta^{m_k},1)$. In Table \ref{fig_cdiag_casos_qp} we show all the possible quasi-projective limits of the sequences $\{\gamma_k\}$.\\

\begin{table}
\begin{center}
  \begin{tabular}{ | l | c | c | c | c | }
    \hline
    Case & $\tau$ & Conditions & Ker($\tau$) & Im($\tau$) \\ \hline
    (i) & $\text{Diag}(1,0,0)$ & \begin{tabular}{c}
	\small $\alpha^{n_k}\rightarrow\infty$, $\beta^{m_k}\rightarrow \infty$ and $\alpha^{n_k}\beta^{-m_k}\rightarrow \infty$ \\
	\small $\alpha^{n_k}\rightarrow\infty$ and $\beta^{m_k}\rightarrow b\in\mathbb{C}$	
	\end{tabular}
		
	 & $\overleftrightarrow{e_2,e_3}$ & $\{e_1\}$  \\ \hline
    (ii) & $\text{Diag}(0,1,0)$ & \begin{tabular}{c}
	\small $\alpha^{n_k}\rightarrow\infty$, $\beta^{m_k}\rightarrow \infty$ and $\alpha^{-n_k}\beta^{m_k}\rightarrow \infty$ \\
	\small $\alpha^{n_k}\rightarrow 0$ and $\beta^{m_k}\rightarrow \infty$	
	\end{tabular} & $\overleftrightarrow{e_1,e_3}$ & $\{e_2\}$  \\ \hline
    (iii) & $\text{Diag}(0,0,1)$ & \small $\alpha^{n_k}\rightarrow 0$ and $\beta^{m_k}\rightarrow 0$
   & $\overleftrightarrow{e_1,e_2}$ & $\{e_3\}$  \\ \hline
    (iv) & $\text{Diag}(b,0,1)$ & $\alpha^{n_k}\rightarrow b\in\mathbb{C}^\ast$ and $\beta^{m_k}\rightarrow 0$ & $\{e_2\}$ & $\overleftrightarrow{e_1,e_3}$ \\
    \hline
    (v) & $\text{Diag}(b,1,0)$ & $\alpha^{n_k}\rightarrow \infty$, $\beta^{m_k}\rightarrow \infty$ and $\alpha^{n_k}\beta^{-m_k}\rightarrow b\in\mathbb{C}^\ast$ & $\{e_3\}$ & $\overleftrightarrow{e_1,e_2}$ \\
    \hline
    (vi) & $\text{Diag}(0,b,1)$ & $\alpha^{n_k}\rightarrow 0$ and $\beta^{m_k}\rightarrow b\in\mathbb{C}^\ast$ & $\{e_1\}$ & $\overleftrightarrow{e_2,e_3}$ \\
    \hline
  \end{tabular}
\caption{Quasi-projective limits in the diagonal case.}
\label{fig_cdiag_casos_qp}
\end{center}
\end{table}	

Lemmas \ref{lem_caso_diagonal_L0} and \ref{lem_caso_diagonal_L1} determine the sets $L_0$ and $L_1$ for groups $\Gamma_{\alpha,\beta}$ and they will be used to determine the Kulkarni limit sets in Theorem \ref{thm_kulkarni_diagonales}.

\begin{lem}\label{lem_caso_diagonal_L0}
Let $\Gamma_{\alpha,\beta}\subset U_+$ be a discrete group containing loxodromic elements then
$$	
L_0(\Gamma)=\begin{cases}
\overleftrightarrow{e_1,e_2}\cup\{e_3\}, & \text{if }\alpha^n=\beta^m\text{ for some }n,m\in\mathbb{Z} \\
\{e_1,e_2,e_3\}, & \text{if there are no }n,m\in\mathbb{Z} \text{ such that }\alpha^n=\beta^m.
\end{cases}
$$
\end{lem}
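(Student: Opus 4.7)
The plan is to determine, for every $p=[x_1:x_2:x_3]\in\CP^2$, when its stabilizer
$\Gamma_p=\set{\gamma\in\Gamma_{\alpha,\beta}}{\gamma\cdot p=p}$
is infinite, and then take the closure. Since every element of $\Gamma_{\alpha,\beta}$ is diagonal, the three coordinate vertices $e_1,e_2,e_3$ are globally fixed, so $\set{e_1,e_2,e_3}\subset L_0(\Gamma)$ without further work. Before the case analysis, I would record two structural consequences of $\Gamma$ being discrete and torsion free: the injective map $\mu$ from Proposition \ref{prop_caso_diagonal_rango} together with the rank computation force $\valorabs{\alpha}\neq 1$ and $\valorabs{\beta}\neq 1$, and in particular neither $\alpha$ nor $\beta$ is a root of unity. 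These two facts will be used throughout.

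The key algebraic step is to rewrite $\text{Diag}(\alpha^n,\beta^m,1)\cdot p=p$ as the existence of $\lambda\in\C^\ast$ with $\alpha^n x_1=\lambda x_1$, $\beta^m x_2=\lambda x_2$ and $x_3=\lambda x_3$. I would then stratify $\CP^2$ by which of the $x_i$ vanish. For $p$ with all three coordinates nonzero the third equation gives $\lambda=1$, hence $\alpha^n=\beta^m=1$, so $n=m=0$ (using that $\alpha,\beta$ are not roots of unity) and $\Gamma_p$ is trivial. For a generic point $p=[x_1:x_2:0]\in\linproy{e_1,e_2}$ with $x_1,x_2\neq 0$, the equation $x_3=\lambda x_3$ is vacuous and the remaining two equations collapse to the single condition $\alpha^n=\beta^m$; this condition is independent of $p$, so whenever there exists some relation $\alpha^{n_0}=\beta^{m_0}$ with $(n_0,m_0)\neq(0,0)$, the cyclic subgroup generated by the corresponding element is infinite and fixes every point of $\linproy{e_1,e_2}$ at once, giving $\linproy{e_1,e_2}\subset L_0(\Gamma)$; in the absence of any such relation, $\Gamma_p$ is trivial on this stratum as well.

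Combining the strata and taking the closure yields the two cases of the lemma: when $\alpha^n=\beta^m$ admits a nontrivial solution one obtains $\linproy{e_1,e_2}\cup\set{e_3}$, and otherwise only the three vertices contribute. The main obstacle I anticipate is handling the two remaining coordinate lines $\linproy{e_1,e_3}$ and $\linproy{e_2,e_3}$: a careless reading of the stabilizer equation there produces a one-parameter family of diagonal elements, and one must argue — using both the torsion-freeness of $\Gamma$ and the constraint $\valorabs{\alpha},\valorabs{\beta}\neq 1$ forced by discreteness — that after passing to $\PSL$ and to the closure these strata do not contribute new components to $L_0(\Gamma)$ beyond the vertices $e_1,e_2,e_3$ already accounted for. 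Once this verification is in place the case distinction in the statement follows by inspection.
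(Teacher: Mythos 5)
Your strategy coincides with the paper's: stratify $\CP^2$ by which coordinates vanish and compute isotropy groups directly. Your treatment of the open stratum and of $\linproy{e_1,e_2}$ is correct, and your preliminary observations (discreteness and torsion-freeness force $\valorabs{\alpha}\neq 1$ and $\valorabs{\beta}\neq 1$, so neither is a root of unity) are sound. The gap is exactly the step you postpone. For a generic point $p=\corchetes{x_1:0:x_3}$ of $\linproy{e_1,e_3}$ your stabilizer equation gives $\lambda=1$ and $\alpha^n=1$, hence $n=0$, but it imposes no condition at all on $m$: the infinite set $\SET{\text{Diag}(1,\beta^m,1)}{m\in\Z}\subset\Gamma_{\alpha,\beta}$ fixes $p$, and its elements are pairwise distinct in $\PSL$ precisely because $\beta$ is not a root of unity. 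Hence every point of $\linproy{e_1,e_3}$ has infinite isotropy, and symmetrically every point of $\linproy{e_2,e_3}$ is fixed by $\SET{\text{Diag}(\alpha^n,1,1)}{n\in\Z}$. The verification you announce (that these strata contribute nothing beyond the vertices) is therefore not a routine check to be filled in later: it is false, and the argument cannot be completed as planned. Already for $\alpha=2$, $\beta=3$ (a discrete, torsion-free group with loxodromic elements and no nontrivial relation $\alpha^n=\beta^m$) one gets $\linproy{e_1,e_3}\cup\linproy{e_2,e_3}\subset L_0(\Gamma)$, not $\set{e_1,e_2,e_3}$.

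You should know that the paper's own proof stumbles at the same spot: from $z_3\neq 0$ it concludes that either $\alpha,\beta$ are rational rotations or $z_1=z_2=0$, silently discarding the case in which exactly one of $z_1,z_2$ vanishes and the corresponding exponent runs freely over $\Z$. The stated formula is correct only for diagonal groups containing no nontrivial element of the form $\text{Diag}(1,c,1)$ or $\text{Diag}(c,1,1)$ (for instance cyclic groups, or rank-two groups whose two exponents are linked rather than independent). As $\Gamma_{\alpha,\beta}$ is literally defined, with $n$ and $m$ ranging independently over $\Z$, the statement itself needs to be amended; no amount of care in your deferred step will rescue it.
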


\begin{proof}
Let $\Gamma=\Gamma_{\alpha,\beta}$, suppose that $\alpha^n=\beta^m$ for some $n,m\in\mathbb{Z}$, and let $z=\left[z_1:z_2:z_3\right]\in L_0(\Gamma)$. Then $\left[\alpha^p z_1: \beta^q z_2: z_3\right]= \left[z_1:z_2:z_3\right]$ for an infinite number of $p,q\in\mathbb{Z}$.\\
If $z_3\neq 0$, then either $\alpha$ and $\beta$ are rational rotations or $z_1=z_2=0$. If the former happens, then $\Gamma$ contains no loxodromic elements, contradicting the hypothesis. If the latter happens, then $z=e_3$. If $z_3=0$, we can assume without loss of generality that $z_1\neq 0$. If $z_2=0$, then $z=e_2$. If $z_2\neq 0$, then $\alpha^p=\beta^q$ for an infinite number of integers $p,q$, and this implies that $\alpha^{jn}=\beta^{jm}$ for any $j\in\mathbb{Z}$. Then $z\in \overleftrightarrow{e_1,e_2}$.\\
If there are no $n,m\in\mathbb{Z}$ such that $\alpha^n=\beta^m$, then no point in $\overleftrightarrow{e_1,e_2}\setminus\{e_1,e_2\}$ satisfies that $\left[\alpha^p z_1: \beta^q z_2: z_3\right] = \left[z_1:z_2:z_3\right]$ for an infinite number of $p,q\in\mathbb{Z}$.
\end{proof}	

\begin{lem}\label{lem_caso_diagonal_L1}
Let $\Gamma_{\alpha,\beta}\subset U_+$ be a discrete group containing loxodromic elements, then
	$$
	L_1(\Gamma)=
	\begin{cases}
	\{e_1,e_2\}, & \text{if }|\alpha|>1>|\beta|\text{ or }|\alpha|<1<|\beta| \\
	\{e_1,e_3\}, & \text{if }|\alpha|>|\beta|>1\text{ or }|\alpha|<|\beta|<1 \\
	\{e_1\}\cup\overleftrightarrow{e_2,e_3}, & \text{if }\beta\text{ is an irrational rotation}.
	\end{cases}		
	$$	 
\end{lem}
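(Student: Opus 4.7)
The plan is to invoke Proposition \ref{prop_convergencia_qp} together with Table \ref{fig_cdiag_casos_qp}: every sequence of distinct elements $\gamma_{n_k,m_k}\in\Gamma_{\alpha,\beta}$ admits a subsequence whose quasi-projective limit $\tau$ is one of the six types listed in the table, and the accumulation points of an orbit $\Gamma z$, for $z$ off $\corchetes{\kernel(\tau)}$, are exactly the images $\tau(z)$. The computation of $L_1(\Gamma)$ therefore reduces, in each of the three cases of the lemma, to deciding which entries (i)--(vi) of the table are realized by a sequence of \emph{distinct} index pairs $(n_k,m_k)$, and then collecting the resulting images $\tau(z)$ as $z$ varies over $\CP^2\setminus L_0(\Gamma)$.

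For Case 1 ($\valorabs{\alpha}>1>\valorabs{\beta}$, with the opposite sub-case being symmetric), the decisive rigidity is that both $\valorabs{\alpha}\neq 1$ and $\valorabs{\beta}\neq 1$, so the maps $n\mapsto\alpha^n$ and $m\mapsto\beta^m$ are proper; any subsequence along which $\alpha^{n_k}$ (respectively $\beta^{m_k}$) converges to a nonzero finite limit must have $\set{n_k}$ (respectively $\set{m_k}$) eventually constant. Walking through the signs of $n_k,m_k\to\pm\infty$ one sees that the only table entries contributing points outside $L_0(\Gamma)$ are (i) and (ii), whose images are $\set{e_1}$ and $\set{e_2}$; the images of entries (iii), (iv), (v) and (vi) lie on coordinate lines contained in $L_0(\Gamma)$ via Lemma \ref{lem_caso_diagonal_L0}. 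Taking closure yields $L_1(\Gamma)=\set{e_1,e_2}$.

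Case 2 ($\valorabs{\alpha}>\valorabs{\beta}>1$) is handled analogously; the difference is that now $\alpha^{n_k}$ and $\beta^{m_k}$ escape $\C^\ast$ in the same direction. The surviving entries contributing outside $L_0(\Gamma)$ are (i) and (iii), with images $\set{e_1}$ and $\set{e_3}$, so $L_1(\Gamma)=\set{e_1,e_3}$. The more delicate entry (v) is ruled out by a comparison of the exponential growth rates $\valorabs{\alpha}^{n_k}$ and $\valorabs{\beta}^{m_k}$.

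Case 3 is qualitatively different because $\valorabs{\beta}=1$: now $\beta^{m_k}$ stays on $\Ss^1$, so entries (ii), (iii), (iv) and (v)---all of which require $\beta^{m_k}\to 0$ or $\infty$---are never realized. Conversely the density of $\set{\beta^m}_{m\in\Z}$ in $\Ss^1$ makes entry (vi) realizable for every prescribed $b\in\Ss^1$: assuming $\valorabs{\alpha}>1$ and fixing $n_k\to-\infty$ so that $\alpha^{n_k}\to 0$, one chooses $m_k$ so that $\beta^{m_k}\to b$ and obtains an accumulation point $\corchetes{0:bz_2:z_3}\in\linproy{e_2,e_3}$. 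Letting $b$ range over a dense subset of $\Ss^1$ and letting $z=\corchetes{z_1:z_2:z_3}$ vary over $\CP^2\setminus L_0(\Gamma)$, these images fill a dense subset of $\linproy{e_2,e_3}$; entry (i) contributes $\set{e_1}$, and taking closure gives $\set{e_1}\cup\linproy{e_2,e_3}$. The principal obstacle throughout is the bookkeeping that distinguishes genuinely realizable quasi-projective limits from formal table entries, and the key tools are the properness of $n\mapsto\alpha^n$ in Cases 1 and 2 together with the density of $\set{\beta^m}_{m\in\Z}$ on $\Ss^1$ in Case 3.
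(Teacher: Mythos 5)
Your framework---reduce $L_1(\Gamma)$ to the quasi-projective limits of arbitrary sequences $\text{Diag}(\alpha^{n_k},\beta^{m_k},1)$ via Table \ref{fig_cdiag_casos_qp} and Proposition \ref{prop_convergencia_qp}---is the same computation the paper performs, except that the paper's proof only ever examines the powers $\gamma^{\pm k}$ of the single element $\gamma=\text{Diag}(\alpha,\beta,1)$, i.e.\ the index pairs $n_k=m_k=\pm k$. Your broader setup is the correct one (orbits of $\Gamma z$ involve all pairs $(n,m)$, and $\Gamma$ has rank two), but it is exactly where your bookkeeping breaks. First, in Case 1 you assert that the images of entries (iii)--(vi) are contained in $L_0(\Gamma)$. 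For entries (iv) and (vi) the images are $\linproy{e_1,e_3}$ and $\linproy{e_2,e_3}$, neither of which lies in $L_0(\Gamma)$ as computed in Lemma \ref{lem_caso_diagonal_L0}, and both entries are realized by freezing one exponent: with $n_k\equiv n_0$ and $m_k\to+\infty$ the distinct elements $\text{Diag}(\alpha^{n_0},\beta^{m_k},1)$ send $z=\corchetes{1:1:1}$ to $\corchetes{\alpha^{n_0}:\beta^{m_k}:1}\rightarrow\corchetes{\alpha^{n_0}:0:1}$, an accumulation point on $\linproy{e_1,e_3}\setminus\set{e_1,e_3}$ that is not in $\set{e_1,e_2}$. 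Second, in Case 2 your claim that entry (v) is ``ruled out by a comparison of the exponential growth rates'' is false: taking $m_k=\lfloor n_k\log\valorabs{\alpha}/\log\valorabs{\beta}\rfloor$ keeps $n_k\log\valorabs{\alpha}-m_k\log\valorabs{\beta}$ in the bounded interval $\left[0,\log\valorabs{\beta}\right)$, so $\alpha^{n_k}\beta^{-m_k}$ stays in a compact subset of $\C^\ast$ and converges along a subsequence to some $b\in\C^\ast$; entry (v) is therefore realized and contributes accumulation points $\corchetes{bz_1:z_2:0}$ that need not lie in $\set{e_1,e_3}$.

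These are not presentational slips: once arbitrary index sequences are admitted, as your own plan requires, the additional accumulation points genuinely occur, so the case analysis cannot be closed in the form you state it. The paper's proof does not confront this because it silently restricts attention to the cyclic sequences $\gamma^{\pm k}$; your proposal, by contrast, announces the full classification and then discards several realizable table entries with incorrect justifications. To repair the argument within your framework you would have to treat entries (iv), (v) and (vi) honestly---identifying exactly which limits $b$ occur and which points $\tau(z)$ they produce for $z\in\CP^2\setminus L_0(\Gamma)$---rather than asserting that their contributions are absorbed by $L_0(\Gamma)$ or excluded by growth rates.
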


\begin{proof}
Let $\alpha,\beta\in\mathbb{C}^\ast$, since $\Gamma=\Gamma_{\alpha,\beta}$ contains loxodromic elements, we can assume without loss of generality that $|\alpha|\neq 1$. Let $\{\gamma_k\}\subset\Gamma$ be a sequence of distinct elements given by $\gamma_k=\text{Diag}(\alpha^k,\beta^k,1)$. Let $z=\left[z_1:z_2:z_3\right]\in \mathbb{CP}^2\setminus L_0(\Gamma)$, then $\gamma_k z=\left[\alpha^k z_1:\beta^k z_2:z_3\right]$. We have two essentially different sequences in $\Gamma$, $\{\gamma^k\}$ and $\{\gamma^{-k}\}$. Since $|\alpha|\neq 1$, we can assume, without loss of generality that $|\alpha|>1$, then $\alpha^k\rightarrow\infty$ as $k\rightarrow \infty$. We have three cases:
		\begin{enumerate}[(i)]
		\item $|\beta|<1$, then $\beta^k\rightarrow 0$ as $k\rightarrow \infty$, and $\gamma_k z\rightarrow e_1$ as $k\rightarrow \infty$. Analogously, $\gamma_k z\rightarrow e_2$ as $k\rightarrow -\infty$.
		\item $|\beta|=1$, $\beta$ cannot be a rational rotation because $\Gamma$ is torsion-free, then $\beta$ is an irrational rotation. Then there are subsequences (still denoted by $\{\gamma_k\}$) converging to any $b\in\mathbb{C}$, $|b|=1$. Then $\gamma_k z\rightarrow e_1$ as $k\rightarrow \infty$ and $\gamma_k z\rightarrow \left[0:b^{-1}z_2:z_3\right]$ as $k\rightarrow -\infty$. 
		\item $|\beta|>1$, then $\beta^k\rightarrow \infty$ as $k\rightarrow \infty$. There are two possibilities:
			\begin{itemize}
			\item $\alpha^k\beta^{-k}\rightarrow\infty$ as $k\rightarrow \infty$, then $\gamma_k z\rightarrow e_1$ as $k\rightarrow \infty$. Also, $\gamma_k z\rightarrow e_3$ as $k\rightarrow -\infty$.
			\item $\alpha^k\beta^{-k}\rightarrow 0$ as $k\rightarrow \infty$, $\gamma_k z\rightarrow e_2$ as $k\rightarrow \infty$ and $\gamma_k z\rightarrow e_2$ as $k\rightarrow \infty$, and $\gamma_k z\rightarrow e_3$ as $k\rightarrow -\infty$.
			\end{itemize}
		\end{enumerate}		 
\end{proof}

If $\Gamma=\Gamma_{\alpha,\beta}\subset U_+$ is a discrete group containing loxodromic elements, then putting together Lemmas \ref{lem_caso_diagonal_L0} and \ref{lem_caso_diagonal_L1} we have the following cases:\\

\noindent If $\alpha^n=\beta^m$ for some $n,m\in\mathbb{Z}$:
	\begin{itemize}
	\item[{\scriptsize \textbf{[D1]}}] $L_0(\Gamma)\cup L_1(\Gamma)=\overleftrightarrow{e_1,e_2}\cup\{e_3\}$, if $|\alpha|>1>|\beta|$ or $|\alpha|<1<|\beta|$.
	\item[{\scriptsize \textbf{[D2]}}] $L_0(\Gamma)\cup L_1(\Gamma)=\overleftrightarrow{e_1,e_2}\cup\{e_3\}$, if $|\alpha|>|\beta|>1$ or $|\alpha|<|\beta|<1$.
	\end{itemize}
If there are no integers $n,m$ such that $\alpha^n=\beta^m$:
	\begin{itemize}
	\item[{\scriptsize \textbf{[D3]}}] $L_0(\Gamma)\cup L_1(\Gamma)=\{e_1,e_2,e_3\}$, if $|\alpha|>1>|\beta|$ or $|\alpha|<1<|\beta|$.
	\item[{\scriptsize \textbf{[D4]}}] $L_0(\Gamma)\cup L_1(\Gamma)=\{e_1,e_2,e_3\}$, if $|\alpha|>|\beta|>1$ or $|\alpha|<|\beta|<1$.
	\item[{\scriptsize \textbf{[D5]}}] $L_0(\Gamma)\cup L_1(\Gamma)=\overleftrightarrow{e_1,e_2}\cup\overleftrightarrow{e_2,e_3}$, if $\beta$ is an irrational rotation.
	\end{itemize}

The following theorem gives the full description of the Kulkarni limit set for groups in this case, we will use the notation described in the previous paragraph.

\begin{thm}\label{thm_kulkarni_diagonales}
Let $\Gamma_{\alpha,\beta}\subset U_+$ be a discrete group containing loxodromic elements, then
	\begin{enumerate}[(i)]
	\item $\Lambda_{\text{Kul}}(\Gamma)=\overleftrightarrow{e_1,e_2}\cup\{e_3\}$ in Cases {\scriptsize \textbf{[D1]}} and {\scriptsize \textbf{[D2]}}.
	\item $\Lambda_{\text{Kul}}(\Gamma)=\{e_1,e_2,e_3\}$ in Cases {\scriptsize \textbf{[D3]}} and {\scriptsize \textbf{[D4]}}.
	\item $\Lambda_{\text{Kul}}(\Gamma)=\overleftrightarrow{e_1,e_2}\cup\overleftrightarrow{e_2,e_3}$ in Case {\scriptsize \textbf{[D5]}}.
	\end{enumerate}
\end{thm}

\begin{proof}
We consider a sequence $\{\gamma^k\}\subset\Gamma$. We will determine the quasi-projective limits of this sequence using Table \ref{fig_cdiag_casos_qp}, and then using Proposition \ref{prop_descripcion_Eq}, we determine the set $L_2(\Gamma)$, and therefore $\Lambda_{\text{Kul}}(\Gamma)$.
	\begin{itemize}
	\item In Case {\scriptsize \textbf{[D1]}}, $\alpha^k\rightarrow\infty$, $\beta^k\rightarrow 0$, and the quasi-projective limit is given by (i) of Table \ref{fig_cdiag_casos_qp}. Therefore the orbits of compact subsets of $\mathbb{CP}\setminus L_0(\Gamma)\cup L_1(\Gamma)$ accumulate on $\{e_1\}$ and $\{e_2\}$ under the sequences $\{\gamma^k\}$ and $\{\gamma^{-k}\}$ respectively. Then $\Lambda_{\text{Kul}}(\Gamma)=\overleftrightarrow{e_1,e_2}\cup\{e_3\}$.
	\item In Case {\scriptsize \textbf{[D2]}}, $\alpha^k\rightarrow\infty$, $\beta^k\rightarrow \infty$ and $\alpha^k\beta^{-k}\rightarrow\infty$, then the quasi-projective limit is also given by (i) of Table \ref{fig_cdiag_casos_qp}. We have again, $\Lambda_{\text{Kul}}(\Gamma)=\overleftrightarrow{e_1,e_2}\cup\{e_3\}$.
	\item In Cases {\scriptsize \textbf{[D3]}} and {\scriptsize \textbf{[D4]}}, the orbits of compact subsets of $\mathbb{CP}\setminus L_0(\Gamma)\cup L_1(\Gamma)$ accumulate on $\{e_1\}$ (and accumulate on $\{e_1\}$ under the sequence of $\{\gamma^{-k}\}$). Then $\Lambda_{\text{Kul}}(\Gamma)=\{e_1,e_2,e_3\}$.
	\item In Case {\scriptsize \textbf{[D5]}}, we have $|\beta|=1$, and we can assume without loss of generality that $|\alpha|>1$. Then the quasi-projective limit of the sequence $\{\gamma_k\}$ is $\tau=\text{Diag}(1,0,0)$ and therefore the orbits of compact subsets of $\mathbb{CP}^2\setminus \text{Ker}(\tau)=\mathbb{CP}^2\setminus\overleftrightarrow{e_2,e_3}$ accumulate on $\text{Im}(\tau)=\{e_1\}$. If $K\subset\mathbb{CP}^2\setminus\left(\overleftrightarrow{e_1,e_2}\cup\overleftrightarrow{e_2,e_3}\right)$, then $K\subset\mathbb{CP}^2\setminus\overleftrightarrow{e_2,e_3}$. Therefore $L_2(\Gamma)=\{e_1\}$, we finally conclude that
		$$\Lambda_{\text{Kul}}(\Gamma)=\overleftrightarrow{e_1,e_2}\cup\overleftrightarrow{e_2,e_3}.\qedhere$$
	\end{itemize}
\end{proof}

\section{Proof of the Main Theorems}
\label{sec_proof_main_thm}
In this section we prove Theorem \ref{thm_main_solvable} for discrete subgroups of $U_{+}$ and Theorem \ref{thm_main_gen}. 

\begin{proof}[Proof of Theorem \ref{thm_main_solvable}]
Let $\Gamma\subset U_+$ be a complex Kleinian group such that its Kulkarni limit set does not consist of exactly four lines in general position. We can assume that $\Gamma$ contains loxodromic elements, otherwise it is described in detail in \cite{ppar}.\\

If $\Gamma$ is non-commutative, then in each sub-case of the proof of Theorem \ref{thm_descomposicion_caso_noconmutativo2} we have constructed an open subset $\Omega_\Gamma\subset\mathbb{CP}^2$ such that the orbits of every compact set $K\subset \Omega_\Gamma$ accumulate on $\mathbb{CP}^2\setminus \Omega_\Gamma$. Thus we can define a limit set for the action of $\Gamma$ by $\Lambda_\Gamma:=\Omega_\Gamma$.  This limit set describes the dynamics of $\Gamma$, and the open region $\Omega_\Gamma$ satisfies (i) and (ii) by construction. $\Gamma$ is finitely generated (see \cite{auslander}) and we have proven in Theorem \ref{thm_descomposicion_caso_noconmutativo2} that $\text{rank}(\Gamma)\leq 4$. This verifies (iii). Conclusion (iv) follows immediately from Theorem \ref{thm_descomposicion_caso_noconmutativo} and Corollary \ref{cor_lox_nivel34}.\\

If $\Gamma$ is commutative, it is conjugate to a sugroup of the Lie groups $C_1$ or $C_2$ (see Lemma \ref{lem_7casos_conmutativos} and the subsequent arguments). In this setting, the region $\Omega_{\text{Kul}}(\Gamma)$ satisfies conclusions (i) and (ii) as a consequence of Theorems \ref{thm_case1_kulkarni}, and \ref{thm_kulkarni_diagonales}.  Again, $\Gamma$ is finitely generated \cite{auslander} and $\text{rank}(\Gamma)\leq 4$ (Propositions \ref{prop_conmutativo_c1_rangoW} and \ref{prop_caso_diagonal_rango}). This proves conclusion (iii). On the other hand, $\Gamma\cong\mathbb{Z}^r$ with $r=\text{rank}(\Gamma)$, and then we can write $\Gamma$ as a trivial semidirect product of copies of $\mathbb{Z}$, thus verifying conclusion (iv).\\  

In both cases, conclusion (v) follows from Theorem \ref{teo_solvable_triangularizables}.
\end{proof}

As it was stated in the introduction, our ultimate goal is to prove Theorem \ref{thm_main_solvable} for solvable discrete subgroups of $\text{PSL}\left(3,\mathbb{C}\right)$ satisfying the hypothesis of the theorem. Using the ideas we have developed in this paper, we are able to prove a first generalization of Theorem \ref{thm_main_solvable}, stated in Theorem \ref{thm_main_gen}. 

\begin{proof}[Proof of Theorem \ref{thm_main_gen}]
Let $\Gamma$ be a solvable subgroup of $\text{PSL}\left(3,\mathbb{C}\right)$ as in the hypothesis of Theorem \ref{thm_main_gen}. Theorem \ref{teo_solvable_triangularizables} guaranties the existence of a virtually triangularizable finite index subgroup $\Gamma_0\subset\Gamma$. According to the hypothesis, $\Gamma_0$ is commutative and therefore, the non-empty region $\Omega_{\text{Kul}}(\Gamma_0)$ satisfies conclusions (i) and (ii) (Theorem \ref{thm_main_solvable}). Proposition 3.6 of \cite{bcn16} states that $\Omega_{\text{Kul}}(\Gamma)=\Omega_{\text{Kul}}(\Gamma_0)$, thus proving conclusions (i) and (ii). Finally, conclusion (iii) follows from Theorem \ref{teo_solvable_triangularizables}. 
\end{proof}

In order to prove the remaining conclusions for solvable subgroups of $\text{PSL}\left(3,\mathbb{C}\right)$ we need to study the extensions of upper triangular discrete groups of $U_+$. This will be done in subsequent works.

\section*{Acknowledgments}
The author would like to thank to the IMUNAM Cuernavaca, CIMAT, FAMAT UADY and their people for its hospitality and kindness during the writing of this paper. Finally, the author is specially thankful to \'Angel Cano, M\'onica Moreno, Waldemar Barrera, Juan Pablo Navarrete, Carlos Cabrera and Manuel Cruz for many valuable and helpful conversations.

\bibliographystyle{abbrv}
\bibliography{Bibliografia}

\end{document}